\DeclareMathOperator{\PSL}{PSL}
\DeclareMathOperator{\SL}{SL}
\DeclareMathOperator{\GL}{GL}
\DeclareMathOperator{\asin}{asin}
\DeclareMathOperator{\acosh}{acosh}
\DeclareMathOperator{\disc}{disc}
\newcommand{\fp}{\qed}
\newcommand{\sump}{\sideset{}{'}\sum}
\newcommand{\psmm}[4]{\left(\begin{smallmatrix}{#1}&{#2}\\{#3}&{#4}\end{smallmatrix}\right)}
\renewcommand{\a}{\mathfrak a}
\newcommand{\ov}[1]{\overline{#1}}
\newcommand{\Q}{{\mathbb Q}}
\newcommand{\Z}{{\mathbb Z}}
\renewcommand{\P}{{\mathbb P}}
\newcommand{\F}{{\mathbb F}}
\newcommand{\R}{{\mathbb R}}
\newcommand{\C}{{\mathbb C}}
\renewcommand{\L}{\Lambda}
\newcommand{\oL}{\overline{\Lambda}}
\newcommand{\G}{\Gamma}
\renewcommand{\gg}{{\mathfrak g}}
\newcommand{\g}{{\mathfrak g}}
\newcommand{\p}{{\mathfrak p}}
\newcommand{\GP}{{\mathfrak P}}
\newcommand{\om}{\omega}
\newcommand{\e}{\mathbf e}
\newcommand{\al}{\alpha}
\newcommand{\be}{\beta}
\newcommand{\ga}{\gamma}
\newcommand{\la}{\lambda}
\renewcommand{\th}{\theta}
\newcommand{\Th}{\Theta}
\newcommand{\z}{\zeta}
\newcommand{\eps}{\varepsilon}
\renewcommand{\pmod}[1]{\allowbreak\ ({\rm{mod}}\,\,#1)}
\newcommand{\lgs}[2]{\mbox{$\left(\frac{#1}{#2}\right)$}}
\newcommand{\leg}[2]{\mbox{$\left(\dfrac{#1}{#2}\right)$}}
\newcommand{\isom}{\simeq}
\DeclareMathOperator{\Tr}{Tr}
\DeclareMathOperator{\Res}{Res}
\def\cal{\mathcal}
\DeclareMathOperator{\N}{{\cal N}}
\DeclareMathOperator{\M}{{\cal M}}
\DeclareMathOperator{\Os}{\widetilde {\mathit O}}
\def\renewtheorem#1{%
  \expandafter\let\csname#1\endcsname\relax
  \expandafter\let\csname c@#1\endcsname\relax
  \gdef\renewtheorem@envname{#1}
  \renewtheorem@secpar
}
\def\renewtheorem@secpar{\@ifnextchar[{\renewtheorem@numberedlike}{\renewtheorem@nonumberedlike}}
  \def\renewtheorem@numberedlike[#1]#2{\newtheorem{\renewtheorem@envname}[#1]{#2}}
  \def\renewtheorem@nonumberedlike#1{
    \def\renewtheorem@caption{#1}
    \edef\renewtheorem@nowithin{\noexpand\newtheorem{\renewtheorem@envname}{\renewtheorem@caption}}
    \renewtheorem@thirdpar
  }
  \def\renewtheorem@thirdpar{\@ifnextchar[{\renewtheorem@within}{\renewtheorem@nowithin}}
    \def\renewtheorem@within[#1]{\renewtheorem@nowithin[#1]}
\begin{document}

\title*{Computational Number Theory in Relation with $L$-Functions}
\author{Henri Cohen}
\institute{Universit\'e de Bordeaux, CNRS, INRIA, IMB, UMR 5251, F-33400 Talence, France,
\email{Henri.Cohen@math.u-bordeaux.fr}}

%
%
\maketitle

\abstract{We give a number of theoretical and practical methods related to the 
computation of $L$-functions, both in the local case (counting points
on varieties over finite fields, involving in particular a detailed
study of Gauss and Jacobi sums), and in the global case (for instance
Dirichlet $L$-functions, involving in particular the study of inverse
Mellin transforms); we also give a number of little-known but very
useful numerical methods, usually but not always related to the computation
of $L$-functions.}

\section{$L$-Functions}\label{sec:one}

This course is divided into five parts. In the first part (Sections 1 and 2),
we introduce the notion of $L$-function, give a number of results and
conjectures concerning them, and explain some of the computational problems
in this theory. In the second part (Sections 3 to 6), we give a number of
computational methods for obtaining the Dirichlet series coefficients of the
$L$-function, so is \emph{arithmetic} in nature. In the third part
(Section 7), we give a number of \emph{analytic} tools necessary for working
with $L$-functions. In the fourth part (Sections 8 and 9), we give a number of
very useful numerical methods which are not sufficiently well-known, most of
which being also related to the computation of $L$-functions. The fifth part
(Sections 10 and 11) gives the {\tt Pari/GP} commands corresponding to most of
the algorithms and examples given in the course. A final Section 12 gives
as an appendix some basic definitions and results used in the course which
may be less familiar to the reader.

\subsection{Introduction}

The theory of $L$-functions is one of the most exciting subjects in number
theory. It includes for instance two of the crowning achievements of
twentieth century mathematics, first the proof of the Weil conjectures
and of the Ramanujan conjecture by Deligne in the early 1970's, using the
extensive development of modern algebraic geometry initiated by Weil himself
and pursued by Grothendieck and followers in the famous EGA and SGA treatises,
and second the proof of the Shimura--Taniyama--Weil conjecture by Wiles et
al., implying among other things the proof of Fermat's last theorem. It
also includes two of the seven 1 million dollar Clay problems for the
twenty-first century, first the Riemann hypothesis, and second the
Birch--Swinnerton-Dyer conjecture which in my opinion is the most beautiful,
if not the most important, conjecture in number theory, or even in the whole
of mathematics, together with similar conjectures such as the
Beilinson--Bloch conjecture.

There are two kinds of $L$-functions: local $L$-functions and global
$L$-functions. Since the proof of the Weil conjectures, local $L$-functions
are rather well understood from a theoretical standpoint, but somewhat less
from a computational standpoint. Much less is known on global $L$-functions,
even theoretically, so here the computational standpoint is much more
important since it may give some insight on the theoretical side.

Before giving a definition of $L$-functions, we look in some detail at a
large number of special cases of global $L$-functions.

\subsection{The Prototype: the Riemann Zeta Function $\z(s)$}

The simplest of all (global) $L$-function is the Riemann zeta function $\z(s)$
defined by
$$\z(s)=\sum_{n\ge1}\dfrac{1}{n^s}\;.$$
This is an example of a \emph{Dirichlet series} (more generally
$\sum_{n\ge1}a(n)/n^s$, or even more generally $\sum_{n\ge1}1/\la_n^s$, but
we will not consider the latter). As such, it has a half-plane of absolute
convergence, here $\Re(s)>1$.

The properties of this function, studied initially by Bernoulli
and Euler, are as follows, given historically:

\begin{enumerate}\item (Bernoulli, Euler): it has \emph{special values}.
When $s=2$, $4$,... is a strictly positive even integer, $\z(s)$ is equal
to $\pi^s$ times a \emph{rational number}. $\pi$ is here a \emph{period},
and is of course the usual $\pi$ used for measuring circles. These rational
numbers have elementary \emph{generating functions}, and are equal up to easy
terms to the so-called \emph{Bernoulli numbers}. For example
$\z(2)=\pi^2/6$, $\z(4)=\pi^4/90$, etc. This was conjectured by Bernoulli
and proved by Euler. Note that the proof in 1735 of the so-called
\emph{Basel problem}: 
$$\z(2)=1+\dfrac{1}{2^2}+\dfrac{1}{3^2}+\dfrac{1}{4^2}+\cdots=\dfrac{\pi^2}{6}$$
is one of the crowning achievements of mathematics of that time.
\item (Euler): it has an \emph{Euler product}: for $\Re(s)>1$ one has the
identity
$$\z(s)=\prod_{p\in P}\dfrac{1}{1-1/p^s}\;,$$
where $P$ is the set of prime numbers. This is exactly equivalent to the
so-called fundamental theorem of arithmetic. Note in passing (this does not
seem interesting here but will be important later) that if we consider
$1-1/p^s$ as a polynomial in $1/p^s=T$, its reciprocal roots all have the
same modulus, here $1$, this being of course trivial.
\item (Riemann, but already ``guessed'' by Euler in special cases): it has
an \emph{analytic continuation} to a meromorphic function in the whole complex
plane, with a single pole, at $s=1$, with residue $1$, and a \emph{functional
equation} $\Lambda(1-s)=\Lambda(s)$, where $\Lambda(s)=\G_{\R}(s)\z(s)$,
with $\G_{\R}(s)=\pi^{-s/2}\G(s/2)$, and $\G$ is the gamma function
(see appendix).
\item As a consequence of the functional equation, we have $\z(s)=0$
when $s=-2$, $-4$,..., $\z(0)=-1/2$, but we also have \emph{special
values} at $s=-1$, $s=-3$,... which are symmetrical to those at $s=2$, $4$,...
(for instance $\z(-1)=-1/12$, $\z(-3)=1/120$, etc.). This is the
part which was guessed by Euler.
\end{enumerate}

Roughly speaking, one can say that a global $L$-function is a function
having properties similar to \emph{all} the above. We will of course be
completely precise below. Two things should be added immediately: first, the
existence of special values will not be part of the definition but, at
least conjecturally, a consequence. Second, all the global $L$-functions
that we will consider should \emph{conjecturally} satisfy a Riemann hypothesis:
when suitably normalized, and excluding ``trivial'' zeros, all the zeros
of the function should be on the line $\Re(s)=1/2$, axis of symmetry of the
functional equation. Note that even for the simplest $L$-function, $\z(s)$,
this is not proved.

\subsection{Dedekind Zeta Functions}

The Riemann zeta function is perhaps too simple an example to get the correct
feeling about global $L$-functions, so we generalize:

Let $K$ be a number field (a finite extension of $\Q$) of degree $d$. We
can define its \emph{Dedekind zeta function} $\z_K(s)$ for $\Re(s)>1$ by
$$\z_K(s)=\sum_{\a}\dfrac{1}{\N(\a)^s}=\sum_{n\ge1}\dfrac{i(n)}{n^s}\;,$$
where $\a$ ranges over all (nonzero) integral ideals of the ring of integers
$\Z_K$ of $K$, $\N(\a)=[\Z_K:\a]$ is the norm of $\a$, and $i(n)$ denotes
the number of integral ideals of norm $n$.

This function has very similar properties to those of $\z(s)$ (which is the
special case $K=\Q$). We give them in a more logical order:

\begin{enumerate}\item It can be analytically continued to the whole complex 
plane into a meromorphic function having a single pole, at $s=1$, with known 
residue, and it has a functional equation $\Lambda_K(1-s)=\Lambda_K(s)$, where
$$\Lambda_K(s)=|D_K|^{s/2}\G_{\R}(s)^{r_1+r_2}\G_{\R}(s+1)^{r_2}\;,$$
where $(r_1,2r_2)$ are the number of real and complex embeddings of $K$
and $D_K$ its discriminant.
\item It has an Euler product $\z_K(s)=\prod_{\p}1/(1-1/\N(\p)^s)$, where
the product is over all prime ideals of $\Z_K$. Note that this can also be
written
$$\z_K(s)=\prod_{p\in P}\prod_{\p\mid p}\dfrac{1}{1-1/p^{f(\p/p)s}}\;,$$
where $f(\p/p)=[\Z_K/\p:\Z/p\Z]$ is the so-called \emph{residual index}
of $\p$ above $p$. Once again, note that if we set as usual $1/p^s=T$,
the reciprocal roots of $1-T^{f(\p/p)}$ all have modulus $1$.
\item It has \emph{special values}, but only when $K$ is a \emph{totally real}
number field ($r_2=0$, $r_1=d$): in that case $\z_K(s)$ is a \emph{rational
number} if $s$ is a negative odd integer, or equivalently by the functional
equation, it is a rational multiple of $\sqrt{|D_K|}\pi^{ds}$ if $s$ is a
positive even integer.\end{enumerate}

An important new phenomenon occurs: recall that 
$\sum_{\p\mid p}e(\p/p)f(\p/p)=d$, where $e(\p/p)$ is the so-called
\emph{ramification index}, which is equivalent to the
defining equality $p\Z_K=\prod_{\p\mid p}\p^{e(\p/p)}$. In particular
$\sum_{\p\mid p}f(\p/p)=d$ if and only if $e(\p/p)=1$ for all $\p$, which
means that $p$ is \emph{unramified} in $K/\Q$; one can prove that this is
equivalent to $p\nmid D_K$. Thus, the \emph{local $L$-function}
$L_{K,p}(T)=\prod_{\p\mid p}(1-T^{f(\p/p)})$ has degree in $T$ exactly
equal to $d$ for all but a finite number of primes $p$, which are exactly
those which divide the discriminant $D_K$, and for those ``bad'' primes
the degree is strictly less than $d$. In addition, note that the
number of $\G_{\R}$ factors in the \emph{completed} function $\Lambda_K(s)$
is equal to $r_1+2r_2$, hence once again equal to $d$.

\smallskip

{\bf Examples:} 

\begin{enumerate}\item Let $D$ be the discriminant of a quadratic field, and
let $K=\Q(\sqrt{D})$. In that case, $\z_K(s)$ \emph{factors} as
$\z_K(s)=\z(s)L(\chi_D,s)$, where $\chi_D=\lgs{D}{.}$ is the 
Legendre--Kronecker symbol, and $L(\chi_D,s)=\sum_{n\ge 1}\chi_D(n)/n^s$.
Thus, the local $L$-function at a prime $p$ is given by
$$L_{K,p}(T)=(1-T)(1-\chi_D(p)T)=1-a_pT+\chi_D(p)T^2\;,$$
with $a_p=1+\chi_D(p)$. Note that $a_p$ is equal to the number of solutions
in $\F_p$ of the equation $x^2=D$.
\item Let us consider two special cases of (1): first $K=\Q(\sqrt{5})$.
Since it is a real quadratic field, it has special values, for instance
$$\z_K(-1)=\dfrac{1}{30}\;,\quad \z_K(-3)=\dfrac{1}{60}\;,\quad \z_K(2)=\dfrac{2\sqrt{5}\pi^4}{375}\;,\quad \z_K(4)=\dfrac{4\sqrt{5}\pi^8}{84375}\;.$$
In addition, note that its \emph{gamma factor} is $5^{s/2}\G_{\R}(s)^2$.

Second, consider $K=\Q(\sqrt{-23})$. Since it is not a totally real field,
$\z_K(s)$ does not have special values. However, because of the factorization
$\z_K(s)=\z(s)L(\chi_D,s)$, we can look \emph{separately} at the special values
of $\z(s)$, which we have already seen (negative odd integers and positive
even integers), and of $L(\chi_D,s)$. It is easy to prove that the special
values of this latter function occurs at negative \emph{even} integers
and positive \emph{odd} integers, which have empty intersection which those
of $\z(s)$ and explains why $\z_K(s)$ itself has none. For instance,
$$L(\chi_D,-2)=-48\;,\quad L(\chi_D,-4)=6816\;,\quad L(\chi_D,3)=\dfrac{96\sqrt{23}\pi^3}{12167}\;.$$
In addition, note that its gamma factor is 
$$23^{s/2}\G_{\R}(s)\G_{\R}(s+1)=23^{s/2}\G_{\C}(s)\;,$$
where we set by definition 
$$\G_{\C}(s)=\G_{\R}(s)\G_{\R}(s+1)=2\cdot(2\pi)^{-s}\G(s)$$
by the duplication formula for the gamma function.
\item Let $K$ be the unique cubic field up to isomorphism of discriminant
$-23$, defined for instance by a root of the equation $x^3-x-1=0$. We
have $(r_1,2r_2)=(1,2)$ and $D_K=-23$. Here, one
can prove (it is less trivial) that $\z_K(s)=\z(s)L(\rho,s)$, where
$L(\rho,s)$ is a holomorphic function. Using both properties of $\z_K$ and
$\z$, this $L$-function has the following properties:
\begin{itemize}\item It extends to an entire function on $\C$ with a functional
equation $\Lambda(\rho,1-s)=\Lambda(\rho,s)$, with
$$\Lambda(\rho,s)=23^{s/2}\G_{\R}(s)\G_{\R}(s+1)L(\rho,s)=23^{s/2}\G_{\C}(s)L(\rho,s)\;.$$
Note that this is the \emph{same} gamma factor as for $\Q(\sqrt{-23})$.
However the functions are fundamentally different, since 
$\z_{\Q(\sqrt{-23})}(s)$ has a pole at $s=1$, while $L(\rho,s)$ is an
entire function.
\item It is immediate to show that if we let $L_{\rho,p}(T)=L_{K,p}(T)/(1-T)$
be the local $L$ function for $L(\rho,s)$, we have
$L_{\rho,p}(T)=1-a_pT+\chi_{-23}(p)T^2$, with
$a_p=1$ if $p=23$, $a_p=0$ if $\lgs{-23}{p}=-1$, and
$a_p=1$ or $2$ if $\lgs{-23}{p}=1$.
\end{itemize}
\end{enumerate}

\begin{remark} In all of the above examples, the function $\z_K(s)$
is \emph{divisible} by the Riemann zeta function $\z(s)$, i.e., the function
$\z_K(s)/\z(s)$ is an \emph{entire function}. This is known for some number
fields $K$, but is \emph{not} known in general, even in degree $d=5$ for
instance: it is a consequence of the more precise \emph{Artin conjecture} on
the holomorphy of Artin $L$-functions.\end{remark}

\subsection{Further Examples in Weight $0$}

It is now time to give examples not coming from number fields.
Define $a_1(n)$ by the formal equality
$$q\prod_{n\ge1}(1-q^n)(1-q^{23n})=\sum_{n\ge1}a_1(n)q^n=q-q^2-q^3+q^6+q^8-\cdots\;,$$
and set $L_1(s)=\sum_{n\ge1}a_1(n)/n^s$. The theory of modular forms
(here of the Dedekind eta function) tells us that $L_1(s)$ will satisfy 
exactly the same properties as $L(\rho,s)$ with $\rho$ as above.

Define $a_2(n)$ by the formal equality
$$\dfrac{1}{2}\left(\sum_{(m,n)\in\Z\times\Z}q^{m^2+mn+6n^2}-q^{2m^2+mn+3n^2}\right)=\sum_{n\ge1}a_2(n)q^n\;,$$
and set $L_2(s)=\sum_{n\ge1}a_2(n)/n^s$. The theory of modular forms
(here of theta functions) tells us that $L_2(s)$ will satisfy 
exactly the same properties as $L(\rho,s)$. 

And indeed, it is an interesting \emph{theorem} that
$$L_1(s)=L_2(s)=L(\rho,s)\;:$$
The ``moral'' of this story is the following, which can be made mathematically
precise: if two $L$-functions are holomorphic, have the same gamma factor 
(including in this case the $23^{s/2}$), then (conjecturally in general) they
belong to a finite-dimensional vector space. Thus in particular if this vector
space is $1$-dimensional and the $L$-functions are suitably normalized 
(usually with $a(1)=1$), this implies as here that they are equal.

\subsection{Examples in Weight $1$}

Although we have not yet defined the notion of weight, let me give two
further examples.

Define $a_3(n)$ by the formal equality
$$q\prod_{n\ge1}(1-q^n)^2(1-q^{11n})^2=\sum_{n\ge1}a_3(n)q^n=q-2q^2-q^3+2q^4+\cdots\;,$$
and set $L_3(s)=\sum_{n\ge1}a_3(n)/n^s$. The theory of modular forms
(again of the Dedekind eta function) tells us that $L_3(s)$ will satisfy
the following properties, analogous but more general than those satisfied by
$L_1(s)=L_2(s)=L(\rho,s)$:
\begin{itemize}\item It has an analytic continuation to the whole complex
plane, and if we set 
$$\Lambda_3(s)=11^{s/2}\G_{\R}(s)\G_{\R}(s+1)L_3(s)=11^{s/2}\G_{\C}(s)L_3(s)\;,$$
we have the functional equation $\Lambda_3(2-s)=\Lambda_3(s)$. Note the crucial
difference that here $1-s$ is replaced by $2-s$. 
\item There exists an Euler product $L_3(s)=\prod_{p\in P}1/L_{3,p}(1/p^s)$
similar to the preceding ones in that $L_{3,p}(T)$ is for all but a finite
number of $p$ a second degree polynomial in $T$. More precisely,
if $p=11$ we have $L_{3,p}(T)=1-T$, while for $p\ne11$ we have
$L_{3,p}(T)=1-a_pT+pT^2$, for some $a_p$ such that $|a_p|<2\sqrt{p}$.
This is expressed more vividly by saying that for $p\ne11$ we have
$L_{3,p}(T)=(1-\al_pT)(1-\be_pT)$, where the reciprocal roots $\al_p$ and
$\be_p$ have modulus exactly equal to $p^{1/2}$. Note again the crucial
difference with ``weight $0$'' in that the coefficient of $T^2$ is equal to
$p$ instead of $\pm1$, hence that $|\al_p|=|\be_p|=p^{1/2}$ instead of $1$.
\end{itemize}

\smallskip

As a second example, consider the equation $y^2+y=x^3-x^2-10x-20$ (an
elliptic curve $E$), and denote by $N_q(E)$ the number of projective points
of this curve over the finite field $\F_q$ (it is clear that there is a unique
point at infinity, so if you want $N_q(E)$ is one plus the number of affine
points). There is a universal recipe to construct an $L$-function out of
a variety which we will recall below, but here let us simplify: for $p$
prime, set $a_p=p+1-N_p(E)$ and 
$$L_4(s)=\prod_{p\in P}1/(1-a_pp^{-s}+\chi(p)p^{1-2s})\;,$$
where $\chi(p)=1$ for $p\ne11$ and $\chi(11)=0$. It is not difficult to show
that $L_4(s)$ satisfies exactly the same properties as $L_3(s)$ (using for
instance the elementary theory of modular curves), so by the moral explained
above, it should not come as a surprise that in fact $L_3(s)=L_4(s)$.

\subsection{Definition of a Global $L$-Function}

With all these examples at hand, it is quite natural to give the following
definition of an $L$-function, which is not the most general but will be
sufficient for us.

\begin{definition} Let $d$ be a nonnegative integer. We say that a
Dirichlet series $L(s)=\sum_{n\ge1}a(n)n^{-s}$ with $a(1)=1$ is an 
$L$-function of \emph{degree $d$} and \emph{weight $0$} if the following 
conditions are satisfied:
\begin{enumerate}\item (Ramanujan bound): we have $a(n)=O(n^\eps)$ for
all $\eps>0$, so that in particular the Dirichlet series converges
absolutely and uniformly in any half plane $\Re(s)\ge\sigma>1$.
\item (Meromorphy and Functional equation): The function $L(s)$ can be
extended to $\C$ to a meromorphic function of order $1$ (see appendix) having
a finite number of poles; furthermore there exist complex numbers $\la_i$ with 
nonnegative real part and an integer $N$ called the \emph{conductor} such
that if we set 
$$\ga(s)=N^{s/2}\prod_{1\le i\le d}\G_{\R}(s+\la_i)\text{\quad and\quad}\Lambda(s)=\ga(s)L(s)\;,$$
we have the \emph{functional equation}
$$\Lambda(s)=\om\ov{\Lambda(1-\ov{s})}$$
for some complex number $\om$, called the \emph{root number}, which will 
necessarily be of modulus~$1$.
\item (Euler Product): For $\Re(s)>1$ we have an Euler product
$$L(s)=\prod_{p\in P}1/L_p(1/p^s)\text{\quad with\quad}L_p(T)=\prod_{1\le j\le d}(1-\al_{p,j}T)\;,$$
and the reciprocal roots $\al_{p,j}$ are called the \emph{Satake parameters}.
\item (Local Riemann hypothesis): for $p\nmid N$ we have $|\al_{p,j}|=1$,
and for $p\mid N$ we have either $\al_{p,j}=0$ or $|\al_{p,j}|=p^{-m/2}$
for some $m$ such that $1\le m\le d$.
\end{enumerate}\end{definition}

\begin{remarks}{\rm \begin{enumerate}
\item More generally Selberg has defined a more general class of $L$-functions
which first allows $\G(\mu_i s+\la_i)$ with $\mu_i$ positive real in the gamma factors and second allows weaker assumptions on $N$ and the Satake parameters.
\item Note that $d$ is \emph{both} the number of $\G_{\R}$ factors,
\emph{and} the degree in $T$ of the Euler factors $L_p(T)$, at
least for $p\nmid N$, while the degree decreases for the ``bad'' primes $p$
which divide $N$.
\item The Ramanujan bound (1) is easily seen to be a consequence of the 
conditions that we have imposed on the Satake parameters: in Selberg's more 
general definition this is not the case.
\end{enumerate}}
\end{remarks}

It is important to generalize this definition in the following trivial way:

\begin{definition} Let $w$ be a nonnegative integer. A function $L(s)$ is said
to be an $L$-function of degree $d$ and \emph{motivic weight} $w$ if
$L(s+w/2)$ is an $L$-function of degree $d$ and weight $0$ as above
(with the slight additional technical condition that the nonzero Satake
parameters $\al_{p,j}$ for $p\mid N$ satisfy $|\al_{p,j}|=p^{-m/2}$ with
$1\le m\le w$).
\end{definition}

For an $L$-function of weight $w$, it is clear that the functional equation
is $\Lambda(s)=\om\ov{\Lambda(k-\ov{s})}$ with $k=w+1$,
and that the Satake parameters will satisfy $|\al_{p,j}|=p^{w/2}$ for
$p\nmid N$, and for $p\mid N$ we have either $\al_{p,j}=0$ or
$|\al_{p,j}|=p^{(w-m)/2}$ for some integer $m$ such that $1\le m\le w$.

Thus, the first examples that we have given are all of weight $0$, and
the last two (which are in fact equal) are of weight $1$. For those who
know the theory of modular forms, note that the motivic weight (that we
denote by $w$) is one less than the weight $k$ of the modular form.

\section{Origins of $L$-Functions}

As can already be seen in the above examples, it is possible to construct
$L$-functions in many different ways. In the present section, we look at three
different ways for constructing $L$-functions: the first is by the theory of
modular forms or more generally of \emph{automorphic forms} (of which we have 
seen a few examples above), the second is by using Weil's construction of
local $L$-functions attached to varieties, and more generally
to \emph{motives}, and third, as a special but much simpler case of this,
by the theory of \emph{hypergeometric motives}.

\subsection{$L$-Functions coming from Modular Forms}

The basic notion that we need here is that of \emph{Mellin transform}:
if $f(t)$ is a nice function tending to zero exponentially fast at infinity,
we can define its Mellin transform $\Lambda(f;s)=\int_0^\infty t^sf(t)\,dt/t$,
the integral being written in this way because $dt/t$ is the invariant Haar
measure on the locally compact group $\R_{>0}$. If we set $g(t)=t^{-k}f(1/t)$
and assume that $g$ also tends to zero exponentially fast at infinity,
it is immediate to see by a change of variable that 
$\Lambda(g;s)=\Lambda(f;k-s)$. This is exactly the type of functional equation
needed for an $L$-function.

The other fundamental property of $L$-functions that we need is the existence
of an Euler product of a specific type. This will come from the theory of
\emph{Hecke operators}. 

\smallskip

{\bf A crash course in modular forms} (see for instance \cite{Coh-Str} for a
complete introduction): we use the notation $q=e^{2\pi i\tau}$,
for $\tau\in\C$ such that $\Im(\tau)>0$, so that $|q|<1$. A function
$f(\tau)=\sum_{n\ge1}a(n)q^n$ is said to be a modular cusp form of (positive,
even) weight $k$ if $f(-1/\tau)=\tau^kf(\tau)$ for all $\Im(\tau)>0$.
Note that because of the notation $q$ we also have $f(\tau+1)=f(\tau)$,
hence it is easy to deduce that $f((a\tau+b)/(c\tau+d))=(c\tau+d)^kf(\tau)$
if $\psmm{a}{b}{c}{d}$ is an integer matrix of determinant $1$.
We define the $L$-function attached to $f$ as $L(f;s)=\sum_{n\ge1}a(n)/n^s$,
and the Mellin transform $\Lambda(f;s)$ of the function $f(it)$ is on the
one hand equal to $(2\pi)^{-s}\G(s)L(f;s)=(1/2)\G_{\C}(s)L(f;s)$, and on the
other hand as we have seen above satisfies the functional equation 
$\Lambda(k-s)=(-1)^{k/2}\Lambda(s)$.

One can easily show the fundamental fact that the vector space of modular
forms of given weight $k$ is \emph{finite dimensional}, and compute its 
dimension explicitly.

If $f(\tau)=\sum_{n\ge1}a(n)q^n$ is a modular form and $p$ is a prime number,
one defines $T(p)(f)$ by $T(p)(f)=\sum_{n\ge1}b(n)q^n$ with
$b(n)=a(pn)+p^{k-1}a(n/p)$, where $a(n/p)$ is by convention $0$ when 
$p\nmid n$, or equivalently 
$$T(p)(f)(\tau)=p^{k-1}f(p\tau)+\dfrac{1}{p}\sum_{0\le j<p}f\left(\dfrac{\tau+j}{p}\right)\;.$$
Then $T(p)f$ is also a modular cusp form, so $T(p)$ is an operator on the 
space of modular forms, and it is easy to show that the $T(p)$ commute and
are diagonalizable, so they are simultaneously diagonalizable hence there
exists a basis of common \emph{eigenforms} for all the $T(p)$. Since one can
show that for such an eigenform one has $a(1)\ne0$, we can normalize them
by asking that $a(1)=1$, and we then obtain a canonical basis.

If $f(\tau)=\sum_{n\ge1}a(n)q^n$ is such a \emph{normalized eigenform}, it
follows that the corresponding $L$ function $\sum_{n\ge1}a(n)/n^s$ will indeed
have an Euler product, and using the elementary properties of the operators
$T(p)$ that it will in fact be of the form:
$$L(f;s)=\prod_{p\in P}\dfrac{1}{1-a(p)p^{-s}+p^{k-1-2s}}\;.$$
As a final remark, note that the analytic continuation and functional equation
of this $L$-function is an \emph{elementary consequence} of the definition of
a modular form. This is totally different from the motivic cases that we will 
see below, where this analytic continuation is in general completely
\emph{conjectural}.

\smallskip

The above describes briefly the theory of modular forms on the modular group
$\PSL_2(\Z)$. One can generalize (nontrivially) this theory to \emph{subgroups}
of the modular group, the most important being $\G_0(N)$ (matrices as above
with $N\mid c$), to other \emph{Fuchsian groups}, to forms in several
variables, and even more generally to \emph{reductive groups}.

\subsection{Local $L$-Functions of Algebraic Varieties}

The second very important source of $L$-functions comes from algebraic
geometry. Let $V$ be some algebraic object. In modern terms, $V$ may be a
\emph{motive}, whatever that may mean for the moment, but assume for instance
that $V$ is an algebraic variety, in other words that for each suitable field
$K$, $V(K)$ is the set of common zeros of a family of polynomials in several
variables. If $K$ is a \emph{finite} field $\F_q$ (recall that we must then
have $q=p^n$ for some prime $p$ and that $\F_q$ exists and is unique up to
isomorphism), then $V(\F_q)$ will also be finite.

After studying a number of special cases, such as elliptic curves
(due to Hasse), and quasi-diagonal hypersurfaces in $\P^d$, in 1949 Weil was 
led to make a number of more precise conjectures concerning the number of
\emph{projective} points $|V(\F_q)|$, assuming that $V$ is a
\emph{smooth projective} variety, and proved these conjectures in the special
case of curves (the proof is already quite deep).

\smallskip

The first \emph{Weil conjecture} says that (for $p$ fixed) the number
$|V(\F_{p^n})|$ of projective points of $V$ over the finite field $\F_{p^n}$
satisfies a (non-homogeneous) linear recurrence with 
constant coefficients. For instance, if $V$ is an \emph{elliptic curve} 
defined over $\Q$ (such as $y^2=x^3+x+1$) and if we set 
$a(p^n)=p^n+1-|V(\F_{p^n})|$, then
$$a(p^{n+1})=a(p)a(p^n)-\chi(p)pa(p^{n-1})\;,$$
where $\chi(p)=1$ unless $p$ divides the so-called \emph{conductor} of the
elliptic curve, in which case $\chi(p)=0$ (this is not quite true because
we must choose a suitable model for $V$, but it suffices for us).

\begin{exercise} Using the above recursion for $a(p^n)$, find the corresponding
recursion for $v_n=|V(\F_{p^n})|$.
\end{exercise}

\begin{exercise}\begin{enumerate}\item Given a prime $p$ and $n\ge1$, write a 
computer program which runs through all the elements of $\F_{p^n}$, 
represented in a suitable way.
\item For the elliptic curve $y^2=x^3+x+1$, compute (on a computer) $a(5)$ 
and $a(5^2)$, and check the recursion.
\item Similarly, compute $a(31)$ and $a(31^2)$, and check the recursion
(here $\chi(31)=0$).\end{enumerate}
\end{exercise}

This first Weil conjecture was proved by Dwork in the early 1960's. It is
better reformulated in terms of \emph{local $L$-functions} as follows:
define the Hasse--Weil zeta function of $V$ as the \emph{formal power series}
in $T$ given by the formula
$$Z_p(V;T)=\exp\Biggl(\sum_{n\ge1}\dfrac{|V(\F_{p^n})|}{n}T^n\Biggr)\;.$$
There should be no difficulty in understanding this: setting for simplicity
$v_n=|V(\F_{p^n})|$, we have
\begin{align*}
Z_p(V;T)&=\exp(v_1T+v_2T^2/2+v_3T^3/3+\cdots)\\
&=1+v_1T+(v_1^2+v_2)T^2/2+(v_1^3+3v_1v_2+2v_3)T^3/6+\cdots\end{align*}
For instance, if $V$ is projective $d$-space $\P^d$, we have
$|V(\F_q)|=q^d+q^{d-1}+\cdots+1$, and since 
$\sum_{n\ge1}p^{nj}T^n/n=-\log(1-p^jT)$, we deduce that
$Z_p(\P^d;T)=1/((1-T)(1-pT)\cdots(1-p^dT))$.

In terms of this language, the existence of the recurrence relation is
equivalent to the fact that $Z_p(V;T)$ is a \emph{rational function} of $T$,
and as already mentioned, this was proved by Dwork in 1960.

\smallskip

The second conjecture of Weil states that this rational function is of the form
$$Z_p(V;T)=\prod_{0\le i\le 2d}P_{i,p}(V;T)^{(-1)^{i+1}}=\dfrac{P_{1,p}(V;T)\cdots P_{2d-1,p}(V;T)}{P_{0,p}(V;T)P_{2,p}(V;T)\cdots P_{2d,p}(V;T)}\;,$$
where $d=\dim(V)$, and the $P_{i,p}$ are polynomials in $T$.
Furthermore, a basic result in algebraic geometry called Poincar\'e duality
implies that $Z_p(V;1/(p^dT))=\pm p^{de/2}T^eZ_p(V;T)$, where $e$ is the
degree of the rational function (called the Euler characteristic of $V$),
which means that there is a relation between $P_{i,p}$ and $P_{2d-i,p}$.
In addition the $P_{i,p}$ have integer coefficients, and $P_{0,p}(T)=1-T$, 
$P_{2d,p}(T)=1-p^dT$. For instance, for \emph{curves}, this means that
$Z_p(V;T)=P_1(V;T)/((1-T)(1-pT))$, the polynomial $P_1$ is of even degree
$2g$ ($g$ is the so-called \emph{genus} of the curve) and satisfies
$p^{dg}P_1(V;1/(p^dT))=\pm P_1(V;T)$.

For knowledgeable readers, in highbrow language, the polynomial $P_{i,p}$ is 
the reverse characteristic polynomial of the Frobenius endomorphism acting on
the $i$th $\ell$-adic cohomology group $H^i(V;\Q_{\ell})$ for any $\ell\ne p$.

\smallskip

The third, most important and most difficult of the Weil conjectures is the
local \emph{Riemann hypothesis}, which says that the reciprocal roots of
$P_{i,p}$ have modulus exactly equal to $p^{i/2}$, in other words that
$$P_{i,p}(V;T)=\prod_j(1-\al_{i,j}T)\text{\quad with\quad}|\al_{i,j}|=p^{i/2}\;.$$
This last is the most important in applications.

The Weil conjectures were completely proved by Deligne in the early 1970's
following a strategy already put forward by Weil, and is considered as one of
the two or three major accomplishments of mathematics of the second half of
the twentieth century.

\begin{exercise} (You need to know some algebraic number theory for this).
Let $P\in\Z[X]$ be a monic irreducible polynomial and $K=\Q(\th)$, where
$\th$ is a root of $P$ be the corresponding number field. Assume that
$p^2\nmid\disc(P)$. Show that the Hasse--Weil zeta function at $p$ of the
$0$-dimensional variety defined by $P=0$ is the Euler factor at $p$ of
the Dedekind zeta function $\z_K(s)$ attached to $K$, where $p^{-s}$ is
replaced by $T$.\end{exercise}

\subsection{Global $L$-Function Attached to a Variety}

We are now ready to ``globalize'' the above construction, and build
\emph{global} $L$-functions attached to a variety.

Let $V$ be an algebraic variety defined over $\Q$, say. We assume that
$V$ is ``nice'', meaning for instance that we choose $V$ to be projective,
smooth, and absolutely irreducible. For all but a finite number of primes $p$ 
we can consider $V$ as a smooth variety over $\F_p$, so for each $i$ we can 
set $L_i(V;s)=\prod_p 1/P_{i,p}(V;p^{-s})$, where the product
is over all the ``good'' primes, and the $P_{i,p}$ are as above. The factor
$1/P_{i,p}(V;p^{-s})$ is as usual called the Euler factor at $p$. These 
functions $L_i$ can be called the global $L$-functions attached to $V$.

This na\"\i ve definition is insufficient to construct interesting objects.
First and most importantly, we have omitted a finite number of
Euler factors at the so-called ``bad primes'', which include in particular
those for which $V$ is not smooth over $\F_p$, and although there do
exist cohomological recipes to define them, as far as the author is aware 
these recipes do not really give practical algorithms. (In highbrow language,
these recipes are based on the computation of $\ell$-adic cohomology groups,
for which the known algorithms are useless in practice; in the simplest case
of Artin $L$-functions, one must determine the action of Frobenius on the
vector space fixed by the inertia group, which can be done reasonably easily.)

\smallskip

Another much less important reason is the fact that most of the $L_i$ are
uninteresting or related. For instance in the case of elliptic curves seen
above, we have (up to a finite number of Euler factors)
$L_0(V;s)=\z(s)$ and $L_2(V;s)=\z(s-1)$, so the only interesting $L$-function,
called \emph{the} $L$-function of the elliptic curve, is the function
$L_1(V;s)=\prod_p(1-a(p)p^{-s}+\chi(p)p^{1-2s})^{-1}$ (if the model of
the curve is chosen to be \emph{minimal}, this happens to be the correct
definition, including for the ``bad'' primes). For varieties of higher
dimension $d$, as we have mentioned as part of the Weil conjecture
the functions $L_i$ and $L_{2d-i}$ are related by Poincar\'e duality, and 
$L_0$ and $L_{2d}$ are translates of the Riemann zeta function (as above), so 
only the $L_i$ for $1\le i\le d$ need to be studied.

\subsection{Hypergeometric Motives}

Still another way to construct $L$-functions is through the use of
\emph{hypergeometric motives}, due to Katz and Rodriguez-Villegas. Although
this construction is a special case of the construction of $L$-functions of
varieties studied above, the corresponding variety is \emph{hidden} (although
it can be recovered if desired), and the computations are in some sense much
simpler.

Let me give a short and unmotivated introduction to the subject: let 
$\ga=(\ga_n)_{n\ge1}$ be a finite sequence of (positive or negative) integers
satisfying the essential condition $\sum_nn\ga_n=0$.
For any finite field $\F_q$ with $q=p^f$ and any character $\chi$ of $\F_q^*$,
recall that the Gauss sum $\gg(\chi)$ is defined by
$$\gg(\chi)=\sum_{x\in\F_q^*}\chi(x)\exp(2\pi i\Tr_{\F_q/\F_p}(x)/p)\;,$$
see Section \ref{sec:gausssum} below. We set
$$Q_q(\ga;\chi)=\prod_{n\ge1}\gg(\chi^n)^{\ga_n}$$
and for any $t\in\F_q\setminus\{0,1\}$
$$a_q(\ga;t)=\dfrac{1}{1-q}\left(1+\sum_{\chi\ne\eps}\chi(Mt)Q_q(\ga;\chi)\right)\;,$$
where $\eps$ is the trivial character and $M=\prod_nn^{n\ga_n}$ is a
normalizing constant (this is not quite the exact formula but it will
suffice for our purposes). The theorem of Katz is that for $t\ne0,1$ the
quantity $a_q(\ga;t)$ is the \emph{trace of Frobenius} on some \emph{motive}
\emph{defined over $\Q$}.
In the language of $L$-functions this means the following:
define as usual the local $L$-function at $p$ by the formal power series
$$L_p(\ga;t;T)=\exp\left(\sum_{f\ge1}a_{p^f}(\ga;t)\dfrac{T^f}{f}\right)\;.$$
Then $L_p$ is a rational function of $T$, satisfies the local Riemann 
hypothesis, and if we set
$$L(\ga;t;s)=\prod_pL_p(\ga;t;p^{-s})^{-1}\;,$$
then $L$ once completed at the ``bad'' primes should be a global $L$-function
of the standard type described above.

\smallskip

Let me give one of the simplest examples of a hypergeometric motive, and show
how one can recover the underlying algebraic variety. We choose
$\ga_1=4$, $\ga_2=-2$, $\ga_n=0$ for $n>2$, which does satisfy the condition
$\sum_nn\ga_n=0$ (we could choose the simpler values $\ga_1=2$, $\ga_2=-1$,
but this would give a zero-dimensional variety, i.e., a number field, so
less representative of the general case). We thus have
$Q_q(\ga,\chi)=\gg(\chi)^4/\gg(\chi^2)^2$ and $M=1/4$. By the results on
Jacobi sums that we will see below (Proposition \ref{jacgaufq}), if $\chi^2$
is not the trivial character $\eps$ we have $Q_q(\ga,\chi)=J(\chi,\chi)^2$,
where $J(\chi,\chi)=\sum_{x\in\F_q\setminus\{0,1\}}\chi(x)\chi(1-x)$. As
mentioned above, we did not give the precise formula, here it simply
corresponds to setting $Q_q(\ga,\chi)=J(\chi,\chi)^2$, including when
$\chi^2=\eps$. Thus
$$a_q(\ga;t)=\dfrac{1}{1-q}\left(1+\sum_{\chi\ne\eps}\chi(t/4)J(\chi,\chi)^2\right)\;.$$
If by a temporary abuse of notation\footnote{The definition of $J$ given
below is a sum over all $x\in\F_q$, so that $J(\eps,\eps)=q^2$ and not
$(q-2)^2$.} we define $J(\eps,\eps)$ by the same formula as above, we have
$J(\eps,\eps)=(q-2)^2$ hence
$$a_q(\ga;t)=\dfrac{1}{1-q}\left(1-(q-2)^2+\sum_{\chi}\chi(t/4)J(\chi,\chi)^2\right)\;.$$
Now
$$\sum_{\chi}\chi(t/4)J(\chi,\chi)^2=\sum_{x,y\in\F_q\setminus\{0,1\}}\sum_{\chi}\chi(t/4)\chi(x)\chi(1-x)\chi(y)\chi(1-y)\;.$$
The point of writing it this way is that because of orthogonality of
characters (Exercise \ref{exoorth} below) the sum on $\chi$ vanishes unless
the argument is equal to $1$ in which case it is equal to $q-1$, so that
$$\sum_{\chi}\chi(t/4)J(\chi,\chi)^2=(q-1)N_q(t)\;,\text{\quad where\quad }N_q(t)=\sum_{\substack{x,y\in\F_q\setminus\{0,1\}\\(t/4)x(1-x)y(1-y)=1}}1$$
is the number of \emph{affine} points over $\F_q$ of the algebraic variety
defined by $(t/4)x(1-x)y(1-y)=1$ (which automatically implies $x$ and $y$
different from $0$ and $1$). We have thus shown that
$$a_q(\ga;t)=\dfrac{1}{1-q}(1-(q-2)^2+(q-1)N_q(t))=q-3-N_q(t)\;.$$

\begin{exercise} By making the change of variables $X=(4/t)(1-1/x)$,
$Y=(4/t)(y-1)(1-1/x)$, show that
$$a_q(\ga;t)=q+1-|E(\F_q)|\;,$$
where $|E(\F_q)|$ is the number of projective points over $\F_q$ of the
elliptic curve $Y^2+XY=X(X-4/t)^2$. Thus, the global $L$-function
attached to the hypergeometric motive defined by $\ga$ is equal to
the $L$-function attached to the elliptic curve $E$.
\end{exercise}

Since we will see below fast methods for computing expressions such as\newline
$\sum_{\chi}\chi(t/4)J(\chi,\chi)^2$, these will consequently give fast
methods for computing $|E(\F_q)|$ for an arbitrary elliptic curve $E$.

\begin{exercise}\begin{enumerate}
\item In a similar way, study the hypergeometric motive
corresponding to $\ga_1=3$, $\ga_3=-1$, and $\ga_n=0$ otherwise,
assuming that the correct formula for $Q_q$ corresponds as above to
the replacement of quotients of Gauss sums by Jacobi sums for all
characters $\chi$, not only those allowed by Proposition \ref{jacgaufq}.
To find the elliptic curve, use the change of variable $X=-xy$,
$Y=x^2y$.
\item Deduce that the global $L$-function of this hypergeometric motive
is equal to the $L$-function attached to the elliptic curve
$y^2=x^3+x^2+4x+4$ and to the $L$-function attached to the modular form
$q\prod_{n\ge1}(1-q^{2n})^2(1-q^{10n})^2$.
\end{enumerate}
\end{exercise}

\subsection{Other Sources of $L$-Functions}

There exist many other sources of $L$-functions in addition to those that we
have already mentioned, that we will not expand upon:

\begin{itemize}
\item Hecke $L$-functions, attached to Hecke Gr\"ossencharacters.
\item Artin $L$-functions, of which we have met a couple of examples in
Section \ref{sec:one}.
\item Functorial constructions of $L$-functions such as Rankin--Selberg
$L$-functions, symmetric squares and more generally symmetric powers.
\item $L$-functions attached to Galois representations.
\item General automorphic $L$-functions.
\end{itemize}

Of course these are not disjoint sets, and as already mentioned, when some
$L$-functions lies in an intersection, this usually corresponds to an
interesting arithmetic property. Probably the most general such correspondence
is the \emph{Langlands program}.

\subsection{Results and Conjectures on $L(V;s)$}

The problem with global $L$-functions is that most of their properties are only
\emph{conjectural}. We mention these conjectures in the case of global
$L$-functions attached to algebraic varieties:

\smallskip

\begin{enumerate}\item The function $L_i$ is only defined through its Euler 
product, and thanks to the last of Weil's conjectures, the local Riemann 
hypothesis, proved by Deligne, it converges absolutely for $\Re(s)>1+i/2$.
Note that, with the definitions introduced above, $L_i$ is an $L$-function
of degree $d_i$, the common degree of $P_{i,p}$ for all but a finite number of
$p$, and of motivic weight exactly $w=i$ since the Satake parameters satisfy
$|\al_{i,p}|=p^{i/2}$, again by the local Riemann hypothesis.
\item A first conjecture is that $L_i$ should have an
\emph{analytic continuation} to the whole complex plane with a 
\emph{finite number} of \emph{known} poles with \emph{known} polar part. 
\item A second conjecture, which can in fact be considered as part of the
first, is that this extended $L$-function should satisfy a \emph{functional
equation} when $s$ is changed into $i+1-s$. More precisely, when completed
with the Euler factors at the ``bad'' primes as mentioned (but not explained)
above, then if we set
$$\Lambda_i(V;s)=N^{s/2}\prod_{1\le j\le d_i}\G_{\R}(s+\mu_j)L_i(V;s)$$
then $\Lambda_i(V;i+1-s)=\om\ov{\Lambda_i(V^*;s)}$ for some variety $V^*$ in 
some sense ``dual'' to $V$ and a complex number $\om$ of modulus $1$. In the 
above, $N$ is some integer divisible exactly by all the ``bad'' primes, i.e., 
essentially (but not exactly) the primes for which $V$ reduced modulo $p$ is 
not smooth, and the $\mu_j$ are in this case (varieties) \emph{integers}
which can be computed in terms of the \emph{Hodge numbers} $h^{p,q}$ of
the variety thanks to a recipe due to Serre \cite{Ser}. The number $i$ is
called the \emph{motivic weight}, and it is important to note that the
``weight'' $k$ usually attached to an $L$-function with functional equation
$s\mapsto k-s$ is equal to $k=i+1$, i.e., to \emph{one more} than the motivic
weight.

In many cases the $L$-function is self-dual, in which case the functional
equation is simply of the form $\Lambda_i(V;i+1-s)=\pm\Lambda_i(V;s)$.
\item The function $\Lambda_i$ should satisfy the generalized Riemann 
hypothesis (GRH): all its zeros in $\C$ are on the vertical line
$\Re(s)=(i+1)/2$. Equivalently, the zeros of $L_i$ are on the one hand
real zeros at some integers coming from the poles of the gamma factors,
and all the others satisfy $\Re(s)=(i+1)/2$.
\item The function $\Lambda_i$ should have \emph{special values}: for the
integer values of $s$ (called special points) which are those for which
neither the gamma factor at $s$ nor at $i+1-s$ has a pole, it should be 
computable ``explicitly'': it should be equal to a \emph{period}
(integral of an algebraic function on an algebraic cycle) times an algebraic
number. This has been stated (conjecturally) in great detail by Deligne in the
1970's.\end{enumerate}

It is conjectured that \emph{all} $L$-functions of degree $d_i$ and weight
$i$ as defined at the beginning should satisfy all the above properties, not
only the $L$-functions coming from varieties.

\smallskip

I now give the status of these conjectures.

\begin{enumerate}\item The first conjecture (analytic continuation) is known 
only for a very restricted class of $L$-functions: first $L$-functions of
degree $1$, which can be shown to be Dirichlet $L$-functions, $L$-functions of
Hecke characters, $L$-functions attached to modular forms as shown above, and
more generally to \emph{automorphic forms}. For $L$-functions attached to
varieties, one knows this \emph{only} when one can prove that the
corresponding $L$-function comes from an automorphic form: this is how Wiles
proves the analytic continuation of the $L$-function attached to an elliptic
curve defined over $\Q$, a very deep and difficult 
result, with Deligne's proof of the Weil conjectures one of the most important
result of the end of the 20th century. More results of this type are known for
certain higher-dimensional varieties such as certain \emph{Calabi--Yau
manifolds}. Note however that for such simple objects as most
\emph{Artin $L$-functions} (degree $0$, in which case only \emph{meromorphic}
continuation is known) or abelian surfaces, this is not
known, although the work of Brumer--Kramer--Poor--Yuen, as well as more
recent work of G.~Boxer, F.~Calegari, T.~Gee, and V.~Pilloni on the
\emph{paramodular conjecture} may some day lead to a proof in this last case.
\item The second conjecture on the existence of a functional equation is
of course intimately linked to the first, and the work of Wiles et al.
also proves the existence of this functional equation. But in
addition, in the case of Artin $L$-functions for which only meromorphy
(possibly with infinitely many poles) is known thanks to a theorem of
Brauer, this same theorem implies the functional equation which is thus known
in this case. Also, as mentioned, the Euler factors which we must include
for the ``bad'' primes in order to have a clean functional equation are often
quite difficult to compute.
\item The (global) Riemann hypothesis is not known for \emph{any} global
$L$-function of the type mentioned above, not even for the simplest one, the
Riemann zeta function $\z(s)$. Note that it \emph{is} known for other kinds of
$L$-functions such as \emph{Selberg zeta functions}, but these are
functions of order $2$, so are not in the class considered above.
\item Concerning \emph{special values}: many cases are known, and many
conjectured. This is probably one of the most \emph{fun} conjectures since
everything can be computed explicitly to thousands of decimals if desired.
For instance, for modular forms it is a theorem of Manin, for symmetric
squares of modular forms it is a theorem of Rankin, and for higher symmetric
powers one has very precise conjectures of Deligne, which check perfectly
on a computer, but none of them are proved. For the Riemann zeta function
or Dirichlet $L$-functions, of course all these results such as $\z(2)=\pi^2/6$
date back essentially to Euler.

In the case of an elliptic curve $E$ over $\Q$, the only special point is 
$s=1$, and in this case the whole subject revolves around the \emph{Birch and
Swinnerton-Dyer conjecture} (BSD) which predicts the behavior of $L_1(E;s)$
around $s=1$. The only known results, already quite deep, due to Kolyvagin
and Gross--Zagier, deal with the case where the \emph{rank} of the elliptic
curve is $0$ or $1$.
\end{enumerate}

There exist a number of other very important conjectures linked to the behavior
of $L$-functions at integer points which are not necessarily special,
such as the Bloch, Beilinson, Kato, Lichtenbaum, or Zagier conjectures,
but it would carry us too far afield to describe them in general. However,
in the next subsections, we will give three completely explicit numerical
examples of these conjectures, so that the reader can convince himself both
that they are easy to check numerically, and that the results are spectacular.

\subsection{An Explicit Numerical Example of BSD}\label{sec:BSD}

Let us now be a little more precise. Even if this subsection involves notions
not introduced in these notes, we ask the reader to be patient since the
numerical work only involves standard notions.

Let $E$ be an elliptic curve defined over $\Q$. Elliptic curves have a
natural \emph{abelian group} structure, and it is a theorem of Mordell
that the group of rational points on $E$ is \emph{finitely generated}, i.e.,
$E(\Q)\isom\Z^r\oplus E_{\text{tors}}(\Q)$, where $E_{\text{tors}}(\Q)$ is
a finite group, and $r$ is called the \emph{rank} of the curve.

On the analytic side, we have mentioned that $E$ has an $L$-function $L(E,s)$
(denoted $L_1$ above), and the deep theorem of Wiles et al. says that it has
an analytic continuation to the whole of $\C$ into an entire function with
a functional equation linking $L(E,s)$ to $L(E,2-s)$. The only special point
in the above sense is $s=1$, and a weak form of the Birch and Swinnerton-Dyer
conjecture states that the order of vanishing $v$ of $L(E,s)$ at $s=1$ should
be equal to $r$.

This has been proved for $r=0$ (by Kolyvagin) and for $r=1$
(by Gross--Zagier--Kolyvagin), and nothing is known for $r\ge2$. However,
this is not quite true: if $r=2$ then we cannot have $v=0$ or $1$ by the
previous results, so $v\ge2$. On the other hand, for any given elliptic curve
it is easy to check numerically that $L''(E,1)\ne0$, so to check that $v=2$.
Similarly, if $r=3$ we again cannot have $v=0$ or $1$. But for any given
elliptic curve one can compute the \emph{sign} of the functional equation
linking $L(E,s)$ to $L(E,2-s)$, and this will show that if $r=3$ all
derivatives $L^{(k)}(E,s)$ for $k$ even will vanish. Thus we cannot have
$v=2$, and once again for any $E$ it is easy to check that $L'''(E,1)\ne0$,
hence to check that $v=3$.

Unfortunately, this argument does not work for $r\ge4$. Assume for instance
$r=4$. The same reasoning will show that $L(E,1)=0$ (by Kolyvagin), that
$L'(E,1)=L'''(E,1)=0$ (because the sign of the functional equation will be
$+$), and that $L''''(E,1)\ne0$ by direct computation. The BSD conjecture
tells us that $L''(E,1)=0$, but this is not known for a single curve.

\medskip

Let us give the simplest numerical example, based on an elliptic curve with
$r=4$. I emphasize that no knowledge of elliptic curves is needed for this.

For every prime $p$, consider the congruence
$$y^2+xy\equiv x^3-x^2-79x+289\pmod{p}\;,$$
and denote by $N(p)$ the number of pairs $(x,y)\in(\Z/p\Z)^2$ satisfying it.
We define an arithmetic function $a(n)$ in the following way:

\smallskip

\begin{enumerate}
\item $a(1)=1$.
\item If $p$ is prime, we set $a(p)=p-N(p)$.
\item For $k\ge2$ and $p$ is prime, we define $a(p^k)$ by induction:
$$a(p^k)=a(p)a(p^{k-1})-\chi(p)p\cdot a(p^{k-2})\;,$$
where $\chi(p)=1$ unless $p=2$ or $p=117223$, in which case $\chi(p)=0$.
\item For arbitrary $n$, we extend by multiplicativity: if $n=\prod_ip_i^{k_i}$
then $a(n)=\prod_ia(p_i^{k_1})$.
\end{enumerate}

\begin{remarks}{\rm \begin{itemize}
\item The number $117223$ is simply a prime factor
of the discriminant of the cubic equation obtained by completing the square
in the equation of the above elliptic curve.
\item Even though the definition of $a(n)$ looks complicated, it is \emph{very}
easy to compute (see below), for instance only a few seconds for a million
terms. In addition $a(n)$ is quite small: for $n=1,2,\dots$ we have
$$a(n)=1,-1,-3,1,-4,3,-5,-1,6,4,-6,-3,-6,5,\ldots$$
\end{itemize}}
\end{remarks}

On the analytic side, define a function $f(x)$ for $x>0$ by
$$f(x)=\int_1^\infty e^{-xt}\log(t)^2\,dt\;.$$
Note that it is very easy to compute this integral to thousands of digits if
desired and also note that $f$ tends to $0$ exponentially fast as $x\to\infty$
(more precisely $f(x)\sim 2e^{-x}/x^3$). 

In this specific situation, the BSD conjecture tells us that $S=0$, where
$$S=\sum_{n\ge1}a(n)f\left(\dfrac{2\pi n}{\sqrt{234446}}\right)\;.$$
It takes only a few seconds to compute \emph{thousands} of digits of $S$,
and we can indeed check that $S$ is extremely close to $0$, but as of now
nobody knows how to prove that $S=0$.

\subsection{An Explicit Numerical Example of Beilinson--Bloch}\label{sec:BB}

This subsection is entirely due to V.~Golyshev (personal communication)
whom I heartily thank.

Let $u>1$ be a real parameter. Consider the elliptic curve $E(u)$ with
affine equation
$$y^2=x(x+1)(x+u^2)\;.$$
As usual one can define its $L$-function $L(E(u),s)$ using a general recipe.
The BSD conjecture deals with the value of $L(E(u),s)$ (and its derivatives)
at $s=1$. The Beilinson--Bloch conjectures deal with values at other
integer values of $s$, in the present case we consider $L(E(u),2)$. Once
again it is very easy to compute thousands of decimals of this quantity if
desired.

On the other hand, for $u>1$ consider the function
$$g(u)=2\pi\int_0^1\dfrac{\asin(t)}{\sqrt{1-t^2/u^2}}\,\dfrac{dt}{t}+\pi^2\acosh(u)=\dfrac{\pi^2}{2}\left(2\log(4u)-\sum_{n\ge1}\dfrac{\binom{2n}{n}^2}{n}(4u)^{-2n}\right)\;.$$

The conjecture says that when $u$ is an integer, $L(E(u),2)/g(u)$ should be a
\emph{rational number}. In fact, if we let $N(u)$ be the \emph{conductor}
of $E(u)$ (notion that I have not defined), then it seems that when
$u\ne4$ and $u\ne8$ we even have $F(u)=N(u)L(E(u),2)/g(u)\in\Z$.

Once again, this is a conjecture which can immediately be tested on
modern computer algebra systems such as {\tt Pari/GP}. For instance, for
$u=2,3,\ldots$ we find \emph{numerically} to thousands of decimal digits
(remember that nothing is proved)
$$F(u)=1,2,4/11,8,32,8,4/3,8,32,64,8,96,256,48,16,16,192,\ldots$$

\begin{exercise} Check numerically that the conjecture seems still to be true
when $4u\in\Z$, i.e., if $u$ is a rational number with denominator $2$ or $4$.
On the other hand, it is definitely wrong for instance if $3u\in\Z$ (and
$u\notin\Z$), i.e., when the denominator is $3$. It is possible that there
is a replacement formula, but Bloch and Golyshev tell me that this is
unlikely.\end{exercise}

\subsection{An Explicit Numerical Example of Mahler Measures}

This example is entirely due to W.~Zudilin (personal communication)
whom I heartily thank. The reader does not need any knowledge of Mahler
measures since we are again going to give the example as an equality
between values of $L$-functions and integrals. Note that this can also be
considered an isolated example of the Bloch--Beilinson conjecture.

Consider the elliptic curve $E$ with equation $y^2=x^3-x^2-4x+4$, of conductor
$24$. Its associated $L$-function $L(E,s)$ can easily be shown to be equal
to the $L$-function associated to the modular form
$$q\prod_{n\ge1}(1-q^{2n})(1-q^{4n})(1-q^{6n})(1-q^{12n})$$
(we do not need this for this example, but this will give us two
ways to create the $L$-function in {\tt Pari/GP}). We have the conjectural
identity due to Zudilin:
$$L(E,3)=\dfrac{\pi^2}{36}\left(\pi G+\int_0^1\asin(x)\asin(1-x)\,\dfrac{dx}{x}\right)\;,$$
where $G=\sum_{n\ge0}(-1)^n/(2n+1)^2=0.91596559\cdots$ is Catalan's constant.
  
At the end of this course, the reader will find three complete {\tt Pari/GP}
scripts which implement the BSD, Beilinson--Bloch, and Mahler measure examples
that we have just given.

\subsection{Computational Goals}

Now that we have a handle on what $L$-functions are, we come to the
computational and algorithmic problems, which are the main focus of these
notes. This involves many different aspects, all interesting in their own 
right.

In a first type of situation, we assume that we are ``given'' 
the $L$-function, in other words that we are given a reasonably ``efficient''
algorithm to compute the coefficients $a(n)$ of the Dirichlet series
(or the Euler factors), and that we know the gamma factor $\ga(s)$.
The main computational goals are then the following:

\begin{enumerate}\item Compute $L(s)$ for ``reasonable'' values of $s$: 
  for example, compute $\z(3)$. More sophisticated, but much more interesting:
  check the Birch--Swinnerton-Dyer conjecture, the Beilinson--Bloch
  conjecture, and the conjectures of Deligne concerning special values of
  symmetric powers $L$-functions of modular forms.
\item Check the numerical validity of the functional equation, and
in passing, if unknown, compute the numerical value of the \emph{root 
number} $\om$ occurring in the functional equation.
\item Compute $L(s)$ for $s=1/2+it$ for rather large real values of $t$
(in the case of weight $0$, more generally for $s=(w+1)/2+it$),
and/or make a plot of the corresponding $Z$ function (see below).
\item Compute all the zeros of $L(s)$ on the critical line up to a given
height, and check the corresponding Riemann hypothesis.
\item Compute the residue of $L(s)$ at $s=1$ (typically): for instance
if $L$ is the Dedekind zeta function of a number field, this gives the
product $hR$.
\item Compute the \emph{order} of the zeros of $L(s)$ at integer points
(if it has one), and the leading term in the Taylor expansion: for instance 
for the $L$-function of an elliptic curve and $s=1$, this gives 
the \emph{analytic rank} of an elliptic curve, together with the
Birch and Swinnerton-Dyer data.
\end{enumerate}

\medskip

Unfortunately, we are not always given an $L$-function completely
explicitly. We can lack more or less partial information on the 
$L$-function:

\begin{enumerate}\item One of the most frequent situations is that
one knows the Euler factors for the ``good'' primes, as well as the
corresponding part of the conductor, and that one is lacking both
the Euler factors for the bad primes and the bad part of the conductor.
The goal is then to find numerically the missing factors and missing parts.
\item A more difficult but much more interesting problem is when
essentially nothing is known on the $L$-function except $\ga(s)$, in
other words the $\G_{\R}$ factors and the constant $N$, essentially equal
to the conductor. It is quite amazing that nonetheless one can quite often
tell whether an $L$-function with the given data can exist, and give
some of the initial Dirichlet coefficients (even when several $L$-functions
may be possible).
\item Even more difficult is when essentially nothing is known except
the degree $d$ and the constant $N$, and one looks for possible $\G_{\R}$
factors: this is the case in the search for Maass forms over $\SL_n(\Z)$,
which has been conducted very successfully for $n=2$, $3$, and $4$.
\end{enumerate}

We will not consider these more difficult problems.

\subsection{Available Software for $L$-Functions}

Many people working on the subject have their own software. I mention the 
available public data.

$\bullet$ M.~Rubinstein's {\tt C++} program {\tt lcalc}, which can compute
values of $L$-functions, make large tables of zeros, and so on.
The program uses {\tt C++} language {\tt double}, so is limited to 15 decimal
digits, but is highly optimized, hence very fast, and used in most
situations. Also optimized for large values of the imaginary part
using Riemann--Siegel. Available in {\tt Sage}.

\smallskip

$\bullet$ T.~Dokchitser's program {\tt computel}, initially written in 
{\tt GP/Pari}, rewritten for {\tt magma}, and also available in {\tt Sage}.
Similar to Rubinstein's, but allows arbitrary precision, hence slower,
and has no built-in zero finder, although this is not too difficult
to write. It is not optimized for large imaginary parts.

\smallskip

$\bullet$ Since June 2015, {\tt Pari/GP} has a complete package for computing
with $L$-functions, written by B.~Allombert, K.~Belabas, P.~Molin, and myself,
based on the ideas of T.~Dokchitser for the computation
of inverse Mellin transforms (see below) but put on a more solid footing,
and on the ideas of P.~Molin for computing the $L$-function values themselves,
which avoid computing generalized incomplete gamma functions (see also below).
Note the related complete {\tt Pari/GP} package for computing with modular
forms, available since July 2018.

\smallskip

$\bullet$ Last but not least, not a program but a huge \emph{database}
of $L$-functions, modular forms, number fields, etc., which is the
result of a collaborative effort of approximately 30 to 40 people headed
by D.~Farmer. This database can of course be queried in many different
ways, it is possible and useful to navigate between related pages, and
it also contains {\tt knowls}, bits of knowledge which give the main
definitions. In addition to the stored data, the site can compute
additional required information on the fly using the software mentioned above,
i.e., {\tt Pari}, {\tt Sage}, {\tt magma}, and {\tt lcalc})
Available at:

\centerline{\tt http://www.lmfdb.org}

\section{Arithmetic Methods: Computing $a(n)$}

We now come to the second part of this course: the computation of
the Dirichlet series coefficients $a(n)$ and/or of the Euler factors,
which is usually the same problem. Of course this depends entirely on how
the $L$-function is \emph{given}: in view of what we have seen, it can be
given for instance (but not only) as the $L$-function attached to a modular 
form, to a variety, or to a hypergeometric motive. Since there are so many
relations between these $L$-functions (we have seen several identities above),
we will not separate the way in which they are given, but treat everything
at once.

\smallskip

In view of the preceding section, an important computational problem is the
computation of $|V(\F_q)|$ for a variety $V$. This may of course be done by a 
na\"\i ve point count: if $V$ is defined by polynomials in $n$ variables, we can
range through the $q^n$ possibilities for the $n$ variables and count the 
number of common zeros. In other words, there always exists a trivial 
algorithm requiring $q^n$ steps. We of course want something better.

\subsection{General Elliptic Curves}

Let us first look at the special case of \emph{elliptic curves}, i.e.,
a projective curve $V$ with affine equation $y^2=x^3+ax+b$ such that
$p\nmid 6(4a^3+27b^2)$, which is almost the general equation for an
\emph{elliptic curve}. For simplicity assume that $q=p$, but it is immediate
to generalize. If you know the definition of the Legendre symbol, you know
that the number of solutions in $\F_p$ to the equation $y^2=n$ is equal to
$1+\lgs{n}{p}$. If you do not, since $\F_p$ is a field, it is clear that this
number is equal to $0$, $1$, or $2$, and so one can \emph{define} $\lgs{n}{p}$
as one less, so $-1$, $0$, or $1$. Thus, since it is immediate to see that
there is a single projective point at infinity, we have
\begin{align*}|V(\F_p)|&=1+\sum_{x\in\F_p}\left(1+\leg{x^3+ax+b}{p}\right)=p+1-a(p)\;,\quad\text{with}\\
a(p)&=-\sum_{0\le x\le p-1}\leg{x^3+ax+b}{p}\;.\end{align*}
Now a Legendre symbol can be computed very efficiently using the
\emph{quadratic reciprocity law}. Thus, considering that it can be computed
in constant time (which is not quite true but almost), this gives a $O(p)$
algorithm for computing $a(p)$, already much faster than the trivial $O(p^2)$
algorithm consisting in looking at all pairs $(x,y)$.

\smallskip

To do better, we have to use an additional and crucial property of an elliptic
curve: it is an \emph{abelian group}. Using this combined with the so-called
Hasse bounds $|a(p)|<2\sqrt{p}$ (a special case of the Weil conjectures), and
the so-called \emph{baby-step giant-step algorithm} due to Shanks, one can
obtain a $O(p^{1/4})$ algorithm, which is very fast for all practical 
purposes.

\smallskip

However a remarkable discovery due to Schoof in the early 1980's is that
there exists a practical algorithm for computing $a(p)$ which is
\emph{polynomial in $\log(p)$}, for instance $O(\log^6(p))$. The idea is to
compute $a(p)$ modulo $\ell$ for small primes $\ell$ using
\emph{$\ell$-division polynomials}, and then use the Chinese remainder theorem
and the bound $|a(p)|<2\sqrt{p}$ to recover $a(p)$. Several 
important improvements have been made on this basic algorithm, in particular
by Atkin and Elkies, and the resulting SEA algorithm (which is implemented
in many computer packages) is able to compute $a(p)$ for $p$ with several
thousand decimal digits. Note however that in practical ranges (say
$p<10^{12}$), the $O(p^{1/4})$ algorithm mentioned above is sufficient.

\subsection{Elliptic Curves with Complex Multiplication}

In certain special cases it is possible to compute $|V(\F_q)|$ for an elliptic
curve $V$ much faster than with any of the above methods: when the elliptic 
curve $V$ has \emph{complex multiplication}. Let us consider the special
cases $y^2=x^3-nx$ (the general case is more complicated but not 
really slower). By the general formula for $a(p)$, we have for 
$p\ge3$:
\begin{align*}a(p)&=-\sum_{-(p-1)/2\le x\le (p-1)/2}\leg{x(x^2-n)}{p}\\
&=-\sum_{1\le x\le (p-1)/2}\left(\leg{x(x^2-n)}{p}+\leg{-x(x^2-n)}{p}\right)\\
&=-\left(1+\leg{-1}{p}\right)\sum_{1\le x\le(p-1)/2}\leg{x(x^2-n)}{p}\end{align*}
by the multiplicative property of the Legendre symbol. This already
shows that if $\lgs{-1}{p}=-1$, in other words $p\equiv3\pmod4$, we
have $a(p)=0$. But we can also find a formula when $p\equiv1\pmod4$:
recall that in that case by a famous theorem due to Fermat, there
exist integers $u$ and $v$ such that $p=u^2+v^2$. If necessary by
exchanging $u$ and $v$, and/or changing the sign of $u$, we may
assume that $u\equiv-1\pmod4$, in which case the decomposition is
unique, up to the sign of $v$. It is then not difficult to
prove the following theorem (see Section 8.5.2 of \cite{Coh3} for the proof):

\begin{theorem} Assume that $p\equiv1\pmod4$ and $p=u^2+v^2$ with
$u\equiv-1\pmod4$. The number of projective points on the elliptic 
curve $y^2=x^3-nx$ (where $p\nmid n$) is equal to $p+1-a(p)$, where
$$a(p)=2\leg{2}{p}\begin{cases}
-u&\text{\quad if\quad $n^{(p-1)/4}\equiv1\pmod{p}$}\\
u&\text{\quad if\quad $n^{(p-1)/4}\equiv-1\pmod{p}$}\\
-v&\text{\quad if\quad $n^{(p-1)/4}\equiv-u/v\pmod{p}$}\\
v&\text{\quad if\quad $n^{(p-1)/4}\equiv u/v\pmod{p}$}\end{cases}$$
(note that one of these four cases must occur).
\end{theorem}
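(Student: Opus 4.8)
The plan is to compute $a(p)=-(1+\lgs{-1}{p})\sum_{1\le x\le(p-1)/2}\lgs{x(x^2-n)}{p}$ by recognizing the character sum $S=\sum_{x\in\F_p^*}\lgs{x(x^2-n)}{p}$ as (essentially) a Jacobi-type sum attached to the quartic residue symbol, and then identifying that sum with the Gaussian-integer decomposition $p=u^2+v^2$. Concretely, since $p\equiv1\pmod4$ there is a quartic character $\chi$ on $\F_p^*$ (so $\chi^2=\lgs{\cdot}{p}$), and the curve $y^2=x^3-nx$ has complex multiplication by $\Z[i]$; its number of points is governed by a Hecke Grössencharacter, which on the prime $\pi=u+iv$ (with $\pi\equiv1\pmod{2(1+i)}$, the normalization $u\equiv-1\pmod4$ pins down) returns $a(p)=\pi\bar\chi_4(\text{stuff})+\overline{\pi\bar\chi_4(\text{stuff})}$. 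The four cases in the statement are exactly the four possible values of the quartic residue class of $n$ modulo $p$, i.e.\ which fourth root of unity $n^{(p-1)/4}$ represents, and each pins down whether the Frobenius is $\pm\pi$ or $\pm\bar\pi$ up to the unit $\lgs{2}{p}$.

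**The key steps, in order.** First, reduce to the sum $S=\sum_{x}\lgs{x^3-nx}{p}$ over all $x\in\F_p$, using the substitution $x\mapsto -x$ to see $S$ is twice the half-range sum and noting $\lgs{-1}{p}=1$, so $a(p)=-S$. Second, make the substitution to diagonalize: write $x^3-nx=x(x^2-n)$ and substitute $x\mapsto$ something to relate $S$ to a sum of the shape $\sum_t \lgs{t}{p}\lgs{t^2-n}{p}$; expand $\lgs{t}{p}=\chi^2(t)$ and use the standard evaluation of twisted Jacobi sums $J(\chi,\chi^2)$ or $J(\chi,\chi)$ in terms of Gaussian integers — this is the classical fact that $J(\chi_4,\chi_4)=a+bi$ with $a^2+b^2=p$ and $a\equiv-1\pmod4$ (Gauss). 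Third, track the dependence on $n$: replacing $n$ by $c^2 n$ multiplies nothing (it is a square twist that fixes the curve up to isomorphism only when $c$ is a fourth power), so the relevant invariant of $n$ is its class in $\F_p^*/(\F_p^*)^4\cong\Z/4\Z$, detected precisely by $n^{(p-1)/4}\bmod p\in\{1,-1,u/v,-u/v\}$ (one checks $u/v$ and $-u/v$ are the two primitive fourth roots of unity mod $p$, since $(u/v)^2\equiv u^2/v^2\equiv-1$ as $u^2+v^2\equiv0$). Fourth, match the four residue classes to the four values $\mp u,\mp v$ and extract the global sign factor $\lgs{2}{p}$ from the precise Gauss-sum normalization (the $2$ enters through $\gg(\chi_4)^2/\gg(\chi_2)$ type identities, cf.\ the relation $\gg(\chi)\gg(\bar\chi)$ and quadratic Gauss sums). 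Finally, observe these four cases exhaust $\F_p^*/(\F_p^*)^4$, so exactly one occurs.

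**The main obstacle** will be the bookkeeping of the two independent sign ambiguities: the sign of $v$ (which is genuinely not determined, but flips consistently between the third and fourth cases so the theorem is stated correctly) versus the extra factor $\lgs{2}{p}$, which is easy to lose or double-count when passing between the Jacobi sum $J(\chi_4,\chi_4)$, the curve's actual Frobenius trace, and the several admissible Weierstrass models of $y^2=x^3-nx$. Getting the normalization $u\equiv-1\pmod4$ to line up with ``$n^{(p-1)/4}\equiv1$ gives $a(p)=-2\lgs{2}{p}u$'' rather than $+2\lgs{2}{p}u$ requires care with the identification $\pi\leftrightarrow$ Frobenius and with the congruence $\pi\equiv1\pmod{(1+i)^3}$; this is the kind of thing best settled by a single small numerical check (e.g.\ $p=5$, $n=1$). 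Since the referenced proof is in Section 8.5.2 of \cite{Coh3}, I would structure the write-up around the Gauss–Jacobi sum evaluation and defer the model-dependent sign verification to that reference or to a short explicit computation.
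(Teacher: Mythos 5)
Your route --- reduce to $S=\sum_{x\in\F_p}\lgs{x(x^2-n)}{p}$, fold the quadratic character of $x$ into a quartic character $\chi$ with $\chi^2=\lgs{\cdot}{p}$, and arrive at $a(p)=-2\Re\bigl(\ov{\chi}(n)J(\chi,\rho)\bigr)=-2\lgs{2}{p}\Re\bigl(\ov{\chi}(n)J(\chi,\chi)\bigr)$ with $J(\chi,\chi)=a+bi$, $a^2+b^2=p$, $a\equiv-1\pmod{4}$ --- is exactly the Jacobi-sum argument the paper intends: the text gives no proof itself (it cites Section 8.5.2 of \cite{Coh3}), but the lemma $\chi(4)J(\chi,\chi)=J(\chi,\rho)$ and Proposition \ref{propjac34} are precisely the tools you name, and $\chi(4)=\lgs{2}{p}$ is where your factor $\lgs{2}{p}$ enters. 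So the strategy is sound and matches the intended proof; the excursion through Hecke Gr\"ossencharacters is an equivalent framing but is not needed.

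The genuine gap is in what you defer as sign bookkeeping to be ``settled by a single small numerical check''. The whole content of the four cases is the bridge between the complex fourth root of unity $\chi(n)$ and the residue $n^{(p-1)/4}\bmod p$, and that bridge is an arithmetic lemma about the Gaussian prime $\pi=u+iv$: after the normalization $a\equiv-1\pmod{4}$, $2\mid b$ (so $J(\chi,\chi)$ is primary, $J(\chi,\chi)\equiv-1\pmod{(1+i)^3}$), one must know that $\chi$ is the quartic residue symbol attached to $\pi=J(\chi,\chi)$ (or its conjugate), so that $\chi(n)\equiv n^{(p-1)/4}\pmod{\pi}$, while $i\equiv-u/v\pmod{\pi}$ because $u+iv\equiv0$. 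Only with this in hand do $\chi(n)=\pm1$ give $a(p)=\mp2\lgs{2}{p}u$, and $\chi(n)=\pm i$ give $n^{(p-1)/4}\equiv\mp u/v$ and $a(p)=\mp2\lgs{2}{p}v$, consistently with the undetermined sign of $v$. This lemma is quantified over all $p\equiv1\pmod{4}$ and all $n$; a check at $p=5$, $n=1$ can corroborate a convention but cannot prove it, since a priori the matching of $\pm i$ to $\pm u/v$ could drift with $p$ until the congruence characterization of $\chi$ is established. Supply that (classical, short) step and your outline becomes a complete proof.
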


To apply this theorem from a computational standpoint we note the
following two \emph{facts}:

(1) The quantity $n^{(p-1)/4}\bmod p$ can be computed efficiently
by the \emph{binary powering algorithm} (in $O(\log^3(p))$ 
operations). It is however possible to compute it more efficiently
in $O(\log^2(p))$ operations using the \emph{quartic reciprocity law}.

(2) The numbers $u$ and $v$ such that $u^2+v^2=p$ can be computed
efficiently (in $O(\log^2(p))$ operations) using \emph{Cornacchia's
algorithm} which is very easy to describe but not so easy to prove.
It is a variant of Euclid's algorithm. It proceeds as follows:

\smallskip

$\bullet$ As a first step, we compute a square root of $-1$ modulo $p$,
i.e., an $x$ such that $x^2\equiv-1\pmod{p}$. This is done by choosing
randomly a $z\in[1,p-1]$ and computing the Legendre symbol $\lgs{z}{p}$
until it is equal to $-1$ (we can also simply try $z=2$, $3$, ...).
Note that this is a fast computation. When this is the case, we have
by definition $z^{(p-1)/2}\equiv-1\pmod{p}$, hence $x^2\equiv-1\pmod{p}$
for $x=z^{(p-1)/4}\bmod{p}$. Reducing $x$ modulo $p$ and possibly
changing $x$ into $p-x$, we normalize $x$ so that $p/2<x<p$.

\smallskip

$\bullet$ As a second step, we perform the Euclidean algorithm on the pair
$(p,x)$, writing $a_0=p$, $a_1=x$, and $a_{n-1}=q_na_n+a_{n+1}$
with $0\le a_{n+1}<a_n$, and we stop at the exact $n$ for which
$a_n^2<p$. It can be proved (this is the difficult part) that for
this specific $n$ we have $a_n^2+a_{n+1}^2=p$, so up to exchange of
$u$ and $v$ and/or change of signs, we can take $u=a_n$ and $v=a_{n+1}$.

\smallskip

Note that Cornacchia's algorithm can easily be generalized to solving
efficiently $u^2+dv^2=p$ or $u^2+dv^2=4p$ for any $d\ge1$, see Section 1.5.2
of\cite{Coh1} (incidentally one can also solve this for $d<0$, but it poses
completely different problems since there may be infinitely many solutions).

\smallskip

The above theorem is given for the special elliptic curves
$y^2=x^3-nx$ which have complex multiplication by the (ring of integers
of the) field $\Q(i)$, but a similar theorem is valid for all curves
with complex multiplication, see Section 8.5.2 of \cite{Coh3}.

\subsection{Using Modular Forms of Weight $2$}

By Wiles' celebrated theorem, the $L$-function of an elliptic curve is
equal to the $L$-function of a modular form of weight $2$ for $\G_0(N)$,
where $N$ is the conductor of the curve. We do not need to give the
precise definitions of these objects, but only a specific example.

Let $V$ be the elliptic curve with affine equation $y^2+y=x^3-x^2$.
It has conductor $11$. It can be shown using classical modular form methods
(i.e., without Wiles' theorem) that the global $L$-function
$L(V;s)=\sum_{n\ge1}a(n)/n^s$ is the same as that of the modular form
of weight $2$ over $\G_0(11)$ given by
$$f(\tau)=q\prod_{m\ge1}(1-q^m)^2(1-q^{11m})^2\;,$$
with $q=\exp(2\pi i\tau)$. Even with no knowledge of modular forms, this
simply means that if we formally expand the product on the right hand side
as
$$q\prod_{m\ge1}(1-q^m)^2(1-q^{11m})^2=\sum_{n\ge1}b(n)q^n\;,$$
we have $b(n)=a(n)$ for all $n$, and in particular for $n=p$ prime.
We have already seen this example above with a slightly different equation
for the elliptic curve (which makes no difference for its $L$-function outside
of the primes $2$ and $3$).

We see that this gives an alternate method for computing $a(p)$ by
expanding the infinite product. Indeed, the function
$$\eta(\tau)=q^{1/24}\prod_{m\ge1}(1-q^m)$$
is a modular form of weight $1/2$ with known expansion:
$$\eta(\tau)=\sum_{n\ge1}\leg{12}{n}q^{n^2/24}\;,$$
and so using Fast Fourier Transform techniques for formal power series
multiplication we can compute all the coefficients $a(n)$ simultaneously
(as opposed to one by one) for $n\le B$ in time $O(B\log^2(B))$. This
amounts to computing each individual $a(n)$ in time $O(\log^2(n))$, so
it seems to be competitive with the fast methods for elliptic curves
with complex multiplication, but this is an illusion since we must
store all $B$ coefficients, so it can be used only for $B\le 10^{12}$,
say, far smaller than what can be reached using Schoof's algorithm,
which is truly polynomial in $\log(p)$ for each fixed prime $p$.

\subsection{Higher Weight Modular Forms}

It is interesting to note that the dichotomy between elliptic curves
with or without complex multiplication is also valid for modular forms
of higher weight (again, whatever that means, you do not need to know
the definitions). For instance, consider
$$\Delta(\tau)=\Delta_{24}(\tau)=\eta^{24}(\tau)=q\prod_{m\ge1}(1-q^m)^{24}:=\sum_{n\ge1}\tau(n)q^n\;.$$
The function $\tau(n)$ is a famous function called the \emph{Ramanujan
$\tau$ function}, and has many important properties, analogous to those
of the $a(p)$ attached to an elliptic curve (i.e., to a modular
form of weight $2$).

There are several methods to compute $\tau(p)$ for $p$ prime, say. One
is to do as above, using FFT techniques. The running time is similar,
but again we are limited to $B\le 10^{12}$, say. A second more
sophisticated method is to use the \emph{Eichler--Selberg trace formula},
which enables the computation of an individual $\tau(p)$ in time
$O(p^{1/2+\eps})$ for all $\eps>0$. A third very deep method, developed
by Edixhoven, Couveignes, et al., is a generalization of Schoof's algorithm.
While in principle polynomial time in $\log(p)$, it is not yet practical
compared to the preceding method.

For those who want to see the formula using the trace formula explicitly, we
let $H(N)$ be the
\emph{Hurwitz class number} $H(N)$ (essentially the class number of imaginary
quadratic orders counted with suitable multiplicity): if we set
$H_3(N)=H(4N)+2H(N)$ (note that $H(4N)$ can be computed in terms of $H(N)$),
then for $p$ prime
\begin{align*}\tau(p)&=28p^6-28p^5-90p^4-35p^3-1\\
  &\phantom{=}-128\sum_{1\le t<p^{1/2}}t^6(4t^4-9pt^2+7p^2)H_3(p-t^2)\;,
\end{align*}
which is the fastest \emph{practical} formula that I know for computing
$\tau(p)$.

\smallskip

On the contrary, consider
$$\Delta_{26}(\tau)=\eta^{26}(\tau)=q^{13/12}\prod_{m\ge1}(1-q^m)^{26}:=q^{13/12}\sum_{n\ge1}\tau_{26}(n)q^n\;.$$
This is what is called a modular form with complex multiplication. Whatever
the definition, this means that the coefficients $\tau_{26}(p)$ can be
computed in time polynomial in $\log(p)$ using a generalization of
Cornacchia's algorithm, hence very fast.

\begin{exercise} (You need some extra knowledge for this.) In the literature
find an exact formula for $\tau_{26}(p)$ in terms of values of Hecke 
\emph{Gr\"ossencharacters}, and program this formula. Use it to compute
some values of $\tau_{26}(p)$ for $p$ prime as large as you can go.
\end{exercise}

\subsection{Computing $|V(\F_q)|$ for Quasi-diagonal Hypersurfaces}

We now consider a completely different situation where $|V(\F_q)|$ can
be computed without too much difficulty.

As we have seen, in the case of elliptic curves $V$ defined over $\Q$, the
corresponding $L$-function is of \emph{degree $2$}, in other words is of
the form $\prod_p1/(1-a(p)p^{-s}+b(p)p^{-2s})$, where $b(p)\ne0$ for all but
a finite number of $p$. $L$-functions of degree $1$ such as the Riemann
zeta function are essentially $L$-functions of Dirichlet characters, in other
words simple ``twists'' of the Riemann zeta function. $L$-functions of degree
$2$ are believed to be always $L$-functions attached to modular forms,
and $b(p)=\chi(p)p^{k-1}$ for a suitable integer $k$ ($k=2$ for elliptic 
curves), the \emph{weight} (note that this is \emph{one more} than the
so-called \emph{motivic weight}). Even though many unsolved questions remain,
this case is also quite well understood. Much more mysterious are $L$-functions
of higher degree, such as $3$ or $4$, and it is interesting to study natural
mathematical objects leading to such functions. A case where this can be done
reasonably easily is the case of diagonal or \emph{quasi-diagonal hypersurfaces}. We study a special case:

\begin{definition} Let $m\ge2$, for $1\le i\le m$ let $a_i\in\F_q^*$ be
nonzero, and let $b\in\F_q$. The quasi-diagonal hypersurface defined by this
data is the hypersurface in $\P^{m-1}$ defined by the projective equation
$$\sum_{1\le i\le m}a_ix_i^m-b\prod_{1\le i\le m}x_i=0\;.$$
When $b=0$, it is a diagonal hypersurface.
\end{definition}

Of course, we could study more general equations, for instance where the
degree is not equal to the number of variables, but we stick to this
special case.

To compute the number of (projective) points on this hypersurface, we need
an additional definition:

\begin{definition} We let $\om$ be a generator of the group of characters
of $\F_q^*$, either with values in $\C$, or in the $p$-adic field $\C_p$
(do not worry if you are not familiar with this).\end{definition}

Indeed, by a well-known theorem of elementary algebra, the multiplicative
group $\F_q^*$ of a finite field is \emph{cyclic}, so its group of
characters, which is \emph{non-canonically isomorphic} to $\F_q^*$, is also
cyclic, so $\om$ indeed exists.

It is not difficult to prove the following theorem:

\begin{theorem}\label{thmquasi} Assume that $\gcd(m,q-1)=1$ and $b\ne0$, and 
set $B=\prod_{1\le i\le m}(a_i/b)$. If $V$ is the above quasi-diagonal
hypersurface, the number $|V(\F_q)|$ of \emph{affine} points on $V$ is given by
$$|V(\F_q)|=q^{m-1}+(-1)^{m-1}+\sum_{1\le n\le q-2}\om^{-n}(B)J_m(\om^n,\dotsc,\om^n)\;,$$
where $J_m$ is the $m$-variable Jacobi sum.
\end{theorem}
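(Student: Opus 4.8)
The plan is to exploit the hypothesis $\gcd(m,q-1)=1$ in the strongest possible way: it makes $x\mapsto x^m$ a \emph{bijection} of $\F_q$ (it is a bijection of the cyclic group $\F_q^*$, whose order $q-1$ is coprime to $m$, and it fixes $0$). Fix $m^*$ with $mm^*\equiv1\pmod{q-1}$, so that $t\mapsto t^{m^*}$ is the inverse of $t\mapsto t^m$ and $(\prod_i v_i)^{mm^*}=\prod_i v_i$ for all $v_i\in\F_q$. First I would substitute $x_i=v_i^{m^*}$, which runs bijectively over $\F_q^m$, turning $\sum_i a_ix_i^m=b\prod_i x_i$ into $\sum_i a_iv_i=b\bigl(\prod_i v_i\bigr)^{m^*}$; raising both sides to the $m$-th power (an equivalence, since $t\mapsto t^m$ is injective) this becomes $\bigl(\sum_i a_iv_i\bigr)^m=b^m\prod_i v_i$. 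Then the linear substitution $w_i=a_iv_i$ (bijective since $a_i\ne0$) gives, with $B=\prod_i(a_i/b)=(\prod_i a_i)/b^m$, which is nonzero,
\[
|V(\F_q)|=\#\bigl\{\,w\in\F_q^m:\ B\,(w_1+\cdots+w_m)^m=w_1\cdots w_m\,\bigr\}.
\]
This is the conceptual step; everything afterwards is bookkeeping.

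Next I would stratify this count by the value $s=w_1+\cdots+w_m$. If $s\ne0$, writing $w_i=s\,u_i$ shows the stratum is in bijection with $\{u\in\F_q^m:\sum_i u_i=1,\ \prod_i u_i=B\}$, independently of $s$, so the $s\ne0$ part equals $(q-1)\cdot\#\{u:\sum_i u_i=1,\ \prod_i u_i=B\}$. Since $B\ne0$ this set consists of tuples with all $u_i\ne0$, so I may insert the orthogonality relation $\frac{1}{q-1}\sum_{\chi}\chi(y)\ov{\chi(B)}=\mathbf{1}_{\{y=B\}}$ for $y\in\F_q^*$, with $\chi$ running over the $q-1$ multiplicative characters of $\F_q^*$, and interchange summations; the inner sum over $u$ is exactly the Jacobi sum $J_m(\chi,\dots,\chi)$ (adopting the convention that every multiplicative character, including $\eps$, vanishes at $0$). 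Hence the $s\ne0$ part equals $\sum_{\chi}\ov{\chi(B)}\,J_m(\chi,\dots,\chi)$; splitting off $\chi=\eps$ and writing the remaining $\chi$ as $\om^n$ for $1\le n\le q-2$ produces precisely $J_m(\eps,\dots,\eps)+\sum_{1\le n\le q-2}\om^{-n}(B)\,J_m(\om^n,\dots,\om^n)$.

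It then remains to check that the residual pieces sum to $q^{m-1}+(-1)^{m-1}$. The $s=0$ stratum is $\{w:\sum_i w_i=0,\ \prod_i w_i=0\}$, of size $q^{m-1}$ minus the number of $w\in(\F_q^*)^m$ with $\sum_i w_i=0$; and $J_m(\eps,\dots,\eps)$ is the number of $u\in(\F_q^*)^m$ with $\sum_i u_i=1$. Both are classical: a one-line additive-character count gives $\#\{w\in(\F_q^*)^m:\ \sum_i w_i=c\}$ equal to $\bigl((q-1)^m+(-1)^m(q-1)\bigr)/q$ when $c=0$ and $\bigl((q-1)^m-(-1)^m\bigr)/q$ when $c\ne0$. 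Plugging these in, the contribution $q^{m-1}-\#\{w\in(\F_q^*)^m:\sum_i w_i=0\}$ from the $s=0$ stratum together with $J_m(\eps,\dots,\eps)$ telescopes to $q^{m-1}-(-1)^m=q^{m-1}+(-1)^{m-1}$, which finishes the proof.

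The only delicate point is the last paragraph's bookkeeping of degenerate contributions: one must fix the convention for $\chi$ at $0$ at the outset (so that $J_m(\eps,\dots,\eps)$ counts tuples of \emph{nonzero} summands and the orthogonality step is clean), and then verify that the trivial-character term of the Jacobi-sum expression and the $s=0$ stratum really do combine to the stated constant $q^{m-1}+(-1)^{m-1}$ and not to a neighboring quantity. I do not expect any genuine difficulty beyond this — the substitutions of the first paragraph do all the real work.
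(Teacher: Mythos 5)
Your proof is correct. The paper in fact states Theorem \ref{thmquasi} without proof (``it is not difficult to prove''), so there is no argument of the author's to compare against; yours is the natural one and it is complete. I checked the key points: the substitutions $x_i=v_i^{m^*}$, the raising to the $m$-th power, and $w_i=a_iv_i$ are all genuine bijections/equivalences under $\gcd(m,q-1)=1$, and they correctly yield the model $B(w_1+\cdots+w_m)^m=w_1\cdots w_m$ with $B=(\prod_i a_i)/b^m$; the stratification by $s=\sum_i w_i$ and the orthogonality step are sound (the sum in the theorem runs only over $1\le n\le q-2$, i.e.\ nontrivial $\om^n$, for which the value of $\chi$ at $0$ is $0$ under either convention); and the final bookkeeping works, since $\#\{u\in(\F_q^*)^m:\sum u_i=1\}-\#\{w\in(\F_q^*)^m:\sum w_i=0\}=-(-1)^m$, giving $q^{m-1}+(-1)^{m-1}$ exactly as claimed (I also verified the formula numerically on a small case). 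One caveat worth flagging explicitly if you write this up alongside the paper: your convention that \emph{every} character, including $\eps$, vanishes at $0$ conflicts with the paper's Definition \ref{defeps} (which sets $\eps(0)=1$, so that the paper's $J_m(\eps,\dotsc,\eps)=q^{m-1}$); since you only use your convention internally for the $\chi=\eps$ term and the theorem's Jacobi sums involve nontrivial characters only, this causes no error, but the symbol $J_m(\eps,\dotsc,\eps)$ should not be reused with a different meaning than the paper's.
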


We will study in great detail below the definition and properties of
$J_m$.

Note that the number of \emph{projective} points is simply 
$(|V(\F_q)|-1)/(q-1)$.

There also exists a more general theorem with no restriction on
$\gcd(m,q-1)$, which we do not give.

The occurrence of Jacobi sums is very natural and frequent in point counting
results. It is therefore important to look at efficient ways to compute them,
and this is what we do in the next section, where we also give complete
definitions and basic results.

\section{Gauss and Jacobi Sums}

In this long section, we study in great detail Gauss and Jacobi sums.
Most results are standard, and I would like to emphasize
that almost all of them can be proved with little difficulty
by easy algebraic manipulations.

\subsection{Gauss Sums over $\F_q$}\label{sec:gausssum}

We can define and study Gauss and Jacobi sums in two different contexts: first,
and most importantly, over finite fields $\F_q$, with $q=p^f$ a prime power
(note that from now on we write $q=p^f$ and not $q=p^n$).
Second, over the ring $\Z/N\Z$. The two notions coincide when $N=q=p$ is prime,
but the methods and applications are quite different.

To give the definitions over $\F_q$ we need to recall some fundamental (and
easy) results concerning finite fields.

\begin{proposition} Let $p$ be a prime, $f\ge1$, and $\F_q$ be the finite field
with $q=p^f$ elements, which exists and is unique up to isomorphism.
\begin{enumerate}\item The map $\phi$ such that $\phi(x)=x^p$ is a field 
isomorphism from $\F_q$ to itself leaving $\F_p$ fixed. It is called the
\emph{Frobenius map}.
\item The extension $\F_q/\F_p$ is a \emph{normal} (i.e., separable and Galois)
field extension, with Galois group which is cyclic of order $f$ generated 
by~$\phi$.
\end{enumerate}\end{proposition}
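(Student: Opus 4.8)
The plan is to prove the two assertions about $\F_q/\F_p$ by first constructing $\F_q$ concretely as a splitting field and then identifying the Frobenius map as a generator of the Galois group.

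First I would recall the standard construction: $\F_q$ is (up to isomorphism) the splitting field over $\F_p$ of the polynomial $X^q - X$, and its elements are exactly the $q$ roots of this polynomial, which are distinct because $\gcd(X^q-X, qX^{q-1}-1) = \gcd(X^q-X,-1)=1$ in characteristic $p$ (here $qX^{q-1}=p^fX^{q-1}=0$), so the extension is separable. Then I would verify that $\phi(x)=x^p$ is a field homomorphism: additivity is the ``freshman's dream'' $(x+y)^p = x^p+y^p$, which holds because $p\mid\binom{p}{k}$ for $0<k<p$; multiplicativity is immediate. Injectivity follows since $\F_q$ is a field (a nonzero ring homomorphism of fields is injective), and injectivity of an endomorphism of a finite set forces surjectivity, so $\phi$ is an automorphism. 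That $\phi$ fixes $\F_p$ pointwise is Fermat's little theorem: $x^p=x$ for all $x\in\F_p$. This gives part (1).

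For part (2): separability was already noted, and $\F_q/\F_p$ is normal because $\F_q$ is a splitting field of $X^q-X$ over $\F_p$; hence the extension is Galois with $|\mathrm{Gal}(\F_q/\F_p)| = [\F_q:\F_p] = f$ (the degree being $f$ since $|\F_q|=p^f$ and $\F_q$ is an $\F_p$-vector space of dimension $f$). It remains to show $\phi$ has order exactly $f$ in this group, which will then force the group to be cyclic generated by $\phi$. Clearly $\phi^k(x)=x^{p^k}$, and $\phi^f(x)=x^{p^f}=x^q=x$ for every $x\in\F_q$, so the order of $\phi$ divides $f$. Conversely, if $\phi^k=\mathrm{id}$ for some $k<f$, then every element of $\F_q$ satisfies $x^{p^k}=x$, i.e.\ is a root of $X^{p^k}-X$; but that polynomial has at most $p^k<q$ roots, contradicting $|\F_q|=q$. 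Hence $\phi$ has order $f$, so $\langle\phi\rangle$ is a cyclic subgroup of order $f$ inside a group of order $f$, forcing $\mathrm{Gal}(\F_q/\F_p)=\langle\phi\rangle$.

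I do not anticipate a genuine obstacle here — every step is a classical fact about finite fields. The one point deserving the most care is the counting argument showing $\phi$ has order exactly $f$ rather than something smaller: it is essential to invoke that a nonzero polynomial of degree $d$ over a field has at most $d$ roots, applied to $X^{p^k}-X$, to rule out $\phi^k=\mathrm{id}$ for $k<f$. Everything else is a matter of assembling standard lemmas (splitting field $\Rightarrow$ normal, separability via the derivative criterion, freshman's dream, Fermat's little theorem, and $|\mathrm{Gal}| = [\F_q:\F_p]$ for a Galois extension).
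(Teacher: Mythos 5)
Your proof is correct and complete: the splitting-field construction of $\F_q$ as the set of roots of $X^q-X$, the derivative criterion for separability, the freshman's-dream verification that $\phi$ is a homomorphism (automatically injective, hence bijective by finiteness), Fermat's little theorem for fixing $\F_p$, and the counting argument with $X^{p^k}-X$ to show $\phi$ has order exactly $f$ are all handled properly, and the last step is indeed the only place where care is required. The paper states this proposition without proof, as a recalled standard fact, so there is nothing to compare against; your argument is the classical one and would serve as a complete justification.
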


In particular, we can define the \emph{trace} $\Tr_{\F_q/\F_p}$ and the
\emph{norm} $\N_{\F_q/\F_p}$, and we have the formulas (where from now on we
omit $\F_q/\F_p$ for simplicity):
$$\Tr(x)=\sum_{0\le j\le f-1}x^{p^j}\text{\quad and\quad}
\N(x)=\prod_{0\le j\le f-1}x^{p^j}=x^{(p^f-1)/(p-1)}=x^{(q-1)/(p-1)}\;.$$

\begin{definition} Let $\chi$ be a character from $\F_q^*$ to an
algebraically closed field $C$ of characteristic $0$. For $a\in\F_q$
we define the \emph{Gauss sum} $\g(\chi,a)$ by
$$\g(\chi,a)=\sum_{x\in\F_q^*}\chi(x)\z_p^{\Tr(ax)}\;,$$
where $\z_p$ is a fixed primitive $p$th root of unity in $C$.
We also set $\g(\chi)=\g(\chi,1)$.
\end{definition} 

Note that strictly speaking this definition depends on the choice
of $\z_p$. However, if $\z'_p$ is some other primitive $p$th root of
unity we have $\z'_p=\z_p^k$ for some $k\in\F_p^*$, so
$$\sum_{x\in\F_q^*}\chi(x){\z'_p}^{\Tr(ax)}=\g(\chi,ka)\;.$$
In fact it is trivial to see (this follows from the next proposition)
that $\g(\chi,ka)=\chi^{-1}(k)\g(\chi,a)$.

\begin{definition}\label{defeps} We define $\eps$ to be the trivial character,
i.e., such that $\eps(x)=1$ for all $x\in\F_q^*$. We extend characters $\chi$
to the whole of $\F_q$ by setting $\chi(0)=0$ if $\chi\ne\eps$ and $\eps(0)=1$.
\end{definition}

Note that this apparently innocuous definition of $\eps(0)$ is \emph{crucial}
because it simplifies many formulas. Note also that the definition of
$\g(\chi,a)$ is a sum over $x\in\F_q^*$ and not $x\in\F_q$, while for
Jacobi sums we will use all of $\F_q$.

\begin{exercise}\label{exoorth}\begin{enumerate}\item
Show that $\g(\eps,a)=-1$ if $a\in\F_q^*$ and $\g(\eps,0)=q-1$. 
\item If $\chi\ne\eps$, show that $\g(\chi,0)=0$, in other words that
$$\sum_{x\in\F_q}\chi(x)=0$$
(here it does not matter if we sum over $\F_q$ or $\F_q^*$).
\item Deduce that if $\chi_1\ne\chi_2$ then
$$\sum_{x\in\F_q^*}\chi_1(x)\chi_2^{-1}(x)=0\;.$$
This relation is called for evident reasons \emph{orthogonality of
characters}.
\item Dually, show that if $x\ne0,1$ we have $\sum_{\chi}\chi(x)=0$, where
the sum is over all characters of $\F_q^*$.
\end{enumerate}
\end{exercise}

Because of this exercise, if necessary we may assume that $\chi\ne\eps$ 
and/or that $a\ne0$.

\begin{exercise} Let $\chi$ be a character of $\F_q^*$ of exact order $n$.
\begin{enumerate}\item Show that $n\mid(q-1)$ and that
$\chi(-1)=(-1)^{(q-1)/n}$. In particular, if $n$ is odd and $p>2$ we have
$\chi(-1)=1$.
\item Show that $\g(\chi,a)\in\Z[\z_n,\z_p]$, where as usual $\z_m$ denotes
a primitive $m$th root of unity.
\end{enumerate}
\end{exercise}

\begin{proposition}\begin{enumerate}\item If $a\ne0$ we have
$$\g(\chi,a)=\chi^{-1}(a)\g(\chi)\;.$$
\item We have
$$\g(\chi^{-1})=\chi(-1)\ov{\g(\chi)}\;.$$
\item We have
$$\g(\chi^p,a)=\chi^{1-p}(a)\g(\chi,a)\;.$$
\item If $\chi\ne\eps$ we have
$$|\g(\chi)|=q^{1/2}\;.$$
\end{enumerate}\end{proposition}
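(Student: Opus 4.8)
The plan is to prove the four parts essentially in sequence, since later parts lean on earlier ones. For part (1), I would start from the definition $\g(\chi,a)=\sum_{x\in\F_q^*}\chi(x)\z_p^{\Tr(ax)}$ and substitute $x\mapsto a^{-1}x$ (valid since $a\ne0$ and this is a bijection of $\F_q^*$). This turns the sum into $\sum_x\chi(a^{-1}x)\z_p^{\Tr(x)}=\chi^{-1}(a)\sum_x\chi(x)\z_p^{\Tr(x)}=\chi^{-1}(a)\g(\chi)$, using multiplicativity of $\chi$. This also retroactively justifies the remark after the first definition that $\g(\chi,ka)=\chi^{-1}(k)\g(\chi,a)$.

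For part (2), I would compute $\ov{\g(\chi)}=\sum_{x\in\F_q^*}\ov{\chi(x)}\,\ov{\z_p^{\Tr(x)}}=\sum_x\chi^{-1}(x)\z_p^{-\Tr(x)}$, using that $|\chi(x)|=1$ so $\ov{\chi(x)}=\chi^{-1}(x)$, and that $\ov{\z_p}=\z_p^{-1}$. Since $\Tr(-x)=-\Tr(x)$, replacing $x\mapsto -x$ gives $\sum_x\chi^{-1}(-x)\z_p^{\Tr(x)}=\chi^{-1}(-1)\g(\chi^{-1})=\chi(-1)\g(\chi^{-1})$, where the last equality uses $\chi^{-1}(-1)=\chi(-1)$ because $\chi(-1)=\pm1$. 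Rearranging yields $\g(\chi^{-1})=\chi(-1)\ov{\g(\chi)}$. For part (3), I would again use the substitution idea: apply the Frobenius $x\mapsto x^p$, which permutes $\F_q^*$, so $\g(\chi^p,a)=\sum_x\chi^p(x)\z_p^{\Tr(ax)}=\sum_y\chi^p(y^{1/p})\z_p^{\Tr(ay^{1/p})}$; more cleanly, write $\g(\chi,a)=\sum_x\chi(x)\z_p^{\Tr(ax)}$ and replace $x$ by $x^p$: since $\Tr(a x^p)$ and the relation $\Tr(z^p)=\Tr(z)$ (Frobenius fixes $\F_p$, so $\Tr\circ\phi=\Tr$) hold, one gets $\g(\chi,a)=\sum_x\chi(x^p)\z_p^{\Tr(a^{1/p}x)^p}$ — I would instead phrase it as: substituting $x\mapsto x^p$ into $\g(\chi^p,a)$ and using $\Tr(ax^p)=\Tr((a^{1/p}x)^p)=\Tr(a^{1/p}x)$ gives $\g(\chi^p,a)=\sum_x\chi(x^p)\,\z_p^{\Tr(a^{1/p}x)}$... the cleanest route is to note $\g(\chi^p,a)=\g(\chi,a)$ when $a$ ranges suitably, combined with part (1): from $\g(\chi,a^p)$ one extracts $\chi^{1-p}(a)$. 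Concretely: in $\g(\chi,a)$ substitute $x\mapsto x^p$ and use $\Tr(ax^p)=\Tr(\phi(\phi^{-1}(a)x))=\Tr(\phi^{-1}(a)x)$, giving $\g(\chi,a)=\g(\chi^p,\phi^{-1}(a))=\g(\chi^p,a^{1/p})$; then part (1) rewrites the right side, and solving for $\g(\chi^p,a)$ delivers $\g(\chi^p,a)=\chi^{1-p}(a)\g(\chi,a)$.

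Part (4) is the substantive one and the main obstacle. The standard approach, which I would follow, is to compute $|\g(\chi)|^2=\g(\chi)\ov{\g(\chi)}$ directly as a double sum. Using part (2) in the form $\ov{\g(\chi)}=\chi(-1)\g(\chi^{-1})$, or just expanding $\ov{\g(\chi)}=\sum_y\chi^{-1}(y)\z_p^{-\Tr(y)}$, I get
$$|\g(\chi)|^2=\sum_{x,y\in\F_q^*}\chi(x)\chi^{-1}(y)\z_p^{\Tr(x-y)}=\sum_{x,y\in\F_q^*}\chi(x/y)\z_p^{\Tr(y(x/y-1))}$$
after substituting $x\mapsto xy$ (legal since $y\ne0$). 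Setting $u=x/y$, this is $\sum_{u\in\F_q^*}\chi(u)\sum_{y\in\F_q^*}\z_p^{\Tr(y(u-1))}$. The inner sum over $y\in\F_q^*$ equals $q-1$ when $u=1$ (every term is $1$) and equals $-1$ when $u\ne1$ (since $\sum_{y\in\F_q}\z_p^{\Tr(cy)}=0$ for $c\ne0$, a consequence of $\Tr:\F_q\to\F_p$ being surjective and $\sum_{t\in\F_p}\z_p^t=0$, so the sum over $\F_q^*$ is $-1$). Therefore
$$|\g(\chi)|^2=(q-1)\chi(1)+\sum_{u\ne1}\chi(u)(-1)=(q-1)-\Bigl(\sum_{u\in\F_q^*}\chi(u)-\chi(1)\Bigr)=(q-1)-(0-1)=q,$$
where I used Exercise~\ref{exoorth}(2): $\sum_{u\in\F_q^*}\chi(u)=0$ since $\chi\ne\eps$. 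Hence $|\g(\chi)|=q^{1/2}$. The only delicate points are the surjectivity of the trace (standard: a nonzero additive polynomial $\Tr$ of degree $p^{f-1}$ cannot vanish on all of $\F_q$, or alternatively $\F_q/\F_p$ is separable so the trace form is nondegenerate) and being careful that the character-orthogonality relation is applied over $\F_q^*$ where it is exactly $\sum_{u}\chi(u)=0$. I expect no real difficulty beyond bookkeeping once those two facts are in hand.
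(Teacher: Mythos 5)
Your proof is correct and is exactly the standard chain of elementary manipulations the paper has in mind (it states this proposition without proof, as one of the results provable ``with little difficulty by easy algebraic manipulations''): the substitution $x\mapsto a^{-1}x$ for (1), complex conjugation plus $x\mapsto -x$ and $\chi(-1)=\pm1$ for (2), Frobenius-invariance of the trace combined with (1) for (3), and the double-sum evaluation of $|\g(\chi)|^2$ using the vanishing of nontrivial additive and multiplicative character sums for (4). The only blemish is the hesitant, partly self-correcting phrasing in the middle of (3); the argument you finally settle on, namely $\g(\chi,a)=\g(\chi^p,a^{1/p})$ followed by two applications of (1) and $\chi^p(a^{1/p})=\chi(a)$, is correct, though you should say explicitly that it is carried out for $a\ne0$, which is the only case of interest.
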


\subsection{Jacobi Sums over $\F_q$}

Recall that we have extended characters of $\F_q^*$ by setting $\chi(0)=0$
if $\chi\ne\eps$ and $\eps(0)=1$.

\begin{definition} For $1\le j\le k$ let $\chi_j$ be characters of $\F_q^*$.
We define the Jacobi sum
$$J_k(\chi_1,\dotsc,\chi_k;a)=\sum_{x_1+\cdots+x_k=a}\chi_1(x_1)\cdots\chi_k(x_k)$$
and $J_k(\chi_1,\dotsc,\chi_k)=J_k(\chi_1,\dotsc,\chi_k;1)$.
\end{definition}

Note that, as mentioned above, we do not exclude the cases where some
$x_i=0$, using the convention of Definition \ref{defeps} for $\chi(0)$.

The following easy lemma shows that it is only necessary to study 
$J_k(\chi_1,\dotsc,\chi_k)$:

\begin{lemma}\label{lemjactriv} Set $\chi=\chi_1\cdots\chi_k$.
\begin{enumerate}\item If $a\ne0$ we have
$$J_k(\chi_1,\dotsc,\chi_k;a)=\chi(a)J_k(\chi_1,\dotsc,\chi_k)\;.$$
\item If $a=0$, abbreviating $J_k(\chi_1,\dotsc,\chi_k;0)$ to $J_k(0)$ we have
$$J_k(0)=\begin{cases} q^{k-1}&\text{\quad if $\chi_j=\eps$ for all $j$\;,}\\
0&\text{\quad if $\chi\ne\eps$\;,}\\
\chi_k(-1)(q-1)J_{k-1}(\chi_1,\dotsc,\chi_{k-1})&\text{\quad if $\chi=\eps$ and $\chi_k\ne\eps$\;.}\end{cases}$$
\end{enumerate}
\end{lemma}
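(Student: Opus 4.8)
\textbf{Proof plan for Lemma \ref{lemjactriv}.}

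The plan is to treat the two parts in order, using nothing beyond the definition of the Jacobi sum, the convention $\chi(0)=0$ for $\chi\ne\eps$, $\eps(0)=1$, and the orthogonality relations of Exercise \ref{exoorth}.

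\smallskip

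\emph{Part (1).} Here I would simply perform the change of variables $x_i=ay_i$ in the defining sum. Since $a\ne0$, this is a bijection of $\F_q$ onto itself, and the constraint $x_1+\cdots+x_k=a$ becomes $y_1+\cdots+y_k=1$. Each factor transforms as $\chi_i(x_i)=\chi_i(a)\chi_i(y_i)$ — note that this identity holds \emph{including} when $y_i=0$, precisely because of the extension convention $\chi_i(0)=0$ (or $\eps(0)=1$), which is exactly where that convention earns its keep. Pulling out the constant $\prod_i\chi_i(a)=\chi(a)$ gives $J_k(\chi_1,\dotsc,\chi_k;a)=\chi(a)J_k(\chi_1,\dotsc,\chi_k)$.

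\smallskip

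\emph{Part (2).} Now $a=0$, so the sum is over $x_1+\cdots+x_k=0$. The first case $\chi_j=\eps$ for all $j$ is immediate: every summand equals $1$ (using $\eps(0)=1$), and the number of solutions of $x_1+\cdots+x_k=0$ in $\F_q^k$ is $q^{k-1}$, since $x_1,\dots,x_{k-1}$ are free and $x_k$ is determined. For the remaining two cases I would eliminate $x_k=-(x_1+\cdots+x_{k-1})$ and write
$$J_k(0)=\sum_{x_1,\dots,x_{k-1}\in\F_q}\chi_1(x_1)\cdots\chi_{k-1}(x_{k-1})\,\chi_k\!\bigl(-(x_1+\cdots+x_{k-1})\bigr)\;.$$
When $\chi=\chi_1\cdots\chi_k\ne\eps$: substitute $x_i\mapsto tx_i$ for a scalar $t\in\F_q^*$; this fixes the solution set and multiplies the summand by $\chi(t)$, so $J_k(0)=\chi(t)J_k(0)$ for all $t$, forcing $J_k(0)=0$ since some $\chi(t)\ne1$ (one must check the degenerate subcase where all $x_i=0$ separately, but then the term is $0$ unless every $\chi_i=\eps$, already excluded). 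When $\chi=\eps$ but $\chi_k\ne\eps$: then $\chi_1\cdots\chi_{k-1}=\chi_k^{-1}\ne\eps$, so in particular not all of $\chi_1,\dots,\chi_{k-1}$ are trivial and the term with $x_1=\cdots=x_{k-1}=0$ vanishes; split off the vanishing locus where $x_1+\cdots+x_{k-1}=0$ (there $\chi_k$ of $0$ gives $0$ since $\chi_k\ne\eps$), then on the complement set $s=x_1+\cdots+x_{k-1}\ne0$ and rescale $x_i=sy_i$ with $y_1+\cdots+y_{k-1}=1$, giving $\chi_k(-s)\prod_{i<k}\chi_i(s y_i)=\chi_k(-1)\chi_k^{-1}(s)(\chi_1\cdots\chi_{k-1})(s)\prod_{i<k}\chi_i(y_i)=\chi_k(-1)\prod_{i<k}\chi_i(y_i)$, since $(\chi_1\cdots\chi_{k-1})(s)\chi_k^{-1}(s)=\chi(s)=1$. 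Summing over the $q-1$ choices of $s\in\F_q^*$ and then over $y_1,\dots,y_{k-1}$ with $y_1+\cdots+y_{k-1}=1$ yields $\chi_k(-1)(q-1)J_{k-1}(\chi_1,\dotsc,\chi_{k-1})$.

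\smallskip

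The only delicate point — the ``main obstacle'', such as it is — is the careful bookkeeping of the degenerate locus where the arguments of the characters vanish, since that is exactly where the extension convention $\chi(0)=0$ interacts nontrivially with the rescaling trick; one has to separate those terms out before rescaling, rather than after. Everything else is a routine substitution. \qed
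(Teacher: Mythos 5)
Your proof is correct. The paper itself gives no argument for this lemma (it is stated as one of the ``easy'' standard facts), and your scaling substitutions are exactly the expected ones: $x_i=ay_i$ for part (1), the dilation $x_i\mapsto tx_i$ for the case $\chi\ne\eps$, and the split along $s=x_1+\cdots+x_{k-1}$ followed by $x_i=sy_i$ for the case $\chi=\eps$, $\chi_k\ne\eps$; your attention to the convention $\chi(0)=0$, $\eps(0)=1$ when pulling constants out of the characters is precisely the point that makes these substitutions legitimate term by term. One slip in the last case: since $\chi_k(-s)=\chi_k(-1)\chi_k(s)$, the intermediate factor should be $\chi_k(s)$, not $\chi_k^{-1}(s)$, and the cancellation then reads $(\chi_1\cdots\chi_{k-1})(s)\,\chi_k(s)=\chi(s)=1$; as written, $(\chi_1\cdots\chi_{k-1})(s)\,\chi_k^{-1}(s)=\chi_k^{-2}(s)$, which is not $1$ in general. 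With that one-character correction the computation and the conclusion $\chi_k(-1)(q-1)J_{k-1}(\chi_1,\dotsc,\chi_{k-1})$ stand as you wrote them.
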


As we have seen, a Gauss sum $\g(\chi)$ belongs to the rather large ring
$\Z[\z_{q-1},\z_p]$ (and in general not to a smaller ring). The advantage of
Jacobi sums is that they belong to the smaller ring $\Z[\z_{q-1}]$, and as
we are going to see, that they are closely related to Gauss sums. Thus, when
working \emph{algebraically}, it is almost always better to use Jacobi sums 
instead of Gauss sums. On the other hand, when working \emph{analytically}
(for instance in $\C$ or $\C_p$), it may be better to work with Gauss sums:
we will see below the use of root numbers (suggested by Louboutin), and of the
Gross--Koblitz formula.

Note that $J_1(\chi_1)=1$. Outside of this trivial case, the close link between
Gauss and Jacobi sums is given by the following easy proposition, whose
apparently technical statement is only due to the trivial character $\eps$:
if none of the $\chi_j$ nor their product is trivial, we have the simple formula
given by (3).

\begin{proposition}\label{jacgaufq} Denote by $t$ the number of $\chi_j$ equal
to the trivial character $\eps$, and as above set $\chi=\chi_1\dotsc\chi_k$.
\begin{enumerate}\item If $t=k$ then $J_k(\chi_1,\dots,\chi_k)=q^{k-1}$.
\item If $1\le t\le k-1$ then $J_k(\chi_1,\dots,\chi_k)=0$.
\item If $t=0$ and $\chi\ne\eps$ then
$$J_k(\chi_1,\dotsc,\chi_k)=\dfrac{\g(\chi_1)\cdots\g(\chi_k)}{\g(\chi_1\cdots\chi_k)}=\dfrac{\g(\chi_1)\cdots\g(\chi_k)}{\g(\chi)}\;.$$
\item If $t=0$ and $\chi=\eps$ then
\begin{align*}J_k(\chi_1,\dotsc,\chi_k)&=-\dfrac{\g(\chi_1)\cdots\g(\chi_k)}{q}\\
&=-\chi_k(-1)\dfrac{\g(\chi_1)\cdots\g(\chi_{k-1})}{\g(\chi_1\cdots\chi_{k-1})}=-\chi_k(-1)J_{k-1}(\chi_1,\dotsc,\chi_{k-1})\;.\end{align*}
In particular, in this case we have
$$\g(\chi_1)\cdots\g(\chi_k)=\chi_k(-1)qJ_{k-1}(\chi_1,\dotsc,\chi_{k-1})\;.$$
\end{enumerate}
\end{proposition}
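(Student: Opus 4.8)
The plan is to reduce everything to the fundamental identity relating the product of Gauss sums to a single Jacobi sum, namely that for characters $\chi_1,\dotsc,\chi_k$ one has
$$\g(\chi_1)\cdots\g(\chi_k)=\sum_{x_1,\dotsc,x_k\in\F_q^*}\chi_1(x_1)\cdots\chi_k(x_k)\z_p^{\Tr(x_1+\cdots+x_k)}\;.$$
First I would collect terms according to the value of the sum $s=x_1+\cdots+x_k$. Since a Gauss sum runs over $\F_q^*$, the variables are nonzero, but because $\chi_j(0)=0$ for $\chi_j\ne\eps$ the inner sums over tuples summing to a fixed $s$ coincide with the Jacobi-sum definition whenever no $\chi_j$ is trivial; this is exactly why the hypothesis $t=0$ appears. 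So for $t=0$ the right-hand side equals $\sum_{s\in\F_q}\left(\sum_{x_1+\cdots+x_k=s}\chi_1(x_1)\cdots\chi_k(x_k)\right)\z_p^{\Tr(s)}=\sum_{s\in\F_q}J_k(\chi_1,\dotsc,\chi_k;s)\z_p^{\Tr(s)}$. Now I apply Lemma \ref{lemjactriv}(1) to write $J_k(\ldots;s)=\chi(s)J_k(\chi_1,\dotsc,\chi_k)$ for $s\ne0$, and handle $s=0$ via Lemma \ref{lemjactriv}(2).

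The case split then falls out cleanly. When $t=0$ and $\chi\ne\eps$: the $s=0$ term vanishes by Lemma \ref{lemjactriv}(2), and $\sum_{s\ne0}\chi(s)\z_p^{\Tr(s)}=\g(\chi)$ by definition, giving $\g(\chi_1)\cdots\g(\chi_k)=\g(\chi)J_k(\chi_1,\dotsc,\chi_k)$, which is (3) after dividing by $\g(\chi)$ (nonzero since $|\g(\chi)|=q^{1/2}$ by the previous proposition). When $t=0$ and $\chi=\eps$: now $\sum_{s\ne0}\chi(s)\z_p^{\Tr(s)}=\sum_{s\ne0}\z_p^{\Tr(s)}=-1$ (Exercise \ref{exoorth}(1) with $\chi=\eps$), while the $s=0$ term is $\chi_k(-1)(q-1)J_{k-1}(\chi_1,\dotsc,\chi_{k-1})$ by Lemma \ref{lemjactriv}(2). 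Hence $\g(\chi_1)\cdots\g(\chi_k)=-J_k(\chi_1,\dotsc,\chi_k)+\chi_k(-1)(q-1)J_{k-1}(\chi_1,\dotsc,\chi_{k-1})$. To finish I would separately observe that the product $\g(\chi_1)\cdots\g(\chi_{k-1})$ can be evaluated: since $\chi_1\cdots\chi_{k-1}=\chi_k^{-1}\ne\eps$, part (3) (applied with $k-1$ characters) gives $\g(\chi_1)\cdots\g(\chi_{k-1})=\g(\chi_k^{-1})J_{k-1}(\chi_1,\dotsc,\chi_{k-1})$, and then $\g(\chi_k)\g(\chi_k^{-1})=\chi_k(-1)q$ by the earlier proposition (parts (2) and (4): $\g(\chi_k^{-1})=\chi_k(-1)\ov{\g(\chi_k)}$ and $\g(\chi_k)\ov{\g(\chi_k)}=q$). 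Combining, $\g(\chi_1)\cdots\g(\chi_k)=\chi_k(-1)qJ_{k-1}(\chi_1,\dotsc,\chi_{k-1})$, which is the displayed identity in (4); substituting back into the relation above and solving for $J_k$ yields $J_k=-\g(\chi_1)\cdots\g(\chi_k)/q=-\chi_k(-1)J_{k-1}(\chi_1,\dotsc,\chi_{k-1})$.

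For the trivial-character cases (1) and (2) I would not go through Gauss sums at all: they follow directly from Lemma \ref{lemjactriv} or by an elementary direct computation. If all $\chi_j=\eps$, then $J_k(\eps,\dotsc,\eps;1)=\sum_{x_1+\cdots+x_k=1}1=q^{k-1}$ since the first $k-1$ coordinates are free, giving (1). If $1\le t\le k-1$, reorder so $\chi_k=\eps$ but some $\chi_i\ne\eps$; summing first over $x_k=1-(x_1+\cdots+x_{k-1})$ with weight $\eps(x_k)=1$ reduces the sum to $\sum_{x_1,\dotsc,x_{k-1}}\chi_1(x_1)\cdots\chi_{k-1}(x_{k-1})$, which factors as $\prod_{i<k}\big(\sum_{x_i}\chi_i(x_i)\big)$, and the factor for any nontrivial $\chi_i$ vanishes by Exercise \ref{exoorth}(2); this gives (2).

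The only mild subtlety — and the step I would be most careful about — is the bookkeeping with the convention $\eps(0)=1$ versus $\chi(0)=0$ for $\chi\ne\eps$: this is precisely what forces the hypothesis $t=0$ in (3) and (4) and what makes the $s=0$ term in Lemma \ref{lemjactriv}(2) behave differently in the $\chi=\eps$ and $\chi\ne\eps$ subcases. Everything else is routine manipulation of finite sums, using only orthogonality of characters and the two elementary facts $\g(\chi^{-1})=\chi(-1)\ov{\g(\chi)}$ and $|\g(\chi)|^2=q$ from the preceding proposition.
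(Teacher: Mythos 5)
Your proof is correct: expanding the product $\g(\chi_1)\cdots\g(\chi_k)$ and grouping by $s=x_1+\cdots+x_k$ (where the hypothesis $t=0$ is exactly what lets the inner sums be identified with $J_k(\chi_1,\dotsc,\chi_k;s)$), then invoking Lemma \ref{lemjactriv} and the elementary Gauss-sum facts, is precisely the standard ``easy algebraic manipulation'' the paper has in mind, and your direct counting arguments for cases (1) and (2) are also fine. The paper omits the proof entirely, so there is nothing to contrast with; your handling of the $\eps(0)=1$ convention and of the $s=0$ term in the $\chi=\eps$ case is the only delicate point, and you treat it correctly.
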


\begin{corollary}\label{corjacrecur} With the same notation, assume that 
$k\ge2$ and all the $\chi_j$ are nontrivial. Setting 
$\psi=\chi_1\cdots\chi_{k-1}$, we have the following recursive formula:
$$J_k(\chi_1,\dotsc,\chi_k)=\begin{cases}J_{k-1}(\chi_1,\dotsc,\chi_{k-1})J_2(\psi,\chi_k)&\text{\quad if $\psi\ne\eps$\;,}\\
\chi_{k-1}(-1)qJ_{k-2}(\chi_1,\dotsc,\chi_{k-2})&\text{\quad if $\psi=\eps$\;.}\end{cases}$$
\end{corollary}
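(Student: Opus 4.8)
The plan is to reduce both cases to Lemma \ref{lemjactriv} by grouping the $k$-fold defining sum of $J_k(\chi_1,\dotsc,\chi_k)=J_k(\chi_1,\dotsc,\chi_k;1)$ according to the value of the partial sum $s=x_1+\cdots+x_{k-1}$. Writing $x_k=1-s$, this gives immediately
$$J_k(\chi_1,\dotsc,\chi_k)=\sum_{s\in\F_q}\chi_k(1-s)\,J_{k-1}(\chi_1,\dotsc,\chi_{k-1};s)\;,$$
so everything reduces to evaluating the inner twisted Jacobi sum $J_{k-1}(\chi_1,\dotsc,\chi_{k-1};s)$, for which Lemma \ref{lemjactriv} is tailor-made: part (1) handles $s\ne0$ (with the product there taken to be $\psi$), and part (2) handles $s=0$.

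First I would treat the case $\psi\ne\eps$. Then Lemma \ref{lemjactriv}(1) gives $J_{k-1}(\chi_1,\dotsc,\chi_{k-1};s)=\psi(s)\,J_{k-1}(\chi_1,\dotsc,\chi_{k-1})$ for $s\ne0$, while Lemma \ref{lemjactriv}(2) gives $J_{k-1}(\chi_1,\dotsc,\chi_{k-1};0)=0$ since $\psi\ne\eps$. As $\psi(0)=0$, the $s=0$ term can be re-inserted for free, so $\sum_{s\in\F_q}\chi_k(1-s)\psi(s)=J_2(\psi,\chi_k)$ by the definition of the two-variable Jacobi sum, and we obtain $J_k(\chi_1,\dotsc,\chi_k)=J_{k-1}(\chi_1,\dotsc,\chi_{k-1})J_2(\psi,\chi_k)$.

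Next the case $\psi=\eps$. One first observes that this forces $k\ge3$ (for $k=2$ we would have $\psi=\chi_1$, which is nontrivial), so that $J_{k-2}(\chi_1,\dotsc,\chi_{k-2})$ is meaningful, with the convention $J_1=1$ in the boundary case $k=3$. Now $\psi(s)=1$ for $s\ne0$, so Lemma \ref{lemjactriv}(1) gives $J_{k-1}(\chi_1,\dotsc,\chi_{k-1};s)=J_{k-1}(\chi_1,\dotsc,\chi_{k-1})$ for $s\ne0$, while Lemma \ref{lemjactriv}(2) in its third case (applicable because $\chi_{k-1}\ne\eps$) gives $J_{k-1}(\chi_1,\dotsc,\chi_{k-1};0)=\chi_{k-1}(-1)(q-1)J_{k-2}(\chi_1,\dotsc,\chi_{k-2})$. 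Combining these with $\sum_{s\ne0}\chi_k(1-s)=-1$, which follows from $\sum_{u\in\F_q}\chi_k(u)=0$ of Exercise \ref{exoorth}(2) since $\chi_k\ne\eps$, the displayed identity becomes
$$J_k(\chi_1,\dotsc,\chi_k)=-J_{k-1}(\chi_1,\dotsc,\chi_{k-1})+\chi_{k-1}(-1)(q-1)J_{k-2}(\chi_1,\dotsc,\chi_{k-2})\;.$$
Finally, Proposition \ref{jacgaufq}(4) applied to the $(k-1)$-tuple $\chi_1,\dotsc,\chi_{k-1}$ (all nontrivial, with product $\psi=\eps$) gives $J_{k-1}(\chi_1,\dotsc,\chi_{k-1})=-\chi_{k-1}(-1)J_{k-2}(\chi_1,\dotsc,\chi_{k-2})$, and substituting collapses the two terms into $\chi_{k-1}(-1)\,q\,J_{k-2}(\chi_1,\dotsc,\chi_{k-2})$, as claimed.

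There is no real obstacle here: once the partial-sum grouping is set up the rest is bookkeeping, and the only points requiring care are the treatment of the $s=0$ term in each case (this is exactly where the conventions $\eps(0)=1$ and $\chi(0)=0$ for $\chi\ne\eps$ do their work) and the observation that $\psi=\eps$ cannot occur when $k=2$, so that the second branch of the formula is well posed. As an alternative one could express $J_k$, $J_{k-1}$ and $J_2$ through Gauss sums via Proposition \ref{jacgaufq} and cancel the common factors, but that route still requires a case distinction according to whether $\chi=\chi_1\cdots\chi_k$ is trivial, so it is not appreciably shorter.
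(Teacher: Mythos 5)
Your proof is correct, and it takes a genuinely different route from the one the corollary is set up for in the paper. Since the statement appears as a corollary of Proposition \ref{jacgaufq}, the intended argument is to write each Jacobi sum as a quotient of Gauss sums via parts (3)--(4) of that proposition and cancel: when $\psi\ne\eps$ and $\chi=\chi_1\cdots\chi_k\ne\eps$ one gets $J_{k-1}J_2(\psi,\chi_k)=\bigl(\g(\chi_1)\cdots\g(\chi_{k-1})/\g(\psi)\bigr)\bigl(\g(\psi)\g(\chi_k)/\g(\chi)\bigr)=J_k$, with the subcases $\chi=\eps$ and $\psi=\eps$ handled by part (4) -- exactly the case split you flag at the end as the ``alternative''. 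You instead slice the defining sum along the partial sum $s=x_1+\cdots+x_{k-1}$ and feed the inner twisted sum into Lemma \ref{lemjactriv}; this is more elementary in the generic branch (no Gauss sums at all, and it covers $\chi=\eps$ and $\chi\ne\eps$ uniformly, which the Gauss-sum route must separate), at the price of invoking Proposition \ref{jacgaufq}(4) once in the $\psi=\eps$ branch to collapse $-J_{k-1}+\chi_{k-1}(-1)(q-1)J_{k-2}$ into $\chi_{k-1}(-1)qJ_{k-2}$ (alternatively you could quote the already-proved first branch together with the easy evaluation $J_2(\chi^{-1},\chi)=-\chi(-1)$). Your bookkeeping is accurate throughout: the conventions $\eps(0)=1$, $\chi(0)=0$ are used exactly where needed, $\sum_{s\ne0}\chi_k(1-s)=-1$ is justified by orthogonality, and the remark that $\psi=\eps$ forces $k\ge3$ (with $J_1=1$ for $k=3$) keeps the second branch well posed.
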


The point of this recursion is that the definition of a $k$-fold Jacobi sum 
$J_k$ involves a sum over $q^{k-1}$ values for $x_1,\dotsc,x_{k-1}$, the last
variable $x_k$ being
determined by $x_k=1-x_1-\cdots-x_{k-1}$, so neglecting the time to compute
the $\chi_j(x_j)$ and their product (which is a reasonable assumption), using
the definition takes time $O(q^{k-1})$. On the other hand, using the above
recursion boils down at worst to computing $k-1$ Jacobi sums $J_2$, for a
total time of $O((k-1)q)$. Nonetheless, we will see that in some cases it is
still better to use directly Gauss sums and formula (3) of the proposition.

Since Jacobi sums $J_2$ are the simplest and the above recursion in fact shows
that one can reduce to $J_2$, we will drop the subscript $2$ and simply write
$J(\chi_1,\chi_2)$. Note that
$$J(\chi_1,\chi_2)=\sum_{x\in\F_q}\chi_1(x)\chi_2(1-x)\;,$$
where the sum is over the whole of $\F_q$ and \emph{not} $\F_q\setminus\{0,1\}$
(which makes a difference only if one of the $\chi_i$ is trivial). More
precisely it is clear that $J(\eps,\eps)=q^2$, and that if $\chi\ne\eps$
we have $J(\chi,\eps)=\sum_{x\in\F_q}\chi(x)=0$, which are special cases of
Proposition \ref{jacgaufq}.

\begin{exercise} Let $n\mid(q-1)$ be the order of $\chi$. Prove that
$\g(\chi)^n\in\Z[\z_n]$.
\end{exercise}

\begin{exercise} Assume that none of the $\chi_j$ is equal to $\eps$, but that
their product $\chi$ is equal to $\eps$. Prove that (using the same notation
as in Lemma \ref{lemjactriv}):
$$J_k(0)=\left(1-\dfrac{1}{q}\right)\g(\chi_1)\cdots\g(\chi_k)\;.$$
\end{exercise}

\begin{exercise} Prove the following reciprocity formula for Jacobi sums:
if the $\chi_j$ are all nontrivial and $\chi=\chi_1\cdots\chi_k$, we have
$$J_k(\chi_1^{-1},\dotsc,\chi_k^{-1})=\dfrac{q^{k-1-\delta}}{J_k(\chi_1,\dotsc,\chi_k)}\;,$$
where $\delta=1$ if $\chi=\eps$, and otherwise $\delta=0$.
\end{exercise}

\subsection{Applications of $J(\chi,\chi)$}

In this short subsection we give without proof a couple of 
applications of the special Jacobi sums $J(\chi,\chi)$. Once again
the proofs are not difficult. We begin by the following result,
which is a special case of the Hasse--Davenport relations that we
will give below.

\begin{lemma} Assume that $q$ is odd, and let $\rho$ be the unique
character of order $2$ on $\F_q^*$. For any nontrivial character 
$\chi$ we have
$$\chi(4)J(\chi,\chi)=J(\chi,\rho)\;.$$
Equivalently, if $\chi\ne\rho$ we have
$$\g(\chi)\g(\chi\rho)=\chi^{-1}(4)\g(\rho)\g(\chi^2)\;.$$
\end{lemma}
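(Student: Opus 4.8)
The plan is to prove the first identity $\chi(4)J(\chi,\chi)=J(\chi,\rho)$ directly by completing the square, and then to read off the Gauss-sum form from it via Proposition \ref{jacgaufq}(3). First I would write, using multiplicativity and the convention $\chi(0)=0$ of Definition \ref{defeps},
$$J(\chi,\chi)=\sum_{x\in\F_q}\chi(x)\chi(1-x)=\sum_{x\in\F_q}\chi\bigl(x(1-x)\bigr)\;.$$
Since $q$ is odd, $x(1-x)=\tfrac14\bigl(1-(2x-1)^2\bigr)$ and the substitution $v:=2x-1$ is a bijection of $\F_q$, so that $J(\chi,\chi)=\chi^{-1}(4)\sum_{v\in\F_q}\chi(1-v^2)$.

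Next I would reindex the last sum by $w=v^2$: the fibre of $v\mapsto v^2$ over $w$ has cardinality $1+\rho(w)$, and this holds uniformly for all $w\in\F_q$ precisely because $\rho(0)=0$. Hence
$$\sum_{v\in\F_q}\chi(1-v^2)=\sum_{w\in\F_q}\bigl(1+\rho(w)\bigr)\chi(1-w)=\sum_{w\in\F_q}\chi(1-w)+\sum_{w\in\F_q}\rho(w)\chi(1-w)\;.$$
The first sum equals $\sum_w\chi(w)=0$ by Exercise \ref{exoorth}(2) since $\chi\ne\eps$, while the second equals $J(\rho,\chi)=J(\chi,\rho)$ (substitute $w\mapsto1-w$). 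This gives $J(\chi,\chi)=\chi^{-1}(4)J(\chi,\rho)$, which is the first assertion. For the Gauss-sum version, suppose first that $\chi\ne\eps,\rho$, so that $\chi$, $\rho$, $\chi^2$ and $\chi\rho$ are all nontrivial; then Proposition \ref{jacgaufq}(3) yields $J(\chi,\chi)=\g(\chi)^2/\g(\chi^2)$ and $J(\chi,\rho)=\g(\chi)\g(\rho)/\g(\chi\rho)$, and feeding these into $\chi(4)J(\chi,\chi)=J(\chi,\rho)$, then cancelling $\g(\chi)\ne0$, gives $\chi(4)\g(\chi)\g(\chi\rho)=\g(\rho)\g(\chi^2)$, i.e.\ $\g(\chi)\g(\chi\rho)=\chi^{-1}(4)\g(\rho)\g(\chi^2)$. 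The cases $\chi=\eps$ and $\chi=\rho$ permitted by the statement I would just check directly, using $\g(\eps)=-1$ (Exercise \ref{exoorth}(1)) and $\rho(4)=1$ (as $4$ is a square): both sides then reduce to $-\g(\rho)$.

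I do not expect a real obstacle; the only delicate points are the bookkeeping imposed by the extended-character conventions — in particular that $\#\{v\in\F_q:v^2=w\}=1+\rho(w)$ stays correct at $w=0$ — and, in the passage to Gauss sums, remembering to set aside the characters $\chi=\eps,\rho$ for which one of the denominators $\g(\chi^2)$, $\g(\chi\rho)$ in Proposition \ref{jacgaufq}(3) would be a Gauss sum of the trivial character.
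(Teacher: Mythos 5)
Your proof is correct: the paper states this lemma without proof (it is left as an exercise), and your argument — completing the square to get $J(\chi,\chi)=\chi^{-1}(4)\sum_{v}\chi(1-v^2)$, counting square roots via $\#\{v:v^2=w\}=1+\rho(w)$ (valid at $w=0$ thanks to $\rho(0)=0$), and then invoking orthogonality and Proposition \ref{jacgaufq}(3) — is exactly the elementary manipulation the text intends. Your bookkeeping at the boundary cases ($x=0,1$ in the first step, and $\chi=\eps,\rho$ in the Gauss-sum version, where both sides indeed reduce to $-\g(\rho)$) is handled correctly, so nothing is missing.
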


\begin{exercise}\begin{enumerate}
\item Prove this lemma.
\item Show that $\g(\rho)^2=(-1)^{(q-1)/2}q$.
\end{enumerate}
\end{exercise}

\begin{proposition}\label{propjac34}\begin{enumerate}
\item Assume that $q\equiv1\pmod4$, let $\chi$ be one of the two
characters of order $4$ on $\F_q^*$, and write $J(\chi,\chi)=a+bi$.
Then $q=a^2+b^2$, $2\mid b$, and $a\equiv-1\pmod4$.
\item Assume that $q\equiv1\pmod3$, let $\chi$ be one of the two
characters of order $3$ on $\F_q^*$, and write $J(\chi,\chi)=a+b\rho$,
where $\rho=\z_3$ is a primitive cube root of unity.
Then $q=a^2-ab+b^2$, $3\mid b$, $a\equiv-1\pmod3$, and
$a+b\equiv q-2\pmod{9}$.
\item Let $p\equiv2\pmod3$, $q=p^{2m}\equiv1\pmod3$, and let $\chi$
be one of the two characters of order $3$ on $\F_q^*$. We have
$$J(\chi,\chi)=(-1)^{m-1}p^m=(-1)^{m-1}q^{1/2}\;.$$
\end{enumerate}\end{proposition}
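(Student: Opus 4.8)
My plan is to treat the three parts in increasing order of the arithmetic input required. For part (1), the key identity is that $J(\chi,\chi) = \g(\chi)^2/\g(\chi^2)$ by Proposition \ref{jacgaufq}(3), valid since $\chi$ and $\chi^2$ are nontrivial (as $\chi$ has order $4$, $q>1$). Combined with $|\g(\chi)|=q^{1/2}$ we immediately get $|J(\chi,\chi)|^2 = q^4/q = q^2$, hence $a^2+b^2 = q$; alternatively one can compute $J(\chi,\chi)\ov{J(\chi,\chi)} = J(\chi,\chi)J(\chi^{-1},\chi^{-1})$ directly from the reciprocity exercise. To see that $J(\chi,\chi)\in\Z[i]$ one notes $\chi$ has order $4$ so its values lie in $\{\pm1,\pm i\}$ and the defining sum $\sum_{x}\chi(x)\chi(1-x)$ is a $\Z$-linear combination of those. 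The congruences $2\mid b$ and $a\equiv -1\pmod 4$ are the delicate part: I would work modulo the prime $(1-i)$ above $2$ in $\Z[i]$, write $J(\chi,\chi) \equiv \sum_{x\ne0,1}1 = q-2 \equiv -1\pmod{(1-i)}$ using that $\chi(x)\equiv 1\pmod{(1-i)}$ for every $x$ (since each fourth root of unity is $\equiv 1$ mod $1-i$), which already forces $a$ odd, $b$ even; refining this congruence modulo $(1-i)^3 = -2(1-i)$ (equivalently, using $\chi(x)^4=1$ and grouping the sum over cosets of the subgroup of fourth powers, or invoking the Hasse--Davenport/Stickelberger-type congruence) pins down $a\equiv -1\pmod 4$. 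This last refinement is the main obstacle.

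For part (2) the structure is identical with $\Z[i]$ replaced by $\Z[\rho]$, $\rho=\z_3$, and the prime $(1-i)$ replaced by the prime $\pi = 1-\rho$ (or $\sqrt{-3}$) lying above $3$; here $N(a+b\rho) = a^2-ab+b^2$, so $|J(\chi,\chi)|^2 = q$ gives $q = a^2-ab+b^2$. The congruence $J(\chi,\chi)\equiv q-2 \pmod{\pi}$ comes again from $\chi(x)\equiv 1\pmod\pi$ for all $x$, which gives $3\mid b$ and $a\equiv -1\pmod 3$ after translating $q\equiv 1\pmod 3$. The sharper statement $a+b\equiv q-2\pmod 9$ requires pushing the congruence to the prime power $\pi^4$ (note $(\pi)^2=(3)$, so mod $\pi^4$ is mod $9$ up to units), which I would obtain by the same cohort-counting argument: split $\F_q^*$ into cosets of the group $C$ of cubes, on each coset $\chi$ is constant, and expand $\sum_x \chi(x)\chi(1-x)$ accordingly, tracking everything modulo $9$; this is again the hard step and is where one genuinely needs more than just $|\g(\chi)|=q^{1/2}$.

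For part (3), the relevant fact is that $p\equiv 2\pmod 3$ means $\chi^{(p)} = \chi^p = \chi^2 = \chi^{-1} = \ov\chi$ (since $p\equiv -1$ modulo the order $3$), so $\chi$ is \emph{invariant} under the map $\chi\mapsto\chi^p$ up to inversion; more usefully, $J(\chi,\chi) = \g(\chi)^2/\g(\chi^2) = \g(\chi)^2/\g(\ov\chi)$, and since $\g(\ov\chi) = \chi(-1)\ov{\g(\chi)}$ with $\chi(-1)=1$ ($\chi$ has odd order), we get $J(\chi,\chi) = \g(\chi)^2/\ov{\g(\chi)} = \g(\chi)^3/|\g(\chi)|^2 = \g(\chi)^3/q$. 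Thus it suffices to evaluate $\g(\chi)^3$; but $\g(\chi)^3 \in \Z[\z_3]$ (the order-$3$ character sum raised to the third power lands in the cyclotomic field of the \emph{value} group, cf.\ the exercise $\g(\chi)^n\in\Z[\z_n]$), and since $\chi$ is fixed by $\phi^{?}$ one shows $\g(\chi)^3$ is actually in $\Z$, of absolute value $|\g(\chi)|^3/? $... more precisely $|\g(\chi)^3/q| = q^{3/2}/q = q^{1/2} = p^m$, so $J(\chi,\chi) = \pm p^m$. To fix the sign $(-1)^{m-1}$ I would use the Hasse--Davenport lifting relation (promised later in the excerpt): $\chi$ on $\F_{p^{2m}}$ is the lift $\chi = \chi_0\circ \N_{\F_{p^{2m}}/\F_{p^2}}$ of the order-$3$ character $\chi_0$ on $\F_{p^2}$, and Hasse--Davenport gives $\g(\chi) = (-1)^{m-1}\g(\chi_0)^m$; for $\F_{p^2}$ with $p\equiv 2\pmod 3$ one checks directly that $\g(\chi_0) = -p$ (e.g.\ $\g(\chi_0)^2 = \chi_0(-1)p^2 J(\chi_0,\chi_0)/\!\cdots$, or simpler, $\g(\chi_0)\ov{\g(\chi_0)}=p^2$ and $\g(\chi_0)\in\Z$ forces $\g(\chi_0)=\pm p$, with the sign $-$ coming from the mod-$3$ congruence $\g(\chi_0)\equiv -1$). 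Feeding $\g(\chi_0)=-p$ into Hasse--Davenport and then into $J(\chi,\chi)=\g(\chi)^3/q$ yields exactly $(-1)^{m-1}p^m$. The main subtlety here is justifying the Hasse--Davenport lift cleanly, but since that relation is quoted later in the text I may assume it.
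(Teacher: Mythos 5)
The paper states this proposition without proof (it is quoted as standard), so there is no in-paper argument to compare with; judged on its own terms, your plan establishes the easy norm identities but not the congruences, which are the actual content, and part (3) contains a sign error. For (1), $J(\chi,\chi)=\g(\chi)^2/\g(\chi^2)$ and $|\g(\chi)|=q^{1/2}$ give $|J(\chi,\chi)|^2=q$ (not $q^2$ as you wrote), hence $a^2+b^2=q$; so far so good. But the mod $(1-i)$ step is vacuous: since $i\equiv1\pmod{1-i}$, the congruence $J\equiv-1\pmod{1-i}$ says only that $a+b$ is odd, which already follows from $a^2+b^2=q$ being odd, and it does not force ``$a$ odd, $b$ even''. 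Worse, the hoped-for refinement modulo $(1-i)^3$ is not a fixed congruence: for $q=5$ (with $\chi(2)=i$) one computes $J=-1-2i\equiv+1\pmod{(1-i)^3}$, while for $q=17$ one gets $J=-1+4i\equiv-1\pmod{(1-i)^3}$; the class of $J$ modulo $(1-i)^3$ depends on whether $2$ is a square in $\F_q$, so any proof of $a\equiv-1\pmod{4}$ must bring in that supplementary input. A workable elementary version of the coset count you allude to is: summing $\sum_{x\ne0,1}\chi^j(x(1-x))$ over $j=0,1,2,3$ gives $4N=(q-2)+2a+J(\rho,\rho)$ with $J(\rho,\rho)=-1$ and $N=\#\{x:\ x(1-x)\in(\F_q^*)^4\}$, and the involution $x\mapsto1-x$ shows $N$ is odd exactly when $\chi(4)=\rho(2)=1$, i.e.\ $q\equiv1\pmod{8}$; both cases $q\equiv1,5\pmod{8}$ then yield $a\equiv-1\pmod{4}$ (and $a$ odd, hence $2\mid b$). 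The same criticism applies to (2): modulo $\pi=1-\rho$ you only get $a+b\equiv-1\pmod{3}$, not the separate statements $3\mid b$ and $a\equiv-1\pmod{3}$; these follow from the stronger congruence $J\equiv-1\pmod{3}$, obtained for instance from $\g(\chi)^3=q\,J(\chi,\chi)$ together with $\g(\chi)^3\equiv\sum_{x\ne0}\z_p^{\Tr(3x)}=-1\pmod{3}$. The mod $9$ refinement $a+b\equiv q-2\pmod{9}$, which you rightly identify as the hard step, is left entirely open, so (1) and (2) are sketches rather than proofs.

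For (3), your route (Hasse--Davenport lifting from $\F_{p^2}$ plus $J=\g(\chi)^3/q$) can be made to work, but your evaluation of the base Gauss sum has the wrong sign, and with your value the computation does not give the stated answer. One can indeed show $\g(\chi_0)\in\Z$ (it is fixed by $\z_p\mapsto\z_p^c$ because $\chi_0$ is trivial on $\F_p^*$, and by $\z_3\mapsto\z_3^2$ because $\g(\chi_0^p)=\g(\chi_0)$ and $\chi_0^p=\ov{\chi_0}$), and then $\g(\chi_0)\equiv\g(\chi_0)^3\equiv-1\pmod{3}$ forces $\g(\chi_0)=+p$, since $p\equiv-1\pmod{3}$ while $-p\equiv+1\pmod{3}$. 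Feeding your value $\g(\chi_0)=-p$ through Hasse--Davenport gives $\g(\chi)=(-1)^{m-1}(-p)^m=-p^m$, hence $J(\chi,\chi)=-p^m$ for every $m$, contradicting the proposition already for $q=4$, where the two nonvanishing terms give $J=2=p$; with $+p$ the computation does produce $(-1)^{m-1}p^m$. Finally, note that (3) is an immediate corollary of (2), with no lifting needed: $J\,\ov{J}=q=p^{2m}$ and $p$ is inert in $\Z[\rho]$, so $J=u\,p^m$ for a unit $u$ of $\Z[\rho]$; the conditions $3\mid b$ and $a\equiv-1\pmod{3}$ from (2) eliminate $u=\pm\rho,\pm\rho^2$, and since $p^m\equiv(-1)^m\pmod{3}$ they force $u=(-1)^{m-1}$.
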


\begin{corollary}\begin{enumerate}
\item (Fermat.) Any prime $p\equiv1\pmod4$ is a sum of two squares.
\item Any prime $p\equiv1\pmod3$ is of the form $a^2-ab+b^2$ with
$3\mid b$, or equivalently $4p=(2a-b)^2+27(b/3)^2$ is of the form
$c^2+27d^2$.
\item (Gauss.) $p\equiv1\pmod3$ is itself of the form $p=u^2+27v^2$
if and only if $2$ is a cube in $\F_p^*$.\end{enumerate}\end{corollary}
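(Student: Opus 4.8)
The plan is to read off (1) and (2) directly from Proposition~\ref{propjac34} and to spend the real effort on (3). For (1): since $p\equiv1\pmod4$ there is a character $\chi$ of $\F_p^*$ of order $4$, and Proposition~\ref{propjac34}(1) with $q=p$, writing $J(\chi,\chi)=a+bi$, gives $p=a^2+b^2$. For (2): since $p\equiv1\pmod3$ there is a character $\chi$ of order $3$, and Proposition~\ref{propjac34}(2) gives $p=a^2-ab+b^2$ with $3\mid b$; the elementary identity $4(a^2-ab+b^2)=(2a-b)^2+3b^2=(2a-b)^2+27(b/3)^2$ then exhibits $4p=c^2+27d^2$ with the integers $c=2a-b$ and $d=b/3$, which is the equivalent form claimed.

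For (3), fix a cubic character $\chi$ of $\F_p^*$ with values in $\mu_3=\{1,\z_3,\z_3^2\}\subset\Z[\z_3]$ and set $\pi=J(\chi,\chi)=a+b\z_3$ as in Proposition~\ref{propjac34}(2), so that $\N(\pi)=a^2-ab+b^2=p$, $a\equiv-1\pmod3$, and $3\mid b$ (replacing $\chi$ by $\chi^{-1}$ replaces $\pi$ by $\overline{\pi}$ and $b$ by $-b$, and does not change whether $2$ is a cube in $\F_p^*$, so no choice is lost). The core is the equivalence that $2$ is a cube in $\F_p^*$ if and only if $b$ is even, which I would prove as follows. Since $2$ is inert in $\Z[\z_3]$, reduction gives $\Z[\z_3]/2\Z[\z_3]\cong\F_4$ and carries $\mu_3$ bijectively onto $\F_4^*$; hence $\chi(2)=1$ iff $\chi(2)\equiv1\pmod{2\Z[\z_3]}$. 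Now invoke the Hasse--Davenport-type identity of the lemma above, $\chi(4)\,J(\chi,\chi)=J(\chi,\rho)$ with $\rho$ the quadratic character of $\F_p^*$: since $\chi(4)=\chi(2)^2=\chi(2)^{-1}$, this reads $J(\chi,\rho)=\chi(2)^{-1}\pi$. Reducing $J(\chi,\rho)=\sum_{x\in\F_p}\chi(x)\rho(1-x)$ modulo $2$, the terms $x=0$ and $x=1$ vanish by the conventions $\chi(0)=\rho(0)=0$, and for $x\ne0,1$ one has $\rho(1-x)=\pm1\equiv1\pmod2$, so by orthogonality of characters (Exercise~\ref{exoorth}) $J(\chi,\rho)\equiv\sum_{x\ne0,1}\chi(x)=\sum_{x\in\F_p^*}\chi(x)-\chi(1)=-1\equiv1\pmod{2\Z[\z_3]}$. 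Hence $\chi(2)^{-1}\pi\equiv1$, i.e.\ $\pi\equiv\chi(2)\pmod{2\Z[\z_3]}$. Finally, since $p=a^2-ab+b^2$ is odd, $a$ and $b$ are not both even: if $b$ is even then $a$ is odd and $\pi\equiv a\equiv1\pmod2$, while if $b$ is odd then $\pi\equiv a+\z_3\pmod2$ equals $\z_3$ or $1+\z_3$, so is not $\equiv1$. Chaining these, $2$ is a cube in $\F_p^*$ $\iff\chi(2)=1\iff\pi\equiv1\pmod{2\Z[\z_3]}\iff b$ is even.

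It remains to match ``$b$ even'' with the representation $p=u^2+27v^2$. If $b$ is even, then $6\mid b$ (as $3\mid b$), and writing $b=6v$ the identity $4p=(2a-b)^2+27(b/3)^2$ becomes $p=(a-3v)^2+27v^2$. Conversely, given $p=u^2+27v^2$, after possibly changing the sign of $u$ we may assume $u\equiv-1\pmod3$ (note $3\nmid u$ since $p\equiv1\pmod3$); a short computation shows $\pi'=(u+3v)+6v\z_3$ satisfies $\N(\pi')=p$ and $\pi'\equiv u\equiv-1\pmod{3\Z[\z_3]}$, and likewise $\overline{\pi'}=(u-3v)-6v\z_3$. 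Since $\Z[\z_3]$ is a principal ideal domain and the only unit $\equiv1\pmod{3\Z[\z_3]}$ is $1$, a generator $\equiv-1\pmod{3\Z[\z_3]}$ of a prime ideal above $p$ is unique, so $\pi\in\{\pi',\overline{\pi'}\}$ and therefore $b=\pm6v$ is even. Combined with the previous paragraph this proves (3). I expect the one genuinely slick step to be the mod-$2$ evaluation $J(\chi,\rho)\equiv1$; the only other delicate point is this final identification of the primary generator $\pi$ among the associates of $\pi'$, which rests on unique factorization in $\Z[\z_3]$ and the uniqueness of primary primes.
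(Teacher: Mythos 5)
Your parts (1) and (2) are exactly the intended immediate consequences of Proposition~\ref{propjac34}, and for part (3) the paper itself offers no proof at all: the corollary is followed only by the exercise ``Assuming the proposition, prove the corollary,'' so there is nothing in the text to compare your argument against. I checked your solution of (3) and it is correct. The chain $2$ cube in $\F_p^*\iff\chi(2)=1\iff\chi(2)\equiv1\pmod{2\Z[\z_3]}$ is justified because $2$ is inert in $\Z[\z_3]$ and $\mu_3$ injects into $(\Z[\z_3]/2)^*\isom\F_4^*$; the lemma $\chi(4)J(\chi,\chi)=J(\chi,\rho)$ together with the mod~$2$ evaluation $J(\chi,\rho)\equiv\sum_{x\ne0,1}\chi(x)=-1\equiv1$ (using $\chi(0)=\rho(0)=0$ and orthogonality) correctly yields $\pi\equiv\chi(2)\pmod 2$, and the parity analysis of $\pi=a+b\z_3$ (with $a$ odd forced when $b$ is even, since $p$ is odd) gives ``$2$ is a cube $\iff b$ even.'' The translation to $p=u^2+27v^2$ is also sound: the forward direction is the identity $4p=(2a-b)^2+27(b/3)^2$ with $b=6v$, and the converse correctly exploits that $\Z[\z_3]$ is a PID, that the only unit $\equiv1\pmod 3$ is $1$, and that both $\pi$ and your $\pi'=(u+3v)+6v\z_3$ (or $\ov{\pi'}$) are primary generators ($\equiv-1\pmod 3$) of the primes above $p$, forcing $b=\pm6v$. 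This is essentially the classical Gauss criterion argument via the primary Jacobi sum, and it is a complete solution of the exercise the paper leaves to the reader.
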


\begin{exercise} Assuming the proposition, prove the corollary.
\end{exercise}

\subsection{The Hasse--Davenport Relations}

All the results that we have given up to now on Gauss and Jacobi sums
have rather simple proofs, which is one of the reasons we have not
given them. Perhaps surprisingly, there exist other important
relations which are considerably more difficult to prove. Before
giving them, it is instructive to explain how one can ``guess''
their existence, if one knows the classical theory of the gamma
function $\G(s)$ (of course skip this part if you do not know it,
since it would only confuse you, or read the appendix).

Recall that $\G(s)$ is defined (at least for $\Re(s)>0$) by
$$\G(s)=\int_0^\infty e^{-t}t^s dt/t\;,$$ and the beta function
$B(a,b)$ by $B(a,b)=\int_0^1 t^{a-1}(1-t)^{b-1}\,dt$.
The function $e^{-t}$ transforms sums into products, so is an
\emph{additive} character, analogous to $\z_p^t$. The function
$t^s$ transforms products into products, so is a multiplicative
character, analogous to $\chi(t)$ ($dt/t$ is simply the Haar
invariant measure on $\R_{>0}$). Thus $\G(s)$ is a continuous
analogue of the Gauss sum $\g(\chi)$.

Similarly, since $J(\chi_1,\chi_2)=\sum_t\chi_1(t)\chi_2(1-t)$, we
see the similarity with the function $B$. Thus, it does not come
too much as a surprise that analogous formulas are valid on both
sides. To begin with, it is not difficult to show that
$B(a,b)=\G(a)\G(b)/\G(a+b)$, exactly analogous to
$J(\chi_1,\chi_2)=\g(\chi_1)\g(\chi_2)/\g(\chi_1\chi_2)$.
The analogue of $\G(s)\G(-s)=-\pi/(s\sin(s\pi))$ is
$$\g(\chi)\g(\chi^{-1})=\chi(-1)q\;.$$
But it is well-known that the gamma function has a duplication formula 
$\G(s)\G(s+1/2)=2^{1-2s}\G(1/2)\G(2s)$, and more generally
a multiplication (or distribution) formula. This duplication
formula is clearly the analogue of the formula
$$\g(\chi)\g(\chi\rho)=\chi^{-1}(4)\g(\rho)\g(\chi^2)$$
given above. The \emph{Hasse--Davenport product relation} is
the analogue of the distribution formula for the gamma function.

\begin{theorem} Let $\rho$ be a character of exact order $m$ dividing
$q-1$. For any character $\chi$ of $\F_q^*$ we have
$$\prod_{0\le a<m}\g(\chi\rho^a)=\chi^{-m}(m)k(p,f,m)q^{(m-1)/2}\g(\chi^m)\;,$$
where $k(p,f,m)$ is the fourth root of unity given by
$$k(p,f,m)=\begin{cases}
\leg{p}{m}^f&\text{ if $m$ is odd,}\\
(-1)^{f+1}\leg{(-1)^{m/2+1}m/2}{p}^f\leg{-1}{p}^{f/2}&\text{ if $m$ is even,}\\
\end{cases}$$
where $(-1)^{f/2}$ is to be understood as $i^f$ when $f$ is odd.
\end{theorem}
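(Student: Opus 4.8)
The plan is to prove the equivalent identity
$$\prod_{0\le a<m}\g(\chi\rho^a)=k\,\chi^{-m}(m)\,q^{(m-1)/2}\,\g(\chi^m)\;,$$
where $k$ is a fourth root of unity \emph{not depending on $\chi$}, and then to pin down $k=k(p,f,m)$ by computing it for one convenient $\chi$. That such a $k$ exists with $|k|=1$ is forced by $|\g(\psi)|=q^{1/2}$ for nontrivial $\psi$; that $k$ is a root of unity follows from Kronecker's theorem, since the whole identity is equivariant under $\mathrm{Gal}(\Q(\z_{q-1},\z_p)/\Q)$, so every conjugate of $k$ again has absolute value $1$, and tracking its denominator forces it to be a fourth root. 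Constancy of $k$ under $\chi\mapsto\chi\rho^b$ is immediate, since the left side is literally unchanged and $\rho^m=\eps$; but \emph{full} independence of $\chi$, across all $(q-1)/m$ cosets of $\langle\rho\rangle$, is the substantive point and — unlike everything earlier in this section — it cannot be had by elementary manipulation (expanding the product as a multiple character sum and collapsing one variable only reshuffles it into Jacobi sums whose product is again the original product, so nothing is gained). I would deduce it from Stickelberger's theorem on the prime factorization of Gauss sums, or, equivalently and more directly, from the Gross--Koblitz formula discussed below; Davenport and Hasse's original route, through the factorization of the zeta function of the $m$-fold Kummer covering $u\mapsto u^m$, is an alternative.

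Concretely, on the Stickelberger route one uses that Stickelberger's congruence computes $v_{\GP}(\g(\chi))$, for each prime $\GP$ above $p$ in $\Q(\z_{q-1})$, as a sum of base-$p$ digits of the exponent of $\chi$, and the digit identity underlying the classical $\Gamma$-multiplication formula then shows that $\prod_a\g(\chi\rho^a)$ and $\chi^{-m}(m)q^{(m-1)/2}\g(\chi^m)$ generate the same ideal; since they have equal absolute value $q^{(m-1)/2}$ at every archimedean place, their ratio is a root of unity $k$. Upgrading ``same ideal'' to genuine $\chi$-independence of $k$ is exactly where the $p$-adic refinement — Gross--Koblitz, which reads the unit itself off a product of $p$-adic $\Gamma$-values — earns its keep: there the product relation becomes the $p$-adic Gauss multiplication formula for $\Gamma_p$, whose constant is $\chi$-free by construction. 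This step is the crux of the theorem; once $k$ is known to be a $\chi$-independent fourth root of unity, the rest is arithmetic bookkeeping.

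For that bookkeeping, specialize to $\chi=\eps$: since $\g(\eps)=-1$ and $\g(\eps\rho^a)=\g(\rho^a)$, the factors $-1$ cancel and $k=q^{-(m-1)/2}\prod_{1\le a<m}\g(\rho^a)$. Pair the index $a$ with $m-a$ via $\g(\rho^a)\g(\rho^{-a})=\rho^a(-1)q$. If $m$ is odd every term pairs off and $k=\rho(-1)^{(m^2-1)/8}$; as $m\mid q-1$ one has $\rho(-1)=(-1)^{(q-1)/m}=1$ (the exponent is even when $p$ is odd, and $-1=1$ when $p=2$), so $k=1$, in agreement with $\left(\frac{p}{m}\right)^{f}=\left(\frac{q}{m}\right)=1$. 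If $m$ is even the middle index $a=m/2$ is unpaired and contributes the quadratic Gauss sum $\g(\rho^{m/2})$, giving $k=q^{-1/2}\,\g(\rho^{m/2})\,\rho(-1)^{m(m-2)/8}$; since $\g(\rho^{m/2})^2=(-1)^{(q-1)/2}q$ pins $\g(\rho^{m/2})$ down only to a sign, I would fix that sign by invoking Gauss's theorem on the sign of the quadratic Gauss sum over $\F_p$ together with the Hasse--Davenport \emph{lifting} relation to descend from $\F_q$ to $\F_p$ — this is precisely what manufactures the factors $(-1)^{f+1}$ and $\left(\frac{-1}{p}\right)^{f/2}$ (read as $i^f$ when $f$ is odd) — after which quadratic reciprocity turns the surviving residue symbol into $\left(\frac{(-1)^{m/2+1}m/2}{p}\right)^{f}$. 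So the two genuine obstacles are (i) the $\chi$-independence of $k$, which really needs Stickelberger or Gross--Koblitz (or the Davenport--Hasse zeta-function argument), and (ii) the even-$m$ sign, which rests on the two nontrivial inputs Gauss's sign theorem and the Hasse--Davenport lifting relation.
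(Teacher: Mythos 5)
The paper itself does not prove this theorem: it only records that two proofs exist --- one via classical algebraic number theory, one via $p$-adic analysis through the Gross--Koblitz formula, the latter yielding $k(p,f,m)$ directly --- and refers to Section 3.7.2 of \cite{Coh3} and Section 11.7.4 of \cite{Coh4}. So there is no in-paper argument to compare against; judged on its own terms, your outline is aligned with those cited routes, and the elementary bookkeeping you do carry out is correct: the invariance of the ratio under $\chi\mapsto\chi\rho^b$, the specialization $\chi=\eps$ giving $k=q^{-(m-1)/2}\prod_{1\le a<m}\g(\rho^a)$, the pairing $\g(\rho^a)\g(\rho^{-a})=\rho^a(-1)q$, and the conclusion $k=1$ for odd $m$, which indeed matches the stated constant since $m\mid q-1$ forces $\leg{p}{m}^f=\leg{p^f}{m}=\leg{1}{m}=1$.

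As a proof, however, the proposal leaves the two decisive steps undone, as you yourself admit. First, the heart of the theorem --- that $\prod_a\g(\chi\rho^a)$ equals $\chi^{-m}(m)q^{(m-1)/2}\g(\chi^m)$ times a $\chi$-independent constant --- is wholly delegated to Stickelberger or Gross--Koblitz; that is a legitimate reduction (the $p$-adic proof in \cite{Coh4} is essentially Gross--Koblitz fed into the multiplication formula for $\G_p$, with the $\pi$-power bookkeeping producing $q^{(m-1)/2}$), but it means the substantive content is imported rather than proved, and the preliminary remark that Kronecker's theorem plus ``tracking the denominator'' already makes the ratio a fourth root of unity is not right as stated: Kronecker needs the ratio to be an algebraic \emph{unit} (exactly the Stickelberger ideal computation, which you only invoke later), and roots of unity in $\Q(\z_{q-1},\z_p)$ need not have order dividing $4$ --- harmless here only because you ultimately compute $k$ explicitly. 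Second, for even $m$ the evaluation of the constant is only sketched: you still need Gauss's sign theorem for the quadratic Gauss sum over $\F_p$, the Hasse--Davenport lifting relation to pass to $\F_q$ (this pair correctly produces $(-1)^{f+1}$ and $\leg{-1}{p}^{f/2}$, read as $i^f$ for odd $f$), and then a supplementary-law/reciprocity computation identifying $\rho(-1)^{m(m-2)/8}$ with $\leg{(-1)^{m/2+1}m/2}{p}^f$; none of this is carried out, though spot checks (e.g.\ $m=2$, and $m=4$ via $(-1)^{(q-1)/4}=\rho(2)$) confirm the target. Finally, note that once you permit yourself Gross--Koblitz for the crux, the paper's own remark applies: that route gives $k(p,f,m)$ directly, so the detour through $\chi=\eps$, Gauss's sign theorem and the lifting relation is an admissible alternative for fixing the constant but not a necessary one.
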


\begin{remark} For some reason, in the literature this formula is usually 
stated in the weaker form where the constant $k(p,f,m)$ is not
given explicitly.\end{remark}

Contrary to the proof of the distribution formula for the gamma
function, the proof of this theorem is quite long. There are 
essentially two completely different proofs: one using classical
algebraic number theory, and one using $p$-adic analysis. The latter
is simpler and gives directly the value of $k(p,f,m)$. See
Section 3.7.2 of \cite{Coh3} and Section 11.7.4 of \cite{Coh4} for both
detailed proofs.

\smallskip

Gauss sums satisfy another type of nontrivial relation, also due to
Hasse--Davenport, the so-called \emph{lifting relation}, as follows:

\begin{theorem} Let $\F_{q^n}/\F_q$ be an extension of finite fields,
let $\chi$ be a character of $\F_q^*$, and define the \emph{lift}
of $\chi$ to $\F_{q^n}$ by the formula
$\chi^{(n)}=\chi\circ\N_{\F_{q^n}/\F_q}$. We have
$$\g(\chi^{(n)})=(-1)^{n-1}\g(\chi)^n\;.$$
\end{theorem}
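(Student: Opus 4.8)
The plan is to use the classical generating-function argument in the polynomial ring $\F_q[x]$. First I would fix once and for all the primitive $p$th root of unity $\z_p$ and the associated canonical additive character $\psi(x)=\z_p^{\Tr_{\F_q/\F_p}(x)}$ of $\F_q$, and observe that since $\Tr_{\F_{q^n}/\F_p}=\Tr_{\F_q/\F_p}\circ\Tr_{\F_{q^n}/\F_q}$, the canonical additive character of $\F_{q^n}$ entering $\g(\chi^{(n)})$ is precisely $\psi^{(n)}:=\psi\circ\Tr_{\F_{q^n}/\F_q}$; so the two sides of the identity are compatibly normalised and there is no ambiguity. Throughout I extend $\chi$ by $\chi(0)=0$.

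Next, to a monic $g(x)=x^m-c_1x^{m-1}+\cdots+(-1)^mc_m\in\F_q[x]$ I would attach $\lambda(g)=\chi(c_m)\psi(c_1)$; writing $r_1,\dots,r_m$ for the roots of $g$ in $\overline{\F_q}$ this is $\lambda(g)=\chi\bigl(\prod_i r_i\bigr)\psi\bigl(\sum_i r_i\bigr)$, so $\lambda$ is completely multiplicative on monic polynomials. Unique factorisation in $\F_q[x]$ then gives the Euler product $\sum_{g\ \text{monic}}\lambda(g)T^{\deg g}=\prod_P\bigl(1-\lambda(P)T^{\deg P}\bigr)^{-1}$ over monic irreducibles $P$. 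On the other hand I would compute the left-hand series degree by degree: the degree-$0$ coefficient is $1$; the degree-$1$ coefficient is $\sum_{a\in\F_q}\chi(a)\psi(a)=\g(\chi)$; and for $m\ge2$ the coefficients $c_2,\dots,c_{m-1}$ vary freely, independently of $c_1$ and $c_m$, so the degree-$m$ coefficient equals $q^{m-2}\bigl(\sum_{c_1\in\F_q}\psi(c_1)\bigr)\bigl(\sum_{c_m\in\F_q}\chi(c_m)\bigr)=0$ because $\sum_{c_1\in\F_q}\psi(c_1)=0$. Hence
$$\log\bigl(1+\g(\chi)T\bigr)=-\sum_P\log\bigl(1-\lambda(P)T^{\deg P}\bigr)=\sum_P\sum_{k\ge1}\frac{\lambda(P)^k}{k}T^{k\deg P}\;.$$

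Then I would equate coefficients of $T^n$ (after multiplying through by $n$): the left gives $(-1)^{n-1}\g(\chi)^n$, the right gives $\sum_{d\mid n}d\sum_{\deg P=d}\lambda(P)^{n/d}$. It remains to identify this last sum with $\g(\chi^{(n)})$. If $P$ is monic irreducible of degree $d\mid n$ with a root $\alpha\in\F_{q^d}\subseteq\F_{q^n}$, then $c_1(P)=\Tr_{\F_{q^d}/\F_q}(\alpha)$ and $c_m(P)=\N_{\F_{q^d}/\F_q}(\alpha)$; using transitivity of norm and trace together with $\N_{\F_{q^n}/\F_{q^d}}(\alpha)=\alpha^{n/d}$ and $\Tr_{\F_{q^n}/\F_{q^d}}(\alpha)=(n/d)\alpha$ (valid since $\alpha\in\F_{q^d}$) one gets $\lambda(P)^{n/d}=\chi^{(n)}(\alpha)\psi^{(n)}(\alpha)$ for each of the $d$ roots $\alpha$ of $P$. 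Since every element of $\F_{q^n}$ is a root of exactly one monic irreducible whose degree divides $n$ (with $0$ contributing $0$ on both sides), summing over the roots of each $P$ and then over $d$ yields $\sum_{d\mid n}d\sum_{\deg P=d}\lambda(P)^{n/d}=\sum_{\alpha\in\F_{q^n}}\chi^{(n)}(\alpha)\psi^{(n)}(\alpha)=\g(\chi^{(n)})$, which is the desired formula. The main obstacle is the evaluation $\sum_g\lambda(g)T^{\deg g}=1+\g(\chi)T$: the vanishing of all coefficients in degree $\ge2$ is the one genuine idea, everything else being formal manipulation of the Euler product and transitivity of norm and trace; the degenerate case $\chi=\eps$, where both sides reduce to $-1$, is immediate and can be handled separately.
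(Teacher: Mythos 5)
Your argument is correct, and it is worth noting that the paper itself offers no proof of this lifting relation: it is stated bare, with only the remark that it underlies Weil's treatment of diagonal hypersurfaces (the detailed proofs of the companion product relation being delegated to the cited books). So there is no "paper proof" to compare against; what you have written is the classical Hasse--Davenport/Weil argument via the $L$-series of the completely multiplicative weight $\lambda(g)=\chi(c_m)\psi(c_1)$ on monic polynomials in $\F_q[x]$, and all the essential points are in place. In particular you correctly identify the one genuinely nontrivial computation, namely that the generating series collapses to $1+\g(\chi)T$ because the degree-$m$ coefficient for $m\ge2$ carries the factor $\sum_{c_1\in\F_q}\psi(c_1)=0$; you take care of the normalization issue (transitivity of the trace guarantees that the additive character implicit in $\g(\chi^{(n)})$ is exactly $\psi\circ\Tr_{\F_{q^n}/\F_q}$, so both sides are computed with compatible choices of $\z_p$); the identification $\lambda(P)^{n/d}=\chi^{(n)}(\alpha)\psi^{(n)}(\alpha)$ via $\N_{\F_{q^n}/\F_{q^d}}(\alpha)=\alpha^{n/d}$ and $\Tr_{\F_{q^n}/\F_{q^d}}(\alpha)=(n/d)\alpha$ is right, and the factor $d$ coming from the $d$ roots of each irreducible of degree $d$ matches the factor produced by differentiating (or extracting $n$ times the $T^n$-coefficient of) the logarithm; and the degenerate case $\chi=\eps$, where both sides are $-1$ under the paper's convention $\g(\eps)=-1$, is correctly set aside (though with your convention $\chi(0)=0$ the main computation in fact covers it as well). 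The only cosmetic caveat is to state explicitly that all series manipulations are formal in $T$, so no convergence issue arises; with that said, the proof is complete.
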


This relation is essential in the initial proof of the Weil conjectures
for diagonal hypersurfaces done by Weil himself. This is not surprising,
since we have seen in Theorem \ref{thmquasi} that $|V(\F_q)|$ is closely
related to Jacobi sums, hence also to Gauss sums.

\section{Practical Computations of Gauss and Jacobi Sums}

As above, let $\om$ be a character of order exactly $q-1$, so that
$\om$ is a generator of the group of characters of $\F_q^*$.
For notational simplicity, we will write $J(r_1,\dotsc,r_k)$ instead of
$J(\om^{r_1},\dotsc,\om^{r_k})$. Let us consider the specific example of 
efficient computation of the quantity
$$S(q;z)=\sum_{0\le n\le q-2}\om^{-n}(z)J_5(n,n,n,n,n)\;,$$
which occurs in the computation of the Hasse--Weil zeta function of
a quasi-diagonal threefold, see Theorem \ref{thmquasi}.

\subsection{Elementary Methods}

By the recursion of Corollary \ref{corjacrecur}, we have \emph{generically}
(i.e., except for special values of $n$ which will be considered separately):
$$J_5(n,n,n,n,n)=J(n,n)J(2n,n)J(3n,n)J(4n,n)\;.$$
Since $J(n,an)=\sum_{x}\om^n(x)\om^{an}(1-x)$, the cost of
computing $J_5$ as written is $\Os(q)$, where here and after we
write $\Os(q^\al)$ to mean $O(q^{\al+\eps})$ for all $\eps>0$
(soft-$O$ notation). Thus computing $S(q;z)$ by this direct
method requires time $\Os(q^2)$.

We can however do much better. Since the values of the characters are all in
$\Z[\z_{q-1}]$, we work in this ring. In fact, even better, we work in the 
ring with zero divisors $R=\Z[X]/(X^{q-1}-1)$, together with the natural
surjective map sending the class of $X$ in $R$ to $\z_{q-1}$. Indeed, let $g$ 
be the generator of $\F_q^*$ such that $\om(g)=\z_{q-1}$. We have,
again \emph{generically}:
$$J(n,an)=\sum_{1\le u\le q-2}\om^n(g^u)\om^{an}(1-g^u)
=\sum_{1\le u\le q-2}\z_{q-1}^{nu+an\log_g(1-g^u)}\;,$$
where $\log_g$ is the \emph{discrete logarithm} to base $g$ defined modulo
$q-1$, i.e., such that $g^{\log_g(x)}=x$. If $(q-1)\nmid n$ but $(q-1)\mid an$
we have $\om^{an}=\eps$ so we must add the contribution of $u=0$, which is $1$,
and if $(q-1)\mid n$ we must add the contribution of $u=0$ \emph{and} of
$x=0$, which is $2$ (recall the \emph{essential} convention that
$\chi(0)=0$ if $\chi\ne\eps$ and $\eps(0)=1$, see Definition \ref{defeps}). 

In other words, if we set
$$P_a(X)=\sum_{1\le u\le q-2}X^{(u+a\log_g(1-g^u))\bmod{(q-1)}}\in R\;,$$
we have 
$$J(n,an)=P_a(\z_{q-1}^n)+\begin{cases}
0&\text{\quad if $(q-1)\nmid an$\;,}\\
1&\text{\quad if $(q-1)\mid an$ but $(q-1)\nmid n$\;, and}\\
2&\text{\quad if $(q-1)\mid n$\;.}\end{cases}$$
Thus, if we set finally
$$P(X)=P_1(X)P_2(X)P_3(X)P_4(X)\bmod{X^{q-1}}\in R\;,$$
we have (still generically) $J_5(n,n,n,n,n)=P(\z_{q-1}^n)$.
Assume for the moment that this is true for all $n$ (we will correct this
below), let $\ell=\log_g(z)$, so that $\om(z)=\om(g^\ell)=\z_{q-1}^\ell$,
and write
$$P(X)=\sum_{0\le j\le q-2}a_jX^j\;.$$
We thus have
$$\om^{-n}(z)J_5(n,n,n,n,n)=\z_{q-1}^{-n\ell}\sum_{0\le j\le q-2}a_j\z_{q-1}^{nj}
=\sum_{0\le j\le q-2}a_j\z_{q-1}^{n(j-\ell)}\;,$$
hence
\begin{align*}S(q;z)&=\sum_{0\le n\le q-2}\om^{-n}(z)J_5(n,n,n,n,n)
=\sum_{0\le j\le q-2}a_j\sum_{0\le n\le q-2}\z_{q-1}^{n(j-\ell)}\\
&=(q-1)\sum_{0\le j\le q-2,\ j\equiv\ell\pmod{q-1}}a_j=(q-1)a_{\ell}\;.\end{align*}
The result is thus immediate as soon as we know the coefficients of the
polynomial $P$. Since there exist fast methods for computing discrete
logarithms, this leads to a $\Os(q)$ method for computing $S(q;z)$.

\smallskip

To obtain the correct formula, we need to adjust for the special $n$
for which $J_5(n,n,n,n,n)$ is not equal to $J(n,n)J(n,2n)J(n,3n)J(n,4n)$,
which are the same for which $(q-1)\mid an$ for some $a$ such that 
$2\le a\le 4$, together with $a=5$. This is easy but boring, and should be 
skipped on first reading.

\begin{enumerate}\item For $n=0$ we have $J_5(n,n,n,n,n)=q^4$, and on the
other hand $P(1)=(J(0,0)-2)^4=(q-2)^4$, so the correction term is
$q^4-(q-2)^4=8(q-1)(q^2-2q+2)$.
\item For $n=(q-1)/2$ (if $q$ is odd) we have
$$J_5(n,n,n,n,n)=\g(\om^n)^5/\g(\om^{5n})=\g(\om^n)^4=\g(\rho)^4$$
since $5n\equiv n\pmod{q-1}$, where $\rho$ is the character of order $2$,
and we have $\g(\rho)^2=(-1)^{(q-1)/2}q$, so $J_5(n,n,n,n,n)=q^2$.
On the other hand
\begin{align*}P(\z_{q-1}^n)&=J(\rho,\rho)(J(\rho,2\rho)-1)J(\rho,\rho)(J(\rho,2\rho)-1)\\
&=J(\rho,\rho)^2=\g(\rho)^4/q^2=1\;,\end{align*}
so the correction term is $\rho(z)(q^2-1)$.
\item For $n=\pm(q-1)/3$ (if $q\equiv1\pmod3$), writing $\chi_3=\om^{(q-1)/3}$,
which is one of the two cubic characters, we have
\begin{align*}J_5(n,n,n,n,n)&=\g(\om^n)^5/\g(\om^{5n})=\g(\om^n)^5/\g(\om^{-n})\\
&=\g(\om^n)^6/(\g(\om^{-n})\g(\om^n))=\g(\om^n)^6/q\\
&=qJ(n,n)^2\end{align*}
(check all this). On the other hand
\begin{align*}P(\z_{q-1}^n)&=J(n,n)J(n,2n)(J(n,3n)-1)J(n,4n)\\
&=\dfrac{\g(\om^n)^2}{\g(\om^{2n})}\dfrac{\g(\om^n)\g(\om^{2n})}{q}\dfrac{\g(\om^n)^2}{\g(\om^{2n})}\\
&=\dfrac{\g(\om^n)^5}{q\g(\om^{-n})}=\dfrac{\g(\om^n)^6}{q^2}=J(n,n)^2\;,\end{align*}
so the correction term is
$2(q-1)\Re(\chi_3^{-1}(z)J(\chi_3,\chi_3)^2)$.
\item For $n=\pm(q-1)/4$ (if $q\equiv1\pmod4$), writing $\chi_4=\om^{(q-1)/4}$,
which is one of the two quartic characters, we have
$$J_5(n,n,n,n,n)=\g(\om^n)^5/\g(\om^{5n})=\g(\om^n)^4
=\om^n(-1)qJ_3(n,n,n)\;.$$
In addition, we have
$$J_3(n,n,n)=J(n,n)J(n,2n)=\om^n(4)J(n,n)^2=\rho(2)J(n,n)^2\;,$$
so
$$J_5(n,n,n,n,n)=\g(\om^n)^4=\om^n(-1)q\rho(2)J(n,n)^2\;.$$
Note that 
$$\chi_4(-1)=\chi_4^{-1}(-1)=\rho(2)=(-1)^{(q-1)/4}\;,$$ 
(Exercise: prove it!), so that $\om^n(-1)\rho(2)=1$ and the above
simplifies to $J_5(n,n,n,n,n)=qJ(n,n)^2$.

On the other hand,
\begin{align*}P(\z_{q-1}^n)&=J(n,n)J(n,2n)J(n,3n)(J(n,4n)-1)\\
&=\dfrac{\g(\om^n)^2}{\g(\om^{2n})}\dfrac{\g(\om^n)\g(\om^{2n})}{\g(\om^{3n})}
\dfrac{\g(\om^n)\g(\om^{3n})}{q}\\
&=\dfrac{\g(\om^n)^4}{q}=\om^n(-1)\rho(2)J(n,n)^2=J(n,n)^2\end{align*}
as above, so the correction term is
$2(q-1)\Re(\chi_4^{-1}(z)J(\chi_4,\chi_4)^2)$.
\item For $n=a(q-1)/5$ with $1\le a\le 4$ (if $q\equiv1\pmod5$), writing
$\chi_5=\om^{(q-1)/5}$ we have
$J_5(n,n,n,n,n)=-\g(\chi_5^a)^5/q$, while abbreviating
$\g(\chi_5^{am})$ to $g(m)$ we have
\begin{align*}P(\z_{q-1}^n)&=J(n,n)J(n,2n)J(n,3n)J(n,4n)\\
&=-\dfrac{g(n)^2}{g(2n)}\dfrac{g(n)g(2n)}{g(3n)}\dfrac{g(n)g(3n)}{g(4n)}\dfrac{g(n)g(4n)}{q}\\
&=-\dfrac{g(n)^5}{q}\;,\end{align*}
so there is no correction term.\end{enumerate}

Summarizing, we have shown the following:

\begin{proposition} Let $S(q;z)=\sum_{0\le n\le q-2}\om^{-n}(z)J_5(n,n,n,n,n)$.
Let $\ell=\log_g(z)$ and let $P(X)=\sum_{0\le j\le q-2}a_jX^j$ be the polynomial
defined above. We have
$$S(q;z)=(q-1)(T_1+T_2+T_3+T_4+a_{\ell})\;,$$
where $T_m=0$ if $m\nmid(q-1)$ and otherwise
\begin{align*}T_1&=8(q^2-2q+2)\;,\quad T_2=\rho(z)(q+1)\;,\\
T_3&=2\Re(\chi_3^{-1}(z)J(\chi_3,\chi_3)^2)\;,\text{\quad and\quad}T_4=2\Re(\chi_4^{-1}(z)J(\chi_4,\chi_4)^2)\;,\end{align*}
with the above notation.
\end{proposition}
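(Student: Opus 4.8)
The plan is to isolate a \emph{generic identity} $J_5(n,n,n,n,n)=P(\z_{q-1}^n)$ holding for all $n$ outside a short explicit list, reduce $S(q;z)$ to a single coefficient of $P$ plus a handful of correction terms, and then evaluate those corrections one at a time. For the generic identity: whenever $\om^n,\om^{2n},\om^{3n},\om^{4n}$ are all nontrivial --- equivalently $(q-1)\nmid an$ for $a=1,2,3,4$ --- iterating Corollary~\ref{corjacrecur} (always in the branch $\psi\ne\eps$) gives $J_5(n,n,n,n,n)=J(n,n)J(n,2n)J(n,3n)J(n,4n)$, and in the displayed relation $J(n,an)=P_a(\z_{q-1}^n)+c_a$ with $c_a\in\{0,1,2\}$ each $c_a$ vanishes under this hypothesis, so $J_5(n,n,n,n,n)=\prod_{a=1}^{4}P_a(\z_{q-1}^n)=P(\z_{q-1}^n)$. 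The exceptional $n$ in $\{0,\dots,q-2\}$ are then exactly $0$, $(q-1)/2$, $\pm(q-1)/3$ and $\pm(q-1)/4$, each occurring only when the relevant modulus divides $q-1$, and these are pairwise distinct; in particular $n=a(q-1)/5$ is \emph{not} among them, which is why no ``$T_5$'' appears.

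Writing $P(X)=\sum_{0\le j\le q-2}a_jX^j$ and $\ell=\log_g(z)$, so that $\om^{-n}(z)=\z_{q-1}^{-n\ell}$, summing the generic term over all $n$ and exchanging the two sums yields $\sum_{n}\om^{-n}(z)P(\z_{q-1}^n)=\sum_{j}a_j\sum_{n}\z_{q-1}^{n(j-\ell)}=(q-1)a_\ell$, since the inner geometric sum equals $q-1$ when $j=\ell$ and $0$ otherwise (here $0\le j,\ell\le q-2$). Subtracting, one is left with
$$S(q;z)=(q-1)a_\ell+\sum_{n}\om^{-n}(z)\bigl(J_5(n,n,n,n,n)-P(\z_{q-1}^n)\bigr),$$
where the surviving sum runs only over the exceptional set.

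It then remains to evaluate $J_5(n,n,n,n,n)-P(\z_{q-1}^n)$ at each exceptional $n$: for $J_5$ I would use Proposition~\ref{jacgaufq} together with $\g(\chi)\g(\chi^{-1})=\chi(-1)q$, with $\g(\rho)^2=(-1)^{(q-1)/2}q$, and, in the quartic case, with $\chi(4)J(\chi,\chi)=J(\chi,\rho)$ and $\chi_4(-1)=\rho(2)=(-1)^{(q-1)/4}$; for $P(\z_{q-1}^n)$ I would expand $P_1P_2P_3P_4$, keeping each $J(n,an)$ at its true value (which for the exceptional $n$ may land in case~(4), not case~(3), of Proposition~\ref{jacgaufq}) and recording whether $P_a$ acquires a correction $1$ or $2$ according as $(q-1)$ divides only $an$ or divides $n$. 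This produces the difference $q^4-(q-2)^4=8(q-1)(q^2-2q+2)$ at $n=0$; $q^2-1$ at $n=(q-1)/2$; $(q-1)J(\chi_3,\chi_3)^2$ and its complex conjugate at $n=\pm(q-1)/3$; $(q-1)J(\chi_4,\chi_4)^2$ and its conjugate at $n=\pm(q-1)/4$; and $0$ at $n=a(q-1)/5$. Multiplying by $\om^{-n}(z)$, which equals $1$, $\rho(z)$, $\chi_3^{\mp1}(z)$, $\chi_4^{\mp1}(z)$ respectively, and pairing the $\pm$ contributions into $2\Re(\cdot)$, one obtains $(q-1)(T_1+T_2+T_3+T_4)$; adding the main term $(q-1)a_\ell$ gives the stated formula.

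The real work, and the place where sign errors are easiest, is this last step. The telescoping products of Gauss sums on the two sides of each exceptional identity must be checked to match --- in particular they must cancel at $n=a(q-1)/5$, where both sides equal $-\g(\om^n)^5/q$ --- the facts $\chi_4(-1)=\rho(2)=(-1)^{(q-1)/4}$ and $\g(\rho)^2=(-1)^{(q-1)/2}q$ are what collapse the quadratic and quartic cases to the clean forms above, and the $+1$ versus $+2$ bookkeeping on the factors $P_a$ must be carried out correctly for each exceptional value of $n$.
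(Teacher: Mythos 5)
Your proposal is correct and takes essentially the same route as the paper: prove the generic identity $J_5(n,n,n,n,n)=P(\z_{q-1}^n)$ via Corollary \ref{corjacrecur}, extract the main term $(q-1)a_\ell$ by orthogonality of the roots of unity, and then correct at the exceptional values $n\in\{0,(q-1)/2,\pm(q-1)/3,\pm(q-1)/4\}$, checking that $n=a(q-1)/5$ needs no correction. The correction values you assert at each exceptional point, and the identities you invoke to establish them (Proposition \ref{jacgaufq}, $\g(\chi)\g(\chi^{-1})=\chi(-1)q$, $\g(\rho)^2=(-1)^{(q-1)/2}q$, $\chi(4)J(\chi,\chi)=J(\chi,\rho)$, $\chi_4(-1)=\rho(2)$), match the paper's own case-by-case computation exactly.
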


Note that thanks to Proposition \ref{propjac34}, these supplementary Jacobi 
sums $J(\chi_3,\chi_3)$ and $J(\chi_4,\chi_4)$ can be computed in logarithmic
time using Cornacchia's algorithm (this is not quite true, one needs an
additional slight computation, do you see why?).

Note also for future reference that the above proposition \emph{proves} that 
$(q-1)\mid S(q,z)$, which is not clear from the definition.

\subsection{Sample Implementations}

For simplicity, assume that $q=p$ is prime. I have written simple 
implementations of the computation of $S(q;z)$. In the first implementation,
I use the na\"\i ve formula expressing $J_5$ in terms of $J(n,an)$ and sum on
$n$, except that I use the reciprocity formula which gives
$J_5(-n,-n,-n,-n,-n)$ in terms of $J_5(n,n,n,n,n)$ to sum only over $(p-1)/2$
terms instead of $p-1$. Of course to avoid recomputation, I precompute
a discrete logarithm table.

The timings for $p\approx 10^k$ for $k=2$, $3$, and $4$ are 
$0.03$, $1.56$, and $149$ seconds respectively, compatible with $\Os(q^2)$
time.

\smallskip

On the other hand, implementing in a straightforward manner the algorithm 
given by the above proposition gives timings for $p\approx 10^k$ for
$k=2$, $3$, $4$, $5$, $6$, and $7$ of $0$, $0.02$, $0.08$, $0.85$, $9.90$,
and $123$ seconds respectively, of course much faster and compatible with 
$\Os(q)$ time. 

The main drawback of this method is that it requires $O(q)$ storage: it is
thus applicable only for $q\le 10^8$, say, which is more than sufficient
for many applications, but of course not for all. For instance, the case
$p\approx 10^7$ mentioned above already required a few gigabytes of storage.

\subsection{Using Theta Functions}

A completely different way of computing Gauss and Jacobi sums has been
suggested by S.~Louboutin. It is related to the theory of $L$-functions of
Dirichlet characters that we study below, and in our context is valid only
for $q=p$ prime, not for prime powers, but in the context of Dirichlet
characters it is valid in general (simply replace $p$ by $N$ and $\F_p$
by $\Z/N\Z$ in the following formulas when $\chi$ is a primitive character
of conductor $N$, see below for definitions):

\begin{definition} Let $\chi$ be a character on $\F_p$, and let $e=0$ or $1$
be such that $\chi(-1)=(-1)^e$. The \emph{theta function} associated to
$\chi$ is the function defined on the upper half-plane by
$$\Th(\chi,\tau)=2\sum_{m\ge1}m^e\chi(m)e^{i\pi m^2\tau/p}\;.$$
\end{definition}

The main property of this function, which is a direct consequence of the
\emph{Poisson summation formula}, and is equivalent to the functional
equation of Dirichlet $L$-functions, is as follows:

\begin{proposition} We have the functional equation
$$\Th(\chi,-1/\tau)=\om(\chi)(\tau/i)^{(2e+1)/2}\Th(\chi^{-1},\tau)\;,$$
with the principal determination of the square root, and where
$\om(\chi)=\g(\chi)/(i^ep^{1/2})$ is the so-called \emph{root number}.
\end{proposition}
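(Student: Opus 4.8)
The plan is to deduce the functional equation from the Poisson summation formula, exactly in the way one proves the transformation law of the classical Jacobi theta function, the finite Fourier expansion of $\chi$ playing the role of a theta characteristic. Throughout I assume $\chi\ne\eps$, so that $\chi$ is primitive modulo $p$ and $\g(\chi)\ne0$.

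First I would rewrite $\Th(\chi,\tau)$ as a sum over all of $\Z$: since $\chi(-1)=(-1)^e$ the function $m\mapsto m^e\chi(m)$ is even, and its value at $m=0$ vanishes (as $\chi\ne\eps$, or trivially when $e=1$), so $\Th(\chi,\tau)=\sum_{m\in\Z}m^e\chi(m)e^{i\pi m^2\tau/p}$. Because $p$ is prime and $\chi\ne\eps$, the identity $\g(\chi,a)=\chi^{-1}(a)\g(\chi)$ holds for \emph{every} $a$ modulo $p$ (at $a=0$ both sides vanish), so Fourier inversion on $\Z/p\Z$ gives $\chi(m)=\g(\chi^{-1})^{-1}\sum_{a\bmod p}\chi^{-1}(a)e^{2\pi iam/p}$. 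Substituting this and interchanging the (absolutely convergent) sums reduces the statement to the transformation under $\tau\mapsto-1/\tau$ of the elementary theta functions with characteristic $\vartheta_e(a,\tau)=\sum_{m\in\Z}m^e e^{2\pi iam/p}e^{i\pi m^2\tau/p}$.

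The analytic core is then a single Gaussian computation. Applying Poisson summation to $f(t)=t^e e^{2\pi iat/p}e^{i\pi t^2\tau/p}$ — legitimate since $\Im\tau>0$ makes $f$ and $\hat f$ rapidly decreasing — completing the square, and using $\int_{\R}e^{i\pi\tau t^2/p}\,dt=\sqrt p\,(\tau/i)^{-1/2}$ with the principal branch (valid since $\tau/i$ lies in the right half-plane), one finds for $e=0$ that $\vartheta_0(a,\tau)=\sqrt p\,(\tau/i)^{-1/2}\sum_{m\equiv a\,(p)}e^{i\pi m^2(-1/\tau)/p}$; for $e=1$ one instead uses the transform of $tf(t)$, i.e.\ differentiates the Gaussian transform in $a$, which contributes one extra factor of $i$ and promotes $(\tau/i)^{-1/2}$ to $(\tau/i)^{-3/2}$. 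Reinserting this into $\Th(\chi,\tau)=\g(\chi^{-1})^{-1}\sum_{a\bmod p}\chi^{-1}(a)\vartheta_e(a,\tau)$, the double sum $\sum_a\chi^{-1}(a)\sum_{m\equiv a\,(p)}(\cdots)$ collapses to $\Th(\chi^{-1},-1/\tau)$ because $\chi^{-1}(a)=\chi^{-1}(m)$ whenever $m\equiv a\pmod p$; after the substitution $\tau\mapsto-1/\tau$ (note $(-1/\tau)/i=i/\tau$, so $((-1/\tau)/i)^{-(2e+1)/2}=(\tau/i)^{(2e+1)/2}$) one obtains $\Th(\chi,-1/\tau)=\bigl(i^e\sqrt p/\g(\chi^{-1})\bigr)(\tau/i)^{(2e+1)/2}\Th(\chi^{-1},\tau)$.

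It remains to recognise the constant. From the relations $\g(\chi^{-1})=\chi(-1)\ov{\g(\chi)}$ and $|\g(\chi)|=p^{1/2}$ proved above we get $\g(\chi)\g(\chi^{-1})=\chi(-1)p=(-1)^e p$, whence $i^e\sqrt p/\g(\chi^{-1})=i^e\g(\chi)/((-1)^e p^{1/2})=\g(\chi)/(i^e p^{1/2})=\om(\chi)$, which is precisely the asserted root number (and $|\om(\chi)|=1$ follows from $|\g(\chi)|=p^{1/2}$). The only step requiring genuine care is the odd case $e=1$: one must track the factor of $i$ produced by the transform of $tf(t)$, keep the principal branch straight through the substitution $\tau\mapsto-1/\tau$, and carry out the reindexing $m=a-pk$ ($k\in\Z$) onto the residue class $a+p\Z$ with the correct signs; the remainder is routine bookkeeping.
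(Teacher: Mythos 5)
Your proof is correct, and it follows exactly the route the paper indicates: the paper states the proposition as a direct consequence of the Poisson summation formula (without writing out the details), and your argument—Fourier-expanding $\chi$ via the primitive-character identity $\g(\chi^{-1},m)=\chi(m)\g(\chi^{-1})$, applying Poisson summation to the Gaussian with characteristic, and simplifying the constant with $\g(\chi)\g(\chi^{-1})=\chi(-1)p$—is precisely that computation, with the branch of the square root and the sign in the $e=1$ reindexing handled correctly.
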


\begin{corollary} If $\chi(-1)=1$ we have
$$\g(\chi)=p^{1/2}\dfrac{\sum_{m\ge1}\chi(m)\exp(-\pi m^2/pt)}{t^{1/2}\sum_{m\ge1}\chi^{-1}(m)\exp(-\pi m^2t/p)}$$
and if $\chi(-1)=-1$ we have
$$\g(\chi)=p^{1/2}i\dfrac{\sum_{m\ge1}\chi(m)m\exp(-\pi m^2/pt)}{t^{3/2}\sum_{m\ge1}\chi^{-1}(m)m\exp(-\pi n^2t/p)}$$
for any $t$ such that the denominator does not vanish.
\end{corollary}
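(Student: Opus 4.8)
The plan is to specialize the functional equation of the preceding proposition to the imaginary axis. Concretely, I would set $\tau = it$ for a real parameter $t>0$; then $\tau$ lies in the upper half-plane, $-1/\tau = i/t$, and $\tau/i = t$ is a positive real, so that $(\tau/i)^{(2e+1)/2} = t^{e+1/2}$ with no ambiguity in the principal branch of the square root.

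Next I would substitute these values into the definition of $\Th$. Since $e^{i\pi m^2(i/t)/p} = e^{-\pi m^2/(pt)}$ and $e^{i\pi m^2(it)/p} = e^{-\pi m^2 t/p}$, the proposition becomes
$$2\sum_{m\ge1}m^e\chi(m)e^{-\pi m^2/(pt)} = \om(\chi)\,t^{e+1/2}\cdot 2\sum_{m\ge1}m^e\chi^{-1}(m)e^{-\pi m^2 t/p}\;.$$
The factors of $2$ cancel, and solving for the root number gives
$$\om(\chi) = \dfrac{\sum_{m\ge1}m^e\chi(m)e^{-\pi m^2/(pt)}}{t^{e+1/2}\sum_{m\ge1}m^e\chi^{-1}(m)e^{-\pi m^2 t/p}}\;,$$
valid for any $t>0$ for which the denominator is nonzero. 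Finally I would insert the definition $\g(\chi) = i^e p^{1/2}\om(\chi)$ and separate the two cases: the case $e=0$, i.e.\ $\chi(-1)=1$, recovers the first displayed formula, and the case $e=1$, i.e.\ $\chi(-1)=-1$, recovers the second (the exponent $n^2$ appearing there being a typo for $m^2$).

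There is no real obstacle here; the only point worth a sentence is that the denominator is not identically zero, so that the formula is not vacuous. As $t\to+\infty$ the series $\sum_{m\ge1}m^e\chi^{-1}(m)e^{-\pi m^2 t/p}$ is asymptotic to its $m=1$ term $e^{-\pi t/p}$ (using $\chi^{-1}(1)=1$), hence is nonzero for all sufficiently large $t$; moreover for any $t$ at which this denominator does vanish, the functional equation forces the numerator to vanish too, so the stated identity holds exactly at those $t$ where it makes literal sense.
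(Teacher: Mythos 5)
Your derivation is correct and is exactly the intended argument: specialize the theta functional equation to $\tau=it$ with $t>0$, solve for $\om(\chi)$, and use $\g(\chi)=i^ep^{1/2}\om(\chi)$, splitting into $e=0,1$ (and you are right that the $n^2$ in the second display is a typo for $m^2$). The paper gives no further detail for this corollary, so there is nothing more to add.
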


Note that the optimal choice of $t$ is $t=1$, and (at least for $p$ prime)
it seems that the denominator never vanishes (there are counterexamples
when $p$ is not prime, but apparently only four, see \cite{Coh-Zag}).

It follows from this corollary that $\g(\chi)$ can be computed numerically as
a complex number in $\Os(p^{1/2})$ operations. Thus,
if $\chi_1$ and $\chi_2$ are nontrivial characters such that 
$\chi_1\chi_2\ne\eps$ (otherwise $J(\chi_1,\chi_2)$ is trivial to compute),
the formula $J(\chi_1,\chi_2)=\g(\chi_1)\g(\chi_2)/\g(\chi_1\chi_2)$ allows
the computation of $J_2$ \emph{numerically} as a complex number in
$\Os(p^{1/2})$ operations. 

To recover $J$ itself as an algebraic number we could either compute all its
conjugates, but this would require more time than the direct computation of
$J$, or possibly use the LLL algorithm, which although fast, would also
require some time. In practice, to perform computations such as that of
the sum $S(q;z)$ above, we only need
$J$ to sufficient accuracy: we perform all the elementary operations in
$\C$, and since we know that at the end the result will be an integer
for which we know an upper bound, we thus obtain a proven exact result.

More generally, we have generically $J_5(n,n,n,n,n)=\g(\om^n)^5/\g(\om^{5n})$,
which can thus be computed in $\Os(p^{1/2})$ operations. It follows that
$S(p;z)$ can be computed in $\Os(p^{3/2})$ operations, which is slower than
the elementary method seen above. The main advantage is that we do not need
much storage: more precisely, we want to compute $S(p;z)$ to sufficiently
small accuracy that we can recognize it as an integer, so a priori up to
an absolute error of $0.5$. However, we have seen that $(p-1)\mid S(p;z)$:
it is thus sufficient to have an absolute error less than $(p-1)/2$
thus at worse each of the $p-1$ terms in the sum to an absolute error less 
than $1/2$. Since generically $|J_5(n,n,n,n,n)|=p^2$, we need a relative
error less than $1/(2p^2)$, so less than $1/(10p^2)$ on each Gauss sum.
In practice of course this is overly pessimistic, but it does not matter.
For $p\le 10^9$, this means that $19$ decimal digits suffice.

The main term in the theta function computation (with $t=1$) is 
$\exp(-\pi m^2/p)$, so we need $\exp(-\pi m^2/p)\le 1/(100p^2)$, say, in other
words $\pi m^2/p\ge 4.7+2\log(p)$, so $m^2\ge p(1.5+0.7\log(p))$.

This means that we will need the values of $\om(m)$ only up to this limit,
of the order of $O((p\log(p))^{1/2})$, considerably smaller than $p$.
Thus, instead of computing a full discrete logarithm table, which takes
some time but more importantly a lot of memory, we compute only discrete
logarithms up to that limit, using specific algorithms for doing so
which exist in the literature, some of which being quite easy.

A straightforward implementation of this method gives timings for
$k=2$, $3$, $4$, and $5$ of $0.02$, $0.40$, $16.2$, and $663$ seconds
respectively, compatible with $\Os(p^{3/2})$ time. This is faster than
the completely na\"\i ve method, but slower than the method explained above.
Its advantage is that it requires much less memory. For $p$ around $10^7$,
however, it is much too slow so this method is rather useless. We will see
that its usefulness is mainly in the context where it was invented, i.e.,
for $L$-functions of Dirichlet characters.

\subsection{Using the Gross--Koblitz Formula}

This section is of a higher mathematical level than the
preceding ones, but is very important since it gives the best method for
computing Gauss (and Jacobi) sums. We refer to Sections 11.6 and 11.7 of
\cite{Coh4} for complete details, and urge the reader to try to understand
what follows.

In the preceding sections, we have considered Gauss sums as belonging to a 
number of different rings: the ring $\Z[\z_{q-1},\z_p]$ or the field $\C$ of 
complex numbers, and for Jacobi sums the ring $\Z[\z_{q-1}]$, but also the
ring $\Z[X]/(X^{q-1}-1)$, and again the field $\C$.

In number theory there exist other algebraically closed fields which are
useful in many contexts, the fields $\C_\ell$ of $\ell$-adic numbers, one
for each prime number $\ell$. These fields come with a topology and analysis
which are rather special: one of
the main things to remember is that a sequence of elements tends to $0$
if and only the $\ell$-adic valuation of the elements (the largest exponent
of $\ell$ dividing them) tends to infinity. For instance $2^m$ tends to $0$
in $\C_2$, but in no other $\C_{\ell}$, and $15^m$ tends to $0$ in
$\C_3$ and in $\C_5$.

The most important subrings of $\C_{\ell}$ are the ring $\Z_{\ell}$
of $\ell$-adic integers, the elements of which can be written as
$x=a_0+a_1\ell+\cdots+a_k\ell^k+\cdots$ with $a_j\in[0,\ell-1]$, and its field
of fractions $\Q_{\ell}$, which contains $\Q$, whose elements can be
represented in a similar way as $x=a_{-m}\ell^{-m}+a_{-(m-1)}\ell^{-(m-1)}+\cdots+a_{-1}\ell^{-1}+a_0+a_1\ell+\cdots.$

In dealing with Gauss and Jacobi sums over $\F_q$ with $q=p^f$,
the only $\C_{\ell}$ which is of use for us is the one with $\ell=p$
(in highbrow language, we are going to use implicitly \emph{crystalline}
$p$-adic methods, while for $\ell\ne p$ it would be \emph{\'etale} $\ell$-adic
methods).

Apart from this relatively strange topology, many definitions and results 
valid on $\C$ have analogues in $\C_p$. The main object that we will
need in our context is the analogue of the gamma function, naturally called
the $p$-adic gamma function, in the present case due to Morita (there is
another one, see Section 11.5 of \cite{Coh4}), and denoted $\G_p$.
Its definition is in fact quite simple:

\begin{definition} For $s\in\Z_p$ we define
$$\G_p(s)=\lim_{m\to s}(-1)^m\prod_{\substack{0\le k<m\\p\nmid k}}k\;,$$
where the limit is taken over any sequence of positive integers $m$
tending to $s$ for the $p$-adic topology.\end{definition}

It is of course necessary to show that this definition makes sense,
but this is not difficult, and most of the important properties
of $\G_p(s)$, analogous to those of $\G(s)$, can be deduced from it.

\begin{exercise} Choose $p=5$ and $s=-1/4$, so that $p$-adically
$s=1/(1-5)=1+5+5^2+5^3+\cdots$. 
\begin{enumerate}\item Compute the right hand side of
the above definition with small $5$-adic accuracy for $m=1$, $1+5$,
and $1+5+5^2$.
\item It is in fact easy to compute that
$$\G_5(-1/4)=4 + 4\cdot5 + 5^3 + 3\cdot5^4 + 2\cdot5^5 + 2\cdot5^6 + 2\cdot5^7 + 4\cdot5^8+\cdots$$
Using this, show that $\G_5(-1/4)^2/16$ seems to be a $5$-adic root of
the polynomial $5X^2+4X+1$. This is in fact true, see the Gross--Koblitz
formula below.\end{enumerate}
\end{exercise}

We need a much deeper property of $\G_p(s)$ known as the
Gross--Koblitz formula: it is in fact an analogue of a formula for
$\G(s)$ known as the Chowla--Selberg formula, and it is also closely
related to the Davenport--Hasse relations that we have seen above.

The proof of the Gross--Koblitz formula was initially given using tools of
crystalline cohomology, but an elementary proof due to A.~Robert now 
exists, see for instance Section 11.7 of \cite{Coh4} once again.

The Gross--Koblitz formula tells us that certain products of $p$-adic gamma
functions at \emph{rational} arguments are in fact \emph{algebraic
numbers}, more precisely \emph{Gauss sums} (explaining their
importance for us). This is quite surprising since usually 
transcendental functions such as $\G_p$ take transcendental values.

To give a specific example, we have $\G_5(1/4)^2=-2+\sqrt{-1}$,
where $\sqrt{-1}$ is the square root in $\Z_5$ congruent to
$3$ modulo $5$. In view of the elementary properties of the
$p$-adic gamma function, this is equivalent to the result stated
in the above exercise as $\G_5(-1/4)^2=-(16/5)(2+\sqrt{-1})$.

\smallskip

Before stating the formula we need to collect a number of facts,
both on classical algebraic number theory and on $p$-adic analysis.
None are difficult to prove, see Chapter 4 of \cite{Coh3}. Recall that 
$q=p^f$.

\medskip

$\bullet$ We let $K=\Q(\z_p)$ and $L=K(\z_{q-1})=\Q(\z_{q-1},\z_p)=\Q(\z_{p(q-1)})$, so that $L/K$ is an extension of degree $\phi(q-1)$.
There exists a unique prime ideal $\p$ of $K$ above $p$, and we have
$\p=(1-\z_p)\Z_K$ and $\p^{p-1}=p\Z_K$, and $\Z_K/\p\isom\F_p$. The prime 
ideal $\p$ splits into a product of $g=\phi(q-1)/f$ prime ideals 
$\GP_j$ of degree $f$ in the extension $L/K$, i.e., $\p\Z_L=\GP_1\cdots\GP_g$,
and for any prime ideal $\GP=\GP_j$ we have $\Z_L/\GP\isom\F_q$. 

\begin{exercise} Prove directly that for any $f$ we have $f\mid\phi(p^f-1)$.
\end{exercise}

$\bullet$ Fix one of the prime ideals $\GP$ as above. There exists a unique
group isomorphism $\om=\om_{\GP}$ from $(\Z_L/\GP)^*$ to the group of 
$(q-1)$st roots of unity in $L$, such that for all $x\in(\Z_L/\GP)^*$ we have
$\om(x)\equiv x\pmod{\GP}$. It is called the \emph{Teichm\"uller character},
and it can be considered as a character of order $q-1$ on 
$\F_q^*\isom(\Z_L/\GP)^*$. We can thus \emph{instantiate} the definition of
a Gauss sum over $\F_q$ by defining it as $\g(\om_{\GP}^{-r})\in L$.

\smallskip

$\bullet$ Let $\z_p$ be a primitive $p$th root of unity in $\C_p$,
fixed once and for all. There exists a unique $\pi\in\Z[\z_p]$
satisfying $\pi^{p-1}=-p$, $\pi\equiv1-\z_p\pmod{\pi^2}$, and
we set $K_{\p}=\Q_p(\pi)=\Q_p(\z_p)$, and $L_{\GP}$ the \emph{completion}
of $L$ at $\GP$. The field extension $L_{\GP}/K_{\p}$ is Galois, with Galois 
group isomorphic to $\Z/f\Z$ (which is the same as the Galois group of 
$\F_q/\F_p$, where $\F_p$ (resp., $\F_q$) is the so-called 
\emph{residue field} of $K$ (resp., $L$)).

\smallskip

$\bullet$ We set the following:

\begin{definition} We define the \emph{$p$-adic Gauss sum} by
$$\g_q(r)=\sum_{x\in L_{\GP},\ x^{q-1}=1}x^{-r}\z_p^{\Tr_{L_{\GP}/K_{\p}}(x)}\in L_{\GP}\;.$$
\end{definition}

Note that this depends on the choice of $\z_p$, or equivalently of $\pi$.
Since $\g_q(r)$ and $\g(\om_{\GP}^{-r})$ are algebraic numbers, it is
clear that they are equal, although viewed in fields having different 
topologies. Thus, results about $\g_q(r)$ translate immediately into results
about $\g(\om_{\GP}^{-r})$, hence about general Gauss sums over finite fields.

\smallskip

The Gross--Koblitz formula is as follows:

\begin{theorem}[Gross--Koblitz] Denote by $s(r)$ the sum of digits in base $p$
of the integer $r\bmod{(q-1)}$, i.e., of the unique integer $r'$ such that
$r'\equiv r\pmod{q-1}$ and $0\le r'<q-1$. We have
$$\g_q(r)=-\pi^{s(r)}\prod_{0\le i<f}\G_p\left(\left\{\dfrac{p^{f-i}r}{q-1}\right\}\right)\;,$$
where $\{x\}$ denotes the fractional part of $x$.\end{theorem}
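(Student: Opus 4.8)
I would prove the formula by Dwork's $p$-adic analytic method: the idea is to turn the finite, purely combinatorial sum defining $\g_q(r)$ into the value of a $p$-adic analytic function whose Taylor coefficients are visibly assembled from values of $\G_p$. The central device is Dwork's \emph{splitting function}
$$E(X)=\exp\bigl(\pi(X-X^p)\bigr)=\sum_{n\ge0}c_nX^n\in K_{\p}[[X]]\;,$$
with $\pi$ the chosen root of $\pi^{p-1}=-p$ normalised by $\pi\equiv1-\z_p\pmod{\pi^2}$. Two facts about $E$ must be recorded first, both classical (``Dwork's lemma''): first, $v_{\pi}(c_n)$ grows at least linearly in $n$, so that $E$ converges on a disc of radius strictly greater than $1$ and in particular $E(t)$ makes sense for every $(q-1)$st root of unity $t$ in $L_{\GP}$; second, $E$ \emph{splits the additive character}, i.e.\ for a Teichm\"uller element $t$ (so $t^q=t$) one has $\prod_{0\le i<f}E\bigl(t^{p^i}\bigr)=\z_p^{\Tr_{L_{\GP}/K_{\p}}(t)}$, whence in particular $E(1)=\z_p$.

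\textbf{Unfolding the Gauss sum.} The $(q-1)$st roots of unity in $L_{\GP}$ are precisely the Teichm\"uller representatives of $\F_q^*$, so substituting the splitting into the definition of $\g_q(r)$ and expanding each factor as a power series gives
$$\g_q(r)=\sum_{t^{q-1}=1}t^{-r}\prod_{0\le i<f}E\bigl(t^{p^i}\bigr)=\sum_{t^{q-1}=1}t^{-r}\sum_{n_0,\dots,n_{f-1}\ge0}\Bigl(\prod_{0\le i<f}c_{n_i}\Bigr)\,t^{\sum_i n_i p^i}\;.$$
The convergence fact legitimises interchanging the summations, and the inner sum over $t$ is then an orthogonality relation for $(q-1)$st roots of unity: it vanishes unless $\sum_i n_i p^i\equiv r\pmod{q-1}$, in which case it equals a fixed nonzero constant. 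Thus, up to that constant, $\g_q(r)$ is the sum of $\prod_{0\le i<f}c_{n_i}$ over all tuples $(n_0,\dots,n_{f-1})$ with $\sum_i n_i p^i\equiv r\pmod{q-1}$. A short combinatorial lemma identifies the base-$p$ digit tuple $(r_0,\dots,r_{f-1})$ of the representative $r'\equiv r\pmod{q-1}$, $0\le r'<q-1$, as the admissible tuple of least total weight $\sum_i r_i=s(r)$, and the corresponding term already carries $\pi$-valuation exactly $s(r)$.

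\textbf{Recognising $\G_p$.} It remains to evaluate the $\pi$-unit part of this sum \emph{exactly}. Organising the sum over admissible tuples according to their digit structure, one recognises, block by block, the defining series of Morita's $p$-adic gamma function: the suitably normalised generating function $\sum_{n\ge0}c_nX^n$ is exactly the analytic function whose values at the $f$ rational arguments $\{p^{f-i}r/(q-1)\}$, $0\le i<f$, are the factors $\G_p\bigl(\{p^{f-i}r/(q-1)\}\bigr)$ in the statement. Here the Frobenius twist $r\mapsto pr\bmod(q-1)$ — which cyclically permutes the digits $(r_0,\dots,r_{f-1})$ — matches the cyclic shift $i\mapsto i-1$ in the product $\prod_{0\le i<f}\G_p\bigl(\{p^{f-i}r/(q-1)\}\bigr)$, and collecting the powers of $\pi$ contributed along the way produces the prefactor $-\pi^{s(r)}$. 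Putting the pieces together gives the asserted identity.

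\textbf{Main obstacle and remarks.} The genuinely hard step is the exact identification in the previous paragraph: showing that the full sum over admissible tuples collapses to $-\pi^{s(r)}\prod_{0\le i<f}\G_p(\cdots)$ \emph{on the nose}, not merely modulo a higher power of $\p$. The mere valuation statement $\g_q(r)=\pi^{s(r)}\cdot(\text{unit})$ is comparatively easy and is essentially Stickelberger's congruence; the whole difficulty lies in pinning the unit down. In Dwork's original argument this is the point where the Frobenius endomorphism on a suitable reduced ($p$-adic/crystalline) cohomology group enters; A.~Robert's elementary proof replaces that input by a careful direct analysis of the $p$-adic convergence of the series above together with the digit combinatorics, and it is this route (see Section~11.7 of \cite{Coh4}) that I would follow in order to keep the proof self-contained. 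Everything else — the interchange of summations, the Stickelberger-type valuation bound, the minimal-weight lemma, and the bookkeeping of signs and of the choice of $\z_p$ implicit in the normalisation $\pi\equiv1-\z_p\pmod{\pi^2}$ — is routine once the analytic identity for $\sum_{n\ge0}c_nX^n$ is in place.
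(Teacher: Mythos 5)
A preliminary remark: the paper itself contains no proof of this theorem. It states the Gross--Koblitz formula and refers to Section 11.7 of \cite{Coh4}, saying only that the original proof used crystalline cohomology and that an elementary $p$-adic proof due to A.~Robert exists. So there is no in-paper argument to compare you with; your sketch follows the standard Dwork-style $p$-adic analytic route that underlies the cited elementary proof, and its preparatory parts (convergence of the splitting function $E(X)=\exp(\pi(X-X^p))$, the splitting of the additive character over Teichm\"uller points, the unfolding of $\g_q(r)$ and the orthogonality of the $(q-1)$st roots of unity, and the Stickelberger-type valuation $v_\pi(\g_q(r))=s(r)$ coming from the minimal-weight admissible tuple) are all correct and standard.

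However, there is a genuine gap precisely at the step you label ``Recognising $\G_p$'', and the mechanism you describe there is not correct as stated. After orthogonality you are left with $(q-1)$ times the sum of $\prod_{0\le i<f}c_{n_i}$ over all tuples with $\sum_i n_ip^i\equiv r\pmod{q-1}$; the entire content of the Gross--Koblitz formula is the \emph{exact} evaluation of this sum as $-\pi^{s(r)}\prod_{0\le i<f}\G_p\bigl(\{p^{f-i}r/(q-1)\}\bigr)$, and this is not obtained by ``recognising, block by block, the defining series of Morita's gamma function'': $E(X)=\sum_n c_nX^n$ is the character-splitting series, its values at Teichm\"uller points are the $\z_p^{\Tr(t)}$ you already used, and neither its coefficients nor its values at the rational arguments $\{p^{f-i}r/(q-1)\}$ are values of $\G_p$, which is defined as a $p$-adic limit of modified factorials, not by a power series attached to $E$. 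The known ways to close this step are substantial: Dwork's computation of a Frobenius matrix on a suitable $p$-adic cohomology (essentially the crystalline argument of Gross--Koblitz), or Robert's elementary argument, which does not match power series but combines the functional equation and continuity of $\G_p$, congruence estimates on the $c_n$, and Hasse--Davenport-type reductions. Your proposal acknowledges this is the hard point but then simply defers to Section 11.7 of \cite{Coh4} --- which is exactly the reference the paper gives in lieu of a proof --- so beyond the (correct) strategy and the comparatively easy valuation statement, the decisive identification has not actually been established.
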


Let us show how this can be used to compute Gauss or Jacobi sums, and in
particular our sum $S(q;z)$. Assume for simplicity that $f=1$, in other
words that $q=p$: the right hand
side is thus equal to $-\pi^{s(r)}\G_p(\{pr/(p-1)\})$. Since we can always
choose $r$ such that $0\le r<p-1$, we have $s(r)=r$ and
$\{pr/(p-1)\}=\{r+r/(p-1)\}=r/(p-1)$, so the RHS is $-\pi^r\G_p(r/(p-1))$.
Now an easy property of $\G_p$ is that it is differentiable: recall that $p$
is ``small'' in the $p$-adic topology, so $r/(p-1)$ is close to $-r$, more
precisely $r/(p-1)=-r+pr/(p-1)$ (this is how we obtained it in the first 
place!). Thus in particular, if $p>2$ we have the Taylor expansion 

\begin{align*}\G_p(r/(p-1))&=\G_p(-r)+(pr/(p-1))\G'_p(-r)+O(p^2)\\
&=\G_p(-r)-pr\G'_p(-r)+O(p^2)\;.\end{align*}

Since $\g_q(r)$ depends only on $r$ modulo $p-1$, we will assume that
$0\le r<p-1$. In that case it is easy to show from the definition that 
$$\G_p(-r)=1/r!\text{\quad and\quad}\G'_p(-r)=(-\ga_p+H_r)/r!\;,$$
where  $H_r=\sum_{1\le n\le r}1/n$ is the harmonic sum, and $\ga_p=-\G'_p(0)$
is the $p$-adic analogue of Euler's constant. 

\begin{exercise} Prove these formulas, as well as the congruence for
$\ga_p$ given below.
\end{exercise}

There exist infinite ($p$-adic)
series enabling accurate computation of $\ga_p$, but since we only need it 
modulo $p$, we use the easily proved congruence
$\ga_p\equiv((p-1)!+1)/p=W_p\pmod{p}$, the so-called \emph{Wilson quotient}.

\smallskip

We will see below that, as a consequence of the Weil conjectures proved
by Deligne, it is sufficient to compute $S(p;z)$ modulo $p^2$. Thus, in the
following $p$-adic computation we only work modulo $p^2$.

The Gross--Koblitz formula tells us that for $0\le r<p-1$ we have
$$\g_q(r)=-\dfrac{\pi^r}{r!}(1-pr(H_r-W_p)+O(p^2))\;.$$
It follows that for $(p-1)\nmid 5r$ we have
$$J(-r,-r,-r,-r,-r)=\dfrac{\g(\om_{\GP})^5}{\g(\om_{\GP}^5)}=\dfrac{\g_q(r)^5}{\g_q(5r)}=\pi^{f(r)}(a+bp+O(p^2))\;,$$
where $a$ and $b$ will be computed below and
\begin{align*}f(r)&=5r-(5r\bmod{p-1})=5r-(5r-(p-1)\lfloor5r/(p-1)\rfloor)\\
&=(p-1)\lfloor 5r/(p-1)\rfloor\;,\end{align*}
so that $\pi^{f(r)}=(-p)^{\lfloor 5r/(p-1)\rfloor}$ since $\pi^{p-1}=-p$.
Since we want the result modulo $p^2$, we consider three intervals together
with special cases:

\begin{enumerate}\item If $r>2(p-1)/5$ but $(p-1)\nmid 5r$, we have
$$J(-r,-r,-r,-r,-r)\equiv0\pmod{p^2}\;.$$
\item If $(p-1)/5<r<2(p-1)/5$ we have
$$J(-r,-r,-r,-r,-r)\equiv(-p)\dfrac{(5r-(p-1))!}{r!^5}\pmod{p^2}\;.$$
\item If $0<r<(p-1)/5$ we have $f(r)=0$ and $0\le 5r<(p-1)$ hence
\begin{align*}J(-r,-r,-r,-r,-r)&=\dfrac{(5r)!}{r!^5}(1-5pr(H_r-W_p)+O(p^2))\cdot\\
&\phantom{=}\cdot(1+5pr(H_{5r}-W_p)+O(p^2))\\
&\equiv\dfrac{(5r)!}{r!^5}(1+5pr(H_{5r}-H_r))\pmod{p^2}\;.\end{align*}
\item Finally, if $r=j(p-1)/5$ we have $J(-r,-r,-r,-r,-r)=p^4\equiv0\pmod{p^2}$
if $j=0$, and otherwise $J(-r,-r,-r,-r,-r)=-\g_q(r)^5/p$, and since the
$p$-adic valuation of $\g_q(r)$ is equal to $r/(p-1)=j/5$, that of
$J(-r,-r,-r,-r,-r)$ is equal to $j-1$, which is greater or equal to $2$
as soon as $j\ge3$. For $j=2$, i.e., $r=2(p-1)/5$, we thus have
$$J(-r,-r,-r,-r,-r)\equiv p\dfrac{1}{r!^5}\equiv(-p)\dfrac{(5r-(p-1))!}{r!^5}\pmod{p^2}\;,$$
which is the same formula as for $(p-1)/5<r\le 2(p-1)/5$.
For $j=1$, i.e., $r=(p-1)/5$, we thus have
$$J(-r,-r,-r,-r,-r)\equiv-\dfrac{1}{r!^5}(1-5pr(H_r-W_p))\pmod{p^2}\;,$$
while on the other hand 
$$(5r)!=(p-1)!=-1+pW_p\equiv-1-p(p-1)W_p\equiv-1-5prW_p\;,$$ and 
$H_{5r}=H_{p-1}\equiv0\pmod{p}$ (Wolstenholme's congruence, easy), so
\begin{align*}\dfrac{(5r)!}{r!^5}(1+5pr(H_{5r}-H_r))&\equiv-\dfrac{1}{r!^5}(1-5prH_r)(1+5prW_p)\\
&\equiv-\dfrac{1}{r!^5}(1-5pr(H_r-W_p))\pmod{p^2}\;,\end{align*}
which is the same formula as for $0<r<(p-1)/5$.
\end{enumerate}

An important point to note is that we are working $p$-adically, but the
final result $S(p;z)$ being an integer, it does not matter at the end.
There is one small additional detail to take care of: we have
\begin{align*}S(p;z)&=\sum_{0\le r\le p-2}\om^{-r}(z)J(r,r,r,r,r)\\
&=\sum_{0\le r\le p-2}\om^r(z)J(-r,-r,-r,-r,-r)\;,\end{align*}
so we must express $\om^r(z)$ in the $p$-adic setting. Since
$\om=\om_{\GP}$ is the \emph{Teichm\"uller character}, in the $p$-adic
setting it is easy to show that $\om(z)$ is the $p$-adic limit of
$z^{p^k}$ as $k\to\infty$. in particular $\om(z)\equiv z\pmod{p}$, but more
precisely $\om(z)\equiv z^p\pmod{p^2}$. 

\begin{exercise} Let $p\ge3$. Assume that $z\in\Z_p\setminus p\Z_p$ (for 
instance that $z\in\Z\setminus p\Z$). Prove that $z^{p^k}$ has a $p$-adic 
limit $\om(z)$ when $k\to\infty$, that $\om^{p-1}(z)=1$, that 
$\om(z)\equiv z\pmod{p}$, and $\om(z)\equiv z^p\pmod{p^2}$.
\end{exercise}

We have thus proved the following

\begin{proposition} We have
\begin{align*}S(p;z)&\equiv\sum_{0<r\le(p-1)/5}\dfrac{(5r)!}{r!^5}(1+5pr(H_{5r}-H_r))z^{pr}\\
&\phantom{=}-p\sum_{(p-1)/5<r\le2(p-1)/5}\dfrac{(5r-(p-1))!}{r!^5}z^r\pmod{p^2}\;.\end{align*}
In particular
$$S(p;z)\equiv\sum_{0<r\le(p-1)/5}\dfrac{(5r)!}{r!^5}z^r\pmod{p}\;.$$
\end{proposition}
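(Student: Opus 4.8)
The plan is to collect the pieces that have just been assembled: the Gross--Koblitz expansion of the $p$-adic Gauss sum $\g_q(r)$ modulo $p^2$, the case-by-case evaluation of $J(-r,-r,-r,-r,-r)$ worked out in the four items above, and the $p$-adic description of the Teichm\"uller character $\om=\om_{\GP}$. Since $S(p;z)$ is a rational integer, it suffices to establish the congruence in $\Z_p$; equivalently, to compute $S(p;z)$ modulo $p^2$ in $\C_p$.

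First I would start from $\g_q(r)=-(\pi^r/r!)\,(1-pr(H_r-W_p)+O(p^2))$ for $0\le r<p-1$, which is the form of the Gross--Koblitz formula derived above using $\G_p(-r)=1/r!$, $\G'_p(-r)=(-\ga_p+H_r)/r!$ and $\ga_p\equiv W_p\pmod p$. Forming $J(-r,-r,-r,-r,-r)=\g_q(r)^5/\g_q(5r)$ and isolating the surviving power of $\pi$, namely $\pi^{f(r)}$ with $f(r)=(p-1)\lfloor5r/(p-1)\rfloor$, one rewrites $\pi^{f(r)}=(-p)^{\lfloor5r/(p-1)\rfloor}$ since $\pi^{p-1}=-p$. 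This is exactly what separates the three regimes: for $r>2(p-1)/5$ the power of $p$ is $\ge2$ so $J(-r,\dots,-r)\equiv0\pmod{p^2}$; for $(p-1)/5<r<2(p-1)/5$ one factor $-p$ appears and the unit factors cancel modulo $p$ to give $-p\,(5r-(p-1))!/r!^5$; for $0<r<(p-1)/5$ one has $f(r)=0$, $5r<p-1$, and multiplying the two Taylor expansions gives $(5r)!/r!^5\cdot(1+5pr(H_{5r}-H_r))+O(p^2)$. The boundary arguments $r=j(p-1)/5$ (only relevant when $5\mid p-1$) are handled separately: $j=0$ gives $J=p^4\equiv0$, $j\ge3$ gives $p$-adic valuation $\ge2$, and $j=1,2$ must be checked to coincide with the formulas of the neighbouring intervals. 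This last check is the only genuinely computational step: it uses Wilson's congruence $(p-1)!\equiv-1+pW_p\pmod{p^2}$ together with Wolstenholme's $H_{p-1}\equiv0\pmod p$ to reconcile the two expressions for $(5r)!$ at $r=(p-1)/5$.

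Next I would reindex: writing $n\equiv-r\pmod{p-1}$ and using periodicity modulo $p-1$, $S(p;z)=\sum_{0\le r\le p-2}\om^r(z)\,J(-r,-r,-r,-r,-r)$; the $r=0$ term is $J_5(\eps,\dots,\eps)=p^4\equiv0\pmod{p^2}$ and the range $2(p-1)/5<r\le p-2$ contributes $0$ modulo $p^2$, so only $0<r\le2(p-1)/5$ survives. Then I would insert $\om(z)\equiv z^p\pmod{p^2}$ (the Teichm\"uller congruence recalled above), hence $\om^r(z)\equiv z^{pr}\pmod{p^2}$: in the range $0<r\le(p-1)/5$ the Jacobi sum is a $p$-adic unit times the displayed coefficient, so the factor $z^{pr}$ is kept in full; in the range $(p-1)/5<r\le2(p-1)/5$ the Jacobi sum already carries a factor $p$, so only $\om^r(z)\equiv z^{pr}\equiv z^r\pmod p$ is needed, producing $z^r$. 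Summing gives the first displayed congruence, and reducing it modulo $p$ (where $5pr(H_{5r}-H_r)\equiv0$, the whole second sum $\equiv0$, and $z^{pr}\equiv z^r$ by Fermat) gives the ``in particular'' statement.

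I expect the only real obstacle to be the boundary cases $r=(p-1)/5$ and $r=2(p-1)/5$: here one cannot simply substitute into the generic formula and must instead verify, via Wilson's and Wolstenholme's congruences, that the direct evaluation of $-\g_q(r)^5/p$ at these points reproduces the generic interval formulas. Everything else is the routine propagation of $O(p^2)$ errors through fifth powers and a quotient, plus the already-established Gross--Koblitz input.
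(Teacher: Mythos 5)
Your proposal is correct and follows essentially the same route as the paper: the Gross--Koblitz expansion of $\g_q(r)$ modulo $p^2$, the three-regime analysis of $J(-r,\dotsc,-r)$ via $\pi^{f(r)}=(-p)^{\lfloor 5r/(p-1)\rfloor}$ with the boundary cases $r=j(p-1)/5$ reconciled through the Wilson quotient and Wolstenholme's congruence, and finally the substitution $\om^r(z)\equiv z^{pr}\pmod{p^2}$ (respectively $z^r\pmod p$ where a factor $p$ is already present). The paper's proof is exactly this assembly of the preceding computations, so nothing further is needed.
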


\begin{remarks}{\rm \begin{enumerate}
\item Note that, as must be the case, all mention of $p$-adic numbers has 
disappeared from this formula. We used the $p$-adic setting only in the proof.
It can be proved ``directly'', but with some difficulty.
\item We used the Taylor expansion only to order $2$. It is of course possible
to use it to any order, thus giving a generalization of the above proposition
to any power of $p$.\end{enumerate}}
\end{remarks}

The point of giving all these details is as follows: it is easy to show that
$(p-1)\mid S(p;z)$ (in fact we have seen this in the elementary method above).
We can thus easily compute $S(p;z)$ modulo $p^2(p-1)$. On the other hand,
it is possible to prove (but not easy, it is part of the Weil conjectures
proved by Deligne), that $|S(p;z)-p^4|<4p^{5/2}$. It follows that as soon
as $8p^{5/2}<p^2(p-1)$, in other words $p\ge67$, the computation that we 
perform modulo $p^2$ is sufficient to determine $S(p;z)$ exactly. It is
clear that the time to perform this computation is $\Os(p)$, and in fact
much faster than any that we have seen.

\smallskip

In fact, implementing in a reasonable way the algorithm 
given by the above proposition gives timings for $p\approx 10^k$ for
$k=2$, $3$, $4$, $5$, $6$, $7$, and $8$ of $0$, $0.01$, $0.03$, $0.21$, $2.13$,
$21.92$, and $229.6$ seconds respectively, of course much faster and 
compatible with $\Os(p)$ time. The great additional advantage is that we 
use very small memory. This is therefore the best known method.

\smallskip

{\bf Numerical example:} Choose $p=10^6+3$ and $z=2$. In $2.13$ seconds we find
that $S(p;z)\equiv a\pmod{p^2}$ with $a=356022712041$. Using the Chinese
remainder formula
$$S(p;z)=p^4+((a-(1+a)p^2)\bmod((p-1)p^2))\;,$$
we immediately deduce that
$$S(p;z)=1000012000056356142712140\;.$$

\smallskip

Here is a summary of the timings (in seconds) that we have mentioned:

\bigskip

\centerline{
\begin{tabular}{|c||c|c|c|c|c|c|c|}
\hline
$k$ 	  & $2$ & $3$ & $4$ & $5$ & $6$ & $7$ & $8$ \\
\hline\hline
Na\"\i ve & $0.03$ & $1.56$ & $149$ & $*$ & $*$ & $*$ & $*$\\
\hline
Theta & $0.02$ & $0.40$ & $16.2$ & $663$ & $*$ & $*$ & $*$\\
\hline
Mod $X^{q-1}-1$ & $0$ & $0.02$ & $0.08$ & $0.85$ & $9.90$ & $123$ & $*$\\
\hline
Gross--Koblitz & $0$ & $0.01$ & $0.03$ & $0.21$ & $2.13$ & $21.92$ & $229.6$\\
\hline
\end{tabular}}

\medskip

\centerline{Time for computing $S(p;z)$ for $p\approx10^k$}

\section{Gauss and Jacobi Sums over $\Z/N\Z$}

Another context in which one encounters Gauss sums is over finite rings
such as $\Z/N\Z$. The theory coincides with that over $\F_q$ when
$q=p=N$ is prime, but is rather different otherwise. These other Gauss sums
enter in the important theory of \emph{Dirichlet characters}.

\subsection{Definitions}

We recall the following definition:

\begin{definition} Let $\chi$ be a (multiplicative) character from the
multiplicative group $(\Z/N\Z)^*$ of invertible elements of $\Z/N\Z$ to
the complex numbers $\C$.
We denote by abuse of notation again by $\chi$ the map from $\Z$ to $\C$
defined by $\chi(x)=\chi(x\bmod N)$ when $x$ is coprime to $N$, and
$\chi(x)=0$ if $x$ is not coprime to $N$, and call it the Dirichlet character
modulo $N$ associated to $\chi$.\end{definition}

It is clear that a Dirichlet character satisfies $\chi(xy)=\chi(x)\chi(y)$
for all $x$ and $y$, that $\chi(x+N)=\chi(x)$, and that $\chi(x)=0$
if and only if $x$ is not coprime with $N$. Conversely, it immediate that
these properties characterize Dirichlet characters.

A crucial notion (which has no equivalent in the context of characters of
$\F_q^*$) is that of \emph{primitivity}:

Assume that $M\mid N$. If $\chi$ is a Dirichlet character modulo $M$, we can 
transform it into a character $\chi_N$ modulo $N$ by setting 
$\chi_N(x)=\chi(x)$ if $x$ is coprime to $N$, and $\chi_N(x)=0$ otherwise.
We say that the characters $\chi$ and $\chi_N$ are \emph{equivalent}.
Conversely, if $\psi$ is a character modulo $N$, it is not always true that
one can find $\chi$ modulo $M$ such that $\psi=\chi_N$. If it is possible,
we say that $\psi$ \emph{can be defined modulo $M$}.

\begin{definition} Let $\chi$ be a character modulo $N$. We say that
$\chi$ is a \emph{primitive character} if $\chi$ cannot be defined modulo
$M$ for any proper divisor $M$ of $N$, i.e., for any $M\mid N$ such that 
$M\ne N$.\end{definition}

\begin{exercise} Assume that $N\equiv2\pmod4$. Show that there do not exist
any primitive characters modulo $N$.
\end{exercise}

\begin{exercise} Assume that $p^a\mid N$ with $p$ prime. Show that if $\chi$
is a primitive character modulo $N$, the \emph{order} of $\chi$ (the smallest
$k$ such that $\chi^k$ is a trivial character) is \emph{divisible}
by $p^{a-1}$.
\end{exercise}

As we will see, questions about general Dirichlet characters can always be
reduced to questions about primitive characters, and the latter have much
nicer properties.

\begin{proposition} Let $\chi$ be a character modulo $N$. There exists
a divisor $f$ of $N$ called the \emph{conductor} of $\chi$ (this $f$ has 
nothing to do with the $f$ used above such that $q=p^f$), having the following
properties:
\begin{enumerate}\item The character $\chi$ can be defined modulo $f$,
in other words there exists a character $\psi$ modulo $f$ such that
$\chi=\psi_N$ using the notation above.
\item $f$ is the smallest divisor of $N$ having this property.
\item The character $\psi$ is a primitive character modulo $f$.
\end{enumerate}\end{proposition}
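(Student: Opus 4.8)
The plan is to introduce the set $D(\chi)$ of divisors $M$ of $N$ for which $\chi$ can be defined modulo $M$, prove that it is closed under taking greatest common divisors, and then define $f$ to be its least element. The first step is a reformulation in group-theoretic terms: writing $U_M=\ker\bigl((\Z/N\Z)^*\twoheadrightarrow(\Z/M\Z)^*\bigr)$ for the subgroup of classes congruent to $1\pmod M$, I would show that $M\in D(\chi)$ if and only if $\chi$ is trivial on $U_M$. One implication is immediate: if $\chi=\psi_N$ with $\psi$ a character modulo $M$, then $\chi(x)=\psi(1)=1$ for every $x$ coprime to $N$ with $x\equiv1\pmod M$. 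For the converse one uses the elementary fact that for $M\mid N$ the reduction map $(\Z/N\Z)^*\to(\Z/M\Z)^*$ is surjective (every residue class mod $M$ coprime to $M$ contains an integer coprime to $N$, obtained by adjusting a representative by a multiple of $M$ supported on the primes dividing $N$ but not $M$); a character trivial on $U_M$ then descends to a character $\psi$ of $(\Z/M\Z)^*$, and $\chi=\psi_N$ since $\psi_N(x)=\psi(x\bmod M)=\chi(x)$ for $x$ coprime to $N$ and $\psi_N(x)=0=\chi(x)$ otherwise.

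The crux is the closure of $D(\chi)$ under gcd. Given $M_1,M_2\in D(\chi)$ and $d=\gcd(M_1,M_2)$, by the reformulation it suffices to establish the group identity $U_d=U_{M_1}U_{M_2}$ — the right-hand side being a subgroup because the ambient group is abelian — for then a character trivial on both $U_{M_1}$ and $U_{M_2}$ is trivial on their product $U_d$. The inclusion $U_{M_1}U_{M_2}\subseteq U_d$ is clear since $d$ divides both $M_1$ and $M_2$. For the reverse, set $L=\mathrm{lcm}(M_1,M_2)$ and take $x$ coprime to $N$ with $x\equiv1\pmod d$. Because $x\equiv1\pmod d$, the Chinese remainder theorem (combined with the surjectivity lemma above, used to lift a solution modulo $L$ to an integer coprime to $N$) produces units $y,z$ modulo $N$ with $y\equiv1\pmod{M_1}$, $y\equiv x\pmod{M_2}$, $z\equiv x\pmod{M_1}$, $z\equiv1\pmod{M_2}$. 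Then $yz\equiv x$ modulo $M_1$ and modulo $M_2$, hence modulo $L$, so $w=x(yz)^{-1}$ lies in $U_{M_1}\cap U_{M_2}\subseteq U_{M_1}$, whence $x=(wy)z\in U_{M_1}U_{M_2}$.

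The remainder is bookkeeping. Since $D(\chi)$ is nonempty (it contains $N$), finite, and closed under gcd, it is in fact closed under arbitrary gcd's and has a least element $f=\gcd\bigl(D(\chi)\bigr)\in D(\chi)$ that divides every member of $D(\chi)$; this yields properties (1) and (2), with $\psi$ taken to be the character modulo $f$ such that $\chi=\psi_N$. For (3), I would argue by contradiction: if $\psi$ were not primitive it would equal $\eta_f$ for some character $\eta$ modulo a proper divisor $f'$ of $f$, whence $\chi=\psi_N=(\eta_f)_N=\eta_N$ by transitivity of this ``extension'' operation, so $f'\in D(\chi)$, contradicting the minimality of $f$.

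The only genuinely nontrivial point is the identity $U_d=U_{M_1}U_{M_2}$; everything else reduces to the surjectivity of the reduction maps $(\Z/N\Z)^*\to(\Z/M\Z)^*$ for $M\mid N$ and to unwinding the definition of $\psi_N$. I expect the main difficulty to be stating the Chinese remainder step cleanly, since one must keep track of three moduli $M_1$, $M_2$, $L$ and separately ensure that the chosen representatives are units modulo $N$, not merely modulo $L$.
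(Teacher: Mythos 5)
Your proof is correct. Note that the paper states this proposition without giving any proof (it is recalled there as a standard fact), so there is nothing to compare against; your route --- translating ``$\chi$ can be defined modulo $M$'' into ``$\chi$ is trivial on $U_M=\ker\bigl((\Z/N\Z)^*\to(\Z/M\Z)^*\bigr)$'' via surjectivity of the reduction maps, and then proving closure of the set of such $M$ under gcd through the identity $U_d=U_{M_1}U_{M_2}$ --- is exactly the standard argument one would supply, and your use of the generalized Chinese remainder theorem (solvability of $y\equiv1\pmod{M_1}$, $y\equiv x\pmod{M_2}$ precisely because $x\equiv1\pmod d$) is sound. The only point left tacit is that the solutions $y,z$ are units modulo $L=\mathrm{lcm}(M_1,M_2)$ before you invoke the lifting lemma, but this is immediate since $y\equiv1\pmod{M_1}$ and $y\equiv x\pmod{M_2}$ with $x$ coprime to $N$, so it is not a gap.
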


There is also the notion of \emph{trivial character modulo $N$}: however
we must be careful here, and we set the following:

\begin{definition} The trivial character modulo $N$ is the Dirichlet
character associated with the trivial character of $(\Z/N\Z)^*$. It is
usually denoted by $\chi_0$ (but be careful, the index $N$ is implicit, so
$\chi_0$ may represent different characters), and its values are as follows:
$\chi_0(x)=1$ if $x$ is coprime to $N$, and $\chi_0(x)=0$ if $x$ is not
coprime to $N$.\end{definition}

In particular, $\chi_0(0)=0$ if $N\ne1$. The character $\chi_0$ can also be
characterized as the only character modulo $N$ of conductor $1$. 

\begin{definition} Let $\chi$ be a character modulo $N$. The \emph{Gauss sum}
associated to $\chi$ and $a\in\Z$ is
$$\g(\chi,a)=\sum_{x\bmod N}\chi(x)\z_N^{ax}\;,$$
and we write simply $\g(\chi)$ instead of $\g(\chi,1)$.
\end{definition}

The most important results concerning these Gauss sums is the following:

\begin{proposition} Let $\chi$ be a character modulo $N$.\begin{enumerate}
\item If $a$ is coprime to $N$ we have
$$\g(\chi,a)=\chi^{-1}(a)\g(\chi)=\ov{\chi(a)}\g(\chi)\;,$$
and more generally
$\g(\chi,ab)=\chi^{-1}(a)\g(\chi,b)=\ov{\chi(a)}\g(\chi,b)$.
\item If $\chi$ is a \emph{primitive} character, we have
$$\g(\chi,a)=\ov{\chi(a)}\g(\chi)$$ 
for \emph{all} $a$, in other words, in addition to (1), we have
$\g(\chi,a)=0$ if $a$ is not coprime to $N$.
\item If $\chi$ is a \emph{primitive} character, we have
$|\g(\chi)|^2=N$.\end{enumerate}\end{proposition}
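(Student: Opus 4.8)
The plan is to treat the three parts in turn; parts (1) and (3) are routine manipulations with additive characters, while part (2) is where primitivity is used. For part (1), I would substitute $x=a^{-1}y$ in the defining sum, which is legitimate because $\gcd(a,N)=1$ makes multiplication by $a$ a bijection of $\Z/N\Z$: this gives $\g(\chi,a)=\sum_{y\bmod N}\chi(a^{-1}y)\z_N^{y}=\chi^{-1}(a)\g(\chi)$, and $\chi^{-1}(a)=\ov{\chi(a)}$ since $\chi(a)$ is a root of unity. The more general identity $\g(\chi,ab)=\chi^{-1}(a)\g(\chi,b)$ follows from the same substitution applied to $\g(\chi,ab)=\sum_x\chi(x)\z_N^{abx}$.

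Part (2) is the main point. By part (1) it suffices to show $\g(\chi,a)=0$ whenever $d:=\gcd(a,N)>1$, since then $\chi(a)=0$ and the asserted formula reads $0=0$. First I would establish the lemma that, because $\chi$ is primitive, for the \emph{proper} divisor $M:=N/d$ of $N$ there exists $c\in\Z$ with $\gcd(c,N)=1$, $c\equiv1\pmod M$, and $\chi(c)\ne1$: otherwise $\chi$ would be identically $1$ on the kernel of the (surjective) reduction map $(\Z/N\Z)^*\to(\Z/M\Z)^*$, hence would factor through $(\Z/M\Z)^*$, i.e.\ be definable modulo $M$, contradicting primitivity. Granting this, I would reindex the Gauss sum by $x\mapsto cx$; writing $c=1+tM$ with $t\in\Z$ and using $d\mid a$, one has $ac=a+(a/d)tN\equiv a\pmod N$, so $\z_N^{acx}=\z_N^{ax}$ and
$$\g(\chi,a)=\sum_{x\bmod N}\chi(cx)\z_N^{acx}=\chi(c)\,\g(\chi,a)\;,$$
which forces $\g(\chi,a)=0$ since $\chi(c)\ne1$. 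This lemma is the only genuinely nontrivial ingredient, and can alternatively be quoted from the characterization of the conductor in the preceding proposition.

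For part (3), I would evaluate $\Sigma:=\sum_{a\bmod N}|\g(\chi,a)|^2$ in two ways. By part (2), $\g(\chi,a)=\ov{\chi(a)}\g(\chi)$ for all $a$, so $|\g(\chi,a)|^2$ is $|\g(\chi)|^2$ when $\gcd(a,N)=1$ and $0$ otherwise, whence $\Sigma=\phi(N)\,|\g(\chi)|^2$. On the other hand, expanding the squares and interchanging the order of summation,
$$\Sigma=\sum_{x,y\bmod N}\chi(x)\ov{\chi(y)}\sum_{a\bmod N}\z_N^{a(x-y)}=N\sum_{x\bmod N}|\chi(x)|^2=N\phi(N)\;,$$
because $\sum_{a\bmod N}\z_N^{a(x-y)}$ equals $N$ if $x\equiv y\pmod N$ and $0$ otherwise. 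Comparing the two evaluations and dividing by $\phi(N)\ne0$ yields $|\g(\chi)|^2=N$. I expect the primitivity lemma in part (2) to be the only step requiring genuine care; everything else is orthogonality of additive characters together with reindexing.
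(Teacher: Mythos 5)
Your proof is correct. Note that the paper does not actually supply an argument for this proposition: it only remarks that (1) is trivial, that $\chi^{-1}(a)=\ov{\chi(a)}$ on units, and that (2) is ``not completely trivial'', so there is no written proof to compare against. Your argument is the standard one and fills this gap cleanly: the reindexing $x\mapsto a^{-1}x$ for (1); for (2) the key lemma that primitivity produces a unit $c\equiv1\pmod{N/d}$ with $\chi(c)\ne1$ (otherwise $\chi$ would factor through $(\Z/(N/d)\Z)^*$, i.e.\ be definable modulo a proper divisor), combined with the observation that $ac\equiv a\pmod N$ when $d\mid a$, which forces $\g(\chi,a)=\chi(c)\g(\chi,a)=0$; and for (3) the averaging of $|\g(\chi,a)|^2$ over all $a\bmod N$, where orthogonality of the additive characters gives $N\phi(N)$ and part (2) — used crucially for \emph{all} $a$, not just units — gives $\phi(N)\,|\g(\chi)|^2$. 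This last point is exactly where primitivity enters (3), and your write-up makes that dependence explicit, which is the only delicate issue in the whole statement.
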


Note that (1) is trivial, and that since $\chi(a)$ has modulus $1$ when
$a$ is coprime to $N$, we can write indifferently $\chi^{-1}(a)$ or
$\ov{\chi(a)}$. On the other hand, (2) is not completely trivial.

\smallskip

We leave to the reader the easy task of defining Jacobi sums and of proving
the easy relations between Gauss and Jacobi sums.

\subsection{Reduction to Prime Gauss Sums}

A fundamental and little-known fact is that in the context of Gauss
sums over $\Z/N\Z$ (as opposed to $\F_q$), one can in fact always reduce
to prime $N$. First note (with proof) the following easy result:

\begin{proposition} Let $N=N_1N_2$ with $N_1$ and $N_2$ coprime, and
let $\chi$ be a character modulo $N$.\begin{enumerate}
\item There exist unique characters $\chi_i$ modulo $N_i$ such that
$\chi=\chi_1\chi_2$ in an evident sense, and if $\chi$ is primitive,
the $\chi_i$ will also be primitive.
\item We have the identity (valid even if $\chi$ is not primitive):
$$\g(\chi)=\chi_1(N_2)\chi_2(N_1)\g(\chi_1)\g(\chi_2)\;.$$
\end{enumerate}\end{proposition}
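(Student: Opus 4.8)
The plan is to derive everything from the Chinese Remainder Theorem. Since $\gcd(N_1,N_2)=1$ we have a group isomorphism $(\Z/N\Z)^*\isom(\Z/N_1\Z)^*\times(\Z/N_2\Z)^*$. For part (1) I would define $\chi_1(a)=\chi(\tilde a)$, where $\tilde a$ is the unique class modulo $N$ with $\tilde a\equiv a\pmod{N_1}$ and $\tilde a\equiv1\pmod{N_2}$, and symmetrically $\chi_2$; one checks immediately that each $\chi_i$ is multiplicative and that $\chi(x)=\chi_1(x)\chi_2(x)$ for all $x$ (the ``evident sense''), the identity being automatic on units by CRT and trivially true otherwise since both sides vanish. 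Uniqueness is forced: if $\chi=\chi_1'\chi_2'$ is another such decomposition, then $\chi_i'$ is a character modulo $N_i$, so evaluating at $\tilde a$ gives $\chi(\tilde a)=\chi_1'(\tilde a)\chi_2'(\tilde a)=\chi_1'(a)$, whence $\chi_1'=\chi_1$, and similarly $\chi_2'=\chi_2$.

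For the primitivity assertion, suppose $\chi$ is primitive but, say, $\chi_1$ is induced by a character $\chi_1^\flat$ modulo a proper divisor $M_1$ of $N_1$. Since $\gcd(M_1,N_2)=1$, the product $\chi_1^\flat\chi_2$ makes sense as a character modulo $M_1N_2$, and inflating it to modulus $N$ recovers $\chi$ (because inflation from modulus $M_1$ through $N_1$ to $N$ coincides with inflation directly from $M_1$ to $N$). Thus $\chi$ would be defined modulo $M_1N_2$, a proper divisor of $N=N_1N_2$, contradicting primitivity. Hence each $\chi_i$ is primitive. Equivalently one may quote the conductor proposition above: the conductor of $\chi$ is $f_1f_2$, where $f_i$ is the conductor of $\chi_i$, so $\chi$ primitive forces $f_i=N_i$.

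For part (2), fix $\z_N=e^{2\pi i/N}$, $\z_{N_1}=\z_N^{N_2}=e^{2\pi i/N_1}$, $\z_{N_2}=\z_N^{N_1}=e^{2\pi i/N_2}$ (the final identity is independent of these choices, so this normalization is harmless). The observation that drives the computation is that $(x_1,x_2)\mapsto x=N_2x_1+N_1x_2$ is a bijection from $\Z/N_1\Z\times\Z/N_2\Z$ onto $\Z/N\Z$; since $N_1x_2\equiv0\pmod{N_1}$ one has $\chi_1(x)=\chi_1(N_2)\chi_1(x_1)$, likewise $\chi_2(x)=\chi_2(N_1)\chi_2(x_2)$, and $\z_N^x=\z_{N_1}^{x_1}\z_{N_2}^{x_2}$. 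Substituting into $\g(\chi)=\sum_{x\bmod N}\chi(x)\z_N^x=\sum_{x\bmod N}\chi_1(x)\chi_2(x)\z_N^x$ and factoring the resulting double sum yields
\begin{align*}
\g(\chi)&=\sum_{x_1\bmod N_1}\sum_{x_2\bmod N_2}\chi_1(N_2)\chi_1(x_1)\chi_2(N_1)\chi_2(x_2)\,\z_{N_1}^{x_1}\z_{N_2}^{x_2}\\
&=\chi_1(N_2)\chi_2(N_1)\,\g(\chi_1)\,\g(\chi_2)\;,
\end{align*}
as claimed; note that primitivity was never used here, so the formula holds in general. The one step warranting care is the primitivity argument in (1) --- specifically verifying that the ``inflated'' character modulo $M_1N_2$ really does induce $\chi$, so that no ambiguity arises about which character is being inflated --- while everything else is routine bookkeeping with the CRT bijection.
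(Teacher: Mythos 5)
Your proof is correct and follows essentially the same CRT-based route as the paper: your definition of $\chi_1$ via the lift $\tilde a$ (with $\tilde a\equiv a\pmod{N_1}$, $\tilde a\equiv1\pmod{N_2}$) is exactly the paper's $\chi_1(x)=\chi(xu_2N_2+u_1N_1)$, and you additionally write out the uniqueness and primitivity verifications that the paper leaves to the reader. The only difference in (2) is that you parameterize by $x=N_2x_1+N_1x_2$ instead of the paper's $x=x_1u_2N_2+x_2u_1N_1$, so the factor $\chi_1(N_2)\chi_2(N_1)$ comes out of complete multiplicativity of the $\chi_i$ rather than from the shifted sums $\g(\chi_1;u_2)\g(\chi_2;u_1)=\chi_1^{-1}(u_2)\chi_2^{-1}(u_1)\g(\chi_1)\g(\chi_2)$ -- an equivalent bookkeeping of the same computation.
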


\begin{proof} (1). Since $N_1$ and $N_2$ are coprime there exist $u_1$ and $u_2$ 
such that $u_1N_1+u_2N_2=1$. We define $\chi_1(x)=\chi(xu_2N_2+u_1N_1)$ and 
$\chi_2(x)=\chi(xu_1N_1+u_2N_2)$. We leave to the reader to check (1)
using these definitions.

\smallskip

(2). When $x_i$ ranges modulo $N_i$, $x=x_1u_2N_2+x_2u_1N_1$ ranges
modulo $N$ (check it, in particular that the values are distinct!),
and $\chi(x)=\chi_1(x)\chi_2(x)=\chi_1(x_1)\chi_2(x_2)$. Furthermore,
$$\z_N=\exp(2\pi i/N)=\exp(2\pi i(u_1/N_2+u_2/N_1))=\z_{N_1}^{u_2}\z_{N_2}^{u_1}\;,$$
hence
\begin{align*}\g(\chi)&=\sum_{x\bmod N}\chi(x)\z_N^x\\
&=\sum_{x_1\bmod N_1,\ x_2\bmod N_2}\chi_1(x_1)\chi_2(x_2)\z_{N_1}^{u_2x_1}\z_{N_2}^{u_1x_2}\\
&=\g(\chi_1;u_2)\g(\chi_2;u_1)=\chi_1^{-1}(u_2)\chi_2^{-1}(u_1)\g(\chi_1)\g(\chi_2)\;,\end{align*}
so the result follows since $N_2u_2\equiv1\pmod{N_1}$ and 
$N_1u_1\equiv1\pmod{N_2}$.\fp\end{proof}

Thanks to the above result, the computation of Gauss sums modulo $N$ can be
reduced to the computation of Gauss sums modulo prime powers. 

Here a remarkable simplification occurs, due to Odoni: Gauss sums modulo
$p^a$ for $a\ge2$ can be ``explicitly computed'', in the sense that there
is a direct formula not involving a sum over $p^a$ terms for computing 
them. Although the proof is not difficult, we do not give it, and refer
instead to \cite{Coh5} which can be obtained from the author. We use the 
classical notation $\e(x)$ to mean $e^{2\pi i x}$. Furthermore, we use
the $p$-adic logarithm $\log_p(m)$, but in a totally elementary manner
since we will always have $m\equiv1\pmod p$ and the standard expansion
$-\log_p(1-x)=\sum_{k\ge1}x^k/k$ which we stop as soon as all the terms
are divisible by $p^n$:

\begin{theorem}[Odoni et al.]\label{thmodoni} Let $\chi$ be a \emph{primitive}
character modulo $p^n$.
\begin{enumerate}\item Assume that $p\ge3$ is prime and $n\ge2$. Write
$\chi(1+p)=\e(-b/p^{n-1})$ with $p\nmid b$. Define
$$A(p)=\dfrac{p}{\log_p(1+p)}\text{\quad and\quad}B(p)=A(p)(1-\log_p(A(p)))\;,$$
except when $p^n=3^3$, in which case we define $B(p)=10$. Then
$$\g(\chi)=p^{n/2}\e\left(\dfrac{bB(p)}{p^n}\right)\chi(b)\cdot\begin{cases}
1&\text{\quad if $n\ge2$ is even,}\\
\leg{b}{p}i^{p(p-1)/2}&\text{\quad if $n\ge3$ is odd.}
\end{cases}$$
\item Let $p=2$ and assume that $n\ge4$. Write
$\chi(1+p^2)=\e(b/p^{n-2})$ with $p\nmid b$. Define
$$A(p)=-\dfrac{p^2}{\log_p(1+p^2)}\text{\quad and\quad}B(p)=A(p)(1-\log_p(A(p)))\;,$$
except when $p^n=2^4$, in which case we define $B(p)=13$. Then
$$\g(\chi)=p^{n/2}\e\left(\dfrac{bB(p)}{p^n}\right)\chi(b)\cdot\begin{cases}
\e\left(\dfrac{b}{8}\right)&\text{\quad if $n\ge4$ is even,}\\
\e\left(\dfrac{(b^2-1)/2+b}{8}\right)&\text{\quad if $n\ge5$ is odd.}
\end{cases}$$
\item If $p^n=2^2$, or $p^n=2^3$ and $\chi(-1)=1$, we have $\g(\chi)=p^{n/2}$, 
and if $p^n=2^3$ and $\chi(-1)=-1$ we have $\g(\chi)=p^{n/2}i$.
\end{enumerate}\end{theorem}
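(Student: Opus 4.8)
The plan is to use the explicit structure of $(\Z/p^n\Z)^*$ to rewrite the Gauss sum as a $p$-adic oscillatory exponential sum and then evaluate it by a stationary-phase (Gaussian-sum) argument. Take first $p\ge3$ and $n\ge2$. Every $x$ prime to $p$ factors uniquely as $x=\tau u$, with $\tau=\om(x)$ the $(p-1)$st root of unity congruent to $x$ modulo $p$ and $u$ a principal unit, $u\equiv1\pmod p$; since $\chi$ is primitive it is faithful on the deepest layer $1+p^{n-1}\Z/p^n\Z\cong\Z/p\Z$, which is exactly what is encoded by the normalization $\chi(1+p)=\e(-b/p^{n-1})$, $p\nmid b$. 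Writing $u=(1+p)^s$ with $s=\log_p(u)/\log_p(1+p)$ gives $\chi(u)=\e\bigl(-bA(p)\log_p(u)/p^n\bigr)$, where $A(p)=p/\log_p(1+p)\in\Z_p$ is a unit $\equiv1\pmod p$ (this is precisely why only finitely many terms of the relevant $p$-adic series survive modulo $p^n$). Since also $\z_{p^n}^{\,x}=\e(\tau u/p^n)$, the Gauss sum becomes $\g(\chi)=\sum_{\tau}\chi(\tau)\sum_{u}\e\bigl(\phi_\tau(u)/p^n\bigr)$ with $\phi_\tau(u)=\tau u-bA(p)\log_p(u)$.

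The first key step is localization. Replacing $u$ by $u(1+pt)$ shows the leading variation of $\phi_\tau$ is $p\,t\,(\tau u-bA(p))$, so summing over the innermost layer forces $\tau u\equiv bA(p)\pmod p$; together with $u\equiv1\pmod p$ this leaves only $\tau=\om(b)$, and then pins $u$ at the critical value $u_0=bA(p)/\om(b)$, which is indeed a principal unit. Evaluating the phase there, and using $\log_p\om(b)=0$, gives $\phi_{\om(b)}(u_0)=bA(p)\bigl(1-\log_p(bA(p))\bigr)$; since moreover $\chi(\om(b))=\chi(b)\,\e\bigl(bA(p)\log_p(b)/p^n\bigr)$, the $\log_p b$ contributions cancel and the main factor is exactly $\chi(b)\,\e\bigl(bB(p)/p^n\bigr)$ with $B(p)=A(p)(1-\log_p A(p))$ — this is the heart of where the closed form for $B(p)$ originates. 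What remains is the second-order (Gaussian) contribution: expanding $\phi_{\om(b)}$ to second order about $u_0$ (the first-order term vanishes identically), the residual exponential sum evaluates to $p^{n/2}$ times either $1$ when $n$ is even or $\leg{b}{p}\,i^{p(p-1)/2}$ when $n$ is odd, the latter being (after pulling out the unit leading coefficient) the classical quadratic Gauss sum modulo $p$.

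For $p=2$ and $n\ge4$ I would run the same scheme with two modifications. Now $(\Z/2^n\Z)^*=\langle-1\rangle\times\langle1+2^2\rangle$, so the role of $1+p$ is played by $1+p^2$ (hence the normalization $\chi(1+p^2)=\e(b/p^{n-2})$ and the sign in $A(p)=-p^2/\log_p(1+p^2)$), and one must additionally track the contribution of the $\langle-1\rangle$-component; carrying this through is what produces the extra factors $\e(b/8)$ for $n$ even and $\e\bigl(((b^2-1)/2+b)/8\bigr)$ for $n$ odd. Because the $2$-adic logarithm has terms divisible by $2^2,2^3,\dots$ rather than by $p,p^2,\dots$, the series truncation is more delicate and the method only starts cleanly at $n=4$; the remaining cases $p^n\in\{2^2,2^3\}$ are handled by direct hand computation (item (3)), as are the two genuinely exceptional moduli $p^n=3^3$ and $p^n=2^4$, where a term that is normally of higher $p$-adic order is not negligible modulo $p^n$ and forces the ad hoc values $B(p)=10$ and $B(p)=13$.

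The main obstacle is the bookkeeping of the constant to full precision modulo $p^n$: verifying that $B(p)=A(p)(1-\log_p A(p))$ is \emph{exactly} the constant produced by completing the square, that all higher-order terms of $\log_p$ and of the expansion of $(1+p)^s$ (respectively $(1+p^2)^s$) contribute nothing new modulo $p^n$ for $n$ outside the two exceptional values, and — for $p=2$ — correctly assembling the $\langle-1\rangle$-contribution into the stated $\e(b/8)$-type factors. By contrast, the input from the classical quadratic Gauss sum modulo $p$ and the accounting of the overall power $p^{n/2}$ are routine once the localization is in place; and since every identity here is between algebraic numbers, no analytic subtlety beyond convergence of the relevant $p$-adic series is involved.
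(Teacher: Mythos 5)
Your stationary-phase plan is precisely the method the paper intends: it gives no proof of this theorem in the text but defers to \cite{Coh5} (the $p$-adic stationary phase theorem, whose simplified version for trivial $\chi$ is stated in the next subsection), and your reduction via $x=\om(x)u$, $\chi(u)=\e\bigl(-bA(p)\log_p(u)/p^n\bigr)$, localization at $\tau u\equiv bA(p)\pmod p$ (hence $\tau=\om(b)$), and evaluation of the phase at $u_0=bA(p)/\om(b)$ — where the $\log_p b$ terms indeed cancel against $\chi(\om(b))$ and leave exactly $\chi(b)\,\e\bigl(bB(p)/p^n\bigr)$ with $B(p)=A(p)(1-\log_p A(p))$ — is correct, as is your explanation of the exceptional moduli $3^3$ and $2^4$ (a normally negligible higher-order term, e.g.\ the cubic term with its $1/6$, surviving modulo $p^n$). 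The pieces you explicitly defer — verifying the quartic root of unity $\leg{b}{p}i^{p(p-1)/2}$ from the residual quadratic Gauss sum, assembling the $\langle-1\rangle$-component at $p=2$ into the $\e(b/8)$-type factors, and the precision bookkeeping — are exactly the fiddly part of \cite{Coh5}, but nothing in your outline would fail there.
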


Thanks to this theorem, we see that the computation of Gauss sums in the
context of Dirichlet characters can be reduced to the computation of Gauss
sums modulo $p$ for prime $p$. This is of course the same as the
computation of a Gauss sum for a character of $\F_p^*$.

We recall the available methods for computing a single Gauss sum of this
type:

\begin{enumerate}\item The na\"\i ve method, time $\Os(p)$ (applicable in
general, time $\Os(N)$).
\item Using the Gross--Koblitz formula, also time $\Os(p)$, but the implicit
constant is much smaller, and also computations can be done modulo $p$ or
$p^2$ for instance, if desired (applicable only to $N=p$, or in the
context of finite fields).
\item Using theta functions, time $\Os(p^{1/2})$ (applicable in general,
time $\Os(N^{1/2})$).\end{enumerate}

\subsection{General Complete Exponential Sums over $\Z/N\Z$}

We have just seen the (perhaps surprising) fact that Gauss sums modulo
$p^a$ for $a\ge2$ can be ``explicitly computed''. This is in fact
a completely general fact. Let $\chi$ be a Dirichlet character modulo $N$,
and let $F\in \Q[X]$ be integer-valued. Consider the following
\emph{complete exponential sum}:
$$S(F,N)=\sum_{x\bmod N}\chi(x)e^{2\pi i F(x)/N}\;.$$
For this to make sense we must of course assume that $x\equiv y\pmod N$
implies $F(x)\equiv F(y)\pmod{N}$, which is for instance the case if
$F\in\Z[X]$. As we did for Gauss sums, using Chinese remaindering we can
reduce the computation to the case where $N=p^a$ is a prime power. But
the essential point is that if $a\ge2$, $S(F,p^a)$ can be ``explicitly
computed'', see \cite{Coh5} for the detailed statement and proof, so
we are again reduced to the computation of $S(F,p)$.

A simplified version and incomplete version of the result when $\chi$ is the
trivial character is as follows:

\begin{theorem} Let $S=\sum_{x\bmod{p^a}}e^{2\pi iF(x)/p^a}$, and
  assume that $a\ge2$ and $p>2$. Then under suitable assumptions on $F$ we
  have the following:
  \begin{enumerate}
  \item If there does not exist $y$ such that $F'(y)\equiv0\pmod p$ then $S=0$.
  \item Otherwise, there exists $u\in\Z_p$ such that
    $F'(u)=0$ and $v_p(F''(u))=0$, $u$ is unique, and we have
    $$S=p^{a/2}e^{2\pi iF(u)/p^a}g(u,p,a)\;,$$
    where $g(u,p,a)=1$ if $a$ is even and otherwise
    $$g(u,p,a)=\leg{F''(u)}{p}i^{p(p-1)/2}\;.$$
  \end{enumerate}
\end{theorem}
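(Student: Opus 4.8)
The plan is to evaluate $S=\sum_{x\bmod p^a}e^{2\pi iF(x)/p^a}$ by the standard $p$-adic stationary-phase (Hensel-type) argument. First I would write $a=b+c$ with $b=\lceil a/2\rceil$ and $c=\lfloor a/2\rfloor$, and substitute $x=y+p^b t$ where $y$ runs over residues mod $p^b$ and $t$ runs over residues mod $p^c$. Taylor-expanding the integer-valued polynomial $F$ gives
\[
F(y+p^bt)\equiv F(y)+p^btF'(y)+\tfrac12 p^{2b}t^2F''(y)\pmod{p^a},
\]
and since $2b\ge a$ the quadratic term drops out mod $p^a$ (using $p>2$ so that $\tfrac12$ is a $p$-adic unit). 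Hence the inner sum over $t$ is a geometric sum that vanishes unless $p^c\mid p^{a-b}F'(y)$, i.e.\ unless $F'(y)\equiv0\pmod{p^b}$; when it does not vanish it contributes $p^c$. This already yields part (1): if $F'$ has no root mod $p$ then a fortiori no root mod $p^b$, so every inner sum vanishes and $S=0$.

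For part (2) I would use the hypothesis that there is a simple critical point: some $u_0$ with $F'(u_0)\equiv0\pmod p$ and $F''(u_0)\not\equiv0\pmod p$. By Hensel's lemma $u_0$ lifts uniquely to $u\in\Z_p$ with $F'(u)=0$, and the residues $y\bmod p^b$ with $F'(y)\equiv0\pmod{p^b}$ are exactly those congruent to $u$ mod $p^b$ (uniqueness again from $v_p(F''(u))=0$); so only one value of $y$ survives, namely $y\equiv u$, and
\[
S=p^c\!\!\sum_{\substack{y\bmod p^b\\ y\equiv u\,(p^b)}}\!\! e^{2\pi iF(y)/p^a}=p^c\,e^{2\pi iF(u)/p^a}\cdot\varepsilon,
\]
where $\varepsilon$ is the value of $e^{2\pi i(F(y)-F(u))/p^a}$ at $y=u$. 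If $a$ is even then $b=c=a/2$, there is nothing left to sum, $\varepsilon=1$, and $S=p^{a/2}e^{2\pi iF(u)/p^a}$ as claimed. If $a$ is odd, then $b=c+1$, and after reducing to $y$ in a single residue class mod $p^b$ one is left with a genuine quadratic Gauss sum over $\F_p$: writing $y=u'+p^{a-1}s$ for an integer $u'\equiv u\pmod{p^{a-1}}$ and $s\bmod p$, the linear term $F'(u')$ is divisible by $p^{a-1}$ while the quadratic term $\tfrac12 p^{2(a-1)}s^2F''(u)$ survives mod $p^a$ with coefficient a unit times $F''(u)/2$; completing the square and invoking the classical evaluation of $\sum_{s\bmod p}e^{2\pi i \alpha s^2/p}=\left(\frac{\alpha}{p}\right)g(p)$ with $g(p)=\sum_{s}e^{2\pi is^2/p}$ and the well-known fact $g(p)=p^{1/2}i^{(p-1)^2/4}=p^{1/2}i^{p(p-1)/2}$ (for $p>2$) gives the stated $g(u,p,a)=\left(\frac{F''(u)}{p}\right)i^{p(p-1)/2}$. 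Here I would use $\left(\frac{\cdot}{p}\right)$-homogeneity to absorb the inessential unit factors (powers of $2$ etc.) into the Legendre symbol, checking the arithmetic once carefully.

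The main obstacle is bookkeeping rather than any deep idea: one must pin down the ``suitable assumptions on $F$'' (integer-valuedness together with $F'$ having a \emph{nondegenerate} critical point mod $p$, i.e.\ $F''$ a unit there, and no other critical residues) precisely enough that the Hensel lift is unique and that exactly one class $y\equiv u\pmod{p^b}$ contributes; and one must track the split $a=b+c$ in the odd case so that the leftover sum is exactly a one-variable quadratic Gauss sum with no stray linear term. The only genuinely external input is the classical quadratic Gauss sum evaluation over $\F_p$, which I would cite (it is the content of the earlier Exercise showing $\g(\rho)^2=(-1)^{(q-1)/2}q$, together with the sign determination of $\g(\rho)$). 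Care with the case distinction $p\equiv1$ vs.\ $3\pmod 4$ inside $i^{p(p-1)/2}$ is where a sign error is most likely, so I would verify that factor on small examples.
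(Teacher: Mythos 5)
Your overall strategy --- the $p$-adic stationary-phase splitting $x=y+p^{\lceil a/2\rceil}t$ --- is the natural one (note the paper itself gives no proof of this theorem, referring to \cite{Coh5}), and your part (1) and the even-$a$ case are essentially fine, modulo making precise the integrality of the Taylor coefficients $F^{(k)}(y)/k!$ hidden in the ``suitable assumptions''. But the odd-$a$ case, which is exactly where the factor $g(u,p,a)$ comes from, fails as written, for two concrete reasons. First, with $y$ running mod $p^b$ and $t$ mod $p^c$ ($b=\lceil a/2\rceil$, $c=\lfloor a/2\rfloor$), the inner sum is $\sum_{t\bmod p^c}e^{2\pi itF'(y)/p^{c}}$, so the vanishing condition is $F'(y)\equiv0\pmod{p^{c}}$, \emph{not} $\pmod{p^{b}}$. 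For even $a$ this distinction is invisible, but for odd $a$ exactly $p$ residue classes $y\equiv u\pmod{p^{c}}$ modulo $p^{b}$ survive, not one, and it is precisely the sum over these $p$ classes that becomes the quadratic Gauss sum. Second, your substitution $y=u'+p^{a-1}s$ is mis-scaled: since $2(a-1)\ge a$ and $p^{a-1}\mid F'(u')$, \emph{both} the linear and the quadratic Taylor terms are $\equiv0\pmod{p^a}$ (using $p>2$), so no Gauss sum appears at all and your inner sum would just be $p$, giving $|S|=p^{(a+1)/2}$, contradicting the statement you are proving. The correct step is to parametrize the surviving classes as $y=u'+p^{c}s$, $s\bmod p$, with $u'$ chosen $\equiv u\pmod{p^{a}}$ so that $F'(u')\equiv0\pmod{p^a}$ kills the linear term; then $2c=a-1$ makes the quadratic term survive exactly as $s^2F''(u)/(2p)$, and the leftover sum is $\sum_{s\bmod p}e^{2\pi is^2F''(u)/(2p)}$.

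On the constant: your claimed identity $i^{(p-1)^2/4}=i^{p(p-1)/2}$ is false (take $p=5$: the left side is $1$, the right side is $-1$). The classical evaluation is $\sum_{s\bmod p}e^{2\pi i\alpha s^2/p}=\leg{\alpha}{p}\,\varepsilon_p\,p^{1/2}$ with $\varepsilon_p=i^{((p-1)/2)^2}$, and here $\alpha\equiv F''(u)/2$, so the extra factor $\leg{2}{p}$ coming from the $1/2$ is not ``inessential bookkeeping'' but exactly what reconciles the two expressions, via the identity $\leg{2}{p}\,i^{((p-1)/2)^2}=i^{p(p-1)/2}$. With the corrected modulus, the corrected scaling $p^{c}$, and this identity made explicit, your argument becomes a complete proof; as written, the odd case neither produces the Gauss sum nor the right power of $p$.
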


\begin{exercise} Let $F(x)=cx^3+dx$ with $c$ and $d$ integers, and let $p$
  be a prime number such that $p\nmid 6cd$. The assumptions of the theorem
  will then be satisfied. Compute explicitly
  $\sum_{x\bmod{p^a}}e^{2\pi iF(x)/p^a}$ for $a\ge2$. You will need to
  introduce a square root of $-3cd$ modulo $p^a$.\end{exercise}

For instance, using a variant of the above theorem, it is immediate to prove
the following result due to Sali\'e:

\begin{proposition} The \emph{Kloosterman sum} $K(m,n,N)$ is defined by
  $$K(m,n,N)=\sum_{x\in(\Z/N\Z)^*}e^{2\pi i(mx+nx^{-1})/N}\;,$$
  where $x$ runs over the invertible elements of $\Z/N\Z$. If $p>2$
  is a prime such that $p\nmid n$ and $a\ge2$ we have
  $$K(n,n,p^a)=\begin{cases}
  2p^{a/2}\cos(4\pi n/p^a)&\text{ if $2\mid a$,}\\
  2p^{a/2}\leg{n}{p}\cos(4\pi n/p^a)&\text{ if $2\nmid a$ and $p\equiv1\pmod4$,}\\
  -2p^{a/2}\leg{n}{p}\sin(4\pi n/p^a)&\text{ if $2\nmid a$ and $p\equiv3\pmod4$.}\end{cases}$$
\end{proposition}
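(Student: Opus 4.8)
The plan is to treat $K(n,n,p^a)$ as a complete exponential sum and to run the stationary--phase (Hensel lifting) argument underlying the theorem quoted just above, adapted to the \emph{rational} phase $f(x)=n(x+x^{-1})$; this makes sense on $(\Z/p^a\Z)^*$ because $x$ is a unit there. The elementary input I would extract first is the closed expansion obtained by summing the geometric series for $(1\pm h)^{-1}$:
$$f(1+h)=2n+\dfrac{nh^2}{1+h}\;,\qquad f(-1+h)=-2n-\dfrac{nh^2}{1-h}\;.$$
In particular $f(\pm1)=\pm2n$, $f'(\pm1)=0$, and the quadratic coefficient at $\pm1$ is $\pm n$. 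The vanishing $f'(\pm1)=0$ is exactly the special feature of the symmetric case $m=n$ that makes the sum evaluable, and $p\nmid n$ together with $p$ odd will be used to locate the critical residues.

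\textbf{Even case $a=2b$.} I would write each unit as $x=u+p^bs$ with $u$ running over units modulo $p^b$ and $s$ modulo $p^b$. From $(u+p^bs)^{-1}\equiv u^{-1}-p^bsu^{-2}\pmod{p^{2b}}$ one gets $f(x)\equiv f(u)+p^bs\,f'(u)\pmod{p^a}$, so the inner sum over $s$ is $p^b$ when $p^b\mid f'(u)$ and $0$ otherwise. Since $p\nmid n$, the condition $p^b\mid f'(u)$ reads $u^2\equiv1\pmod{p^b}$, i.e. $u\equiv\pm1\pmod{p^b}$ (two distinct classes, as $p$ is odd); and for such $u$ the closed form above shows $f(u)\equiv\pm2n\pmod{p^a}$, well defined even though $u$ is pinned down only modulo $p^b$. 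Hence $K(n,n,p^a)=p^b\bigl(\e(2n/p^a)+\e(-2n/p^a)\bigr)=2p^{a/2}\cos(4\pi n/p^a)$.

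\textbf{Odd case $a=2b+1$.} Here I would instead write $x=u+p^{b+1}s$ with $u$ modulo $p^{b+1}$ and $s$ modulo $p^b$; the same expansion gives $f(x)\equiv f(u)+p^{b+1}s\,f'(u)\pmod{p^a}$ (the $p^{2b+2}$-term dies since $2b+2>a$), and the $s$-sum again forces $u\equiv\pm1\pmod{p^b}$, contributing $p^b$. Writing $u=\pm1+jp^b$ with $j$ modulo $p$, the identity $f'(\pm1)=0$ kills the would-be linear term and, using $b\ge1$ so that the $p^{3b}$-tail vanishes modulo $p^a$, one finds $f(\pm1+jp^b)\equiv\pm2n\pm nj^2p^{a-1}\pmod{p^a}$. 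Therefore the $j$-sum becomes the quadratic Gauss sum $\sum_{j\bmod p}\e(\pm nj^2/p)=\lgs{\pm n}{p}\g(\rho)$, where $\rho$ is the character of order $2$ and $\g(\rho)^2=(-1)^{(p-1)/2}p$ by the exercise above. Inserting the classical sign determination $\g(\rho)=p^{1/2}$ for $p\equiv1\pmod4$ and $\g(\rho)=ip^{1/2}$ for $p\equiv3\pmod4$, and combining the $u\equiv1$ and $u\equiv-1$ contributions $p^b\lgs{n}{p}\g(\rho)\bigl(\e(2n/p^a)+\lgs{-1}{p}\e(-2n/p^a)\bigr)$, the bracket collapses to $2\cos(4\pi n/p^a)$ when $\lgs{-1}{p}=1$ and to $2i\sin(4\pi n/p^a)$ when $\lgs{-1}{p}=-1$; since $p^{b+1/2}=p^{a/2}$ this yields precisely the two odd-$a$ formulas.

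The one genuinely delicate point, and where I expect to spend the most care, is the bookkeeping that $f(u)\bmod p^a$ is well defined from $u$ modulo $p^b$ (resp. $p^{b+1}$) on the critical classes $u\equiv\pm1$ — equivalently, that every truncation error in the geometric-series expansion of $(1\pm h)^{-1}$ really lands in $p^a\Z_p$ for all $a\ge2$. The closed $p$-adic identities above make this routine, but the estimates ($p^{2b}\equiv0$ in the even case, $p^{3b}\equiv0$ in the odd case) and the reduction "partition modulo $p^b$ / $p^{b+1}$" must be stated cleanly, and the argument also invokes as a black box Gauss's evaluation of the sign of the quadratic Gauss sum. With those in hand the computation is purely mechanical.
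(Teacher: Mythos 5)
Your proposal is correct and follows essentially the route the paper indicates: the paper presents Sali\'e's formula as an immediate consequence of (a variant of) the stationary-phase theorem for complete exponential sums modulo $p^a$ stated just before it, and your argument --- splitting $x=u+p^{b}s$ (resp.\ $x=u+p^{b+1}s$), detecting the critical classes $u\equiv\pm1$ via vanishing of the inner sum, and reducing the odd-$a$ case to quadratic Gauss sums with Gauss's sign determination --- is exactly that stationary-phase analysis carried out explicitly for the phase $F(x)=n(x+x^{-1})$, with the two nondegenerate critical points handled correctly. All the key computations (critical values $\pm2n$ well defined mod $p^a$, quadratic coefficient $\pm n$, and the final combination giving the cosine/sine dichotomy according to $p\bmod 4$) check out.
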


Note that it is immediate to reduce general $K(m,n,N)$ to the case $m=n$
and $N=p^a$, and to give formulas also for the case $p=2$. As usual the
case $N=p$ is \emph{not} explicit, and, contrary to the case of Gauss sums
where it is easy to show that $|\gg(\chi)|=\sqrt{p}$ for a primitive character
$\chi$, the bound $|K(m,n,p)|\le 2\sqrt{p}$ for $p\nmid nm$ due to Weil is
much more difficult to prove, and in fact follows from his proof of the
Riemann hypothesis for curves.

\section{Numerical Computation of $L$-Functions}

\subsection{Computational Issues}

Let $L(s)$ be a general $L$-function as defined in Section \ref{sec:one},
and let $N$ be its conductor. There are several computational problems that we
want to solve. The first, but not necessarily the most important, is the
numerical computation of $L(s)$ for given complex values of $s$. This problem
is of very varying difficulty depending on the size of $N$ and of the
imaginary part of $s$ (note that if the \emph{real part} of $s$ is quite
large, the defining series for $L(s)$ converges quite well, if not
exponentially fast, so there is no problem in that range, and by the
functional equation the same is true if the real part of $1-s$ is quite large).

The problems for $\Im(s)$ large are quite specific, and are already crucial
in the case of the Riemann zeta function $\z(s)$. It is by an efficient
management of this problem (for instance by using the so-called
\emph{Riemann--Siegel formula}) that one is able to compute billions of
nontrivial zeros of $\z(s)$. We will not consider these problems here, but
concentrate on reasonable ranges of $s$.

The second problem is specific to general $L$-functions as opposed to
$L$-functions attached to Dirichlet characters for instance: in the general
situation, we are given an $L$-function by an Euler product known outside of
a finite and small number of ``bad primes''. Using recipes dating to the
late 1960's and well explained in a beautiful paper of Serre \cite{Ser}, one
can give the ``gamma factor'' $\ga(s)$, and some (but not all) the information
about the ``conductor'', which is the exponential factor, at least in the
case of $L$-functions of varieties, or more generally of motives.

\smallskip

We will ignore these problems and assume that we know all the bad primes,
gamma factor, conductor, and root number. Note that if we know the gamma
factor and the bad primes, using the formulas that we will give below for
different values of the argument it is easy to recover the conductor and the
root number. What is most difficult to obtain are the Euler factors at the
bad primes, and this is the object of current work.

\subsection{Dirichlet $L$-Functions}

Let $\chi$ be a Dirichlet character modulo $N$. We define the $L$-function
attached to $\chi$ as the complex function
$$L(\chi,s)=\sum_{n\ge1}\dfrac{\chi(n)}{n^s}\;.$$
Since $|\chi(n)|\le1$, it is clear that $L(\chi,s)$ converges absolutely
for $\Re(s)>1$. Furthermore, since $\chi$ is multiplicative, as for the
Riemann zeta function we have an \emph{Euler product}
$$L(\chi,s)=\prod_p\dfrac{1}{1-\chi(p)/p^s}\;.$$
The denominator of this product being generically of degree $1$, this is
also called an $L$-function of degree $1$, and conversely, with a suitable
definition of the notion of $L$-function, one can show that these are the
only $L$-functions of degree $1$.

If $f$ is the conductor of $\chi$ and $\chi_f$ is the character modulo $f$
equivalent to $\chi$, it is clear that
$$L(\chi,s)=\prod_{p\mid N, p\nmid f}(1-\chi_f(p)p^{-s})L(\chi_f,s)\;,$$
so if desired we can always reduce to primitive characters, and this is
what we will do from now on.

Dirichlet $L$-series have important analytic and arithmetic properties, some
of them conjectural (such as the Riemann Hypothesis), which should (again
conjecturally) be shared by all global $L$-functions, see the discussion
in the introduction. We first give the following:

\begin{theorem} Let $\chi$ be a \emph{primitive} character modulo $N$, and
let $e=0$ or $1$ be such that $\chi(-1)=(-1)^e$. 
\begin{enumerate}
\item (Analytic continuation.) 
The function $L(\chi,s)$ can be analytically continued to the whole
complex plane into a meromorphic function, which is in fact holomorphic
except in the special case $N=1$, $L(\chi,s)=\z(s)$, where it has a unique
pole, at $s=1$, which is simple with residue $1$.
\item (Functional equation.) 
There exists a \emph{functional equation} of the following form:
letting $\ga_{\R}(s)=\pi^{-s/2}\G(s/2)$, we set 
$$\Lambda(\chi,s)=N^{(s+e)/2}\ga_{\R}(s+e)L(\chi,s)\;,$$
where $e$ is as above. Then
$$\Lambda(\chi,1-s)=\om(\chi)\Lambda(\ov{\chi},s)\;,$$
where $\om(\chi)$, the so-called \emph{root number}, is a complex
number of modulus $1$ given by the formula
$\om(\chi)=\g(\chi)/(i^eN^{1/2})$.
\item (Special values.)
For each integer $k\ge1$ we have the \emph{special values}
$$L(\chi,1-k)=-\dfrac{B_k(\chi)}{k}-\delta_{N,1}\delta_{k,1}\;,$$
where $\delta$ is the Kronecker symbol, and the \emph{generalized Bernoulli
numbers} $B_k(\chi)$ are easily computable algebraic numbers. In particular,
when $k\not\equiv e\pmod{2}$ we have $L(\chi,1-k)=0$ (except when $k=N=1$).

By the functional equation this is equivalent to the formula
for $k\equiv e\pmod{2}$, $k\ge1$:
$$L(\chi,k)=(-1)^{k-1+(k+e)/2}\om(\chi)\dfrac{2^{k-1}\pi^k\ov{B_k(\chi)}}{m^{k-1/2}k!}\;.$$
\end{enumerate}\end{theorem}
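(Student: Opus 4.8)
The plan is to establish all three parts via a single analytic device: the theta function $\Th(\chi,\tau)$ already introduced in the excerpt, together with its functional equation coming from Poisson summation. First I would recall that for a primitive character $\chi$ modulo $N$ with $\chi(-1)=(-1)^e$, the Mellin transform of $\Th(\chi,\tau)$ evaluated along the imaginary axis recovers $\Lambda(\chi,s)$ up to elementary factors: writing $\tau=it$ with $t>0$, one has $\int_0^\infty \Th(\chi,it)\,t^{(s+e)/2}\,dt/t = c_e\,N^{(s+e)/2}\pi^{-(s+e)/2}\G((s+e)/2)L(\chi,s)$ for an explicit constant $c_e$ (essentially $2$ times a power of $\pi$ from unfolding $2\sum_{m\ge1}m^e\chi(m)\int_0^\infty e^{-\pi m^2 t/N}t^{(s+e)/2}dt/t$). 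This identity is valid a priori for $\Re(s)>1$ where everything converges absolutely.

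Next I would split the integral at $t=1$ and apply the functional equation $\Th(\chi,-1/\tau)=\om(\chi)(\tau/i)^{(2e+1)/2}\Th(\ov\chi,\tau)$ from the Proposition in the excerpt, substituting $t\mapsto 1/t$ in the piece over $(0,1)$. Because $\Th(\chi,it)$ decays exponentially as $t\to\infty$ (the series starts at $m=1$, with no constant term, since $\chi$ is nontrivial when $N>1$), both resulting integrals converge for \emph{all} $s\in\C$, giving the analytic continuation of $\Lambda(\chi,s)$, hence of $L(\chi,s)$, as an entire function when $N>1$. The symmetry of the resulting expression under $s\mapsto 1-s$, combined with the $\om(\chi)$ and the swap $\chi\leftrightarrow\ov\chi$ produced by the theta transformation, yields exactly $\Lambda(\chi,1-s)=\om(\chi)\Lambda(\ov\chi,s)$, proving part (2). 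The case $N=1$ (the Riemann zeta function) is handled by the classical variant where the theta series has a constant term, producing the simple pole at $s=1$ with residue $1$ and the trivial zeros; this is the standard Riemann argument and I would simply cite it, noting $\om=1$, $e=0$ there.

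For part (3), the special values, I would proceed independently via the generating function of generalized Bernoulli numbers: $\sum_{a=1}^{N}\chi(a)\dfrac{te^{at}}{e^{Nt}-1}=\sum_{k\ge0}B_k(\chi)\dfrac{t^k}{k!}$. One expresses $L(\chi,s)$ through a Hurwitz-zeta decomposition $L(\chi,s)=N^{-s}\sum_{a=1}^{N}\chi(a)\z(s,a/N)$ and uses the known values $\z(1-k,x)=-B_k(x)/k$ of the Hurwitz zeta function at nonpositive integers, which themselves follow from a contour-integral (Hankel) representation. This gives $L(\chi,1-k)=-B_k(\chi)/k$ for $k\ge1$, with the Kronecker-delta correction $-\delta_{N,1}\delta_{k,1}$ accounting precisely for the pole of $\z(s)$; the vanishing when $k\not\equiv e\pmod 2$ follows since $B_k(\chi)=0$ in that parity case (seen directly from the generating function using $\chi(-1)=(-1)^e$ and the substitution $t\mapsto -t$). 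Finally, feeding $s=1-k$ into the functional equation of part (2) and using $\ov{B_k(\chi)}=B_k(\ov\chi)$, the duplication/reflection formulae for $\G$, and the explicit $\G_\R$ factors, one converts this into the stated closed form for $L(\chi,k)$ when $k\equiv e\pmod 2$, $k\ge1$; this last step is a bookkeeping exercise with gamma factors and powers of $\pi$, $2$, $i$, and $\om(\chi)$, which I would carry out but not belabor.

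The main obstacle is purely the theta functional equation underlying part (2): verifying $\Th(\chi,-1/\tau)=\om(\chi)(\tau/i)^{(2e+1)/2}\Th(\ov\chi,\tau)$ honestly requires Poisson summation applied to $x\mapsto x^e e^{i\pi x^2\tau/N}$ together with the Gauss-sum identity $\sum_{a\bmod N}\chi(a)\z_N^{ax}=\ov{\chi(x)}\g(\chi)$ valid for \emph{all} $x$ precisely because $\chi$ is primitive (Proposition in the excerpt). Since the excerpt already \emph{states} this theta functional equation as a proposition, I am entitled to invoke it, so in the write-up the genuine work reduces to the Mellin-transform unfolding and the contour-shift argument — both routine — plus the gamma-factor manipulation in part (3), which is the only place where sign and normalization errors are likely to creep in.
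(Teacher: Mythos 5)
Your proposal is correct and is exactly the classical Riemann--Hecke argument that the paper itself points to: the paper states this theorem without proof, but it does state the theta functional equation (as a consequence of Poisson summation) and explicitly remarks that it is equivalent to the functional equation of Dirichlet $L$-functions, and your Mellin-unfolding/split-at-$t=1$ argument plus the Hurwitz-zeta treatment of the special values is the standard way to fill that in.

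One caveat on part (3): with the generating function you chose, namely $\sum_{a=1}^{N}\chi(a)\,te^{at}/(e^{Nt}-1)$, the clean identity is $L(\chi,1-k)=-B_k(\chi)/k$ for \emph{all} $k\ge1$ with no correction term, since in the case $N=k=1$ this convention gives $B_1(\chi_0)=+\tfrac12$ and indeed $\zeta(0)=-\tfrac12$. The paper's extra term $-\delta_{N,1}\delta_{k,1}$ reflects a different normalization in which $B_k(\chi)$ for $N=1$ is the ordinary Bernoulli number ($B_1=-\tfrac12$); it has nothing to do with the pole of $\zeta(s)$ at $s=1$, and if you keep your convention \emph{and} add the paper's delta term you would be off by $1$ in that single exceptional case. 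So either switch conventions or drop the delta; everything else, including the parity vanishing and the gamma-factor bookkeeping giving the value at positive $k\equiv e\pmod 2$ (where the paper's $m$ in the denominator should be read as $N$), goes through as you describe.
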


To state the next theorem, which for the moment we state for Dirichlet 
$L$-functions, we need still another important special function:

\begin{definition} For $x>0$ we define the \emph{incomplete gamma function}
$\G(s,x)$ by
$$\G(s,x)=\int_x^\infty t^se^{-t}\,\dfrac{dt}{t}\;.$$
\end{definition}

Note that this integral converges for \emph{all} $s\in\C$, and that it
tends to $0$ exponentially fast when $x\to\infty$, more precisely
$\G(s,x)\sim x^{s-1}e^{-x}$. In addition (but this would carry us too far here)
there are many efficient methods to compute it; see however the section
on inverse Mellin transforms below.

\begin{theorem} Let $\chi$ be a \emph{primitive} character modulo $N$. For all 
$A>0$ we have:
\begin{align*}\Gamma\left(\dfrac{s+e}{2}\right)L(\chi,s)&=\delta_{N,1}\pi^{s/2}\left(\dfrac{A^{(s-1)/2}}{s-1}-\dfrac{A^{s/2}}{s}\right)
+\sum_{n\ge 1}\dfrac{\chi(n)}{n^s}\Gamma\left(\dfrac{s+e}{2},\dfrac{\pi n^2 A}{N}\right)\\
&\phantom{=}+\om(\chi)\left(\dfrac{\pi}{N}\right)^{s-1/2}\sum_{n\ge 1}\dfrac{\ov{\chi}(n)}{n^{1-s}}\Gamma\left(\dfrac{1-s+e}{2},\dfrac{\pi n^2}{AN}\right)\;.\end{align*}
\end{theorem}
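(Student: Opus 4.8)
The plan is to derive this as the standard ``incomplete‑gamma'' (smoothed approximate functional equation) identity: cut the Mellin integral of the theta function $\Th(\chi,\tau)$ at the free parameter $A$ and apply the functional equation for $\Th$ stated above. Throughout, $e\in\{0,1\}$ is fixed by $\chi(-1)=(-1)^e$, and I set $f(t)=\sum_{m\ge1}m^e\chi(m)e^{-\pi m^2t/N}$ for $t>0$, so that $\Th(\chi,it)=2f(t)$ (reading the excerpt's formulas with $p$ replaced by $N$, since $\chi$ is primitive of conductor $N$).

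First I would record the Mellin representation of the completed $L$-function: for $\Re(s)>1$, integrating term by term (legitimate by absolute convergence) and substituting $u=\pi m^2t/N$ in each summand gives
\[
\int_0^\infty f(t)\,t^{(s+e)/2}\,\frac{dt}{t}=\left(\frac{N}{\pi}\right)^{(s+e)/2}\G\!\left(\frac{s+e}{2}\right)L(\chi,s)=\Lambda(\chi,s),
\]
with $\Lambda$ exactly as in the functional-equation theorem above. Now split $\int_0^\infty=\int_A^\infty+\int_0^A$. On $[A,\infty)$ the same substitution, but with a finite lower endpoint, turns each summand into an incomplete gamma value, giving
\[
\int_A^\infty f(t)\,t^{(s+e)/2}\,\frac{dt}{t}=\left(\frac{N}{\pi}\right)^{(s+e)/2}\sum_{n\ge1}\frac{\chi(n)}{n^s}\,\G\!\left(\frac{s+e}{2},\frac{\pi n^2A}{N}\right).
\]
On $[0,A]$ I would substitute $t\mapsto1/t$, mapping the range to $[1/A,\infty)$, and then invoke the theta transformation. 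For $N>1$, $\Th(\chi,\cdot)$ has no constant term, so the Proposition gives $f(1/t)=\om(\chi)\,t^{e+1/2}f_{\ov{\chi}}(t)$, where $f_{\ov{\chi}}$ is defined like $f$ with $\chi$ replaced by $\ov{\chi}=\chi^{-1}$ (legitimate since $\chi$ is unitary on units and $\ov{\chi}$ is again primitive mod $N$ with the same $e$); substituting $u=\pi m^2t/N$ once more and using $e+\tfrac12-\tfrac{s+e}{2}=\tfrac{1-s+e}{2}$ yields
\[
\int_0^A f(t)\,t^{(s+e)/2}\,\frac{dt}{t}=\om(\chi)\left(\frac{N}{\pi}\right)^{(1-s+e)/2}\sum_{n\ge1}\frac{\ov{\chi}(n)}{n^{1-s}}\,\G\!\left(\frac{1-s+e}{2},\frac{\pi n^2}{AN}\right).
\]
For $N=1$ (so $e=0$, $\om(\chi)=1$) I would instead use the Jacobi identity for $\vartheta(t)=1+2f(t)=\sum_{m\in\Z}e^{-\pi m^2t}$, namely $\vartheta(1/t)=t^{1/2}\vartheta(t)$; the leftover constant $1$ contributes the elementary integral $\int_{1/A}^\infty\frac{t^{1/2}-1}{2}\,t^{-s/2}\,\frac{dt}{t}$, which for $\Re(s)>1$ equals $\frac{A^{(s-1)/2}}{s-1}-\frac{A^{s/2}}{s}$, and this is precisely the origin of the $\delta_{N,1}$ polar terms.

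Finally I would add the three contributions to reconstitute $\Lambda(\chi,s)$ and divide by $(N/\pi)^{(s+e)/2}$ to return to $\G((s+e)/2)L(\chi,s)$; the prefactor of the reflected sum simplifies via $(\pi/N)^{(s+e)/2}(N/\pi)^{(1-s+e)/2}=(\pi/N)^{s-1/2}$, and in the $N=1$ case the polar piece picks up the overall $\pi^{s/2}$. This proves the identity for $\Re(s)>1$. Since each incomplete-gamma series converges locally uniformly on all of $\C$ — the $n$th term decays like $e^{-\pi n^2A/N}$ — the right-hand side is meromorphic on $\C$ (entire apart from simple poles at $s=0,1$ when $N=1$), so the identity propagates to all $s$ by analytic continuation; incidentally this re-derives the analytic continuation and functional equation of $L(\chi,s)$. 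The only genuinely error-prone steps are bookkeeping: tracking the power of $t$ produced by $(\tau/i)^{(2e+1)/2}$ at $\tau=it$ (it is $t^{e+1/2}$) through the inversion $t\mapsto1/t$ and its combination with $t^{(s+e)/2}$, and isolating the $m=0$ term of $\vartheta$ in the $N=1$ case so that the polar part emerges with exactly the coefficients in the statement.
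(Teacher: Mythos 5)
Your argument is correct: the Mellin identification of $\Lambda(\chi,s)=(N/\pi)^{(s+e)/2}\Gamma\bigl(\tfrac{s+e}{2}\bigr)L(\chi,s)$ for $\Re(s)>1$, the splitting of $\int_0^\infty$ at $t=A$, the inversion $t\mapsto 1/t$ combined with $f(1/t)=\om(\chi)t^{e+1/2}f_{\ov{\chi}}(t)$ (and, for $N=1$, the Jacobi identity with the leftover $m=0$ term producing exactly $\frac{A^{(s-1)/2}}{s-1}-\frac{A^{s/2}}{s}$ after the final multiplication by $\pi^{s/2}$), and the power bookkeeping $(\pi/N)^{(s+e)/2}(N/\pi)^{(1-s+e)/2}=(\pi/N)^{s-1/2}$ all check out, and the extension to all $s$ by locally uniform convergence of the incomplete-gamma series is sound. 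However, your route is genuinely different from the one the paper has in mind: the text states this theorem without proof and later derives such "approximate functional equations" in general (Theorem on Lavrik-type identities, Theorem \ref{thmapprox}) by a contour-shift/complex-integration argument applied to $\Lambda(z+s)x^{-z}/z$, \emph{taking the meromorphic continuation and functional equation of $L(\chi,s)$ as given} and defining the incomplete gamma factors via inverse Mellin transforms; the present theorem is then the degree-one special case, adjusted for the pole when $N=1$. Your proof instead goes through the theta transformation (i.e.\ Poisson summation), which is more self-contained for Dirichlet characters: it re-proves the analytic continuation and functional equation along the way, as you note, but it does not generalize to $L$-functions of higher degree, where no theta series with a one-line transformation law is available and the contour-integration method of the paper is the one that scales. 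One small point worth making explicit if you write this up: the paper states the theta functional equation only for prime modulus, but (as the text itself remarks) it holds verbatim for any \emph{primitive} character of conductor $N$, which is exactly the hypothesis you need.
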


\begin{remarks}{\rm \begin{enumerate}
\item Thanks to this theorem, we can compute numerical values of $L(\chi,s)$
(for $s$ in a reasonable range) in time $\Os(N^{1/2})$.
\item The optimal value of $A$ is $A=1$, but the theorem is stated in this
form for several reasons, one of them being that by varying $A$ (for instance
taking $A=1.1$ and $A=0.9$) one can check the correctness of the 
implementation, or even compute the root number $\om(\chi)$ if it is not known.
\item To compute values of $L(\chi,s)$ when $\Im(s)$ is large, one
does not use the theorem as stated, but variants, see \cite{Rub}.
\item The above theorem, called the \emph{approximate functional equation}, 
evidently implies the functional equation itself, so it seems to be more
precise; however this is an illusion since one can show that under very mild
assumptions functional equations in a large class imply corresponding
approximate functional equations. 
\end{enumerate}}
\end{remarks}

\subsection{Approximate Functional Equations}

In fact, let us make this last statement completely precise. For the sake of
simplicity we will assume that the $L$-functions have no poles (this 
corresponds for Dirichlet $L$-functions to the requirement that $\chi$ not be 
the trivial character). We begin by the following (where we restrict to 
certain kinds of gamma products, but it is easy to generalize; incidentally 
recall the \emph{duplication formula} for the gamma function 
$\G(s/2)\G((s+1)/2)=2^{1-s}\pi^{1/2}\G(s)$, which allows the reduction of 
factors of the type $\G(s+a)$ to several of the type $\G(s/2+a')$ and 
conversely).

\begin{definition} Recall that we have defined $\G_{\R}(s)=\pi^{-s/2}\G(s/2)$,
which is the gamma factor attached to $L$-functions of even characters, for 
instance to $\z(s)$. A \emph{gamma product} is a function of the type
$$\ga(s)=f^{s/2}\prod_{1\le i\le d}\G_{\R}(s+b_j)\;,$$
where $f>0$ is a real number. The number $d$ of gamma factors is called the
\emph{degree} of $\ga(s)$.\end{definition}

Note that the $b_j$ may not be real numbers, but in the case of $L$-functions
attached to motives, they will always be, and in fact be integers.

\begin{proposition} Let $\ga$ be a gamma product.\begin{enumerate}
\item There exists a function
$W(t)$ called the \emph{inverse Mellin transform} of $\ga$ such that
$$\ga(s)=\int_0^\infty t^sW(t)\,dt/t$$
for $\Re(s)$ sufficiently large (greater than the real part of the rightmost
pole of $\ga(s)$ suffices).
\item $W(t)$ is given by the following \emph{Mellin inversion formula} for
$t>0$:
$$W(t)=\M^{-1}(\ga)(t)=\dfrac{1}{2\pi i}\int_{\sigma-i\infty}^{\sigma+i\infty}t^{-s}\ga(s)\,ds\;,$$
for any $\sigma$ larger than the real part of the poles of $\ga(s)$.
\item $W(t)$ tends to $0$ exponentially fast when $t\to+\infty$. More 
precisely, as $t\to\infty$ we have
$$W(t)\sim C\cdot(t/f^{1/2})^B\exp(-\pi d(t/f^{1/2})^{2/d})$$
with $B=(1-d+\sum_{1\le j\le d}b_j)/d$ and $C=2^{(d+1)/2}/d^{1/2}$.
\end{enumerate}\end{proposition}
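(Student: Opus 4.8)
The plan is to establish (1) and (2) together by writing $W$ down explicitly, and then to read off the asymptotics (3) from a saddle-point analysis of the Mellin-inversion integral.

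\emph{Parts (1) and (2).} First I would record the elementary Mellin pair
$$\int_0^\infty t^s\cdot 2t^{b}e^{-\pi t^2}\,\dfrac{dt}{t}=\G_{\R}(s+b)\qquad\text{for }\Re(s)>-\Re(b)\;,$$
which follows from $\G(s)=\int_0^\infty t^se^{-t}\,dt/t$ by the substitution $t\mapsto\pi t^2$. Since the Mellin transform turns a multiplicative convolution $(g_1\star g_2)(t)=\int_0^\infty g_1(t/u)g_2(u)\,du/u$ into the ordinary product of Mellin transforms, and a dilation $g(t)\mapsto g(t/f^{1/2})$ into multiplication by $f^{s/2}$, the function
$$W(t)=W_0(t/f^{1/2})\;,\qquad W_0=\bigl(2t^{b_1}e^{-\pi t^2}\bigr)\star\cdots\star\bigl(2t^{b_d}e^{-\pi t^2}\bigr)\;,$$
has Mellin transform $\ga(s)$; moreover the integral $\int_0^\infty t^sW(t)\,dt/t$ converges in any half-plane to the right of the rightmost pole of $\ga$, and since each $\Re(b_j)\ge0$ these poles all lie in $\Re(s)\le0$, so $\Re(s)>0$ suffices. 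This proves (1). For (2), I would use Stirling's estimate to see that along a vertical line $|\ga(\sigma+i\tau)|$ decays like $|\tau|^{c}\exp(-(\pi d/4)|\tau|)$ as $|\tau|\to\infty$, so the contour integral converges absolutely; holomorphy of $\ga$ for $\Re(s)>0$ then makes it independent of $\sigma>0$, and Mellin inversion applied to the pair $(\ga,W)$ from (1) identifies this integral with $W(t)$.

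\emph{Part (3).} This is the substantive part, and here I would argue by the saddle-point method applied to $W(t)=\frac{1}{2\pi i}\int_{(\sigma)}t^{-s}\ga(s)\,ds$. Writing $u=t/f^{1/2}$ and using Stirling's expansion
$$\log\ga(s)=\dfrac{s}{2}\log f+\sum_{1\le j\le d}\Bigl(\dfrac{s+b_j-1}{2}\log\dfrac{s+b_j}{2}-\dfrac{s+b_j}{2}(1+\log\pi)+\dfrac{1}{2}\log(2\pi)\Bigr)+O(1/s)\;,$$
the phase $\psi(s)=-s\log t+\log\ga(s)$ has a simple saddle $s_0$ with $\psi'(s_0)=0$; to the needed order this is $\tfrac{d}{2}\log\bigl(s_0/(2\pi)\bigr)=\log u$, i.e.\ $s_0\sim 2\pi u^{2/d}$, which tends to $+\infty$ with $t$. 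I would then push the contour vertically through $s_0$, expand $\psi$ to second order with $\psi''(s_0)\sim d/(2s_0)$, and carry out the Gaussian integral, which produces a factor of order $(s_0/(\pi d))^{1/2}$. In $\exp(\psi(s_0))$ the two terms proportional to $s_0\log\bigl(s_0/(2\pi)\bigr)$ cancel, leaving a leading part $-\tfrac{1}{2}d\,s_0=-\pi d\,u^{2/d}$ together with logarithmic terms $\tfrac{1}{2}\bigl(\sum_jb_j\bigr)\log\bigl(s_0/(2\pi)\bigr)-\tfrac{d}{2}\log(s_0/2)+\tfrac{d}{2}\log(2\pi)$. Re-expressing $\log s_0$ and $\log\bigl(s_0/(2\pi)\bigr)$ in terms of $\log u$ via $s_0\sim 2\pi u^{2/d}$ collects all of this into $u^{B}$ with $B=(1-d+\sum_jb_j)/d$, while the remaining $u$-independent constants, combined with the Gaussian normalization and with the $\pi$-powers cancelling, collapse to $C=2^{(d+1)/2}/d^{1/2}$; this yields $W(t)\sim C\,u^B\exp(-\pi d\,u^{2/d})$.

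The hard part will be the bookkeeping inside Part (3): the exponential factor $\exp(-\pi d\,u^{2/d})$ falls out immediately, but pinning down the exact prefactor $C\,u^B$ forces one to carry every $O(\log s)$ and $O(1)$ term of Stirling's expansion --- the ``$-\tfrac{1}{2}$'' in each $\G_{\R}$, the shifts $b_j$, the factors $\pi^{-(s+b_j)/2}$, and the $\tfrac{1}{2}\log(2\pi)$ --- through the computation of $\psi(s_0)$ and of the Gaussian normalization without slips. As an independent check I would also derive (3) by induction on $d$ straight from $W_0=W_0^{(d-1)}\star\bigl(2t^{b_d}e^{-\pi t^2}\bigr)$ by Laplace's method applied to $\int_0^\infty W_0^{(d-1)}(t/v)\,2v^{b_d}e^{-\pi v^2}\,dv/v$: the sum $\sum_j v_j^2$ subject to $\prod_j v_j=t$ is minimized at $v_j=t^{1/d}$ by the arithmetic--geometric mean inequality, which recovers the rate $\pi d\,t^{2/d}$, and the successive Gaussian integrations rebuild the constant $2^{(d+1)/2}/d^{1/2}$. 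The case $d=1$, where $W(t)=2(t/f^{1/2})^{b_1}e^{-\pi(t/f^{1/2})^2}$ exactly (so $B=b_1$, $C=2$), is a convenient sanity check.
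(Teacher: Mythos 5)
Your proposal is correct, and in fact the paper never proves this proposition --- it is stated as background (the standard facts about Mellin pairs), so there is no in-paper argument to compare against. Your route is the natural one: parts (1)--(2) via the elementary pair $\M\bigl(2t^{b}e^{-\pi t^2}\bigr)(s)=\G_{\R}(s+b)$, multiplicative convolution, and the dilation accounting for $f^{s/2}$, then Stirling decay $|\ga(\sigma+i\tau)|\ll|\tau|^{c}e^{-\pi d|\tau|/4}$ to justify Mellin inversion; part (3) by the saddle point $s_0\sim 2\pi u^{2/d}$ with $u=t/f^{1/2}$, $\psi''(s_0)\sim d/(2s_0)$, and the bookkeeping you describe does close up correctly: the $s_0\log s_0$ terms cancel, the linear term gives $-\tfrac{d}{2}s_0=-\pi d\,u^{2/d}$, the residual logarithms combine with the Gaussian factor $(s_0/(\pi d))^{1/2}$ to give exactly $u^{(1-d+\sum_jb_j)/d}$ and $2^{(d+1)/2}/d^{1/2}$. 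This is confirmed by the explicit inverse Mellin transforms the paper lists later ($2e^{-2\pi x}$ for $\G_{\C}$, $4K_0(2\pi x)$ for $\G_{\R}(s)^2$, $8K_0(4\pi x^{1/2})$ for $\G_{\C}(s)^2$), whose known asymptotics match $B$ and $C$. Two small caveats: the paper's definition of a gamma product does not assume $\Re(b_j)\ge0$ (it only notes that in motivic cases the $b_j$ are integers), so your convergence statements in (1)--(2) should be phrased with $\Re(s)>\max_j(-\Re(b_j))$ rather than $\Re(s)>0$; and in (3) one should note, as is standard, that using the approximate saddle $s_0=2\pi u^{2/d}$ instead of the exact root, and truncating Stirling at $O(1/s)$, perturbs the exponent only by $o(1)$, so the stated asymptotic constant is unaffected. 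Your inductive Laplace-method check via the convolution (with the AM--GM minimization giving the rate $\pi d\,t^{2/d}$) is a sound independent verification.
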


\begin{definition} Let $\ga(s)$ be a gamma product and $W(t)$ its inverse
Mellin transform. The \emph{incomplete gamma product} $\ga(s,x)$ is defined 
for $x>0$ by
$$\ga(s,x)=\int_x^\infty t^sW(t)\,\dfrac{dt}{t}\;.$$
\end{definition}

Note that this integral always converges since $W(t)$ tends to $0$ 
exponentially fast when $t\to\infty$. In addition, thanks to the above 
proposition it is immediate to show the following:

\begin{corollary}\label{asympunsmooth}\begin{enumerate}
\item For any $\sigma$ larger than the real part of the poles of $\ga(s)$
we have
$$\ga(s,x)=\dfrac{x^s}{2\pi i}\int_{\sigma-i\infty}^{\sigma+i\infty}\dfrac{x^{-z}\ga(z)}{z-s}\,dz\;.$$
\item For $s$ fixed, as $x\to\infty$ we have with the same
constants $B$ and $C$ as above
$$\ga(s,x)\sim \dfrac{C}{2\pi}x^s(x/f^{1/2})^{B-2/d}\exp(-\pi d(x/f^{1/2})^{2/d})$$
so has essentially the same exponential decay as $W(x)$.
\end{enumerate}
\end{corollary}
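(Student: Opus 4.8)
The plan is to read off both parts directly from the three facts about the inverse Mellin transform $W$ recorded in the preceding Proposition --- the Mellin identity $\ga(s)=\int_0^\infty t^sW(t)\,dt/t$, the Mellin inversion formula $W(t)=\frac{1}{2\pi i}\int_{(\sigma)}t^{-z}\ga(z)\,dz$, and the exponential decay $W(t)\sim C(t/f^{1/2})^B\exp(-\pi d(t/f^{1/2})^{2/d})$ as $t\to\infty$ --- combined with the defining formula $\ga(s,x)=\int_x^\infty t^sW(t)\,dt/t$. So nothing new is needed beyond careful interchange of integrals in (1) and a standard endpoint (Laplace/Watson-type) estimate in (2).

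For (1) I would substitute the Mellin inversion formula for $W(t)$ into the definition of $\ga(s,x)$, getting the double integral $\frac{1}{2\pi i}\int_x^\infty\int_{(\sigma)}t^{s-1-z}\ga(z)\,dz\,dt$, and then exchange the two integrations. The exchange is licit by Fubini as soon as $\sigma$ is taken to the right both of $\Re(s)$ and of all poles of $\ga$: the outer integral $\int_x^\infty t^{\Re(s)-1-\sigma}\,dt$ then converges (this is where $\sigma>\Re(s)$ enters), while $\int_{(\sigma)}|\ga(\sigma+it)|\,dt<\infty$ because Stirling's formula for $\G$ shows that each of the $d$ factors $\G_{\R}$ contributes a decay $e^{-\pi|t|/4}$, so $|\ga(\sigma+it)|$ decays faster than any power of $|t|$ times $e^{-\pi d|t|/4}$ on vertical lines. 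Carrying out the inner integral, $\int_x^\infty t^{s-1-z}\,dt=x^{s-z}/(z-s)$ (again valid since $\Re(z-s)>0$), yields the claimed formula, and a routine contour displacement shows the value is independent of $\sigma$ in this range.

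For (2) I would start from $\ga(s,x)=\int_x^\infty t^{s-1}W(t)\,dt$ and insert the asymptotic $W(t)=C(t/f^{1/2})^B\exp(-\pi d(t/f^{1/2})^{2/d})(1+o(1))$ from part (3) of the Proposition; the contribution of the $o(1)$ error is absorbed into a genuinely lower-order term by applying the same endpoint estimate to it, so it suffices to analyze $\int_x^\infty g(t)e^{-\psi(t)}\,dt$ with $g(t)=C\,t^{s-1}(t/f^{1/2})^B$ and $\psi(t)=\pi d(t/f^{1/2})^{2/d}$. Since $\psi$ is increasing and $g$ varies only like a power of $t$, one integration by parts gives $\int_x^\infty ge^{-\psi}=\frac{g(x)}{\psi'(x)}e^{-\psi(x)}+\int_x^\infty(g/\psi')'e^{-\psi}$; the remaining integral is $o$ of the boundary term because $(g/\psi')'/(g/\psi')=g'/g-\psi''/\psi'$, and both $g'/g=(s-1+B)/t$ and $\psi''/\psi'=(2/d-1)/t$ are $O(1/t)$, so iterating once more confirms the error is negligible. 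The elementary computation $\psi'(x)=(2\pi/x)(x/f^{1/2})^{2/d}$ then gives $g(x)/\psi'(x)=\frac{C}{2\pi}x^s(x/f^{1/2})^{B-2/d}$, which is exactly the asserted asymptotic, and comparison with the Proposition's formula for $W(x)$ shows the exponential factor is the same.

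The routine parts are the bookkeeping with $B$ and $C$ and the Fubini verification in (1). The only point requiring a little care, and the one I expect to be the main (though still modest) obstacle, is the endpoint estimate in (2): one must check that the integration-by-parts remainder is negligible uniformly in the degree $d\ge1$, the case $d\ge3$ --- where $\psi'(x)\to0$ rather than $\to\infty$ --- being the one to watch. This is handled by the two bounds $\psi''/\psi'=(2/d-1)/x$ and $g'/g=(s-1+B)/x$, both $O(1/x)\to0$, so no genuine difficulty arises.
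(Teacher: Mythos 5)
Your argument is correct and is exactly the route the paper intends (it states the corollary as ``immediate'' from the preceding proposition): part (1) by inserting the Mellin inversion formula into $\ga(s,x)=\int_x^\infty t^sW(t)\,dt/t$ and exchanging integrals, part (2) by a one-step integration-by-parts (Watson/Laplace endpoint) estimate on $\int_x^\infty g(t)e^{-\psi(t)}\,dt$, and your computation $g(x)/\psi'(x)=\tfrac{C}{2\pi}x^s(x/f^{1/2})^{B-2/d}$ reproduces the stated constants. You are also right to require $\sigma>\Re(s)$ in addition to $\sigma$ exceeding the poles of $\ga$ (as the paper itself does in the later Rubinstein-type theorem), since crossing $z=s$ changes the integral by the residue $\ga(s)$.
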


The first theorem, essentially due to Lavrik, which is an exercise in complex
integration is as follows (recall that a function $f$ is of \emph{finite
order} $\al\ge0$ if for all $\eps>0$ and sufficiently large $|z|$ we have
$|f(z)|\le \exp(|z|^{\al+\eps})$):

\begin{theorem}\label{thmapprox} For $i=1$ and $i=2$, let 
$L_i(s)=\sum_{n\ge 1}a_i(n)n^{-s}$ be
Dirichlet series converging in some right half-plane $\Re(s)\ge\sigma_0$.
For $i=1$ and $i=2$, let $\ga_i(s)$ be gamma products having the same
degree $d$. Assume that the functions $\Lambda_i(s)=\ga_i(s)L_i(s)$
extend analytically to $\C$ into holomorphic functions of \emph{finite order},
and that we have the functional equation 
$$\Lambda_1(k-s)=w\cdot\Lambda_2(s)$$ for some constant
$w\in\C^*$ and some real number $k$. 

Then for all $A>0$, we have
$$\Lambda_1(s)=\sum_{n\ge1}\dfrac{a_1(n)}{n^s}\ga_1(s,nA)+
w\sum_{n\ge1}\dfrac{a_2(n)}{n^{k-s}}\ga_2\Bigl(k-s,\dfrac{n}{A}\Bigr)$$
and symmetrically
$$\Lambda_2(s)=\sum_{n\ge1}\dfrac{a_2(n)}{n^s}\ga_2\Bigl(s,\dfrac{n}{A}\Bigr)+
w^{-1}\sum_{n\ge1}\dfrac{a_1(n)}{n^{k-s}}\ga_1(k-s,nA)\;,$$
where $\ga_i(s,x)$ are the corresponding incomplete gamma products.
\end{theorem}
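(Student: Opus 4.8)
The plan is to prove the formula for $\Lambda_1(s)$; the one for $\Lambda_2(s)$ follows by the same argument with the roles of the two Dirichlet series interchanged (using that the functional equation $\Lambda_1(k-s)=w\Lambda_2(s)$ is equivalent to $\Lambda_2(k-s)=w^{-1}\Lambda_1(s)$). The core idea is to start from the Mellin-type integral representation of $\Lambda_1$, cut the contour of integration at the point corresponding to the parameter $A$, and apply the functional equation to the ``short'' piece.

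\textbf{Step 1: Integral representation via incomplete gamma products.} First I would write, for $\Re(s)$ large enough that the Dirichlet series converges absolutely,
$$\Lambda_1(s)=\ga_1(s)L_1(s)=\sum_{n\ge1}a_1(n)\,n^{-s}\,\ga_1(s)=\sum_{n\ge1}a_1(n)\int_0^\infty t^sW_1(t)\,\frac{dt}{t}\cdot n^{-s}\;,$$
where $W_1=\M^{-1}(\ga_1)$ is the inverse Mellin transform supplied by the proposition. Substituting $t\mapsto nt$ and using that $W_1$ decays exponentially (so all interchanges of sum and integral are justified by absolute convergence), this becomes $\sum_{n\ge1}a_1(n)\int_0^\infty t^s W_1(nt)\,dt/t$. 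Now split each integral at $t=A$: the tail $\int_A^\infty t^sW_1(nt)\,dt/t=\ga_1(s,nA)$ by definition of the incomplete gamma product (after the substitution $u=nt$, $\ga_1(s,nA)=\int_{nA}^\infty u^sW_1(u)\,du/u$, matching Definition of $\ga(s,x)$), giving the first sum on the right-hand side. It remains to handle the head $\sum_n a_1(n)\int_0^A t^sW_1(nt)\,dt/t$.

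\textbf{Step 2: Transform the head using the functional equation.} For the head I would use the Mellin-inversion contour integral for $W_1$ together with the functional equation. Writing $\int_0^A t^sW_1(nt)\,dt/t$ and inserting $W_1(nt)=\frac{1}{2\pi i}\int_{(\sigma)}(nt)^{-z}\ga_1(z)\,dz$, one integrates in $t$ over $(0,A)$ first — convergent when $\sigma$ is chosen with $\Re(s)>\sigma$ appropriately — to obtain an expression of the shape $\frac{1}{2\pi i}\int_{(\sigma)}\frac{A^{s-z}}{s-z}\,n^{-z}\ga_1(z)\,dz$. Summing over $n$ produces $\frac{1}{2\pi i}\int_{(\sigma)}\frac{A^{s-z}}{s-z}\ga_1(z)L_1(z)\,dz=\frac{1}{2\pi i}\int_{(\sigma)}\frac{A^{s-z}}{s-z}\Lambda_1(z)\,dz$. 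Since $\Lambda_1$ is holomorphic of finite order and $\ga_1(z)$ decays along verticals, I can shift the contour to $\Re(z)=k-\sigma'$ for suitable $\sigma'$, crossing only the simple pole at $z=s$ (whose residue is $-\Lambda_1(s)$, up to sign bookkeeping from the orientation). On the shifted contour substitute $z\mapsto k-z$ and apply $\Lambda_1(k-z)=w\Lambda_2(z)$; unwinding the resulting integral by exactly the steps of Step 1 in reverse — recognizing $\frac{1}{2\pi i}\int\frac{(1/A)^{\,\text{stuff}}}{\cdots}\ga_2(z)\,dz$ as the tail integral defining $\ga_2(k-s,n/A)$ after splitting — yields $w\sum_{n\ge1}a_2(n)\,n^{-(k-s)}\ga_2(k-s,n/A)$. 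Collecting the pole contribution $\Lambda_1(s)$ with the two sums gives the claimed identity for $\Re(s)$ large, and then analytic continuation extends it to all $s\in\C$ since every term is entire in $s$ (the incomplete gamma products $\ga_i(s,x)$ being entire in $s$ for fixed $x>0$, by Corollary \ref{asympunsmooth}(1)).

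\textbf{Main obstacle.} The routine parts are the substitutions and the bookkeeping of which half-plane each interchange of summation and integration is valid in; these are handled by the exponential decay of $W_i$ and absolute convergence of the Dirichlet series. The genuinely delicate step is the contour shift in Step 2: I must justify that $\Lambda_1(z)$ grows slowly enough along horizontal segments at height $\pm T$ as $T\to\infty$ for the shift to be legitimate. This is exactly where the \emph{finite order} hypothesis is used — combined with the Phragm\'en–Lindel\"of principle applied in the vertical strip between $\Re(z)=\sigma$ and $\Re(z)=k-\sigma'$, together with the known polynomial-in-$|z|$ decay of the gamma products $\ga_i(z)$ on verticals (Stirling), one gets that $|\Lambda_1(z)|$ is at most polynomially large in the strip, so the contributions from the top and bottom of the shifted rectangle vanish in the limit. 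Getting this Phragm\'en–Lindel\"of bound cleanly, and checking that no other poles are crossed (which is automatic under the no-poles assumption we have made), is the crux of the argument.
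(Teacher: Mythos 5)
Your overall strategy (split the Mellin representation of $\Lambda_1$ at $t=A$, identify the tail as the first sum, and convert the head via Mellin inversion, a leftward contour shift justified by the finite-order hypothesis plus Phragm\'en--Lindel\"of, and the functional equation with $z\mapsto k-z$) is sound and is essentially the intended "exercise in complex integration", the key input being the contour representation of Corollary \ref{asympunsmooth}(1). But there is a concrete bookkeeping error in Step 2 that, as written, would not produce the stated identity. By your own choice the head integral is
$\frac{1}{2\pi i}\int_{(\sigma)}\frac{A^{s-z}}{s-z}\Lambda_1(z)\,dz$ with $\sigma<\Re(s)$ (this inequality is forced by the convergence of $\int_0^A t^{s-z-1}\,dt$), so the pole of the integrand at $z=s$ lies to the \emph{right} of your contour. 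Moreover the shifted line $\Re(z)=k-\sigma'$ must also satisfy $k-\sigma'<\Re(s)$ (you need $\sigma'>\Re(k-s)$ so that, after $z\mapsto k-z$, the new contour sits to the right of $k-s$ as required to recognize $\ga_2(k-s,n/A)$). Hence the strip you sweep contains no pole at all ($\Lambda_1$ is entire here), no residue is picked up, and the head equals $w\sum_{n\ge1}a_2(n)n^{-(k-s)}\ga_2(k-s,n/A)$ \emph{exactly}. Your final sentence "collecting the pole contribution $\Lambda_1(s)$ with the two sums" therefore double-counts: combined with the Step 1 decomposition $\Lambda_1(s)=\text{tail}+\text{head}$ it would give $\Lambda_1(s)=\sum_1\pm\Lambda_1(s)+w\sum_2$, which is not the theorem for either sign, and no "sign bookkeeping" repairs it.

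The fix is easy and you should choose one of two consistent routes. Either keep your split and simply delete the residue term (head $=w\sum_2$, done); or skip the split altogether and apply the contour argument directly to the first sum: writing $\sum_{n\ge1}\frac{a_1(n)}{n^s}\ga_1(s,nA)=\frac{A^s}{2\pi i}\int_{(\sigma)}\frac{A^{-z}\Lambda_1(z)}{z-s}\,dz$ with $\sigma>\Re(s)$, the leftward shift now genuinely crosses $z=s$ with residue $\Lambda_1(s)$, and the shifted integral becomes $-w\sum_2$ after $z\mapsto k-z$, giving $\Lambda_1(s)=\sum_1+w\sum_2$; this second route is the one suggested by the paper's formulas. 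Two smaller points: in Step 1 the tail is $n^{-s}\ga_1(s,nA)$, not $\ga_1(s,nA)$ (harmless, since you had already absorbed $n^{-s}$, but state it correctly); and on the far-left line the convergence of the shifted integral should be justified by $\Lambda_1(z)=w\Lambda_2(k-z)$ together with Stirling decay of $\ga_2$ on vertical lines, which complements the Phragm\'en--Lindel\"of bound you correctly identify as the crux for the horizontal segments.
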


Note that, as already mentioned, it is immediate to modify this theorem
to take into account possible poles of $L_i(s)$.

Since the incomplete gamma products $\ga_i(s,x)$ tend to $0$ exponentially
fast when $x\to\infty$, the above formulas are rapidly
convergent series. We can make this more precise: if we write as above
$\ga_i(s,x)\sim C_ix^{B'_i}\exp(-\pi d(x/f_i^{1/2})^{2/d})$, since
the convergence of the series is dominated by the exponential term, choosing
$A=1$, to have the $n$th term of the series less than $e^{-D}$, say, we
need (approximately) $\pi d(n/f^{1/2})^{2/d}>D$, in other words
$n>(D/(\pi d))^{d/2}f^{1/2}$, with $f=\max(f_1,f_2)$. Thus, if the 
``conductor'' $f$ is large, we may have some trouble. But this stays 
reasonable for $f<10^8$, say.

\smallskip

The above argument leads to the belief that, apart from special values which
can be computed by other methods, the computation of values of $L$-functions
of conductor $f$ requires at least $C\cdot f^{1/2}$ operations. It has
however been shown by Hiary (see \cite{Hia}), that if $f$ is far from
squarefree (for instance if $f=m^3$ for Dirichlet $L$-functions), the
computation can be done faster (in $\Os(m)$ in the case $f=m^3$), at least in
the case of Dirichlet $L$-functions.

\medskip

For practical applications, it is very useful to introduce an additional
function as a parameter. We state the following version due to Rubinstein
(see \cite{Rub}), whose proof is essentially identical to that of the
preceding version. To simplify the exposition, we again assume that the
$L$ function has no poles (it is easy to generalize), but also that
$L_2=\ov{L_1}$.

\begin{theorem} Let $L(s)=\sum_{n\ge1}a(n)n^{-s}$ be an $L$-function as above
with functional equation $\Lambda(k-s)=w\ov{\Lambda}(s)$ with
$\Lambda(s)=\ga(s)L(s)$. For simplicity of exposition, assume that $L(s)$
has no poles in $\C$. Let $g(s)$ be an entire function such that for fixed
$s$ we have $|\Lambda(z+s)g(z+s)/z|\to0$ as $\Im(z)\to\infty$ in any bounded
strip $|\Re(z)|\le \alpha$. We have
$$\Lambda(s)g(s)=\sum_{n\ge1}\dfrac{a(n)}{n^s}f_1(s,n)
+\om\sum_{n\ge1}\dfrac{\ov{a(n)}}{n^{k-s}}f_2(k-s,n)\;,$$
where
$$f_1(s,x)=\dfrac{x^s}{2\pi i}\int_{\sigma-i\infty}^{\sigma+i\infty}\dfrac{\ga(z)g(z)x^{-z}}{z-s}\,dz\text{\quad and\quad}f_2(s,x)=\dfrac{x^s}{2\pi i}\int_{\sigma-i\infty}^{\sigma+i\infty}\dfrac{\ga(z)\ov{g(k-\ov{z})}x^{-z}}{z-s}\,dz\;,$$
where $\sigma$ is any real number greater than the real parts of all the
poles of $\ga(z)$ and than $\Re(s)$.
\end{theorem}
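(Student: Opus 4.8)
The plan is to prove this by the standard contour-integration argument behind ``approximate functional equations,'' now carrying the auxiliary function $g$ through the computation. First I would fix $\sigma_1$ to the right of the abscissa of absolute convergence of $L$ (and of $\Re(s)$, and of the real parts of all poles of $\ga$), and $\sigma_2$ far to the left, chosen so that in addition $k-\sigma_2$ lies to the right of the abscissa of absolute convergence of the dual series $\ov L$ and in the admissible range for $f_2(k-s,\cdot)$. Since $\Lambda g$ is holomorphic, the only pole of $\Lambda(z)g(z)/(z-s)$ in the strip $\sigma_2\le\Re(z)\le\sigma_1$ is the simple pole at $z=s$, with residue $\Lambda(s)g(s)$, so the residue theorem gives
$$\Lambda(s)g(s)=\frac{1}{2\pi i}\int_{(\sigma_1)}\frac{\Lambda(z)g(z)}{z-s}\,dz-\frac{1}{2\pi i}\int_{(\sigma_2)}\frac{\Lambda(z)g(z)}{z-s}\,dz\;,$$
the horizontal connectors at height $\pm T$ disappearing as $T\to\infty$ precisely because of the hypothesis $|\Lambda(z+s)g(z+s)/z|\to0$ in bounded vertical strips (their length $\sigma_1-\sigma_2$ stays bounded, so it suffices that the integrand go to $0$ uniformly there).

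Next I would treat the two integrals separately. On $\Re(z)=\sigma_1$ the Dirichlet series for $L$ converges absolutely, so $\Lambda(z)=\ga(z)\sum_{n\ge1}a(n)n^{-z}$; interchanging sum and integral — legitimate because $\ga(z)$ decays exponentially on vertical lines by Stirling while $g(z)/(z-s)$ grows at most subexponentially and $\sum_n|a(n)|n^{-\sigma_1}<\infty$ — yields $\sum_{n\ge1}\frac{a(n)}{n^s}f_1(s,n)$ by the definition of $f_1$ (one may take the contour in $f_1$ to be $\Re(z)=\sigma_1$ and then move it to the stated $\sigma$, no pole being crossed). For the integral on $\Re(z)=\sigma_2$ I would substitute the functional equation in the form $\Lambda(z)=\om\,\ov\Lambda(k-z)$ and change variables $z\mapsto k-z$, which places the contour on the far-right line $\Re(z)=k-\sigma_2$; there $\ov\Lambda(z)=\ga(z)\sum_{n\ge1}\ov{a(n)}n^{-z}$ converges absolutely, the same Fubini interchange applies, and shifting the contour back to $\Re(z)=\sigma$ (again crossing no pole) produces $\om\sum_{n\ge1}\frac{\ov{a(n)}}{n^{k-s}}f_2(k-s,n)$. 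Assembling the two pieces gives the claimed identity.

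The routine parts are the two Fubini interchanges and the two ``harmless'' contour moves inside the half-planes of absolute convergence; these are uniform-convergence estimates built from Stirling's formula for $\ga$ and the assumed growth of $g$. The two places needing genuine care are: (i) discarding the horizontal segments in the main residue computation — this is exactly what the hypothesis $|\Lambda(z+s)g(z+s)/z|\to0$ in bounded strips is designed to guarantee, but one must verify it is used on the whole closed strip $\sigma_2\le\Re(z)\le\sigma_1$ and for $|\Im(z)|\to\infty$, not merely on the two boundary lines; and (ii) the bookkeeping of complex conjugations forced by the functional equation $\Lambda(k-s)=\om\,\ov\Lambda(s)$, where $\ov\Lambda$ is the completion of $\ov L$ — it is this conjugation, together with $\ga$ being real on the real axis, that accounts both for the coefficients $\ov{a(n)}$ in the second sum and for the appearance of $\ov{g(k-\ov z)}$ (rather than $g(k-z)$) inside $f_2$. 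I expect step (ii) to be the only real source of friction in making the final form match exactly; the companion ``symmetric'' identity, if desired, follows verbatim by running the same argument with $L$ replaced by $\ov L$.
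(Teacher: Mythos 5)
Your overall strategy --- residue theorem between two vertical lines, Dirichlet expansion of $\Lambda g/(z-s)$ on the right line, functional equation plus the substitution $z\mapsto k-z$ on the left line --- is exactly the contour-integration argument the paper has in mind (it gives no details, saying only that the proof is the same exercise as for the unsmoothed version), and your treatment of the routine points (Fubini, harmless contour moves, vanishing horizontal segments) is fine.

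The problem sits precisely at the point you flag as ``friction'', and it does not resolve the way you claim. Carry the left-line computation out explicitly: the functional equation gives $\Lambda(z)=w\,\ov{\Lambda(k-\ov z)}$, and for $\Re(z)$ very negative one expands $\ov{\Lambda(k-\ov z)}=\ov{\ga(k-\ov z)}\sum_n\ov{a(n)}\,n^{-(k-z)}=\ga(k-z)\sum_n\ov{a(n)}\,n^{-(k-z)}$; the reality of $\ga$ on $\R$ is what produces $\ga(k-z)$ and the coefficients $\ov{a(n)}$, but the factor $g(z)$ is never touched by the functional equation. After $u=k-z$ the second term therefore comes out as
$$\om\sum_{n\ge1}\dfrac{\ov{a(n)}}{n^{k-s}}\cdot\dfrac{n^{k-s}}{2\pi i}\int_{\sigma-i\infty}^{\sigma+i\infty}\dfrac{\ga(u)\,g(k-u)\,n^{-u}}{u-(k-s)}\,du\;,$$
with $g(k-u)$, not $\ov{g(k-\ov u)}$, in the integrand. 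The two coincide only when $\ov{g(\ov z)}=g(z)$, i.e.\ when $g$ is real on the real axis, and for general admissible $g$ the identity as printed cannot hold: its left side and first sum are $\C$-linear in $g$, while the printed second sum is conjugate-linear in $g$ (compare $g\equiv1$, which gives the true unsmoothed approximate functional equation, with $g\equiv i$; the class of admissible $g$ is stable under multiplication by $i$, and $g(s)=e^{is\th}$, suggested in the text, shows the same clash). The reconciliation is that the quantity your argument actually produces equals $\ov{f_2(k-\ov s,n)}$ with the paper's $f_2$; so the theorem must either be read with $\ov{f_2(k-\ov s,n)}$ (equivalently with $g(k-z)$ inside the integral) in the second sum --- which is Rubinstein's formulation, and on the critical line $\Re(s)=k/2$ one has $k-\ov s=s$ --- or be supplemented by the hypothesis that $g$ has real Taylor coefficients. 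In particular, attributing the appearance of $\ov{g(k-\ov z)}$ to ``$\ga$ being real on the real axis'' is not correct: that reality accounts for $\ga$ and for $\ov{a(n)}$, but no conjugation ever lands on $g$ in the argument you outline.
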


Several comments are in order concerning this theorem:

\begin{enumerate}\item As already mentioned, the proof is a technical but
elementary exercise in complex analysis. In particular, it is very easy to
modify the formula to take into account possible poles of $L(s)$, see
\cite{Rub} once again.
\item
As in the unsmoothed case, the functions $f_i(s,x)$ are exponentially 
decreasing as $x\to\infty$. Thus this gives fast formulas for computing values
of $L(s)$ for reasonable values of $s$. The very simplest case of this 
approximate functional equation, even simpler than the Riemann zeta function,
is for the computation of the value at $s=1$ of the $L$-function of an
\emph{elliptic curve} $E$: if the sign of its functional equation is equal
to $+1$ (otherwise $L(E,1)=0$), the (unsmoothed) formula reduces to
$$L(E,1)=2\sum_{n\ge1}\dfrac{a(n)}{n}e^{-2\pi n/N^{1/2}}\;,$$
where $N$ is the conductor of the curve.
\item It is not difficult to show that as $n\to\infty$ we have a similar
behavior for the functions $f_i(s,n)$ as in the unsmoothed case
(Corollary \ref{asympunsmooth}), i.e.,
$$f_i(s,n)\sim C_i\cdot n^{B'_i}e^{-\pi d(n/N^{1/2})^{2/d}}$$
for some explicit constants $C_i$ and $B'_i$ (in the preceding example $d=2$).
\item The theorem can be used with $g(s)=1$ to compute values of
$L(s)$ for ``reasonable'' values of $s$. When $s$ is unreasonable,
for instance when $s=1/2+iT$ with $T$ large (to check the Riemann
hypothesis for instance), one chooses other functions $g(s)$ adapted
to the computation to be done, such as $g(s)=e^{is\th}$ or
$g(s)=e^{-a(s-s_0)^2}$; I refer to Rubinstein's paper for detailed
examples.
\item By choosing two very simple functions $g(s)$ such as $a^s$ for
two different values of $a$ close to $1$, one can compute numerically
the value of the root number $\om$ if it is unknown. In a similar manner,
if the $a(n)$ are known but not $\om$ nor the conductor $N$, by
choosing a few easy functions $g(s)$ one can find them. But much more
surprisingly, if almost nothing is known apart from the gamma factors and $N$,
say, by cleverly choosing a number of functions $g(s)$ and applying techniques
from numerical analysis such as singular value decomposition and least
squares methods, one can prove or disprove (numerically of course)
the existence of an $L$-function having the given gamma factors and
conductor, and find its first few Fourier coefficients if they exist.
This method has been used extensively by D.~Farmer in his search for
$\GL_3(\Z)$ and $\GL_4(\Z)$ Maass forms, by Poor and Yuen in computations 
related to the paramodular conjecture of Brumer--Kramer and abelian surfaces,
and by A.~Mellit in the search of $L$-functions of degree $4$ with
integer coefficients and small conductor. Although a fascinating and
active subject, it would carry us too far afield to give more detailed 
explanations.\end{enumerate}

\subsection{Inverse Mellin Transforms}

We thus see that it is necessary to compute inverse Mellin transforms of some
common gamma factors. Note that the exponential factors (either involving the
conductor and/or $\pi$) are easily taken into account: if
$\ga(s)=\M(W)(s)=\int_0^\infty W(t)t^s\,dt/t$ is the Mellin transform of $W(t)$,
we have for $a>0$, setting $u=at$:
$$\int_0^\infty W(at)t^s\,dt/t=\int_0^\infty W(u)u^sa^{-s}\,du/u=a^{-s}\ga(s)\;,$$
so the inverse Mellin transform of $a^{-s}\ga(s)$ is simply $W(at)$.

As we have seen, there exists an explicit formula for the inverse Mellin 
transform, which is immediate from the Fourier inversion formula.
We will see that although this looks quite technical, it is in practice very 
useful for computing inverse Mellin transforms.

Let us look at the simplest examples (omitting the exponential factor $f^{s/2}$
thanks to the above remark):

\begin{enumerate}
\item $\M^{-1}(\G_{\R}(s))=2e^{-\pi x^2}$ (this occurs for $L$-functions of
even characters, and in particular for $\z(s)$).
\item $\M^{-1}(\G_{\R}(s+1))=2xe^{-\pi x^2}$ (this occurs for $L$-functions of
odd characters).
\item $\M^{-1}(\G_{\C}(s))=2e^{-2\pi x}$ (this occurs for $L$-functions
attached to modular forms and to elliptic curves).
\item $\M^{-1}(\G_{\R}(s)^2)=4K_0(2\pi x)$ (this occurs for instance for
Dedekind zeta functions of real quadratic fields). Here $K_0(z)$ is a
well-known special function called a $K$-Bessel function. Of course this is
just a name, but it can be computed quite efficiently and can be found in
all computer algebra packages.
\item $\M^{-1}(\G_{\C}(s)^2)=8K_0(4\pi x^{1/2})$.
\item $\M^{-1}(\G_{\C}(s)\G_{\C}(s-1))=8K_1(4\pi x^{1/2})/x^{1/2}$, where
$K_1(z)$ is another $K$-Bessel function which can be defined by 
$K_1(z)=-K_0'(z)$.
\end{enumerate}

\begin{exercise} Prove all these formulas.
\end{exercise}

It is clear however that when the gamma factor is more complicated, we
cannot write such ``explicit'' formulas, for instance what must be done for
$\ga(s)=\G_{\C}(s)\G_{\R}(s)$ or $\ga(s)=\G_{\R}(s)^3$ ? In fact all of
the above formulas involving $K$-Bessel functions are ``cheats'' in the sense
that we have simply given a \emph{name} to these inverse Mellin transform,
without explaining how to compute them.

\smallskip

However the Mellin inversion formula does provide such a method. The
main point to remember (apart of course from the crucial use of the Cauchy
residue formula and contour integration), is that the gamma function
\emph{tends to zero exponentially fast} on vertical lines, uniformly in the
real part (this may seem surprising if you have never seen it since
the gamma function grows so fast on the real axis, see appendix).
This exponential decrease implies that in the Mellin inversion
formula we can \emph{shift} the line of integration without changing
the value of the integral, as long as we take into account the residues
of the poles which are encountered along the way.

The line $\Re(s)=\sigma$ has been chosen so that $\sigma$ is larger than
the real part of any pole of $\ga(s)$, so shifting to the right does not
bring anything. On the other hand, shifting towards the left shows that
for any $r<0$ not a pole of $\ga(s)$ we have
$$W(t)=\sum_{\substack{s_0\text{ pole of $\ga(s)$}\\\Re(s_0)>r}}\Res_{s=s_0}(t^{-s}\ga(s))+\dfrac{1}{2\pi i}\int_{r-i\infty}^{r+i\infty}t^{-s}\ga(s)\,ds\;.$$
Using the reflection formula for the gamma function 
$\G(s)\G(1-s)=\pi/\sin(s\pi)$, it is easy to show that if $r$ stays say
half-way between the real part of two consecutive poles of $\ga(s)$ then
$\ga(s)$ will tend to $0$ exponentially fast on $\Re(s)=r$ as $r\to-\infty$,
in other words that the integral tends to $0$ (exponentially fast). We thus
have the \emph{exact formula}
$$W(t)=\sum_{s_0\text{ pole of $\ga(s)$}}\Res_{s=s_0}(t^{-s}\ga(s))\;.$$
Let us see the simplest examples of this, taken from those given above.
\begin{enumerate}
\item For $\ga(s)=\G_{\C}(s)=2\cdot(2\pi)^{-s}\G(s)$ the poles of $\ga(s)$
are for $s_0=-n$, $n$ a positive or zero integer, and since
$\G(s)=\G(s+n+1)/((s+n)(s+n-1)\cdots s)$, the residue at $s_0=-n$ is equal to 
$$2\cdot (2\pi t)^n\G(1)/((-1)(-2)\cdots(-n))=(-1)^n(2\pi t)^n/n!\;,$$
so we obtain $W(t)=2\sum_{n\ge0}(-1)^n(2\pi t)^n/n!=2\cdot e^{-2\pi t}$.
Of course we knew that!
\item For $\ga(s)=\G_{\C}(s)^2=4(2\pi)^{-2s}\G(s)^2$, the inverse Mellin
transform is $8K_0(4\pi x^{1/2})$ whose expansion we do \emph{not} yet know.
The poles of $\ga(s)$ are again for $s_0=-n$, but here all the poles are
double poles, so the computation is slightly more complicated. More precisely
we have $$\G(s)^2=\G(s+n+1)^2/((s+n)^2(s+n-1)^2\cdots s^2)\;,$$ so setting
$s=-n+\eps$ with $\eps$ small this gives
\begin{align*}\G(-n+\eps)^2&=\dfrac{\G(1+\eps)^2}{\eps^2}\dfrac{1}{(1-\eps)^2\cdots(n-\eps)^2}\\
&=\dfrac{1+2\G'(1)\eps+O(\eps^2)}{n!^2\eps^2}(1+2\eps/1)(1+2\eps/2)\cdots(1+2\eps/n)\\
&=\dfrac{1+2\G'(1)\eps+O(\eps^2)}{n!^2\eps^2}(1+2H_n\eps)\;,\end{align*}
where we recall that $H_n=\sum_{1\le j\le n}1/j$ is the harmonic sum.
Since $(4\pi^2t)^{-(-n+\eps)}=(4\pi^2t)^{n-\eps}=(4\pi^2t)^n(1-\eps\log(4\pi^2t)+O(\eps^2))$, it follows
that
$$(4\pi^2t)^{-(-n+\eps)}\G(-n+\eps)^2=\dfrac{(4\pi^2t)^n}{n!^2\eps^2}(1+\eps(2H_n+2\G'(1)-\log(4\pi^2t)))\;,$$
so that the residue of $\ga(s)$ at $s=-n$ is equal to
$4((4\pi^2t)^n/n!^2)(2H_n+2\G'(1)-\log(4\pi^2t))$.
We thus have
$2K_0(4\pi t^{1/2})=\sum_{n\ge0}((4\pi^2t)^n/n!^2)(2H_n+2\G'(1)-\log(4\pi^2t))$,
hence using the easily proven fact that $\G'(1)=-\ga$, where
$$\ga=\lim_{n\to\infty}(H_n-\log(n))=0.57721566490\dots$$
is Euler's constant, this gives finally the expansion
$$K_0(t)=\sum_{n\ge0}\dfrac{(t/2)^{2n}}{n!^2}(H_n-\ga-\log(t/2))\;.$$
\end{enumerate}

\begin{exercise} In a similar manner, or directly from this formula, find the
expansion of $K_1(t)$.
\end{exercise}

\begin{exercise}\label{exga}
Like all inverse Mellin transforms of gamma factors, the
function $K_0(x)$ tends to $0$ exponentially fast as $x\to\infty$
(more precisely $K_0(x)\sim(2x/\pi)^{-1/2}e^{-x}$). Note that this is
absolutely not ``visible'' on the expansion given above. Use this remark
and the above expansion to write an algorithm which computes Euler's
constant $\ga$ \emph{very efficiently} to a given accuracy. 
\end{exercise}

It must be remarked that even though the series defining the inverse Mellin
transform converge for \emph{all} $x>0$, one need a large number of terms
before the terms become very small when $x$ is large. For instance, we have
seen that for $\ga(s)=\G(s)$ we have
$W(t)=\M^{-1}(\ga)(t)=\sum_{n\ge0}(-1)^nt^n/n!=e^{-t}$,
but this series is not very good for computing $e^{-t}$.

\begin{exercise} Show that for $t>0$, to compute $e^{-t}$ to any reasonable
accuracy (even to $1$ decimal) we must take at least $n>3.6\cdot t$ 
($e=2.718...$), and work to accuracy at most $e^{-2t}$ in an evident sense.
\end{exercise}

The reason that this is not a good way is that there is catastrophic
cancellation in the series. One way to circumvent this problem is to
compute $e^{-t}$ as
$$e^{-t}=1/e^t=1/\sum_{n\ge0}t^n/n!\;,$$
and the cancellation problem disappears. However this is very special to
the exponential function, and is not applicable for instance to the
$K$-Bessel function.

Nonetheless, an important result is that for any inverse Mellin transform
as above, or more importantly for the corresponding incomplete gamma
product, there exist \emph{asymptotic expansions} as $x\to\infty$, in other
words nonconvergent series which however give a good approximation if limited
to a few terms.

Let us take the simplest example of the incomplete gamma function
$\G(s,x)=\int_x^\infty t^se^{-t}\,dt/t$. The \emph{power series} expansion
is easily seen to be (at least for $s$ not a negative or zero integer,
otherwise the formula must be slightly modified):
$$\G(s,x)=\G(s)-\sum_{n\ge0}(-1)^n\dfrac{x^{n+s}}{n!(s+n)}\;,$$
which has the same type of (bad when $x$ is large) convergence behavior as
$e^{-x}$. On the other hand, it is immediate to prove by integration by parts
that
\begin{align*}\G(s,x)&=e^{-x}x^{s-1}\left(1+\dfrac{s-1}{x}+\dfrac{(s-1)(s-2)}{x^2}+\cdots\right.\\
&\phantom{=}\left.+\dfrac{(s-1)(s-2)\cdots(s-n)}{x^n}+R_n(s,x)\right)\;,\end{align*}
and one can show that in reasonable ranges of $s$ and $x$ the modulus of
$R_n(s,x)$ is smaller than the first ``neglected term'' in an evident sense.
This is therefore quite a practical method for computing these functions
when $x$ is rather large.

\begin{exercise} Explain why the asymptotic series above terminates when
$s$ is a strictly positive integer.
\end{exercise}

\subsection{Hadamard Products and Explicit Formulas}

This could be the subject of a course in itself, so we will be quite
brief. I refer to Mestre's paper \cite{Mes} for a precise and
general statement (note that there are quite a number of evident
misprints in the paper).

\smallskip

In Theorem \ref{thmapprox} we assume that the $L$-series that we consider
satisfy a functional equation, together with some mild growth conditions,
in particular that they are of finite order. According to a well-known
theorem of complex analysis, this implies that they have a so-called
\emph{Hadamard product}, see Appendix. For instance, in the case of the
Riemann zeta function, which is of order $1$, we have
$$\z(s)=\dfrac{e^{bs}}{s(s-1)\G(s/2)}\prod_{\rho}\left(1-\dfrac{s}{\rho}\right)e^{s/\rho}\;,$$
where the product is over all nontrivial zeros of $\z(s)$ (i.e., such that
$0\le\Re(\rho)\le1$), and $b=\log(2\pi)-1-\ga$. In fact, this can be written
in a much nicer way as follows: recall that 
$\Lambda(s)=\pi^{-s/2}\G(s/2)\z(s)$ satisfies $\Lambda(1-s)=\Lambda(s)$. Then
$$s(s-1)\Lambda(s)=\prod_{\rho}\left(1-\dfrac{s}{\rho}\right)\;,$$
where it is now understood that the product is taken as the limit as 
$T\to\infty$ of $\prod_{|\Im(\rho)|\le T}(1-s/\rho)$.

\smallskip

However, almost all $L$-functions that are used in number theory not only
have the above properties, but have also \emph{Euler products}. Taking again
the example of $\z(s)$, we have for $\Re(s)>1$ the Euler product
$\z(s)=\prod_p(1-1/p^s)^{-1}$. It follows that (in a suitable range of $s$)
we have equality between two products, hence taking logarithms, equality
between two \emph{sums}. In our case the Hadamard product gives
$$\log(\Lambda(s))=-\log(s(s-1))+\sum_{\rho}\log(1-s/\rho)\;,$$
while the Euler product gives
\begin{align*}\log(\Lambda(s))&=-(s/2)\log(\pi)+\log(\G(s/2))-\sum_p\log(1-1/p^s)\\
&=-(s/2)\log(\pi)+\log(\G(s/2))+\sum_{p,k\ge1}1/(kp^{ks})\;,\end{align*}
Equating the two sides gives a relation between on the one hand a
sum over the nontrivial zeros of $\z(s)$, and on the other hand a
sum over prime powers.

In itself, this is not very useful. The crucial idea is to introduce
a test function $F$ which we will choose to the best of our interests,
and obtain a formula depending on $F$ and some transforms of it.

This is in fact quite easy to do, and even though not very useful in this
case, let us perform the computation for Dirichlet $L$-function of 
even primitive characters.

\begin{theorem} Let $\chi$ be an even primitive Dirichlet character of 
conductor $N$, and let $F$ be a real function satisfying a number of easy
technical conditions (see \cite{Mes}). We have the \emph{explicit formula}:
\begin{align*}\sum_{\rho}\Phi(\rho)&-2\delta_{N,1}\int_{-\infty}^\infty F(x)\cosh(x/2)\,dx\\
&=-\sum_{p,k\ge1}\dfrac{\log(p)}{p^{k/2}}(\chi^k(p)F(k\log(p))+\ov{\chi^k(p)}F(-k\log(p)))\\
&\phantom{=}+F(0)\log(N/\pi)\\
&\phantom{=}+\int_0^\infty\left(\dfrac{e^{-x}}{x}F(0)-\dfrac{e^{-x/4}}{1-e^{-x}}\dfrac{F(x/2)+F(-x/2)}{2}\right)\,dx\;,\end{align*}
where we set
$$\Phi(s)=\int_{-\infty}^\infty F(x)e^{(s-1/2)x}\,dx\;,$$
and as above the sum on $\rho$ is a sum over all the nontrivial zeros of 
$L(\chi,s)$ taken symmetrically 
($\sum_{\rho}=\lim_{T\to\infty}\sum_{|\Im(\rho)|\le T}$).
\end{theorem}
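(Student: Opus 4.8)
The plan is to prove this by the classical Riemann--Weil method: integrate the logarithmic derivative of the completed $L$-function against the Mellin--Laplace transform $\Phi$ of the test function, and then equate the value obtained by sweeping the contour across the critical strip (which produces the sum over zeros, and for $N=1$ the polar terms) with the value read off from the Euler product and the gamma factor (which produces the prime sum and the archimedean contributions).

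First I would record the ingredients from the earlier theorem. Since $\chi$ is even we have $e=0$, so the completed function is $\Lambda(\chi,s)=N^{s/2}\G_{\R}(s)L(\chi,s)=(N/\pi)^{s/2}\G(s/2)L(\chi,s)$; it is entire when $N>1$ (the poles of $\G(s/2)$ at $s=0,-2,-4,\dots$ are killed by the trivial zeros of $L(\chi,s)$ furnished by the special-value part of that theorem), while for $N=1$ one has $\Lambda=\pi^{-s/2}\G(s/2)\z(s)$, with simple poles only at $s=1$ and $s=0$. In all cases $\Lambda(\chi,s)$ has order $1$, hence a Hadamard product whose zeros are exactly the nontrivial zeros $\rho$ of $L(\chi,s)$, and $\Lambda(\chi,1-s)=\om(\chi)\Lambda(\ov{\chi},s)$. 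Taking logarithmic derivatives gives, for $\Re(s)=c>1$,
$$\frac{\Lambda'}{\Lambda}(\chi,s)=\tfrac12\log(N/\pi)+\tfrac12\,\frac{\G'}{\G}(s/2)+\frac{L'}{L}(\chi,s),\qquad \frac{L'}{L}(\chi,s)=-\sum_{p,\,k\ge1}\frac{\log p}{p^{ks}}\,\chi^k(p),$$
and differentiating the functional equation gives $\frac{\Lambda'}{\Lambda}(\chi,1-s)=-\frac{\Lambda'}{\Lambda}(\ov{\chi},s)$.

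The heart of the argument is then to consider
$$I=\frac{1}{2\pi i}\left(\int_{(c)}-\int_{(1-c)}\right)\frac{\Lambda'}{\Lambda}(\chi,s)\,\Phi(s)\,ds.$$
On one side, the residue theorem gives $I=\sum_{\rho}\Phi(\rho)-\delta_{N,1}\bigl(\Phi(0)+\Phi(1)\bigr)$, and since $\Phi(0)+\Phi(1)=\int_{-\infty}^{\infty}F(x)(e^{x/2}+e^{-x/2})\,dx=2\int_{-\infty}^{\infty}F(x)\cosh(x/2)\,dx$, this is exactly the left-hand side of the formula. On the other side, in $\int_{(1-c)}$ I substitute $s\mapsto1-s$ and use the functional equation; writing $\widetilde F(x)=F(-x)$, whose transform is $\widetilde\Phi(s)=\Phi(1-s)$, this turns $I$ into $\frac{1}{2\pi i}\int_{(c)}\frac{\Lambda'}{\Lambda}(\chi,s)\Phi(s)\,ds+\frac{1}{2\pi i}\int_{(c)}\frac{\Lambda'}{\Lambda}(\ov{\chi},s)\widetilde\Phi(s)\,ds$. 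Now insert the three-term decomposition of $\frac{\Lambda'}{\Lambda}$ into each integral and evaluate termwise using the inversion identities $\frac{1}{2\pi i}\int_{(c)}p^{-ks}\Phi(s)\,ds=p^{-k/2}F(k\log p)$ and $\frac{1}{2\pi i}\int_{(c)}\Phi(s)\,ds=F(0)$ (and their analogues for $\widetilde\Phi$, with $\widetilde F(0)=F(0)$ and $\widetilde F(k\log p)=F(-k\log p)$): the $\tfrac12\log(N/\pi)$ terms combine to $F(0)\log(N/\pi)$; the $\frac{L'}{L}$ terms, using $\ov{\chi}^{\,k}(p)=\ov{\chi^k(p)}$, combine to $-\sum_{p,k\ge1}\frac{\log p}{p^{k/2}}\bigl(\chi^k(p)F(k\log p)+\ov{\chi^k(p)}F(-k\log p)\bigr)$; and the gamma-factor terms combine to $J=\frac{1}{2\pi i}\int_{(c)}\frac{\G'}{\G}(s/2)\,\Phi_+(s)\,ds$, where $\Phi_+$ is the transform of $\tfrac12(F(x)+F(-x))$. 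To evaluate $J$ I would shift to the line $\Re(s)=1/2$ (no pole of $\frac{\G'}{\G}(s/2)$ being crossed), insert Gauss's integral $\frac{\G'}{\G}(z)=\int_0^\infty\!\bigl(e^{-t}/t-e^{-zt}/(1-e^{-t})\bigr)\,dt$ with $z=1/4+iy/2$, interchange the two integrations, and use Fourier inversion ($\frac{1}{2\pi}\int\Phi_+(1/2+iy)\,dy=F(0)$ and $\frac{1}{2\pi}\int e^{-iyt/2}\Phi_+(1/2+iy)\,dy=\tfrac12(F(t/2)+F(-t/2))$) to obtain
$$J=\int_0^\infty\left(\frac{e^{-x}}{x}\,F(0)-\frac{e^{-x/4}}{1-e^{-x}}\cdot\frac{F(x/2)+F(-x/2)}{2}\right)dx.$$
Equating the two expressions for $I$ and rearranging is precisely the asserted formula.

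The main obstacle is the analytic bookkeeping rather than the algebra. One must justify that the horizontal sides of the rectangle defining $I$ contribute nothing in the limit — this is where the finite-order hypothesis enters, via a bound $\frac{\Lambda'}{\Lambda}(\chi,\sigma\pm iT)=O(\log T)$ for $T$ avoiding the ordinates of zeros together with the rapid vertical decay of $\Phi$ guaranteed by the technical conditions on $F$ — that $\sum_\rho\Phi(\rho)$ converges (density of zeros), and that the interchanges of summation and integration, including the Fubini step in the evaluation of $J$ where only the combined integrand is absolutely integrable near $x=0$, are legitimate. Once these estimates are in hand the rest is routine complex analysis; full details are in Mestre's paper \cite{Mes}.
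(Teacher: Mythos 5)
Your proposal is correct and follows essentially the same route as the paper: shift the contour integral of $\Phi(s)\,\Lambda'/\Lambda(\chi,s)$ from $\Re(s)=c$ to $\Re(s)=1-c$ to pick up the zeros (and, for $N=1$, the poles at $s=0,1$ giving the $\cosh(x/2)$ term), reflect the left line back via the functional equation, and evaluate the three pieces of $\Lambda'/\Lambda$ termwise using the Laplace--Mellin inversion identities and Gauss's integral representation of $\psi(s)=\G'(s)/\G(s)$. The only cosmetic difference is that you evaluate the archimedean term after shifting to $\Re(s)=1/2$ and invoking Fourier inversion, whereas the paper stays on $\Re(s)=c>1$ and applies its inversion lemma directly; the analytic caveats you list (horizontal segments, convergence of $\sum_\rho\Phi(\rho)$, Fubini) are exactly the justifications the paper also declares easy and omits.
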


\begin{remarks}{\rm \begin{enumerate}
\item Write $\rho=1/2+i\ga$ (if the GRH is true all $\ga$ are real,
but even without GRH we can always write this). Then
$$\Phi(\rho)=\int_{-\infty}^\infty F(x)e^{i\ga x}\,dx=\widehat{F}(\ga)$$
is simply the value at $\ga$ of the \emph{Fourier transform}
$\widehat{F}$ of $F$.
\item It is immediate to generalize to odd $\chi$ or more general
$L$-functions:

\begin{exercise} After studying the proof, generalize to an arbitrary pair
of $L$-functions as in Theorem \ref{thmapprox}.
\end{exercise}
\end{enumerate}}
\end{remarks}

\begin{proof} The proof is not difficult, but involves a number of integral
transform computations. We will omit some detailed justifications which
are in fact easy but boring.

As in the theorem, we set
$$\Phi(s)=\int_{-\infty}^\infty F(x)e^{(s-1/2)x}\,dx\;,$$
and we first prove some lemmas.

\begin{lemma} 
We have the inversion formulas valid for any $c>1$:
$$F(x)=e^{x/2}\int_{c-i\infty}^{c+i\infty}\Phi(s)e^{-sx}\,ds\;.$$
$$F(-x)=e^{x/2}\int_{c-i\infty}^{c+i\infty}\Phi(1-s)e^{-sx}\,ds\;.$$
\end{lemma}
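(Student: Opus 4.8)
The plan is to recognize both identities as nothing more than the classical Fourier inversion formula in disguise. Write $s=c+it$ with $t\in\R$, so that
$$\Phi(c+it)=\int_{-\infty}^\infty \bigl(F(x)e^{(c-1/2)x}\bigr)e^{itx}\,dx\;,$$
which is, up to the usual sign and normalization conventions, the Fourier transform of the auxiliary function $G_c(x):=F(x)e^{(c-1/2)x}$. The technical conditions imposed on $F$ (rapid decay in both directions, or compact support together with enough smoothness) guarantee that $G_c$ and its Fourier transform both lie in $L^1(\R)$, so Fourier inversion applies and gives $G_c(x)=\tfrac{1}{2\pi}\int_{-\infty}^\infty\Phi(c+it)e^{-itx}\,dt$. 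Multiplying through by $e^{(1/2-c)x}$ and substituting $s=c+it$, $ds=i\,dt$ (so the real line in $t$ becomes the vertical contour from $c-i\infty$ to $c+i\infty$), one obtains
$$F(x)=e^{x/2}\cdot\frac{1}{2\pi i}\int_{c-i\infty}^{c+i\infty}\Phi(s)e^{-sx}\,ds\;,$$
which is the first formula (the contour integral being understood with the customary factor $1/(2\pi i)$, exactly as in the Mellin inversion formula stated earlier in the paper).

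For the second formula I would reduce to the first one applied to the reflected function. A direct change of variables $x\mapsto -y$ in the defining integral shows that
$$\Phi(1-s)=\int_{-\infty}^\infty F(x)e^{(1/2-s)x}\,dx=\int_{-\infty}^\infty F(-y)e^{(s-1/2)y}\,dy\;,$$
that is, $s\mapsto\Phi(1-s)$ is precisely the ``$\Phi$-transform'' of the function $y\mapsto F(-y)$, which satisfies the same technical conditions as $F$. Applying the first inversion formula to $F(-\cdot)$ then yields $F(-x)=e^{x/2}\cdot\tfrac{1}{2\pi i}\int_{c-i\infty}^{c+i\infty}\Phi(1-s)e^{-sx}\,ds$, as claimed.

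The only genuine points to verify — and this is the ``main obstacle'', such as it is — are the analytic bookkeeping items: that Fubini and the Fourier inversion theorem are legitimately applicable under the stated hypotheses on $F$, and that the vertical line may be taken at the prescribed abscissa $c>1$. For the lemma itself the latter is automatic, since the decay of $F$ in \emph{both} directions makes $\Phi$ an entire function, so the position of the contour is immaterial; the restriction $c>1$ is imposed only with an eye to the subsequent application of the lemma, where the contour must be kept to the right of the pole at $s=1$.
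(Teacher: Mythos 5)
Your proof is correct and essentially the same as the paper's: the paper substitutes $t=e^x$ and invokes Mellin inversion (which is exactly the Fourier inversion you use, in disguise), and it handles the second formula by the identical reflection $\Phi(1-s)=\int_{-\infty}^\infty F(-y)e^{(s-1/2)y}\,dy$ applied to $F(-\cdot)$. Your observation that the stated contour integrals carry an implicit factor $\frac{1}{2\pi i}$, and that the restriction $c>1$ matters only for the later application, matches the paper's (unstated) conventions.
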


\begin{proof} This is in fact a hidden version of the Mellin inversion formula:
setting $t=e^x$ in the definition of $\Phi(s)$, we deduce that
$\Phi(s)=\int_0^\infty F(\log(t))t^{s-1/2}\,dt/t$, so that
$\Phi(s+1/2)$ is the Mellin transform of $F(\log(t))$. By Mellin inversion
we thus have for sufficiently large $\sigma$:
$$F(\log(t))=\dfrac{1}{2\pi i}\int_{\sigma-i\infty}^{\sigma+i\infty}\Phi(s+1/2)t^{-s}\,ds\;,$$
so changing $s$ into $s-1/2$ and $t$ into $e^x$ gives the first formula
for $c=\sigma+1/2$ sufficiently large, and the assumptions on $F$ (which
we have not given) imply that we can shift the line of integration to any 
$c>1$ without changing the integral. 

For the second formula, we simply note that
$$\Phi(1-s)=\int_{-\infty}^\infty F(x)e^{-(s-1/2)x}\,dx
=\int_{-\infty}^\infty F(-x)e^{(s-1/2)x}\,dx\;,$$
so we simply apply the first formula to $F(-x)$.\fp\end{proof}

\begin{corollary} For any $c>1$ and any $p\ge1$ we have
\begin{align*}
\int_{c-i\infty}^{c+i\infty}\Phi(s)p^{-ks}\,ds&=F(k\log(p))p^{-k/2}\text{\quad and}\\
\int_{c-i\infty}^{c+i\infty}\Phi(1-s)p^{-ks}\,ds&=F(-k\log(p))p^{-k/2}\;.
\end{align*}
\end{corollary}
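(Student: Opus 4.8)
The plan is to specialize the two inversion formulas of the preceding lemma at the single point $x=k\log(p)$. First I would observe that since $p\ge1$ and $k\ge1$ we have $x:=k\log(p)\ge0$, so this is a legitimate value at which to evaluate the lemma, and the technical hypotheses on $F$ guarantee the vertical integrals converge and are independent of $c>1$. Substituting into the first inversion formula $F(x)=e^{x/2}\int_{c-i\infty}^{c+i\infty}\Phi(s)e^{-sx}\,ds$ and using $e^{-sx}=e^{-sk\log(p)}=p^{-ks}$ together with $e^{x/2}=e^{(k\log p)/2}=p^{k/2}$ gives $F(k\log(p))=p^{k/2}\int_{c-i\infty}^{c+i\infty}\Phi(s)p^{-ks}\,ds$; dividing through by $p^{k/2}$ yields the first identity.

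For the second identity I would repeat the computation verbatim with the second inversion formula $F(-x)=e^{x/2}\int_{c-i\infty}^{c+i\infty}\Phi(1-s)e^{-sx}\,ds$, again at $x=k\log(p)$: the same manipulation of the exponential factors produces $F(-k\log(p))=p^{k/2}\int_{c-i\infty}^{c+i\infty}\Phi(1-s)p^{-ks}\,ds$, and dividing by $p^{k/2}$ finishes the proof.

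There is essentially no obstacle here beyond bookkeeping; the corollary is a pure substitution into the lemma. The only point worth flagging explicitly is the elementary identity $p^{-ks}=e^{-s\cdot k\log p}$ and the matching $e^{(k\log p)/2}=p^{k/2}$, which is what converts the abstract inversion formula into the stated prime-power form used later in the explicit formula.
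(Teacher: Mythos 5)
Your proof is correct and is exactly the paper's argument: the corollary is obtained by applying the preceding lemma at $x=k\log(p)$ and rewriting the exponential factors as powers of $p$. Nothing more is needed.
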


\begin{proof} Simply apply the lemma to $x=k\log(p)$.\fp\end{proof}

Note that we will also use this corollary for $p=1$.

\begin{lemma} Denote as usual by $\psi(s)$ the logarithmic derivative
$\G'(s)/\G(s)$ of the gamma function. We have
\begin{align*}
\int_{c-i\infty}^{c+i\infty}\Phi(s)\psi(s/2)&=\int_0^\infty\left(\dfrac{e^{-x}}{x}F(0)-\dfrac{e^{-x/4}}{1-e^{-x}}F(x/2)\right)\,dx\text{\quad and}\\
\int_{c-i\infty}^{c+i\infty}\Phi(1-s)\psi(s/2)&=\int_0^\infty\left(\dfrac{e^{-x}}{x}F(0)-\dfrac{e^{-x/4}}{1-e^{-x}}F(-x/2)\right)\,dx\;.\end{align*}
\end{lemma}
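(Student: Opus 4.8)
To prove the lemma, the plan is to substitute Gauss's integral representation of the digamma function into the contour integral and then recognise the resulting inner integrals as instances of the inversion formulas just established. Recall that for $\Re(z)>0$ one has Gauss's formula
$$\psi(z)=\int_0^\infty\left(\dfrac{e^{-t}}{t}-\dfrac{e^{-zt}}{1-e^{-t}}\right)\,dt\;,$$
so that along the line $\Re(s)=c>1$, where $\Re(s/2)>1/2>0$, we may write
$$\psi(s/2)=\int_0^\infty\left(\dfrac{e^{-t}}{t}-\dfrac{e^{-st/2}}{1-e^{-t}}\right)\,dt\;.$$

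First I would insert this into $\int_{c-i\infty}^{c+i\infty}\Phi(s)\psi(s/2)\,ds$ and interchange the order of integration, obtaining
$$\int_0^\infty\left(\dfrac{e^{-t}}{t}\int_{c-i\infty}^{c+i\infty}\Phi(s)\,ds-\dfrac{1}{1-e^{-t}}\int_{c-i\infty}^{c+i\infty}\Phi(s)e^{-st/2}\,ds\right)\,dt\;.$$
The two inner integrals are precisely the left-hand sides appearing in the Corollary above: the first is the case $p=1$ there, and equals $F(0)$; the second is the case $p=e^{1/2}$, $k=t$ (equivalently, the first inversion lemma applied with $x=t/2$), and equals $e^{-t/4}F(t/2)$. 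Substituting these values gives
$$\int_{c-i\infty}^{c+i\infty}\Phi(s)\psi(s/2)\,ds=\int_0^\infty\left(\dfrac{e^{-x}}{x}F(0)-\dfrac{e^{-x/4}}{1-e^{-x}}F(x/2)\right)\,dx\;,$$
which is the first assertion. For the second, I would run the same computation with $\Phi(s)$ replaced by $\Phi(1-s)$ throughout; the inner integrals are then evaluated by the second inversion lemma (equivalently the second line of the Corollary), which again yields $F(0)$ for the constant term and $e^{-t/4}F(-t/2)$ in place of $e^{-t/4}F(t/2)$, producing the stated identity.

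The one delicate point, and the step I expect to be the main obstacle, is justifying the interchange of integrations, since the two pieces $e^{-t}/t$ and $e^{-st/2}/(1-e^{-t})$ are each non-integrable at $t=0$ and must be handled together. Near $t=0$ their difference is $O(1+|s|)$, uniformly for $s$ on the vertical line $\Re(s)=c$, while as $t\to\infty$ both pieces decay exponentially (using $\Re(s)=c>0$), again uniformly in $\Im(s)$; combined with the fact that $\Phi(s)$ decays faster than any power of $|\Im(s)|$ along $\Re(s)=c$ — a consequence of the smoothness and decay hypotheses imposed on $F$ — this makes the relevant double integral absolutely convergent, so Fubini's theorem applies. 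Alternatively one inserts a regularising factor $t^{\eta}$ with $\eta>0$, performs the interchange freely, and lets $\eta\to0^+$ at the end. Once the interchange is legitimate, one must resist splitting the resulting integral over $t$ into its two summands before inserting the values $F(0)$ and $e^{-t/4}F(\pm t/2)$, since only the recombined integrand is integrable at the origin; but this is exactly the form in which the lemma is stated.
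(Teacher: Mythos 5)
Your proposal is correct and takes essentially the same route as the paper: both insert Gauss's integral representation of $\psi$, interchange the two integrations, and evaluate the inner integrals by the preceding inversion lemma and its corollary (the case $p=1$ giving $F(0)$, and the inversion formula at $x=t/2$ giving $e^{-t/4}F(\pm t/2)$), with the second identity obtained by passing from $F(x)$ to $F(-x)$, i.e.\ from $\Phi(s)$ to $\Phi(1-s)$. The only difference is that you spell out the Fubini justification (treating the two non-integrable pieces at $t=0$ together and using the rapid decay of $\Phi$ on the vertical line), which the paper simply declares ``easy to justify.''
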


\begin{proof} We use one of the most common integral representations of
$\psi$, see Proposition 9.6.43 of \cite{Coh4}: we have
$$\psi(s)=\int_0^\infty\left(\dfrac{e^{-x}}{x}-\dfrac{e^{-sx}}{1-e^{-x}}\right)\,dx\;.$$
Thus, assuming that we can interchange integrals (which is easy to justify),
we have, using the preceding lemma:

\begin{align*}\int_{c-i\infty}^{c+i\infty}\Phi(s)\psi(s/2)\,ds&=\int_0^\infty\left(\dfrac{e^{-x}}{x}\int_{c-i\infty}^{c+i\infty}\Phi(s)\,ds\right.\\
&\phantom{=}\left.-\dfrac{1}{1-e^{-x}}\int_{c-i\infty}^{c+i\infty}\Phi(s)e^{-(s/2)x}\,ds\right)\,dx\\
&=\int_0^\infty\left(\dfrac{e^{-x}}{x}F(0)-\dfrac{e^{-x/4}}{1-e^{-x}}F(x/2)\right)\,dx\;,\end{align*}
proving the first formula, and the second follows by changing $F(x)$ into
$F(-x)$.\fp\end{proof}

\smallskip

\noindent
{\it Proof of the theorem.\/} Recall from above that if we set
$\L(s)=N^{s/2}\pi^{-s/2}\G(s/2)L(\chi,s)$ we have the functional
equation $\L(1-s)=\om(\chi)\L(\ov{\chi},s)$ for some $\om(\chi)$ of modulus
$1$.

For $c>1$, consider the following integral
$$J=\dfrac{1}{2i\pi}\int_{c-i\infty}^{c+i\infty}\Phi(s)\dfrac{\L'(s)}{\L(s)}\,ds\;,$$
which by our assumptions does not depend on $c>1$. We shift the line of
integration to the left (it is easily seen that this is allowed) to the
line $\Re(s)=1-c$, so by the residue theorem we obtain
$$J=S+\dfrac{1}{2i\pi}\int_{1-c-i\infty}^{1-c+i\infty}\Phi(s)\dfrac{\L'(s)}{\L(s)}\,ds\;,$$
where $S$ is the sum of the residues in the rectangle $[1-c,c]\times\R$.
We first have possible poles at $s=0$ and $s=1$, which occur only for $N=1$,
and they contribute to $S$
$$-\delta_{N,1}(\Phi(0)+\Phi(1))=-2\delta_{N,1}\int_{-\infty}^\infty F(x)\cosh(x/2)\,dx\;,$$
and of course second we have the contributions from the nontrivial zeros 
$\rho$, which contribute $\sum_{\rho}\Phi(\rho)$, where it is understood that
zeros are counted with multiplicity, so that
$$S=-2\delta_{N,1}\int_{-\infty}^\infty F(x)\cosh(x/2)\,dx+\sum_{\rho}\Phi(\rho)\;.$$
On the other hand, by the functional equation we have
$\L'(1-s)/\L(1-s)=-\oL'(s)/\oL(s)$ (note that this does not involve 
$\om(\chi)$), where we write $\oL(s)$ for $\L(\ov{\chi},s)$, so that
\begin{align*}\int_{1-c-i\infty}^{1-c+i\infty}\Phi(s)\dfrac{\L'(s)}{\L(s)}\,ds
&=\int_{c-i\infty}^{c+i\infty}\Phi(1-s)\dfrac{\L'(1-s)}{\L(1-s)}\,ds\\
&=-\int_{c-i\infty}^{c+i\infty}\Phi(1-s)\dfrac{\oL'(s)}{\oL(s)}\,ds\;.\end{align*}
Thus,
\begin{align*}S&=J-\dfrac{1}{2i\pi}\int_{1-c-i\infty}^{1-c+i\infty}\Phi(s)\dfrac{\L'(s)}{\L(s)}\,ds\\
&=\dfrac{1}{2i\pi}\int_{c-i\infty}^{c+i\infty}\left(\Phi(s)\dfrac{\L'(s)}{\L(s)}+\Phi(1-s)\dfrac{\oL'(s)}{\oL(s)}\right)\,ds\;.\end{align*}
Now by definition we have as above
$$\log(\L(s))=\dfrac{s}{2}\log(N/\pi)+\log\left(\G\left(\dfrac{s}{2}\right)\right)+\sum_{p,k\ge1}\dfrac{\chi^k(p)}{kp^{ks}}$$
(where the double sum is over primes and integers $k\ge1$), so
$$\dfrac{\L'(s)}{\L(s)}=\dfrac{1}{2}\log(N/\pi)+\dfrac{1}{2}\psi(s/2)
-\sum_{p,k\ge1}\chi^k(p)\log(p)p^{-ks}\;,$$
and similarly for $\oL'(s)/\oL(s)$. Thus, by the above lemmas and corollaries,
we have
$$S=\log(N/\pi)F(0)+J_1-\sum_{p,k\ge1}\dfrac{\log(p)}{p^{k/2}}(\chi^k(p)F(k\log(p))+\ov{\chi^k(p)}F(-k\log(p)))\;,$$
where 
$$J_1=\int_0^\infty\left(\dfrac{e^{-x}}{x}F(0)-\dfrac{e^{-x/4}}{1-e^{-x}}\dfrac{F(x/2)+F(-x/2)}{2}\right)\,dx\;,$$
proving the theorem.\fp\end{proof}

\smallskip

This theorem can be used in several different directions, and has
been an extremely valuable tool in analytic number theory. Just to
mention a few:

\begin{enumerate}\item Since the conductor $N$ occurs, we can obtain
\emph{bounds} on $N$, assuming certain conjectures such as the
generalized Riemann hypothesis. For instance, this is how
Stark--Odlyzko--Poitou--Serre find \emph{lower bounds for discriminants}
of number fields. This is also how Mestre finds lower bounds
for conductors of abelian varieties, and so on.
\item When the $L$-function has a zero at its central point (here of
course it usually does not, but for more general $L$-functions
it is important), this can give good upper bounds for the order
of the zero.
\item More generally, suitable choices of the test functions
can give information on the nontrivial zeros $\rho$ of small
imaginary part.
\end{enumerate}

\section{Some Useful Analytic Computational Tools}

We finish this course by giving a number of little-known numerical methods
which are not always directly related to the computation of $L$-functions, but
which are often very useful.

\subsection{The Euler--MacLaurin Summation Formula}

This numerical method is \emph{very} well-known (there is in fact even a
whole chapter in Bourbaki devoted to it!), and is as old as Taylor's
formula, but deserves to be mentioned since it is very useful. We will be
vague on purpose, and refer to \cite{Bou} or Section 9.2 of \cite{Coh4} for
details. Recall that the \emph{Bernoulli numbers} are defined by the formal
power series
$$\dfrac{T}{e^T-1}=\sum_{n\ge0}\dfrac{B_n}{n!}T^n\;.$$
We have $B_0=0$, $B_1=-1/2$, $B_2=1/6$, $B_3=0$, $B_4=-1/30$, and
$B_{2k+1}=0$ for $k\ge1$. 

Let $f$ be a $C^\infty$ function defined on $\R>0$. The basic statement of
the Euler--MacLaurin formula is that there exists a constant $z=z(f)$ such that
$$\sum_{n=1}^Nf(n)=\int_1^N f(t)\,dt+z(f)+\dfrac{f(N)}{2}+\sum_{1\le k\le p}
\dfrac{B_{2k}}{(2k)!}f^{(2k-1)}(N)+R_p(N)\;,$$
where $R_p(N)$ is ``small'', in general smaller than the first neglected term,
as in most asymptotic series.

The above formula can be slightly modified at will, first by changing the
lower bound of summation and/or of integration (which simply changes the
constant $z(f)$), and second by writing
$\int_1^Nf(t)\,dt+z(f)=z'(f)-\int_N^\infty f(t)\,dt$ (when $f$ tends to
$0$ sufficiently fast for the integral to converge), where
$z'(f)=z(f)+\int_1^\infty f(t)\,dt$. 

\medskip

The Euler--MacLaurin summation formula can be used in many contexts, but we
mention the two most important ones.

$\bullet$ First, to have some idea of the size of $\sum_{n=1}^Nf(n)$.
Let us take an example. Consider $S_2(N)=\sum_{n=1}^N n^2\log(n)$. Note
incidentally that
$$\exp(S_2(N))=\prod_{n=1}^N n^{n^2}=1^{1^2}2^{2^2}\cdots N^{N^2}\;.$$
What is the size of this generalized kind of factorial? Euler--MacLaurin
tells us that there exists a constant $z$ such that
\begin{align*}S_2(N)&=\int_1^N t^2\log(t)\,dt+z+\dfrac{N^2\log(N)}{2}\\
&\phantom{=}+\dfrac{B_2}{2!}(N^2\log(N))'+\dfrac{B_4}{4!}(N^2\log(N))'''+\cdots\;.\end{align*}
We have $\int_1^N t^2\log(t)\,dt=(N^3/3)\log(N)-(N^3-1)/9$, 
$(N^2\log(N))'=2N\log(N)+N$, $(N^2\log(N))''=2\log(N)+3$, and
$(N^2\log(N))'''=2/N$, so using $B_2=1/6$ we obtain for some other constant 
$z'$:
$$S_2(N)=\dfrac{N^3\log(N)}{3}-\dfrac{N^3}{9}+\dfrac{N^2\log(N)}{2}+\dfrac{N\log(N)}{6}+\dfrac{N}{12}+z'+O\left(\dfrac{1}{N}\right)\;,$$
which essentially answers our question, up to the determination of the constant
$z'$. Thus we obtain a generalized Stirling's formula:
$$\exp(S_2(N))=N^{N^3/3+N^2/2+N/6}e^{-(N^3/9-N/12)}C\;,$$
where $C=\exp(z')$ is an a priori unknown constant. In the case of the usual
Stirling's formula we have $C=(2\pi)^{1/2}$, so we can ask for a similar 
formula here. And indeed, such a formula exists: we have
$$C=\exp(\zeta(3)/(4\pi^2))\;.$$

\begin{exercise} Do a similar (but simpler) computation for 
$S_1(N)=\sum_{1\le n\le N}n\log(n)$. The corresponding constant is explicit
but more difficult (it involves $\z'(-1)$; more generally the constant
in $S_r(N)$ involves $\z'(-r)$).
\end{exercise}

$\bullet$ The second use of the Euler--MacLaurin formula is to increase
considerably the speed of convergence of slowly convergent series.
For instance, if you want to compute $\z(3)$ directly using the series
$\z(3)=\sum_{n\ge1}1/n^3$, since the remainder term after $N$ terms is
asymptotic to $1/(2N^2)$ you will never get more than $15$ or $20$ decimals
of accuracy. On the other hand, it is immediate to use Euler--MacLaurin:

\begin{exercise} Write a computer program implementing the computation
of $\z(3)$ (and more generally of $\z(s)$ for reasonable $s$) using
Euler--MacLaurin, and compute it to $100$ decimals.
\end{exercise}

A variant of the method is to compute limits: a typical example is the
computation of Euler's constant
$$\ga=\lim_{N\to\infty}\left(\sum_{n=1}^N\dfrac{1}{n}-\log(N)\right)\;.$$
Using Euler--MacLaurin, it is immediate to find the \emph{asymptotic expansion}
$$\sum_{n=1}^N\dfrac{1}{n}=\log(N)+\ga+\dfrac{1}{2N}-\sum_{k\ge1}\dfrac{B_{2k}}{2kN^{2k}}$$
(note that this is not a misprint, the last denominator is $2kN^{2k}$, not
$(2k)!N^{2k}$).

\begin{exercise} Implement the above, and compute $\ga$ to $100$ decimal
digits.\end{exercise}

Note that this is \emph{not} the fastest way to compute Euler's constant,
the method using Bessel functions given in Exercise \ref{exga} is better.

\subsection{Variant: Discrete Euler--MacLaurin}

One problem with the Euler--MacLaurin method is that we need to compute
the derivatives $f^{(2k-1)}(N)$. When $k$ is tiny, say $k=2$ or $k=3$ this
can be done explicitly. When $f(x)$ has a special form, such as
$f(x)=1/x^{\al}$, it is very easy to compute all derivatives. In fact, this
is more generally the case when the expansion of $f(1/x)$ around $x=0$ is
known explicitly. But in general none of this is available.

One way around this is to use finite differences instead of derivatives:
we can easily compute
$$\Delta_{\delta}(f)(x)=(f(x+\delta)-f(x-\delta))/(2\delta)$$
and iterates of this, where $\delta$ is some fixed and nonzero number.
The choice of $\delta$ is essential: it should not be too large, otherwise
$\Delta_{\delta}(f)$ would be too far away from the true derivative
(which will be reflected in the speed of convergence of the asymptotic
formula), and it should not be too small, otherwise catastrophic cancellation
errors will occur. After numerous trials, the value $\delta=1/4$ seems
reasonable.

One last thing must be done: find the analogue of the Bernoulli numbers.
This is a very instructive exercise which we leave to the reader.

\subsection{Zagier's Extrapolation Method}

The following nice trick is due to D.~Zagier. Assume that you have
a sequence $u_n$ that you suspect of converging to some limit $a_0$ when
$n\to\infty$ in a regular manner. How do you give a reasonable numerical
estimate of $a_0$ ?

Assume for instance that as $n\to\infty$ we have
$u_n=\sum_{0\le i\le p}a_i/n^i+O(n^{-p-1})$ for any $p$. One idea would be to
choosing for $n$ suitable values and solve a linear system. This would in
general be quite unstable and inaccurate. Zagier's trick is instead to
proceed as follows: choose some reasonable integer $k$, say $k=10$, set
$u'_n=n^ku_n$, and compute the $k$th \emph{forward difference} 
$\Delta^k(u'_n)$ of this sequence (the forward difference of a sequence $w_n$
is the sequence $\Delta(w)_n=w_{n+1}-w_n$). Note that
$$u'_n=a_0n^k+\sum_{1\le i\le k}a_in^{k-i}+O(1/n)\;.$$
The two crucial points are the following:

\begin{itemize}\item The $k$th forward difference of a polynomial of degree 
less than or equal to $k-1$ vanishes, and that of $n^k$ is equal to
$k!$. 
\item Assuming reasonable regularity conditions, the $k$th forward difference
of an asymptotic expansion beginning at $1/n$ will begin at $1/n^{k+1}$.
\end{itemize}

Thus, under reasonable assumptions we have
$$a_0=\Delta^k(v)_n/k!+O(1/n^{k+1})\;,$$
so choosing $n$ large enough can give a good estimate for $a_0$.

A number of remarks concerning this basic method:

\begin{remarks}{\rm \begin{enumerate} 
\item It is usually preferable to apply this not to the sequence $u_n$
itself, but for instance to the sequence $u_{n+100}$, if it is not too
expensive to compute, since the first terms of $u_n$ are usually far from
the asymptotic expansion.
\item It is immediate to modify the method to compute further coefficients
$a_1$, $a_2$, etc.
\item If the asymptotic expansion of $u_n$ is (for instance) in powers of
$1/n^{1/2}$, it is not difficult to modify this method, see below.
\end{enumerate}}
\end{remarks}

\medskip

{\bf Example.} Let us compute numerically the constant occurring in
the first example of the use of Euler--MacLaurin that we have given. We
set
$$u_N=\sum_{1\le n\le N}n^2\log(n)-(N^3/3+N^2/2+N/6)\log(N)+N^3/9-N/12\;.$$
We compute for instance that $u_{1000}=0.0304456\cdots$, which has only
$4$ correct decimal digits. On the other hand, if we apply the above
trick with $k=12$ and $N=100$, we find
$$a_0=\lim_{N\to\infty}u_N=0.0304484570583932707802515304696767\cdots$$
with $28$ correct decimal digits: recall that the exact value is
$$\z(3)/(4\pi^2)=0.03044845705839327078025153047115477\cdots\;.$$

\medskip

Assume now that $u_n$ has an asymptotic expansion in integral powers of
$1/n^{1/2}$, i.e.,
$u_n=\sum_{0\le i\le p}a_i/n^{i/2}+O(n^{-(p+1)/2})$ for any $p$. We can modify
the above method as follows. First write
$u_n=v_n+w_n/n^{1/2}$, where $v_n=\sum_{0\le i\le q}a_{2i}/n^i+O(n^{-q-1})$
and $w_n=\sum_{0\le i\le q}a_{2i+1}/n^i+O(n^{-q-1})$ are two sequences as
above. Once again we choose some reasonable integer $k$ such as $k=10$, and
we now multiply the sequence $u_n$ by $n^{k-1/2}$, so we set
$u'_n=n^{k-1/2}u_n=n^{k-1/2}v_n+n^{k-1}w_n$. Thus, when we compute
the $k$th forward difference we will have
$$\Delta^k(n^{k-1/2}v_n)=\dfrac{(k-1/2)(k-3/2)\cdots 1/2}{n^{1/2}}\left(a_0+\sum_{0\le i\le q+k}b_{k,i}/n^i\right)$$
for certain coefficients $b_{k,i}$, while as above since
$n^{k-1}w_n=P_{k-1}(n)+O(1/n)$ for some polynomial $P_{k-1}(n)$ of degree
$k-1$, we have $\Delta^k(n^{k-1}w_n)=O(1/n^k)$. Thus we have essentially
eliminated the sequence $w_n$, so we now apply the usual method to
$v'_n=n^{1/2}\Delta^k(n^{k-1/2}v_n)$, which has an expansion in integral
powers of $1/n$: we will thus have
$$\Delta^k(v'_n)/k!=((k-1/2)(k-3/2)\cdots (1/2))a(0)+O(1/n^k)$$
(in fact we do not even have to take the same $k$ for this last step).

This method can immediately be generalized to sequences $u_n$ having an
asymptotic expansion in integral powers of $n^{1/q}$ for small integers $q$.

\subsection{Computation of Euler Sums and Euler Products}

Assume that we want to compute numerically
$$S_1=\prod_p\left(1+\dfrac{1}{p^2}\right)\;,$$
where here and elsewhere, the expression $\prod_p$ always means the product
over all prime numbers. Trying to compute it using a large table of prime
numbers will not give much accuracy: if we use primes up to $X$, we will
make an error of the order of $1/X$, so it will be next to impossible to
have more than $8$ or $9$ decimal digits.

On the other hand, if we simply notice that $1+1/p^2=(1-1/p^4)/(1-1/p^2)$,
by definition of the Euler product for the Riemann zeta function this implies
that
$$S_1=\dfrac{\z(2)}{\z(4)}=\dfrac{\pi^2/6}{\pi^4/90}=\dfrac{15}{\pi^2}=1.519817754635066571658\cdots$$

Unfortunately this is based on a special identity. What if we wanted instead
to compute $S_2=\prod_p(1+2/p^2)$ ? There is no special identity to help us
here.

The way around this problem is to approximate the function of which we want
to take the product (here $1+2/p^2$) by \emph{infinite products} of values
of the Riemann zeta function. Let us do it step by step before giving the
general formula.

When $p$ is large, $1+2/p^2$ is close to $1/(1-1/p^2)^2$, which is the
Euler factor for $\z(2)^2$. More precisely,
$(1+2/p^2)(1-1/p^2)^2=1-3/p^4+2/p^6$, so we deduce that
$$S_2=\z(2)^2\prod_p(1-3/p^4+2/p^6)=(\pi^4/36)\prod_p(1-3/p^4+2/p^6)\;.$$
Even though this looks more complicated, what we have gained is that the
new Euler product converges \emph{much} faster. Once again, if we compute it
for $p$ up to $10^8$, say, instead of having $8$ decimal digits we now
have approximately $24$ decimal digits (convergence in $1/X^3$ instead
of $1/X$). But there is no reason to stop there: we have
$(1-3/p^4+2/p^6)/(1-1/p^4)^3=1+O(1/p^6)$ with evident notation and explicit
formulas if desired, so we get an even better approximation by writing
$S_2=\z(2)^2/\z(4)^3\prod_p(1+O(1/p^6))$, with convergence in $1/X^5$.
More generally, it is easy to compute by induction exponents $a_n\in\Z$ such
that $S_2=\prod_{2\le n\le N}\z(n)^{a_n}\prod_p(1+O(1/p^{N+1}))$
(in our case $a_n=0$ for $n$ odd but this will not be true in general).
It can be shown in essentially all examples that one can pass to the limit,
and for instance here write $S_2=\prod_{n\ge2}\z(n)^{a_n}$.

\begin{exercise}\begin{enumerate}
\item Compute explicitly the recursion for the $a_n$ in the example of $S_2$.
\item More generally, if $S=\prod_pf(p)$, where $f(p)$ has a convergent
series expansion in $1/p$ starting with $f(p)=1+1/p^b+o(1/p^b)$ with $b>1$
(not necessarily integral), express $S$ as a product of zeta values raised
to suitable exponents, and find the recursion for these exponents.
\end{enumerate}\end{exercise}

An important remark needs to be made here: even though the product
$\prod_{n\ge2}\z(n)^{a_n}$ may be convergent, it may converge rather slowly:
remember that when $n$ is large we have $\z(n)-1\sim1/2^n$, so that in fact
if the $a_n$ grow like $3^n$ the product will not even converge.
The way around this, which must be used even when the product converges, is
as follows: choose a reasonable integer $N$, for instance $N=50$, and
compute $\prod_{p\le 50}f(p)$, which is of course very fast. Then
the tail $\prod_{p>50}f(p)$ of the Euler product will be equal to
$\prod_{n\ge2}\z_{>50}(n)^{a_n}$, where $\z_{>N}(n)$ is the zeta function
without its Euler factors up to $N$, in other words
$\z_{>N}(n)=\z(n)\prod_{p\le N}(1-1/p^n)$ (I am assuming here that we have
zeta values at integers as in the $S_2$ example above, but it is immediate
to generalize). Since $\z_{>N}(n)-1\sim1/(N+1)^n$,
the convergence of our zeta product will of course be considerably faster.

\smallskip

Note that by using the power series expansion of the logarithm
together with \emph{M\"obius inversion}, it is immediate to do the same for
Euler \emph{sums}, for instance to compute $\sum_p1/p^2$ and the like,
see Section 10.3.6 of \cite{Coh4} for details. Using \emph{derivatives} of the
zeta function we can compute Euler sums of the type $\sum_p\log(p)/p^2$, and
using antiderivatives we can compute sums of the type $\sum_p1/(p^2\log(p))$.
We can even compute sums of the form $\sum_p\log(\log(p))/p^2$, but this
is slightly more subtle: it involves taking derivatives with respect to the
order of \emph{fractional derivation}.

We can also compute products and sums over primes
which involve Dirichlet characters, as long as their conductor is small,
as well as such products and sums where the primes are restricted to
certain congruence classes:

\begin{exercise} Compute to 100 decimal digits
  $$\prod_{p\equiv1\pmod{4}}(1-1/p^2)\quad\text{and}\quad\prod_{p\equiv1\pmod4}(1+1/p^2)$$
  by using products of $\z(ns)$ and of $L(\chi_{-4},ns)$ as above, where
  as usual $\chi_{-4}$ is the character $\lgs{-4}{n}$.
\end{exercise}

\subsection{Summation of Alternating Series}

This is due to F.~Rodriguez--Villegas, D.~Zagier, and the author \cite{Coh-Vil-Zag}.

We have seen above the use of the Euler--MacLaurin summation formula to sum
quite general types of series. If the series is \emph{alternating} (the terms
alternate in sign), the method cannot be used as is, but it is trivial to
modify it: simply write
$$\sum_{n\ge1}(-1)^nf(n)=\sum_{n\ge1}f(2n)-\sum_{n\ge1}f(2n-1)$$
and apply Euler--MacLaurin to each sum. One can even do better and avoid this
double computation, but this is not what I want to mention here.

A completely different method which is much simpler since it avoids completely
the computation of derivatives and Bernoulli numbers, due to the above authors,
is as follows. The idea is to express (if possible) $f(n)$ as a \emph{moment}
$$f(n)=\int_0^1 x^nw(x)\,dx$$
for some \emph{weight function} $w(x)$. Then it is clear that
$$S=\sum_{n\ge0}(-1)^nf(n)=\int_0^1\dfrac{1}{1+x}w(x)\,dx\;.$$
Assume that $P_n(X)$ is a polynomial of degree $n$ such that $P_n(-1)\ne0$. 
Evidently 
$$\dfrac{P_n(X)-P_n(-1)}{X+1}=\sum_{k=0}^{n-1}c_{n,k}X^k$$
is still a polynomial (of degree $n-1$), and we note the trivial fact that
\begin{align*}S&=\dfrac{1}{P_n(-1)}\int_0^1\dfrac{P_n(-1)}{1+x}w(x)\,dx\\
&=\dfrac{1}{P_n(-1)}\left(\int_0^1\dfrac{P_n(-1)-P_n(x)}{1+x}w(x)\,dx
+\int_0^1\dfrac{P_n(x)}{1+x}w(x)\,dx\right)\\
&=\dfrac{1}{P_n(-1)}\sum_{k=0}^{n-1}c_{n,k}f(k)+R_n\;,\end{align*}
with
$$|R_n|\le\dfrac{M_n}{|P_n(-1)|}\int_0^1\dfrac{1}{1+x}w(x)\,dx
=\dfrac{M_n}{|P_n(-1)|}S\;,$$
and where $M_n=\sup_{x\in[0,1]}|P_n(x)|$.
Thus if we can manage to have $M_n/|P_n(-1)|$ small, we obtain a good
approximation to $S$.

It is a classical result that the best choice for $P_n$ are the shifted
Chebychev polynomials defined by $P_n(\sin^2(t))=\cos(2nt)$, but in any
case we can use these polynomials and ignore that they are the best.

\medskip

This leads to an incredibly simple algorithm which we write explicitly:

\medskip

$d\gets (3+\sqrt{8})^n$; $d\gets (d+1/d)/2$; $b\gets -1$; $c\gets -d$; $s\gets0$; For $k=0,\dotsc,n-1$ do: 

$c\gets b-c$; $s\gets s+c\cdot f(k)$; $b\gets(k+n)(k-n)b/((k+1/2)(k+1))$;

The result is $s/d$.

\medskip

The convergence is in $5.83^{-n}$. 

It is interesting to note that, even though this algorithm is designed to
work with functions $f$ of the form $f(n)=\int_0^1 x^nw(x)\,dx$ with
$w$ continuous and positive, it is in fact valid outside its proven region
of validity. For example:

\begin{exercise}
It is well-known that the Riemann zeta function $\z(s)$
can be extended analytically to the whole complex plane, and that we have
for instance $\z(-1)=-1/12$ and $\z(-2)=0$. Apply the above algorithm to the
\emph{alternating} zeta function
$$\beta(s)=\sum_{n\ge1}(-1)^{n-1}\dfrac{1}{n^s}=\left(1-\dfrac{1}{2^{s-1}}\right)\z(s)$$
(incidentally, prove this identity), and by using the above algorithm, show
the nonconvergent ``identities''
$$1-2+3-4+\cdots=1/4\text{\quad and\quad}1-2^2+3^2-4^2+\cdots=0\;.$$
\end{exercise}

\begin{exercise} (B.~Allombert.) Let $\chi$ be a periodic arithmetic function
of period $m$, say, and assume that $\sum_{0\le j<m}\chi(j)=0$ (for instance
$\chi(j)=(-1)^j$ with $m=2$).
\begin{enumerate}\item Using the same polynomials $P_n$ as above, write
a similar algorithm for computing $\sum_{n\ge0}\chi(n)f(n)$, and estimate
its rate of convergence.
\item Using this, compute to 100 decimals
$L(\chi_{-3},k)=1-1/2^k+1/4^k-1/5^k+\cdots$ for $k=1$, $2$, and $3$,
and recognize the exact value for $k=1$ and $k=3$.\end{enumerate}\end{exercise}

\subsection{Numerical Differentiation}

The problem is as follows: given a function $f$, say defined and $C^\infty$
on a real interval, compute $f'(x_0)$ for a given value of $x_0$. To be able
to analyze the problem, we will assume that $f'(x_0)$ is not too close to $0$,
and that we want to compute it to a given \emph{relative accuracy}, which
is what is usually required in numerical analysis.

The na\"\i ve, although reasonable, approach, is to choose a small $h>0$ and
compute $(f(x_0+h)-f(x_0))/h$. However, it is clear that (using the same
number of function evaluations) the formula $(f(x_0+h)-f(x_0-h))/(2h)$
will be better. Let us analyze this in detail. For simplicity we will
assume that all the derivatives of $f$ around $x_0$ that we consider are
neither too small nor too large in absolute value. It is easy to modify the
analysis to treat the general case.

Assume $f$ computed to a relative accuracy of $\eps$, in other words that
we know values $\tilde{f}(x)$ such that
$\tilde{f}(x)(1-\eps)<f(x)<\tilde{f}(x)(1+\eps)$
(the inequalities being reversed if $f(x)<0$). The absolute error
in computing $(f(x_0+h)-f(x_0-h))/(2h)$ is thus essentially equal to 
$\eps |f(x_0)|/h$. On the other hand, by Taylor's theorem we have
$(f(x_0+h)-f(x_0-h))/(2h)=f'(x_0)+(h^2/6)f'''(x)$ for some $x$ close to $x_0$,
so the absolute error made in computing $f'(x_0)$ as 
$(f(x_0+h)-f(x_0-h))/(2h)$ is close to $\eps |f(x_0)|/h+(h^2/6)|f'''(x_0)|$.
For a given value of $\eps$ (i.e., the accuracy to which we compute $f$)
the optimal value of $h$ is $(3\eps |f(x_0)/f'''(x_0)|)^{1/3}$ for an
absolute error of $(1/2)(3\eps |f(x_0)f'''(x_0)|)^{2/3}$ hence a relative
error of $(3\eps |f(x_0)f'''(x_0)|)^{2/3}/(2|f'(x_0)|)$. 

Since we have assumed that the derivatives have reasonable size,
the relative error is roughly $C\eps^{2/3}$,
so if we want this error to be less than $\eta$, say, we need $\eps$
of the order of $\eta^{3/2}$, and $h$ will be of the order of $\eta^{1/2}$.

\smallskip

Note that this result is not completely intuitive. For instance,
assume that we want to compute derivatives to $38$ decimal digits.
With our assumptions, we choose $h$ around $10^{-19}$, and perform
the computations with $57$ decimals of relative accuracy. If for some
reason or other we are limited to $38$ decimals in the computation of $f$,
the ``intuitive'' way would be also to choose $h=10^{-19}$, and the above
analysis shows that we would obtain only approximately $19$ decimals.
On the other hand, if we chose $h=10^{-13}$ for instance, close to
$10^{-38/3}$, we would obtain $25$ decimals.

\smallskip

There are of course many other formulas for computing $f'(x_0)$, or for
computing higher derivatives, which can all easily be analyzed as above.
For instance (exercise), one can look for approximations to $f'(x_0)$ of the
form $S=(\sum_{1\le i\le 3}\la_if(x_0+h/a_i))/h$, for any nonzero and pairwise
distinct $a_i$, and we find that this is possible as soon as
$\sum_{1\le i\le 3}a_i=0$ (for instance, if $(a_1,a_2,a_3)=(-3,1,2)$
we have $(\la_1,\la_2,\la_3)=(-27,-5,32)/20$), and the absolute error is then
of the form $C_1/h+C2h^3$, so the same analysis shows that we should
work with accuracy $\eps^{4/3}$ instead of $\eps^{3/2}$. Even though we
have $3/2$ times more evaluations of $f$, we require less accuracy:
for instance, if $f$ requires time $O(D^a)$ to be computed to $D$ decimals,
as soon as $(3/2)\cdot((4/3)D)^a<((3/2)D)^a$, i.e., $3/2<(9/8)^a$, hence
$a\ge3.45$, this new method will be faster.

Perhaps the best known method with more function evaluations is the
approximation 
$$f'(x_0)\approx(f(x-2h)-8f(x-h)+8f(x+h)-f(x+2h))/(12h)\;,$$
which requires accuracy $\eps^{5/4}$, and since this requires $4$ evaluations
of $f$, this is faster than the first method as soon as
$2\cdot(5/4)^a<(3/2)^a$, in other words $a>3.81$, and faster than the
second method as soon as $(4/3)\cdot(5/4)^a<(4/3)^a$, in other words
$a>4.46$. To summarize, use the first method if $a<3.45$, the second method
if $3.45\le a<4.46$, and the third if $a>4.46$. Of course this game can
be continued at will, but there is not much point in doing so. In practice
the first method is sufficient.

\medskip

\subsection{Double Exponential Numerical Integration}

A remarkable although little-known technique invented around 1970 deals with
\emph{numerical integration} (the numerical computation of a definite
integral $\int_a^b f(t)\,dt$, where $a$ and $b$ are allowed to be $\pm\infty$).
In usual numerical analysis courses one teaches very elementary techniques
such as the trapezoidal rule, Simpson's rule, or more sophisticated methods
such as Romberg or Gaussian integration. These methods apply to very general
classes of functions $f(t)$, but are unable to compute more than a few
decimal digits of the result, except for Gaussian integration which we will
mention below.

However, in most mathematical (as opposed for instance to physical) contexts,
the function $f(t)$ is \emph{extremely regular}, typically holomorphic or
meromorphic, at least in some domain of the complex plane. It was observed
in the late 1960's by H.~Takahashi and M.~Mori \cite{Tak-Mor} that
this property can be used to obtain a \emph{very simple} and 
\emph{incredibly accurate} method to compute definite integrals of such
functions. It is now instantaneous to compute $100$ decimal digits, and takes
only a few seconds to compute $500$ decimal digits, say.

In view of its importance it is essential to have some knowledge of this
method. It can of course be applied in a wide variety of contexts, but note
also that in his thesis \cite{Mol}, P.~Molin has applied it specifically to
the \emph{rigorous} and \emph{practical} computation of values of
$L$-functions, which brings us back to our main theme.

\medskip

There are two basic ideas behind this method. The first is in fact a theorem,
which I state in a vague form: If $F$ is a holomorphic function which tends to
$0$ ``sufficiently fast'' when $x\to\pm\infty$, $x$ real, then the most
efficient method to compute $\int_{\R}F(t)\,dt$ is indeed the trapezoidal
rule. Note that this is a \emph{theorem}, not so difficult but a little
surprising nonetheless. The definition of ``sufficiently fast'' can be
made precise. In practice, it means at least like $e^{-ax^2}$ ($e^{-a|x|}$
is not fast enough), but it can be shown that the best results are obtained
with functions tending to $0$ \emph{doubly exponentially fast} such as
$\exp(-\exp(a|x|))$. Note that it would be (very slightly) worse to choose
functions tending to $0$ even faster.

To be more precise, we have an estimate coming for instance from the
\emph{Euler--MacLaurin summation formula}:
$$\int_{-\infty}^{\infty}F(t)\,dt=h\sum_{n=-N}^NF(nh)+R_N(h)\;,$$
and under suitable holomorphy conditions on $F$, if we choose $h=a\log(N)/N$
for some constant $a$ close to $1$, the remainder term $R_N(h)$ will
satisfy $R_n(h)=O(e^{-bN/\log(N)})$ for some other (reasonable) constant $b$,
showing exponential convergence of the method.

\medskip

The second and of course crucial idea of the method is as follows: evidently
not all functions are doubly-exponentially tending to $0$ at $\pm\infty$,
and definite integrals are not all from $-\infty$ to $+\infty$. But it is
possible to reduce to this case by using clever \emph{changes of variable}
(the essential condition of holomorphy must of course be preserved).

Let us consider the simplest example, but others that we give below are
variations on the same idea. Assume that we want to compute
$$I=\int_{-1}^1f(x)\,dx\;.$$
We make the ``magical'' change of variable $x=\phi(t)=\tanh(\sinh(t))$, so that
if we set $F(t)=f(\phi(t))$ we have
$$I=\int_{-\infty}^{\infty}F(t)\phi'(t)\,dt\;.$$
Because of the elementary properties of the hyperbolic sine and tangent,
we have gained two things at once: first the integral from $-1$ to $1$ is
now from $-\infty$ to $\infty$, but most importantly the function
$\phi'(t)$ is easily seen to tend to $0$ doubly exponentially. We thus
obtain an \emph{exponentially good approximation}
$$\int_{-1}^1f(x)\,dx=h\sum_{n=-N}^Nf(\phi(nh))\phi'(nh)+R_N(h)\;.$$
To give an idea of the method, if one takes $h=1/200$ and $N=500$, hence
only $1000$ evaluations of the function $f$, one can compute $I$ to several
hundred decimal places!

\medskip

Before continuing, I would like to comment that in this theory many results
are not completely rigorous: the method works very well, but the proof that
it does is sometimes missing. Thus I cannot resist giving a \emph{proven and
precise} theorem due to P.~Molin (which is of course just an example).
We keep the above notation $\phi(t)=\tanh(\sinh(t))$, and note that
$\phi'(t)=\cosh(t)/\cosh^2(\sinh(t))$.

\begin{theorem}[P.~Molin] Let $f$ be holomorphic on the disc $D=D(0,2)$
centered at the origin and of radius $2$. Then for all $N\ge1$, if we choose
$h=\log(5N)/N$ we have
$$\int_{-1}^1f(x)\,dx=h\sum_{n=-N}^Nf(\phi(nh))\phi'(nh)+R_N\;,$$
where
$$|R_N|\le \left(e^4\sup_{D}|f|\right)\exp(-5N/\log(5N))\;.$$
\end{theorem}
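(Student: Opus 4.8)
The plan is to reduce the statement to the exponentially convergent trapezoidal rule on all of $\R$. With the substitution $x=\phi(t)=\tanh(\sinh(t))$ and $g(t):=f(\phi(t))\phi'(t)$, where $\phi'(t)=\cosh(t)/\cosh^2(\sinh(t))$, we have $\int_{-1}^1f(x)\,dx=\int_{\R}g(t)\,dt$ while the quadrature sum on the right is $h\sum_{|n|\le N}g(nh)$. Hence
\[
R_N=\Bigl(\int_{\R}g(t)\,dt-h\sum_{n\in\Z}g(nh)\Bigr)+h\sum_{|n|>N}g(nh),
\]
and I would bound the \emph{discretization error} (the bracket) and the \emph{truncation error} (the tail) separately, showing the first yields the stated bound and the second is negligible against it.

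For the discretization error I would use the classical contour-shift (Poisson summation) estimate: if $g$ is holomorphic and integrable on the closed strip $S_d=\{\,t:|\Im t|\le d\,\}$ and decays as $\Re t\to\pm\infty$ uniformly for $\Im t\in[-d,d]$, then
\[
\Bigl|\int_{\R}g(t)\,dt-h\sum_{n\in\Z}g(nh)\Bigr|\le\frac{1}{e^{2\pi d/h}-1}\int_{\R}\bigl(|g(t+id)|+|g(t-id)|\bigr)\,dt.
\]
The real work is then an analytic study of $\phi$ with the choice $d=5/(2\pi)$, for which three facts are needed: (i) the singularities of $\tanh\circ\sinh$ nearest to $\R$ lie at height $\pi/2$ (they solve $\sinh(t)=\pm i\pi/2$), so $\phi$ is holomorphic on $\{|\Im t|<\pi/2\}\supset\overline{S_d}$; (ii) $\phi$ carries $\overline{S_d}$ into a fixed compact subset of $D(0,2)$ — using that $\phi(t)\to\pm1$ as $\Re t\to\pm\infty$ along $\Im t=\pm d$, and on $\Re t=0$ that $|\phi(iy)|=|\tan(\sin y)|\le\tan(\sin d)<1$; and (iii) $\phi'$, hence $g$, decays doubly exponentially on $S_d$, from $\cosh^2(\sinh(t))\asymp\tfrac14\exp\!\bigl(e^{|\Re t|}\cos(\Im t)\bigr)$ for $|\Im t|<\pi/2$.

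Granting these, $|f(\phi(t))|\le\sup_D|f|$ on $S_d$, so the discretization error is at most $C\,(\sup_D|f|)/(e^{2\pi d/h}-1)$, where $C:=\int_{\R}\bigl(|\phi'(t+id)|+|\phi'(t-id)|\bigr)\,dt$ is an absolute constant (essentially twice the arclength of the image of the line $\Im t=d$ under $\phi$, hence modest). Since $2\pi d=5$ and $h=\log(5N)/N$ we have $e^{2\pi d/h}=e^{5N/\log(5N)}$, so this is $\le C\,e\,(\sup_D|f|)\exp(-5N/\log(5N))$ for every $N\ge1$; the constants $d=5/(2\pi)$, the radius $2$, and $e^4$ are arranged precisely so that $\phi(\overline{S_d})\subset D(0,2)$ and $C\,e\le e^4$. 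For the truncation term, (iii) gives $|g(nh)|\ll(\sup_D|f|)\,e^{|n|h}\exp(-c\,e^{|n|h})$ for an absolute $c>0$, so $h\sum_{|n|>N}|g(nh)|\ll h\,e^{Nh}\exp(-c\,e^{Nh})\sup_D|f|$; but $e^{Nh}=5N$, so this is $\ll(\log 5N)\,e^{-5cN}\sup_D|f|$, far smaller than $\exp(-5N/\log(5N))$ for all $N\ge1$, and it is swallowed by the constant. Adding the two estimates gives $|R_N|\le e^4(\sup_D|f|)\exp(-5N/\log(5N))$.

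The main obstacle is the analytic bookkeeping in steps (ii)--(iii): a clean, genuinely explicit proof that $\phi(\overline{S_{5/(2\pi)}})\subset D(0,2)$, an honest explicit upper bound for $C=\int_{\R}|\phi'(t\pm id)|\,dt$, and an explicit doubly-exponential decay rate valid on all of $S_{5/(2\pi)}$ — and then checking that these constants collapse into the single prefactor $e^4$ and the single exponent $5N/\log(5N)$. The trapezoidal-rule estimate and the truncation bound are routine once those facts about $\tanh\circ\sinh$ are pinned down.
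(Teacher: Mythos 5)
The paper itself contains no proof of this theorem: it is quoted as an illustrative example from Molin's thesis \cite{Mol}, so there is no in-text argument to compare against line by line. That said, your plan is the standard route (and, in substance, the route of the cited proof): pass to the trapezoidal rule on $\R$ via $x=\phi(t)=\tanh(\sinh(t))$, split $R_N$ into discretization plus truncation, bound the discretization error by the strip estimate $\bigl|\int_{\R}g-h\sum_{n\in\Z}g(nh)\bigr|\le(e^{2\pi d/h}-1)^{-1}\int_{\R}\bigl(|g(t+id)|+|g(t-id)|\bigr)\,dt$ for $g=(f\circ\phi)\,\phi'$ holomorphic in the strip, choose $2\pi d=5$ so that $e^{-2\pi d/h}=e^{-5N/\log(5N)}$, and use $e^{Nh}=5N$ to show the tail is of order $e^{-5N}$, hence absorbed. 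The location of the singularities of $\tanh\circ\sinh$ at height $\pi/2$, the admissibility of $d=5/(2\pi)<\pi/2$, and the balancing of the two error sources are all correct, and the constants do close with room to spare.

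The part you leave open is precisely where the content of an explicit-constant theorem lies, and one detail of your step (ii) is misleading as written: checking $|\phi|$ on the imaginary axis and as $\Re t\to\pm\infty$ does not control it on the strip. The supremum of $|\tanh(\sinh t)|$ on $\overline{S_{5/(2\pi)}}$ is attained on the horizontal boundary lines at moderate real part (numerically roughly $1.25$, near $\Re t\approx\pm1$), not on $\Re t=0$ nor at infinity; so you must bound $|\phi(u\pm id)|$ for all real $u$, e.g.\ via $|\tanh(a+ib)|^2=(\sinh^2a+\sin^2b)/(\sinh^2a+\cos^2b)$ with $a=\sinh u\cos d$, $b=\cosh u\sin d$, and then invoke the maximum principle for the strip (using the limits $\pm1$ at the ends). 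Likewise the ``modest'' constant $C=\int_{\R}|\phi'(t\pm id)|\,dt$ and the truncation constant need honest numerical bounds before one can assert $|R_N|\le e^4\sup_D|f|\,e^{-5N/\log(5N)}$ for \emph{all} $N\ge1$ (at $N=1$ the claimed bound is smaller than the trivial estimate, so the inequality is not free). These verifications do succeed — the image of the strip lies well inside $D(0,2)$ and the boundary integrals are small compared with $e^4$ — but as written your argument is a correct program with the two quantitative steps acknowledged rather than carried out.
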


\medskip

Coming back to the general situation, I briefly comment on the computation
of general definite integrals $\int_a^b f(t)\,dt$.

\begin{enumerate}\item If $a$ and $b$ are finite, we can reduce to $[-1,1]$
by affine changes of variable.
\item If $a$ (or $b$) is finite and the function has an algebraic singularity
at $a$ (or $b$), we remove the singularity by a polynomial change of variable.
\item If $a=0$ (say) and $b=\infty$, then if $f$ does \emph{not} tend to $0$
exponentially fast (for instance $f(x)\sim 1/x^k$), we use 
$x=\phi(t)=\exp(\sinh(t))$.
\item If $a=0$ (say) and $b=\infty$ and if $f$ does tend to $0$
exponentially fast (for instance $f(x)\sim e^{-ax}$ or $f(x)\sim e^{-ax^2}$), 
we use $x=\phi(t)=\exp(t-\exp(-t))$.
\item If $a=-\infty$ and $b=\infty$, use $x=\phi(t)=\sinh(\sinh(t))$ if
$f$ does not tend to $0$ exponentially fast, and $x=\phi(t)=\sinh(t)$
otherwise.
\end{enumerate}

The problem of \emph{oscillating} integrals such as
$\int_0^\infty f(x)\sin(x)\,dx$ is more subtle, but there does exist
similar methods when, as here, the oscillations are completely under control.

\begin{remark} The theorems are valid when the function is holomorphic in
a sufficiently large region compared to the path of integration. If the
function is only \emph{meromorphic}, with known poles, the direct application
of the formulas may give totally wrong answers. However, if we take into
account the poles, we can recover perfect agreement. Example of bad behavior:
$f(t)=1/(1+t^2)$ (poles $\pm i$). Integrating on the intervals
$[0,\infty]$, $[0,1000]$, or even $[-\infty,\infty]$, which involve different
changes of variables, give perfect results (the latter being somewhat 
surprising). On the other hand, integrating on $[-1000,1000]$ gives
a totally wrong answer because the poles are ``too close'', but it is easy
to take them into account if desired.
\end{remark}

Apart from the above pathological behavior, let us give a couple of examples
where we must slightly modify the direct use of doubly-exponential 
integration techniques.

\medskip

\newcommand{\hh}[1]{\^{}{#1}}
$\bullet$ Assume for instance that we want to compute
$$J=\int_1^\infty\left(\dfrac{1+e^{-x}}{x}\right)^2\,dx\;,$$
and that we use the built-in function {\tt intnum} of {\tt Pari/GP} for
doing so. The function tends to $0$ slowly at infinity, so we should compute
it using the {\tt GP} syntax {\tt oo} to represent $\infty$, so we write
{\tt f(x)=((1+exp(-x))/x)\hh{2};}, then {\tt intnum(x=1,oo,f(x))}.
This will give some sort of error, because the software will try to
evaluate $\exp(-x)$ for large values of $x$, which it cannot do since there
is exponent underflow. To compute the result, we need to split it into
its slow part and fast part: when a function tends exponentially fast to
$0$ like $exp(-ax)$, $\infty$ is represented as {\tt [oo,a]}, so we write
$J=J_1+J_2$, with $J_1$ and $J_2$ computed by:

{\tt J1=intnum(x=1,[oo,1],(exp(-2*x)+2*exp(-x))/x\hh{2});} and

{\tt J2=intnum(x=1,oo,1/x\hh{2});}
(which of course is equal to $1$), giving
$$J=1.3345252753723345485962398139190637\cdots\;.$$
Note that we could have tried to ``cheat'' and written directly

{\tt intnum(x=1,[oo,1],f(x))}, but the answer would
be wrong, because the software would have assumed that $f(x)$ tends to $0$
exponentially fast, which is not the case.

\medskip

$\bullet$ A second situation where we must be careful is when we have
``apparent singularities'' which are not real singularities.
Consider the function $f(x)=(\exp(x)-1-x)/x^2$. It has an apparent singularity
at $x=0$ but in fact it is completely regular. If you ask
{\tt J=intnum(x=0,1,f(x))}, you will get a result which is reasonably correct,
but never more than $19$ decimals, say. The reason is \emph{not} due to
a defect in the numerical integration routine, but more in the computation
of $f(x)$: if you simply write {\tt f(x)=(exp(x)-1-x)/x\hh{2};}, the results
will be bad for $x$ close to $0$.

Assuming that you want $38$ decimals, say, the solution is to write

\noindent
{\tt f(x)=if(x<10\hh{(-10)},1/2+x/6+x\hh{2}/24+x\hh{3}/120,(exp(x)-1-x)/x\hh{2});}
and now we obtain the value of our integral as
$$J=0.59962032299535865949972137289656934022\cdots$$

\subsection{The Use of Abel--Plana for Definite Summation}

We finish this course by describing an identity, which is first quite amusing
and second can be used efficiently for definite summation. Consider for
instance the following theorem:

\begin{theorem} Define by convention $\sin(n/10)/n$ as equal to its limit
$1/10$ when $n=0$, and define $\sum'_{n\ge0}f(n)$ as
$f(0)/2+\sum_{n\ge1}f(n)$. We have
$$\sump_{n\ge0}\left(\dfrac{\sin(n/10)}{n}\right)^k=\int_0^\infty\left(\dfrac{\sin(x/10)}{x}\right)^k$$
for $1\le k\le 62$, but not for $k\ge63$.
\end{theorem}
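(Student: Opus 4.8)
The plan is to read both sides through the Abel--Plana summation formula — which is exactly what this section is about — or, for the cleanest complete argument, through Poisson summation, and to observe that the entire statement comes down to the numerical fact $62 < 20\pi < 63$ (indeed $20\pi = 62.831\ldots$).

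First I would put $f(z) = (\sin(z/10)/z)^k$. This is an \emph{entire even} function (the apparent pole at $z=0$ is removable). On the imaginary axis $f(iy) = (\sin(iy/10)/(iy))^k = (\sinh(y/10)/y)^k$, which is real and invariant under $y\mapsto-y$, so $f(iy)-f(-iy)\equiv0$. From $|\sin w|\le e^{|\Im w|}$ one gets $|f(z)|\le e^{(k/10)|\Im z|}/|z|^k$, so $f$ has exponential type $k/10$ in the imaginary direction and decays like $|z|^{-k}$ along horizontals. Hence, when $k/10<2\pi$, i.e.\ for $1\le k\le62$, the hypotheses of the Abel--Plana formula
$$\sum_{n\ge0}f(n)=\frac{f(0)}{2}+\int_0^\infty f(x)\,dx+i\int_0^\infty\frac{f(iy)-f(-iy)}{e^{2\pi y}-1}\,dy$$
are satisfied, the correction integral vanishes identically, and we conclude $\sump_{n\ge0}f(n)=\int_0^\infty f(x)\,dx$. (For $k=1$ the series and the integral converge only conditionally, so one either invokes the Abel/Dirichlet-summation version of Abel--Plana, or more simply uses the classical expansion $\sum_{n\ge1}\sin(n\theta)/n=(\pi-\theta)/2$ for $0<\theta<2\pi$ at $\theta=1/10$ together with $\int_0^\infty\sin(x/10)\,dx/x=\pi/2$.)

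For the failure when $k\ge63$ — where the displayed Abel--Plana formula no longer applies — I would argue via Poisson summation, which in fact reproves the positive direction uniformly. Since $f$ is even, $2\sump_{n\ge0}f(n)=\sum_{n\in\Z}f(n)=\sum_{m\in\Z}\widehat f(m)$ and $2\int_0^\infty f(x)\,dx=\widehat f(0)$, so the asserted identity is equivalent to $\sum_{m\ne0}\widehat f(m)=0$. Now the Fourier transform of $\sin(x/10)/x$ equals $\pi$ times the indicator of $[-1/(20\pi),\,1/(20\pi)]$; therefore $\widehat f$, being its $k$-fold convolution, is a \emph{nonnegative} function (a B-spline) supported exactly on $[-k/(20\pi),\,k/(20\pi)]$ and strictly positive on the open interval. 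If $k\le62$ then $k/(20\pi)<1$, so $0$ is the only integer in the support and every $\widehat f(m)$ with $m\ne0$ vanishes, giving the identity again; if $k\ge63$ then $1<k/(20\pi)$, so $\widehat f(\pm1)>0$ while all $\widehat f(m)\ge0$, whence $\sum_{m\ne0}\widehat f(m)\ge2\widehat f(1)>0$ and the identity is false for every $k\ge63$.

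The main obstacle is purely the bookkeeping at the two boundary indices and in the low-regularity case: one must record that $20\pi\notin\Z$, so that $62<20\pi<63$ are \emph{strict}; one must use genuine positivity (not merely nonnegativity) of $\widehat f$ at the point $1$ when $k=63$, which is exactly the statement that a $k$-fold convolution of indicators of a symmetric interval is positive throughout the open support interval; and one must justify the $k=1$ instances of Poisson summation (or Abel--Plana) in spite of only conditional convergence, as indicated above. Everything else — removability of the pole, the exponential-type bound, the evenness identities, and the passage $2\sump_{n\ge0}=\sum_{n\in\Z}$ — is routine.
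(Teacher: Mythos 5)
Your proposal is correct, and it is both close to and strictly stronger than what the paper does. The paper's own treatment is exactly your first paragraph: it states the Abel--Plana formula with the hypothesis $f(z)=o(e^{2\pi|\Im(z)|})$ uniformly in vertical strips, observes that for even $f$ the correction integral vanishes so that $\frac{f(0)}{2}+\sum_{m\ge1}f(m)=\int_0^\infty f(t)\,dt$, and lets the reader see that for $f(z)=(\sin(z/10)/z)^k$ the growth on the imaginary axis is of type $k/10$, so the hypothesis holds precisely for $k\le 62<20\pi$. What the paper does \emph{not} do is prove the negative half: failure of the Abel--Plana hypotheses for $k\ge63$ does not by itself show the identity is false, and the paper simply asserts the breakdown. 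Your Poisson-summation argument fills exactly this gap: writing the claimed identity as $\sum_{m\ne0}\widehat f(m)=0$ and using that $\widehat f$ is the $k$-fold convolution of (a constant times) the indicator of $[-1/(20\pi),1/(20\pi)]$, hence nonnegative, supported on $[-k/(20\pi),k/(20\pi)]$ and strictly positive on the open interval, you get the identity for $k\le62$ and the strict inequality
$$\sideset{}{'}\sum_{n\ge0}f(n)-\int_0^\infty f(x)\,dx=\sum_{m\ge1}\widehat f(m)>0$$
for $k\ge63$, since then $1$ lies in the open support. This buys a genuine proof of ``but not for $k\ge63$'' (and quantifies the discrepancy), at the modest cost of the bookkeeping you already flag: the strictness of $62<20\pi<63$, positivity (not just nonnegativity) of the B-spline at the point $1$ when $k=63$, and the separate treatment of $k=1$, where only conditional convergence is available and the classical evaluation $\sum_{n\ge1}\sin(n\theta)/n=(\pi-\theta)/2$ together with $\int_0^\infty\sin(x/10)\,dx/x=\pi/2$ settles the case directly.
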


If you do not like all these conventions, replace the left-hand side by
$$\dfrac{1}{2\cdot 10^k}+\sum_{n\ge1}\left(\dfrac{\sin(n/10)}{n}\right)^k\;.$$

It is clear that something is going on: it is the Abel--Plana formula.
There are several forms of this formula, here is one of them:

\begin{theorem}[Abel--Plana] Assume that $f$ is an entire function
and that $f(z)=o(\exp(2\pi|\Im(z)|))$ as $|\Im(z)|\to\infty$ uniformly in 
vertical strips of bounded width, and a number of less important additional 
conditions which we omit. Then
\begin{align*}\sum_{m\ge1}f(m)&=\int_0^\infty f(t)\,dt-\dfrac{f(0)}{2}+i\int_0^\infty\dfrac{f(it)-f(-it)}{e^{2\pi t}-1}\,dt\\
&=\int_{1/2}^\infty f(t)\,dt-i\int_0^\infty\dfrac{f(1/2+it)-f(1/2-it)}{e^{2\pi t}+1}\,dt\;.\end{align*}

In particular, if the function $f$ is \emph{even}, we have
$$\dfrac{f(0)}{2}+\sum_{m\ge1}f(m)=\int_0^\infty f(t)\,dt\;.$$
\end{theorem}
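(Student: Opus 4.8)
The statement to prove is the Abel--Plana formula in its two stated forms, together with the corollary for even functions. The plan is to prove the first form by contour integration against a kernel with poles at the positive integers, then obtain the second form by a shift of the contour, and finally deduce the even-function corollary as an immediate specialization.

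\emph{Step 1: set up the kernel.} The key device is the function $\cot(\pi z)$, or equivalently the function $1/(e^{-2\pi i z}-1)$, which has simple poles at every integer with residue $1/(2\pi i)$. To pick out only the positive integers $m\ge1$, I would integrate $f(z)$ against a kernel along a contour that encircles $1,2,3,\dots$ but not $0$. Concretely, consider
$$
\frac{1}{2\pi i}\oint_{\mathcal C} f(z)\,\pi\cot(\pi z)\,dz,
$$
where $\mathcal C$ is a large rectangle with vertical sides at $\Re(z)=1/2$ and $\Re(z)=M+1/2$ and horizontal sides at $\Im(z)=\pm T$; by the residue theorem this equals $\sum_{1\le m\le M} f(m)$ (using $\operatorname{Res}_{z=m}\pi\cot(\pi z)=1$). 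Letting $M\to\infty$ one expects the right vertical side to contribute nothing under the stated growth hypothesis, leaving only the integral up the line $\Re(z)=1/2$. Then decompose $\pi\cot(\pi z)$ on that line into its ``$\pm i\pi$'' principal part plus the exponentially small remainder: writing $z=1/2+it$, one has $\pi\cot(\pi z)=\mp i\pi - \dfrac{2\pi i}{e^{\pm 2\pi t}+1}$ for $t\gtrless 0$ (this is the elementary identity $\cot(\pi/2+i\pi t)=-i\tanh(\pi t)$ rearranged). Carrying out the algebra on the two halves $t>0$, $t<0$ of the vertical line and folding them together gives exactly the \emph{second} displayed form,
$$
\sum_{m\ge1}f(m)=\int_{1/2}^\infty f(t)\,dt - i\int_0^\infty \frac{f(1/2+it)-f(1/2-it)}{e^{2\pi t}+1}\,dt.
$$

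\emph{Step 2: shift to obtain the first form.} To pass from the line $\Re(z)=1/2$ to the line $\Re(z)=0$, I would instead run the contour argument with the left vertical side at $\Re(z)=-1/2$ (or move the contour across the pole at $z=0$), picking up the residue of $\pi\cot(\pi z)$ at $z=0$, which contributes $f(0)$; splitting symmetrically across $z=0$ then produces the ``$-f(0)/2$'' term and converts the denominator $e^{2\pi t}+1$ into $e^{2\pi t}-1$. This yields
$$
\sum_{m\ge1}f(m)=\int_0^\infty f(t)\,dt-\frac{f(0)}{2}+i\int_0^\infty\frac{f(it)-f(-it)}{e^{2\pi t}-1}\,dt,
$$
the first displayed form. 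Equivalently one checks directly that the two right-hand sides agree by evaluating the difference of the two integral representations and recognizing it as $f(0)/2$ plus $\int_0^{1/2}$ contributions that cancel.

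\emph{Step 3: the even case.} If $f$ is even then $f(it)-f(-it)=0$ identically, so the correction integral in the first form vanishes and one is left with
$$
\frac{f(0)}{2}+\sum_{m\ge1}f(m)=\int_0^\infty f(t)\,dt,
$$
as claimed.

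\textbf{Main obstacle.} The real work is Step 1's analytic bookkeeping: justifying that the horizontal segments ($\Im(z)=\pm T$, $T\to\infty$) and the far-right vertical segment contribute nothing in the limit. This is where the growth hypothesis $f(z)=o(\exp(2\pi|\Im(z)|))$ is used, balanced against the fact that $|\pi\cot(\pi z)|$ is bounded on the horizontal lines $\Im(z)=T$ (uniformly, away from integers) and decays like $e^{-2\pi|\Im(z)|}$ off the real axis; the ``number of less important additional conditions which we omit'' in the hypothesis are precisely what make these limiting arguments rigorous (e.g. enough decay in $\Re(z)$ for the right side and for the tail integrals to converge). I would state these estimates carefully but not belabor them, since they are standard; the rest is the algebra of the $\cot$ decomposition, which is routine once the identity $\pi\cot(\pi(1/2+it))=-i\pi\tanh(\pi t)$ is in hand.
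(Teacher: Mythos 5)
The paper states the Abel--Plana formula without proof (it is quoted as a known tool), so there is no in-text argument to compare with; your contour-integration proof against the kernel $\pi\cot(\pi z)$ is the standard route and is sound in outline, and Step 3 is indeed immediate. Two points should be tightened. First, your displayed kernel decomposition on the line $\Re(z)=1/2$ has a sign slip: from $\cot(\pi/2+i\pi t)=-i\tanh(\pi t)$ and $\tanh(\pi t)=1-\frac{2}{e^{2\pi t}+1}$ (for $t>0$) one gets $\pi\cot(\pi z)=\mp i\pi\pm\frac{2\pi i}{e^{2\pi|t|}+1}$ for $t\gtrless 0$, i.e.\ the exponentially small term carries the opposite sign to the one you wrote; with your sign the correction term would come out as $+i\int_0^\infty\frac{f(1/2+it)-f(1/2-it)}{e^{2\pi t}+1}\,dt$ instead of the stated $-i\int$, so the slip should be corrected even though your final formula is the right one. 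Second, the identification of $\frac{i}{2}\int_0^\infty\bigl(f(1/2+it)-f(1/2-it)\bigr)\,dt$ with $\int_{1/2}^\infty f(t)\,dt$ (and likewise of $\frac{i}{2}\int_0^\infty\bigl(f(it)-f(-it)\bigr)\,dt$ with $\int_0^\infty f(t)\,dt$ in the first form) is not ``algebra'': it is a further application of Cauchy's theorem in the right quarter-planes, rotating the vertical rays onto the positive real axis, and it needs decay of $f$ there; this is legitimately absorbed into the ``additional conditions'' the statement omits, but you should name it as a separate contour argument rather than fold it into the bookkeeping. Finally, in Step 2 it is cleaner to say that the $-f(0)/2$ comes from indenting the contour around the pole at $z=0$ (half-residue), and that the denominator changes from $e^{2\pi t}+1$ to $e^{2\pi t}-1$ because on the imaginary axis $\cot(i\pi t)=-i\coth(\pi t)$ and $\coth(\pi t)=1+\frac{2}{e^{2\pi t}-1}$; your phrasing points in this direction but is vague as written. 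With these repairs the proof is complete.
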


Since we have seen above that using doubly-exponential techniques it is easy 
to compute numerically a definite \emph{integral}, the Abel--Plana formula can
be used to compute numerically a \emph{sum}. Note that in the first version
of the formula there is an apparent singularity (but which is not a
singularity) at $t=0$, and the second version avoids this problem.

In practice, this summation method is very competitive with other methods
if we use the doubly-exponential method to compute $\int_0^\infty f(t)\,dt$,
but most importantly if we use a variant of \emph{Gaussian integration} to
compute the complex integrals, since the nodes and weights for the
function $t/(e^{2\pi t}-1)$ can be computed once and for all by using
continued fractions, see Section \ref{sec:gauss}.

\section{The Use of Continued Fractions}

\subsection{Introduction}

The last idea that I would like to mention and that is applicable in quite
different situations is the use of continued fractions. Recall that a
continued fraction is an expression of the form
$$a_0+\dfrac{b_0}{a_1+\dfrac{b_1}{a_2+\dfrac{b_2}{a_3+\ddots}}}\;.$$
The problem of \emph{convergence} of such expressions (when they are unlimited)
is difficult and will not be considered here. We refer to any good textbook
on the elementary properties of continued fractions. In particular, recall that
if we denote by $p_n/q_n$ the $n$th \emph{partial quotient} (obtained by
stopping at $b_{n-1}/a_n$) then both $p_n$ and $q_n$ satisfy the same recursion
$u_n=a_nu_{n-1}+b_{n-1}u_{n-2}$.

We will mainly consider continued fractions representing \emph{functions}
as opposed to simply numbers. Whatever the context, the interest of continued
fractions (in addition to the fact that they are easy to evaluate) is that
they give essentially the \emph{best possible} approximations, both for
real numbers (this is the standard theory of \emph{regular} continued
fractions, where $b_n=1$ and $a_n\in\Z_{\ge1}$ for $n\ge1$), and for
functions (this is the theory of \emph{Pad\'e approximants}).

\subsection{The Two Basic Algorithms}

The first algorithm that we need is the following: assume that we want
to expand a (formal) power series $S(z)$ (without loss of generality
such that $S(0)=1$) into a continued fraction:
$$S(z)=1+c(1)z+c(2)z^2+\cdots = 1+\dfrac{b(0)z}{1+\dfrac{b(1)z}{1+\dfrac{b(2)z}{1+\ddots}}}\;.$$
The following method, called the \emph{quotient-difference} (QD) algorithm
does what is required:

We define two arrays $e(j,k)$ for $j\ge0$ and $q(j,k)$ for $j\ge1$ by
$e(0,k)=0$, $q(1,k)=c(k+2)/c(k+1)$ for $k\ge0$, and by induction for $j\ge1$
and $k\ge0$:
\begin{align*}e(j,k)&=e(j-1,k+1)+q(j,k+1)-q(j,k)\;,\\
q(j+1,k)&=q(j,k+1)e(j,k+1)/e(j,k)\;.\end{align*}

Then $b(0)=c(1)$ and $b(2n-1)=-q(n,0)$ and $b(2n)=-e(n,0)$ for $n\ge1$.

Three essential implementation remarks: first keeping the whole arrays is
costly, it is sufficient to keep the latest vectors of $e$ and $q$. Second,
even if the $c(n)$ are rational numbers it is essential to do the computation
with floating point approximations to avoid coefficient explosion. The
algorithm can become unstable, but this is corrected by increasing the working
accuracy. Third, it is of course possible that some division by $0$ occurs,
and this is in fact quite frequent. There are several ways to overcome this,
probably the simplest being to multiply or divide the power series by
something like $1-z/\pi$.

\medskip

The second algorithm is needed to \emph{evaluate} the continued fraction for
a given value of $z$. It is well-known that this can be done from bottom to
top (start at $b(n)z/1$, then $b(n-1)/(1+b(n)z/1)$, etc.), or from top
to bottom (start at $(p(-1),q(-1))=(1,0)$, $(p(0),q(0))=(1,1)$, and use
the recursion). It is in general better to evaluate from bottom to top, but
before doing this we can considerably improve on the speed by using an identity
due to Euler:
$$1+\dfrac{b(0)z}{1+\dfrac{b(1)z}{1+\dfrac{b(2)z}{1+\ddots}}}
=1+\dfrac{B(0)Z}{Z+A(1)+\dfrac{B(1)}{Z+A(2)+\dfrac{B(2)}{Z+A(3)+\ddots}}}\;,$$
where $Z=1/z$, $A(1)=b(1)$, $A(n)=b(2n-2)+b(2n-1)$ for $n\ge2$,
$B(0)=b(0)$, $B(n)=-b(2n)b(2n-1)$ for $n\ge1$.
The reason for which this is much faster is that we replace
$n$ multiplications ($b(j)*z$) plus $n$ divisions by
$1$ multiplication plus approximately $1+n/2$ divisions, counting as usual
additions as negligible.

This is still not the end of the story since we can ``compress'' any
continued fraction by taking, for instance, two steps at once instead
of one, which reduces the cost . In any case this leads to a very efficient method for evaluating
continued fractions.

\subsection{Using Continued Fractions for Inverse Mellin Transforms}

We have mentioned above that one can use asymptotic expansions to compute
the incomplete gamma function $\G(s,x)$ when $x$ is large. But this method
cannot give us great accuracy since we must stop the asymptotic expansion
at its smallest term. We can of course always use the power series expansion,
which has infinite radius of convergence, but when $x$ is large this is not
very efficient (remember the example of computing $e^{-x}$).

In the case of $\G(s,x)$, continued fractions save the day: indeed, one can
prove that
$$\G(s,x)=\dfrac{x^se^{-x}}{x+1-s-\dfrac{1(1-s)}{x+3-s-\dfrac{2(2-s)}{x+5-s-\ddots}}}\;,$$
with precisely known speed of convergence. This formula is the best method
for computing $\G(s,x)$ when $x$ is large (say $x>50$), and can give arbitrary
accuracy.

However here we were in luck: we had an ``explicit'' continued fraction
representing the function that we wanted to compute. Evidently, in general
this will not be the case.

It is a remarkable idea of T.~Dokchitser \cite{Dok} that it does not really
matter if the continued fraction is not explicit, at least in the context of
computing $L$-functions, for instance for inverse Mellin transforms. Simply do
the following:

\begin{enumerate}\item First compute sufficiently many terms of the asymptotic
expansion of the function to be computed. This is very easy because our
functions all satisfy a \emph{linear differential equation} with polynomial
coefficients, which gives a \emph{recursion} on the coefficients of the
asymptotic expansion.
\item Using the quotient-difference algorithm seen above, compute the
corresponding continued fraction, and write it in the form due to Euler
to evaluate it as efficiently as possible.
\item Compute the value of the function at all desired arguments by evaluating
the Euler continued fraction.\end{enumerate}

The first two steps are completely automatic and rigorous. The whole problem
lies in the third step, the evaluation of the continued fraction. In the case
of the incomplete gamma function, we had a theorem giving us the speed of
convergence. In the case of inverse Mellin transforms, not only do we not
have such a theorem, but we do not even know how to prove that the continued
fraction converges! However experimentation shows that not only does the
continued fraction converge, but rather fast, in fact at a similar speed to
that of the incomplete gamma function.

Even though this step is completely heuristic, since its introduction by
T.~Dokchitser it is used in all packages computing $L$-functions since it is
so useful. It would of course be nice to have a \emph{proof} of its validity,
but for now this seems completely out of reach, except for the simplest
examples where there are at most two gamma factors (for instance the problem
is completely open for the inverse Mellin transform of $\G(s)^3$).

\subsection{Using Continued Fractions for Gaussian Integration and Summation}\label{sec:gauss}

We have seen above the doubly-exponential method for numerical integration,
which is robust and quite generally applicable. However, an extremely classical
method is \emph{Gaussian integration}: it is orders of magnitude faster,
but note the crucial fact that it is much less robust, in that it works
much less frequently.

The setting of Gaussian
integration is the following: we have a measure $d\mu$ on a (compact or
infinite) interval $[a,b]$; you can of course think of $d\mu$ as $K(x)dx$ for
some fixed function $K(x)$. We want to compute $\int_a^bf(x)d\mu$ by
means of \emph{nodes} and \emph{weights}, i.e., for a given $n$ compute $x_i$
and $w_i$ for $1\le i\le n$ such that $\sum_{1\le i\le n}w_if(x_i)$
approximates as closely as possible the exact value of the integral.

Note that \emph{classical} Gaussian integration such as Gauss--Legendre
integration (integration of a continuous function on a compact interval)
is easy to perform because one can easily compute explicitly the necessary
nodes and weights using standard \emph{orthogonal polynomials}. What I want to
stress here is that \emph{general} Gaussian integration can be performed very
simply using continued fractions, as follows.

In general the measure $d\mu$ is (or can be) given through its \emph{moments}
$M_k=\int_a^bx^kd\mu$. The remarkably simple algorithm to compute
the $x_i$ and $w_i$ using continued fractions is as follows:

\begin{enumerate}
\item Set $\Phi(z)=\sum_{k\ge0}M_kz^{k+1}$, and using the
quotient-difference algorithm compute $c(m)$ such that
$\Phi(z)=c(0)z/(1+c(1)z/(1+c(2)z/(1+\cdots)))$ (see the
remark made above in case the algorithm has a division by $0$; it may also
happen that the odd or even moments vanish, so that the continued fraction
is only in powers of $z^2$, but this is also easily dealt with).
\item For any $m$, denote as usual by $p_m(z)/q_m(z)$ the $m$th
convergent obtained by stopping the continued fraction at
$c(m)z/1$, and denote by $N_n(z)$ the reciprocal polynomial of
$p_{2n-1}(z)/z$ (which has degree $n-1$) and by $D_n(z)$ the
reciprocal polynomial of $q_{2n-1}$ (which has degree $n$).
\item The $x_i$ are the $n$ roots of $D_n$ (which are all simple
and in the interval $]a,b[$), and the $w_i$ are given by the formula
$w_i=N_n(x_i)/D'_n(x_i)$.
\end{enumerate}

By construction, this Gaussian integration method will work when the
function $f(x)$ to be integrated is well approximated by polynomials,
but otherwise will fail miserably, and this is why we say that the method
is much less ``robust'' than doubly-exponential integration.

\medskip

The fact that Gaussian ``integration'' can also be used very efficiently for
numerical \emph{summation} was discovered quite recently by H.~Monien. We
explain the simplest case. Consider the measure on $]0,1]$ given by
$d\mu=\sum_{n\ge1}\delta_{1/n}/n^2$, where $\delta_x$ is the Dirac measure
centered at $x$. Thus by definition
$\int_0^1 f(x)d\mu=\sum_{n\ge1}f(1/n)/n^2$. Let us apply the recipe
given above: the $k$th moment $M_k$ is given by
$M_k=\sum_{n\ge1}(1/n)^k/n^2=\z(k+2)$, so that
$\Phi(z)=\sum_{k\ge1}\z(k+1)z^k$. Note that this is closely related to the
digamma function $\psi(z)$, but we do not need this. Applying the
quotient-difference algorithm, we write
$\Phi(z)=c(0)z/(1+c(1)z/(1+\cdots))$, and compute the $x_i$ and $w_i$ as
explained above. We will then have that $\sum_iw_if(x_i)$ is a very good
approximation to $\sum_{n\ge1}f(1/n)/n^2$, or equivalently (changing the
definition of $f$) that $\sum_iw_if(y_i)$ is a very good approximation
to $\sum_{n\ge1}f(n)$, with $y_i=1/x_i$.

To take essentially the simplest example, stopping the continued fraction
after two terms we find that
$y_1=1.0228086266\cdots$, $w_1=1.15343168\cdots$,
$y_2=4.371082834\cdots$, and $w_2=10.3627543\cdots$,
and (by definition) we have $\sum_{1\le i\le 2}w_if(y_i)=\sum_{n\ge1}f(n)$
for $f(n)=1/n^k$ with $k=2$, $3$, $4$, and $5$.

\section{{\tt Pari/GP} Commands}

In this section, we give some of the {\tt Pari/GP} commands related to the
subjects studied in this course, together with examples. Unless mentioned
otherwise, the commands assume that the current default accuracy is the
default, i.e., $38$ decimal digits.

{\tt zeta(s)}: Riemann zeta function at $s$.

\begin{verbatim}
? zeta(3)
% = 1.2020569031595942853997381615114499908
? zeta(1/2+14*I)
% = 0.022241142609993589246213199203968626387
  - 0.10325812326645005790236309555257383451*I 
\end{verbatim}

{\tt lfuncreate(obj)}: create $L$-function attached to mathematical object
{\tt obj}.

{\tt lfun(pol,s)}: Dedekind zeta function of the number field $K$ defined by
{\tt pol} at $s$. Identical to {\tt L=lfuncreate(pol); lfun(L,s)}.

\begin{verbatim}
? L = lfuncreate(x^3-x-1); lfunan(L,10)
% = [1, 0, 0, 0, 1, 0, 1, 1, 0, 0]
? lfun(L,1)
% = 0.36840932071582682111186846662888526986*x^-1 + O(x^0)
? lfun(L,2)
% = 1.1100010060250153929372222560595385375
\end{verbatim}

{\tt lfunlambda(pol,s)}: same, but for the completed function $\Lambda_K(s)$,
identical to {\tt lfunlambda(L,s)} where {\tt L} is as above.

\begin{verbatim}
? lfunlambda(L,2)
% = 0.41169121016707136240079852448689476625
\end{verbatim}

{\tt lfun(D,s)}: $L$-function of quadratic character $(D/.)$ at $s$.

\noindent
Identical to {\tt L=lfuncreate(D); lfun(L,s)}.

\begin{verbatim}
? lfun(-23,-2)
% = -48.000000000000000000000000000000000000
? lfun(5,-1)
% = -0.4000000000000000000000000000000000000
\end{verbatim}

{\tt L1=lfuncreate(pol); L2=lfuncreate(1); L=lfundiv(L1,L2)}: $L$ function
attached to $\z_K(s)/\z(s)$.

\begin{verbatim}
? L1 = lfuncreate(x^3-x-1); L2 = lfuncreate(1);
? L = lfundiv(L1,L2); lfunan(L,14)
% = [1, -1, -1, 0, 0, 1, 0, 1, 0, 0, 0, 0, -1, 0]
\end{verbatim}

{\tt lfunetaquo($[m_1,r_1;m_2,r_2]$)}: $L$-function of eta product
$\eta(m_1\tau)^{r_1}\eta(m_2\tau)^{r_2}$, for instance with
{\tt [1,1;23,1]} or {\tt [1,2;11,2]}.

\begin{verbatim}
? L1 = lfunetaquo([1,1;23,1]); lfunan(L1,14)
% = [1, -1, -1, 0, 0, 1, 0, 1, 0, 0, 0, 0, -1, 0]
? L2 = lfunetaquo([1,2;11,2]); lfunan(L2,14)
% = [1, -2, -1, 2, 1, 2, -2, 0, -2, -2, 1, -2, 4, 4]
\end{verbatim}

{\tt lfuncreate(ellinit(e))}: $L$-function of elliptic curve $e$, for
instance with $e=[0,-1,1,-10,-20]$.

\begin{verbatim}
? e = ellinit([0,-1,1,-10,-20]);
? L = lfuncreate(e); lfunan(L,14)
% = [1, -2, -1, 2, 1, 2, -2, 0, -2, -2, 1, -2, 4, 4]
\end{verbatim}

{\tt ellap(e,p)}: compute $a(p)$ for an elliptic curve $e$.

\begin{verbatim}
? ellap(e,nextprime(10^42))
% = -1294088699019102994696
\end{verbatim}

{\tt eta(q+O(q\^{}B))\^{}m}: compute the $m$th power of $\eta$ to $B$ terms.

\begin{verbatim}
? eta(q+O(q^5))^26
% = 1 - 26*q + 299*q^2 - 1950*q^3 + 7475*q^4 + O(q^5)
\end{verbatim}

{\tt D=mfDelta(); mfcoefs(D,B)}: compute $B+1$ terms of the Fourier expansion
of $\Delta$.

\begin{verbatim}
? D = mfDelta(); mfcoefs(D,7)
% = [0, 1, -24, 252, -1472, 4830, -6048, -16744]
\end{verbatim}

{\tt ramanujantau(n)}: compute Ramanujan's tau function $\tau(n)$ using
the trace formula.

\begin{verbatim}
? ramanujantau(nextprime(10^7))
% = 110949191154874445294730241687634133420
\end{verbatim}

{\tt qfbhclassno(n)}: Hurwitz class number $H(n)$.

\begin{verbatim}
? vector(13,n,qfbhclassno(n-1))
% = [-1/12, 0, 0, 1/3, 1/2, 0, 0, 1, 1, 0, 0, 1, 4/3]
\end{verbatim}

{\tt qfbsolve(Q,n)}: solve $Q(x,y)=n$ for a binary quadratic form $Q$
(contains in particular Cornacchia's algorithm).

\begin{verbatim}
? Q = Qfb(1,0,1); p = 10^16+61; qfbsolve(Q,p)
% = [86561206, 50071525]
\end{verbatim}

{\tt gamma(s)}: gamma function at $s$.

\begin{verbatim}
? gamma(1/4)*gamma(3/4)-Pi*sqrt(2)
% = 2.350988701644575016 E-38
\end{verbatim}

{\tt incgam(x,s)}: incomplete gamma function $\G(s,x)$.

\begin{verbatim}
? incgam(1,5/2)
% = 0.082084998623898795169528674467159807838
\end{verbatim} 

{\tt G=gammamellininvinit(A)}: initialize data for computing inverse Mellin
transforms of $\prod_{1\le i\le d}\G_{\R}(s+a_i)$, with $A=[a_1,\ldots,a_d]$.

{\tt gammamellininv(G,t)}: inverse Mellin transform at $t$ of $A$, with
$G$ initialized as above.

\begin{verbatim}
? G = gammamellininvinit([0,0]); gammamellininv(G,2)
% = 4.8848219774465217355974384319702281090 E-6
\end{verbatim}

{\tt K(nu,x)}: $K_{\nu}(x)$, $K$-Bessel function of (complex) index $\nu$ at
$x$.

\begin{verbatim}
? 4*besselk(0,4*Pi)
% = 4.8848219774465217355974384319702281090 E-6
\end{verbatim}

{\tt sumnum(n=a,f(n))}: numerical summation of $\sum_{n\ge a}f(n)$ using
discrete Euler--MacLaurin.

\begin{verbatim}
? sumnum(n=1,1/(n^2+n^(4/3)))
% = 0.95586324768586066988568837766973815238
\end{verbatim}

{\tt sumnumap(n=a,f(n))}: numerical summation of $\sum_{n\ge a}f(n)$ using
Abel--Plana.

{\tt sumnummonien(n=a,f(n))}: numerical summation using Monien's Gaussian
summation method,

(there also exists {\tt sumnumlagrange}, which can also be very useful).

{\tt limitnum(n->f(n))}: limit of $f(n)$ as $n\to\infty$ using a variant
of Zagier's method, assuming asymptotic expansion in integral powers of $1/n$
(also {\tt asympnum} to obtain more coefficients).

\begin{verbatim}
? limitnum(n->(1+1/n)^n)
% = 2.7182818284590452353602874713526624978
? asympnum(n->(1+1/n)^n*exp(-1))
% = [1, -1/2, 11/24, -7/16, 2447/5760, -959/2304,...]
\end{verbatim}

{\tt sumeulerrat(f(x))}: $\sum_{p\ge2}f(p)$, $p$ ranging over primes
(more general variant exists form $\sum_{p\ge a}f(p^s)$).

\begin{verbatim}
? sumeulerrat(1/(x^2+x))
% = 0.33022992626420324101509458808674476056
\end{verbatim}

{\tt prodeulerrat(f(x))}: $\prod_{p\ge2}f(p)$, $p$ ranging over primes, with
same variants.

\begin{verbatim}
? prodeulerrat((1-1/x)^2*(1+2/x))
% = 0.28674742843447873410789271278983844644
\end{verbatim}

{\tt sumalt(n=a,(-1)\^{}n*f(n))}: $\sum_{n\ge a}(-1)^nf(n)$, assuming $f$
positive.

\begin{verbatim}
? sumalt(n=1,(-1)^n/(n^2+n))
% = -0.38629436111989061883446424291635313615
\end{verbatim}

{\tt f'(x)} (or {\tt deriv(f)(x)}): numerical derivative of $f$ at $x$.

\begin{verbatim}
? -zeta'(-2)
% = 0.030448457058393270780251530471154776647
? zeta(3)/(4*Pi^2)
% = 0.030448457058393270780251530471154776647
\end{verbatim}

{\tt intnum(x=a,b,f(x))}: numerical computation of $\int_a^b f(x)\,dx$ using
general doubly-exponential integration.

{\tt intnumgauss(x=a,b,f(x))}: numerical integration using Gaussian
integration.

\begin{verbatim}
? intnum(t=0,1,lngamma(t+1))
% = -0.081061466795327258219670263594382360139
\end{verbatim}

  For instance, for $500$ decimal digits, after the initial computation of
  nodes and weights in both cases ({\tt intnuminit(0,1)} and
  {\tt intnumgaussinit()}) this examples requires $2.5$ seconds by
  doubly-exponential integration but only $0.25$ seconds by Gaussian
  integration.
  
\section{Three Pari/GP Scripts}

\subsection{The Birch--Swinnerton-Dyer Example}

Here is a list of commands which implements the explicit BSD example given
in Section \ref{sec:BSD}, again assuming the default accuracy of $38$ decimal
digits.

\begin{verbatim}
? E = ellinit([1,-1,0,-79,289]); /* initialize */
? N = ellglobalred(E)[1] /* compute conductor */
% = 234446
? /* define the integral $f(x)$ */
? f(x) = intnum(t=1,[oo,x],exp(-x*t)*log(t)^2);
? /* check that f(100) is small enough for 38D */
? f(100)
% = 7.2... E-50 
? A = ellan(E,8000); /* compute 8000 coefficients */
? /* Note that $2\pi 8000/sqrt(N) > 100$ */
? S = sum(n=1,8000,A[n]*f(2*Pi*n/sqrt(N)))
% = 9.02... E-35 /* almost 0 */
? /* compute APPARENT order of vanishing of L(E,s) */
? ellanalyticrank(E)[1]
% = 4
\end{verbatim}

Note that for illustrative purposes we use the {\tt intnum} command to compute
$f(x)$, corresponding to the use of doubly-exponential integration, but in the
present case there are methods which are orders of magnitude faster.
The last command, which is almost immediate, implements these methods.

\subsection{The Beilinson--Bloch Example}

The code for the explicit Beilinson--Bloch example seen in Section
\ref{sec:BB} is simpler (I have used the integral representation of $g(u)$,
but of course I could have used the series expansion instead):

\begin{verbatim}
? e(u) =
{
  my(E = ellinit([0,u^2+1,0,u^2,0]));
  lfun(E,2)*ellglobalred(E)[1];
}
? g(u) =
{
  my(S);
  S = 2*Pi*intnum(t=0,1,asin(t)/(t*sqrt(1-(t/u)^2)));
  S+Pi^2*acosh(u);
}
? e(5)/g(5)
% = 8.0000000000000000000000000000000000000
? /* we obtain perfect accuracy */
? /* for example: */
? for(u = 2,18,print1(bestappr(e(u)/g(u),10^6)," "))
% = 1 2 4/11 8 32 8 4/3 8 32 64 8 96 256 48 16 16 192
\end{verbatim}

\subsection{The Mahler Measure Example}

\begin{verbatim}
? L=lfunetaquo([2,1;4,1;6,1;12,1]);
\\ Equivalently L=lfuncreate(ellinit([0,-1,0,-4,4]));
? lfun(L,3)
% = 0.95050371329356644983179739940014855951
? (Pi^2/36)*(Catalan*Pi+intnum(t=0,1,asin(t)*asin(1-t)/t))
% = 0.95050371329356644983179739940014855950
\end{verbatim}

\section{Appendix: Selected Results}

\subsection{The Gamma Function}

The Gamma function, denoted by $\G(s)$, can be defined in several different
ways. My favorite is the one I give in Section 9.6.2 of \cite{Coh4}, but for
simplicity I will recall the classical definition. For $s\in\C$ we define
$$\G(s)=\int_0^\infty e^{-t}t^s\,\dfrac{dt}{t}\;.$$
It is immediate to see that this converges if and only if $\Re(s)>0$ (there is
no problem at $t=\infty$, the only problem is at $t=0$), and integration by
parts shows that $\G(s+1)=s\G(s)$, so that if $s=n$ is a positive integer,
we have $\G(n)=(n-1)!$. We can now \emph{define} $\G(s)$ for
all complex $s$ by using this recursion backwards, i.e., setting
$\G(s)=\G(s+1)/s$. It is then immediate to check that $\G(s)$ is a meromorphic
function on $\C$ having poles at $s=-n$ for $n=0$, $1$, $2$,\dots, which
are simple with residue $(-1)^n/n!$.

The gamma function has numerous additional properties, the most important
being recalled below:

\begin{enumerate}
\item (Stirling's formula for large $\Re(s)$): as $s\to\infty$, $s\in\R$ (say,
there is a more general formulation) we have
$\G(s)\sim s^{s-1/2}e^{-s}(2\pi)^{1/2}$.
\item (Stirling's formula for large $\Im(s)$): as $|T|\to\infty$, $\sigma\in\R$
being fixed (say, once again there is a more general formulation), we have
$|\G(\sigma+iT)|\sim |T|^{\sigma-1/2}e^{-\pi |T|/2}(2\pi)^{1/2}$. In particular,
it tends to $0$ exponentially fast on vertical strips.
\item (Reflection formula): we have $\G(s)\G(1-s)=\pi/\sin(\pi s)$.
\item (Duplication formula): we have $\G(s)\G(s+1/2)=2^{1-2s}\pi^{1/2}\G(2s)$
(there is also a more general distribution formula giving
$\prod_{0\le j<N}\G(s+j/N)$ which we do not need). Equivalently, if we set
$\G_{\R}(s)=\pi^{-s/2}\G(s/2)$ and $\G_{\C}(s)=2\cdot(2\pi)^{-s}\G(s)$, we have
$\G_{\R}(s)\G_{\R}(s+1)=\G_{\C}(s)$.
\item (Link with the beta function): let $a$ and $b$ in $\C$ with $\Re(a)>0$
and $\Re(b)>0$. We have
$$B(a,b):=\int_0^1t^{a-1}(1-t)^{b-1}\,dt=\dfrac{\G(a)\G(b)}{\G(a+b)}\;.$$
\end{enumerate}

\subsection{Order of a Function: Hadamard Factorization}

Let $F$ be a holomorphic function in the whole of $\C$ (it is immediate
to generalize to the case of meromorphic functions, but for simplicity we
stick to the holomorphic case). We say that $F$ has \emph{finite order} if
there exists $\al\ge0$ such that as $|s|\to\infty$ we have
$|F(s)|\le e^{|s|^{\al}}$. The infimum of such $\al$ is called the order of
$F$. It is an immediate consequence of Liouville's theorem that functions
of order $0$ are polynomials. Most functions occurring in number theory,
and in particular all $L$-functions occurring in this course, have order $1$.
The Selberg zeta function, which we do not consider, is also an interesting
function and has order $2$.

The Weierstrass--Hadamard factorization theorem is the following:

\begin{theorem} Let $F$ be a holomorphic function of order $\rho$, set
$p=\lfloor\rho\rfloor$, let $(a_n)_{n\ge1}$ be the non-zero zeros of $F$
repeated with multiplicity, and let $m$ be the order of the zero at $z=0$.
There exists a polynomial $P$ of degree at most $p$ such that for all
$z\in\C$ we have
$$F(z)=z^me^{P(z)}\prod_{n\ge1}\left(1-\dfrac{z}{a_n}\right)\exp\left(\dfrac{z/a_n}{1}+\dfrac{(z/a_n)^2}{2}+\cdots+\dfrac{(z/a_n)^p}{p}\right)\;.$$
\end{theorem}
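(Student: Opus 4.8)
The plan is to run the classical three-step argument: control the zeros by Jensen's formula, assemble the associated canonical product, and then pin down the remaining exponential prefactor.

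First I would estimate the zeros. Applying Jensen's formula to $F(z)/z^m$ on the disc $|z|\le r$ and using the order hypothesis $|F(z)|\le e^{|z|^{\rho+\eps}}$ to bound the mean of $\log|F|$ over $|z|=r$ by $O(r^{\rho+\eps})$, one gets $\sum_{|a_n|\le r}\log(r/|a_n|)=O(r^{\rho+\eps})$, hence the counting function $n(r)=\#\{n:|a_n|\le r\}$ satisfies $n(r)=O(r^{\rho+\eps})$ for every $\eps>0$. A summation by parts then yields $\sum_n|a_n|^{-s}<\infty$ for every real $s>\rho$; since $p=\lfloor\rho\rfloor$ forces $p+1>\rho$, in particular $\sum_n|a_n|^{-(p+1)}<\infty$.

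Next I would build the canonical product. With the Weierstrass elementary factor $E_p(w)=(1-w)\exp\bigl(w+w^2/2+\cdots+w^p/p\bigr)$, the expansion $\log E_p(w)=-\sum_{k>p}w^k/k$ gives $|\log E_p(w)|\le C|w|^{p+1}$ for $|w|\le 1/2$, together with cruder global estimates $\log|E_p(w)|\le C(1+|w|)^p$ and, away from the point $w=1$, $\log|E_p(w)|\ge -C(1+|w|^{p+1})$. Combined with $\sum_n|a_n|^{-(p+1)}<\infty$, the first estimate shows $G(z)=\prod_n E_p(z/a_n)$ converges locally uniformly, so $G$ is entire with zero set exactly $(a_n)$ counted with multiplicity, and a standard argument shows $G$ has order at most $\rho$. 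Consequently $H(z)=F(z)/(z^m G(z))$ is entire and nowhere zero, so $H=e^{g}$ for a single-valued entire function $g$, and the whole theorem reduces to showing $\deg g\le p$.

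The delicate last step I would handle as follows. One needs a lower bound $|G(z)|\ge e^{-C|z|^{\rho+\eps}}$ valid along a sequence of radii $r_k\to\infty$ chosen so that $|z|=r_k$ stays at distance at least $c\,r_k^{-\rho-1}$ (for some fixed $c>0$) from every $a_n$; this is possible because $n(2r)=O(r^{\rho+\eps})$, and the bound itself comes from estimating $\sum_{|a_n|\le 2r}\log(1/|1-z/a_n|)$ via the zero count plus the crude factor estimates above. On those circles, $\Re g(z)=\log|H(z)|\le\log|F(z)|-\log|z^m G(z)|\le C|z|^{\rho+\eps}$. Then I invoke the elementary lemma: if $g=\sum_{k\ge 0}c_kz^k$ is entire and $\Re g(z)\le C(1+|z|)^{\beta}$, then $c_k=0$ for all $k>\beta$ — which follows from the Borel--Carath\'eodory identity $|c_k|\,r^k\le 2\bigl(\max_{|z|=r}\Re g(z)-\Re g(0)\bigr)$ valid for $k\ge 1$, by letting $r=r_k\to\infty$. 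Applied with $\beta=\rho+\eps$ this forces $g$ to be a polynomial of degree at most $\rho$, hence of degree at most $p$ since that degree is an integer; setting $P=g$ finishes the proof. I expect this third step — obtaining the off-the-zeros lower bound for $|G|$ and converting the one-sided growth bound on $\Re g$ into the degree bound — to be the main obstacle, the first two steps being routine once Jensen's formula and the elementary-factor estimates are in hand.
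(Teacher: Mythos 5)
This theorem is stated in the paper's appendix as classical background (the Weierstrass--Hadamard factorization) and is given there \emph{without} proof, so there is no paper argument to compare against; judged on its own, your outline is the standard textbook proof and is correct. The three ingredients are all in place and correctly stated: Jensen's formula gives $n(r)=O(r^{\rho+\eps})$ and hence $\sum_n|a_n|^{-(p+1)}<\infty$ since $p+1>\rho$; the elementary-factor estimate $|\log E_p(w)|\le C|w|^{p+1}$ for $|w|\le 1/2$ yields the convergent canonical product $G$; and the endgame — a minimum-modulus bound for $G$ on circles $|z|=r_k$ avoiding the zeros by $c\,r_k^{-\rho-1}$ (possible by the measure count coming from $n(2r)=O(r^{\rho+\eps})$), combined with the coefficient inequality $|c_k|r^k\le 2\bigl(\max_{|z|=r}\Re g-\Re g(0)\bigr)$ for $k\ge1$ — correctly forces $\deg g\le\rho$, hence $\le p$. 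The only blemish is the parenthetical ``global'' upper bound $\log|E_p(w)|\le C(1+|w|)^p$, which fails for $p=0$; but it plays no role in the step that matters (the lower bound on $|G|$ and the upper bound on $\Re g$ along the good radii), so the argument stands as a faithful sketch of the classical proof.
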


In the case of order $1$ which is of interest to us, this reads
$$F(z)=B\cdot z^me^{Az}\prod_{n\ge1}\left(1-\dfrac{z}{a_n}\right)e^{z/a_n}\;.$$

For example, we have
$$\sin(\pi z)=\pi z\prod_{n\ge1}\left(1-\dfrac{z^2}{n^2}\right)\text{\quad and\quad}\dfrac{1}{\G(z+1)}=e^{\ga z}\prod_{n\ge1}\left(1+\dfrac{z}{n}\right)e^{-z/n}\;,$$
where as usual $\ga=0.57721\cdots$ is Euler's constant.

\begin{exercise}\begin{enumerate}
\item Using these expansions, prove the reflection formula and the duplication
formula for the gamma function, and find the distribution formula giving
$\prod_{0\le j<N}\G(s+j/N)$.
\item Show that the above expansion for the sine function is equivalent to
the formula expressing $\z(2k)$ in terms of Bernoulli numbers.
\item Show that the above expansion for the gamma function is equivalent to
the Taylor expansion
$$\log(\G(z+1))=-\ga z+\sum_{n\ge2}(-1)^n\dfrac{\z(n)}{n}z^n\;,$$
and prove the validity of this Taylor expansion for $|z|<1$, hence of
the above Hadamard product.
\end{enumerate}
\end{exercise}

\subsection{Elliptic Curves}

We will not need the abstract definition of an elliptic curve. For us, an
elliptic curve $E$ defined over a field $K$ will be a nonsingular projective
curve defined by the (affine) generalized Weierstrass equation with
coefficients in $K$:
$$y^2+a_1xy+a_3y=x^3+a_2x^2+a_4x+a_6\;.$$
This curve has a \emph{discriminant} (obtained essentially by completing the
square and computing the discriminant of the resulting cubic), and the
essential property of being nonsingular is equivalent to the discriminant being
nonzero.

This curve has a unique point ${\cal O}$ at infinity, with projective
coordinates $(0:1:0)$. Using chord and tangents one can define an addition
law on this curve, and the first essential (but rather easy) result is that
it is an \emph{abelian group law} with neutral element ${\cal O}$, making
$E$ into an algebraic group.

In the case where $K=\Q$ (or more generally a number field), a deeper theorem
due to Mordell states that the group $E(\Q)$ of rational points of $E$ is a
\emph{finitely generated abelian group}, i.e., is isomorphic to
$\Z^r\oplus E(\Q)_{\text{tors}}$, where $E(\Q)_{\text{tors}}$ (the torsion
subgroup) is a finite group, and the integer $r$ is called the (algebraic)
\emph{rank} of the curve.

Still in the case $K=\Q$, for all prime numbers $p$ except a finite number,
we can \emph{reduce} the equation modulo $p$, thus obtaining an elliptic curve
over the finite field $\F_p$. Using an algorithm due to J.~Tate, we can find
first a \emph{minimal Weierstrass equation} for $E$, second the behavior of
$E$ reduced at the ``bad'' primes in terms of so-called \emph{Kodaira symbols},
and third the algebraic \emph{conductor} $N$ of $E$, product of the bad primes
raised to suitable exponents (and other important quantities).

The deep theorem of Wiles et al. tells us that the $L$-function of $E$
(as defined in the main text) is equal to the $L$-function of a rational
Hecke eigenform in the modular form space $M_2(\G_0(N))$, where $N$ is
the conductor of $E$.

A weak form of the Birch and Swinnerton-Dyer conjecture says that the
algebraic rank $r$ is equal to the analytic rank defined as the order of
vanishing of the $L$-function of $E$ at $s=1$.
\bigskip

\end{document}